\documentclass[11pt]{article}
\usepackage[english]{babel}
\usepackage[utf8]{inputenc}
\usepackage[T1]{fontenc}
\usepackage{amsmath,amsthm,amsfonts,amssymb,bbm,epsfig,verbatim,enumerate,stackrel}
\usepackage{graphicx}
\usepackage{fullpage}
\usepackage{hyperref}
\usepackage{color}


\newtheorem{theorem}{Theorem}
\newtheorem{lemma}[theorem]{Lemma}
\newtheorem{proposition}[theorem]{Proposition}
\newtheorem{corollary}[theorem]{Corollary}
\newtheorem{definition}[theorem]{Definition}

\newtheorem{remark}[theorem]{Remark}
\newtheorem{assumption}{Assumption}

\numberwithin{theorem}{section}
\numberwithin{equation}{section}
\numberwithin{figure}{section}

\DeclareMathOperator{\rad}{rad}

\newcommand{\PP}{\mathbb{P}}
\newcommand{\EE}{\mathbb{E}}
\newcommand{\RR}{\mathbb{R}}
\newcommand{\CC}{\mathbb{C}}
\newcommand{\ZZ}{\mathbb{Z}}

\newcommand{\TT}{\mathbb{T}}
\newcommand{\calE}{\mathcal{E}}
\newcommand{\calF}{\mathcal{F}}
\newcommand{\calH}{\mathcal{H}}
\newcommand{\calN}{\mathcal{N}}
\newcommand{\calS}{\mathcal{S}}
\newcommand{\ve}{\varepsilon}
\newcommand{\ind}{\mathbbm{1}}
\newcommand{\lra}{\leftrightarrow}

\newcommand{\ol}{\overline}
\newcommand{\ul}{\underline}
\newcommand{\dol}[1]{\ol{\ol{#1}}}
\newcommand{\dul}[1]{\ul{\ul{#1}}}
\newcommand{\doul}[1]{\ul{\ol{#1}}}

\newcommand{\Ann}{A} 
\newcommand{\Ball}{B} 
\newcommand{\arm}{\mathcal{A}} 
\newcommand{\circuit}{\mathcal{C}} 
\newcommand{\circuitevent}{\mathcal{O}} 
\newcommand{\circuitarm}{\mathcal{OA}} 
\newcommand{\cluster}{\mathcal{C}} 
\newcommand{\Cinf}{\cluster_\infty} 
\newcommand{\Ch}{\mathcal{C}_H} 
\newcommand{\Cv}{\mathcal{C}_V} 
\newcommand{\colorseq}{\mathfrak{S}} 
\newcommand{\lclus}{\cluster^{\textnormal{max}}} 
\newcommand{\lclusbar}{\ol{\cluster}^{\textnormal{max}}} 
\newcommand{\din}{\partial^{\textnormal{in}}} 
\newcommand{\dout}{\partial^{\textnormal{out}}} 
\newcommand{\de}{\partial^{\textnormal{e}}} 
\newcommand{\next}{\Psi}
\newcommand{\aval}{\mathfrak{a}}
\newcommand{\avalnb}{\mathfrak{n}}
\newcommand{\PPh}{\overline{\mathbb{P}}}

\begin{document}

\title{Near-critical avalanches\\ in 2D frozen percolation and forest fires}

\author{Wai-Kit Lam\footnote{University of Minnesota}, Pierre Nolin\footnote{City University of Hong Kong. Partially supported by a GRF grant from the Research Grants Council of the Hong Kong SAR (project CityU11306719).}}

\date{}

\maketitle

\begin{abstract}
We study two closely related processes on the triangular lattice: frozen percolation, where connected components of occupied vertices freeze (they stop growing) as soon as they contain at least $N$ vertices, and forest fire processes, where connected components burn (they become entirely vacant) at rate $\zeta > 0$. In this paper, we prove that when the density of occupied sites approaches the critical threshold for Bernoulli percolation, both processes display a striking phenomenon: the appearance of near-critical ``avalanches''.

More specifically, we analyze the avalanches, all the way up to the natural characteristic scale of each model, which constitutes an important step toward understanding the self-organized critical behavior of such processes. For frozen percolation, we show in particular that the number of frozen clusters surrounding a given vertex is asymptotically equivalent to $(\log(96/5))^{-1} \log \log N$ as $N \to \infty$. A similar mechanism underlies forest fires, enabling us to obtain an analogous result for these processes, but with substantially more work: the number of burnt clusters is equivalent to $(\log(96/41))^{-1} \log \log (\zeta^{-1})$ as $\zeta \searrow 0$. Moreover, almost all of these clusters have a volume $\zeta^{- 91/55 + o(1)}$.

For forest fires, the percolation process with impurities introduced in \cite{BN2018} plays a crucial role in our proofs, and we extend the results in that paper, up to a positive density of impurities. In addition, we develop a novel exploration procedure to couple full-plane forest fires with processes in finite but large enough (compared to the characteristic scale) domains.

\bigskip

\noindent \textbf{Keywords:} near-critical percolation, forest fires, frozen percolation, self-organized criticality.

\smallskip

\noindent \textbf{Mathematics Subject Classification:} 60K35, 82B43.
\end{abstract}

\tableofcontents

\section{Introduction}

\subsection{Frozen percolation and forest fires} \label{sec:intro_intro}

This paper is concerned with two families of processes, \emph{frozen percolation} and \emph{forest fires}, defined on a simple graph $G = (V,E)$ (where as usual, $V$ and $E$ contain the vertices and the edges of $G$, respectively). We describe them in an informal way now, and refer the reader to Section~\ref{sec:def_processes} for precise definitions. They are all constructed from an underlying \emph{birth process} on $G$, in which the vertices can be in two states, that we interpret as ``containing a particle'' (\emph{occupied}), e.g. a tree, or ``being empty'' (\emph{vacant}). In this process, all vertices are initially vacant (at time $t = 0$), and then become occupied at rate $1$, independently of each other. Hence, at time $t \geq 0$, each vertex is occupied with probability $p(t) = 1 - e^{-t}$, and vacant otherwise. Occupied vertices can be grouped into maximal connected components, called occupied clusters.

First, we consider the class of growth processes known as frozen percolation. More specifically, volume-frozen percolation (FP) with parameter $N \geq 1$, or \textbf{$N$-frozen percolation} for short, is obtained by modifying the dynamics of the birth process in the following way. Each time a vertex tries to change its state from vacant to occupied, it is not always allowed to do so: it becomes occupied only if it is not adjacent to an occupied cluster containing at least $N$ vertices. In other words, we let each occupied cluster $\cluster$ grow as long as its volume, i.e. the number $|\cluster|$ of vertices that it contains, is at most $N-1$. If this volume happens to cross the threshold $N$ at some time $t$, then the cluster stops growing: we say that it \emph{freezes}, and the vertices along its outer boundary, which are all vacant at time $t$, remain in this state forever. Occupied vertices belonging to such a cluster are said to be frozen (at all times $t' \geq t$). Note that a given cluster may never reach volume $N$, if it happens to be ``trapped'' by frozen clusters inside a region with volume smaller than $N$. The probability measure governing $N$-frozen percolation on $G$ is denoted by $\PP_N^{(G)}$.

We also analyze forest fire processes on $G$, for some given ignition rate $\zeta > 0$. Vertices again turn from vacant to occupied at rate $1$. In addition, each vertex $v \in V$ is hit by lightning at rate $\zeta$: when this happens, all the vertices in the occupied cluster containing $v$ become vacant instantaneously, while nothing happens if $v$ is vacant. This process corresponds to the Drossel-Schwabl model \cite{DrSc1992}. It has garnered a lot of attention since its introduction in 1992, but it is still not well understood. We also consider a variant of this process where vertices, once burnt, stay in this state forever, and cannot become occupied at a later time (so that a vertex can be in three possible states: vacant, occupied, or burnt). We refer to this modified version as forest fire without recovery, abbreviated as \textbf{FFWoR}, while the original process is called forest fire with recovery (\textbf{FFWR}). When studying these processes, we use the notations $\PP_\zeta^{(G)}$ and $\ol{\PP}_\zeta^{(G)}$, respectively.

The first version of frozen percolation was introduced by Aldous \cite{Al2000}, inspired by sol-gel transitions \cite{St1943}. In that paper, two graphs $G$ are considered: the infinite $3$-regular tree, and the planted binary tree (in which the root vertex has degree $1$, but all other vertices have degree $3$). More precisely, an edge version of frozen percolation is studied, where connected components freeze as soon as they become infinite. In this case, corresponding formally to the value $N = \infty$, existence is far from clear, and it is established thanks to explicit computations allowed by the tree structure\footnote{Soon after \cite{Al2000}, Benjamini and Schramm pointed out that this process is not well-defined on two-dimensional lattices such as $\ZZ^2$ (see also Remark~(i) after Theorem~1 in \cite{BT2001}).}. Furthermore, the process in \cite{Al2000} is shown to display an exact form of self-organized criticality, a fascinating phenomenon studied extensively in statistical physics (see e.g. \cite{Ba1996, Je1998, Pr2012}, and the references therein). Here, the (near-) critical regime of Bernoulli percolation arises ``spontaneously'', which was also observed, for example, in the case of invasion percolation \cite{WW1983, DSV2009, GPS2018b}.

Percolation theory, which was initiated by Broadbent and Hammersley \cite{BH1957} in 1957, provides key tools to analyze such processes, where connectivity plays a central role. For example, it will allow us to understand how far fires can spread, through large-scale connections in the forest. More specifically, we make use of Bernoulli site percolation on $G$, where vertices are independently occupied or vacant, with respective probabilities $p$ and $1-p$, for some parameter $p \in [0,1]$. In the following, the graph $G$ is either the full (two-dimensional) triangular lattice $\TT = (V_{\TT}, E_{\TT})$, embedded in a natural way into $\RR^2$ (with a vertex at the origin $0$, each face being a triangle with sides of length $1$, one of which parallel to the $x$-axis), or subgraphs of it. As we explain briefly in Section~\ref{sec:outline}, we restrict to $\TT$ because it is the planar lattice where the most sophisticated properties, used extensively in our proofs, are known rigorously. We will mostly consider balls around $0$, i.e. the graph with set of vertices $\Ball_n := [-n,n]^2 \cap V_{\TT}$, and set of edges induced by $E_{\TT}$. In the case of Bernoulli site percolation on $\TT$, there exists almost surely (a.s.) an infinite occupied cluster \emph{if and only if} $p > p_c^{\textrm{site}}(\TT)$, and it is a celebrated result \cite{Ke1980} that $p_c^{\textrm{site}}(\TT) = \frac{1}{2}$ (see also Section~3.4 in \cite{Ke1982}). For the birth process discussed in the beginning of this section, this means that a transition occurs at the critical time $t_c := \log 2$, such that $p(t_c) = p_c^{\textrm{site}}(\TT)$: for all $t \leq t_c$, there is a.s. no infinite occupied cluster, while for all $t > t_c$, there exists a.s. such a cluster (which, moreover, is known to be unique).

Infinite clusters are obviously prevented from forming in $N$-frozen percolation. For forest fires, it is also known that such clusters do not arise (a.s.), although this is much less obvious \cite{Du2006} (but even if they did, they would have to disappear instantaneously since $\zeta > 0$). These two processes have a very peculiar behavior as time crosses the threshold $t_c$, as we explain in Section~\ref{sec:background}. In forest fires for instance, large-scale connections start to appear, helping fires to spread. Hence, large burnt areas may be created, which then act as ``fire lines'' hindering the emergence of new large clusters. The main goal of this paper is to follow closely the emergence of frozen / burnt clusters surrounding a given vertex of the graph: the successive times at which they appear, and how big they are (in terms of diameter and volume). In both cases, we highlight the emergence of what we call \emph{near-critical avalanches}, which, although described by different sets of exponents, are of a similar nature.

\subsection{Background} \label{sec:background}

We now discuss earlier works on frozen percolation and forest fires, in particular the existence of exceptional scales, which are a clear indication of the challenges ahead when trying to study the processes rigorously. These scales are a manifest symptom of the ``non-monotone'' nature of the underlying mechanisms: for example, increasing the number of trees in a forest makes it more connected, but potential fires may then destroy larger regions, making the forest less connected eventually.

\subsubsection{Frozen percolation: exceptional scales, deconcentration}

The volume-frozen percolation process considered here was first studied in \cite{BN2015}, where the existence of a remarkable sequence of functions $m_k(N)$, $k \geq 1$, was uncovered, with $\sqrt{N} \asymp m_1(N) \ll m_2(N) \ll m_3(N) \ll \ldots$ as $N \to \infty$. These were called \emph{exceptional scales} because of the following dichotomy as $N \to \infty$, established in Theorems~1 and 2 of \cite{BN2015}\footnote{In this paper, time is indexed by $[0,\infty)$ in order to achieve a unified treatment with forest fires. We want to underline the fact that earlier works on frozen percolation often use time indexed by $[0,1]$ instead, following the notations of Aldous \cite{Al2000}. In particular, we rephrase the results of \cite{BN2015} and \cite{BKN2015} accordingly in this section.}. Let $M(N)$ be a function of $N$, and consider $N$-frozen percolation in the ball $\Ball_{M(N)}$.
\begin{itemize}
\item[(i)] If $M(N) \asymp m_k(N)$ for some $k \geq 2$, then
$$\liminf_{N \to \infty} \PP_N^{(\Ball_{M(N)})} \big( 0 \text{ is frozen at time } \infty \big) > 0.$$

\item[(ii)] If $m_k(N) \ll M(N) \ll m_{k+1}(N)$ for some $k \geq 1$, then
$$\lim_{N \to \infty} \PP_N^{(\Ball_{M(N)})} \big( 0 \text{ is frozen at time } \infty \big) = 0.$$
\end{itemize}
In the first case, with high probability (w.h.p.) the cluster of $0$ in the final configuration is either macroscopic (with a volume of order $N$, frozen or non-frozen), or microscopic (containing $O(1)$ vertices). In the second situation, $0$ lies in a ``mesoscopic'' cluster w.h.p., with volume $\gg 1$ but $\ll N$ (so non-frozen, in particular).

For each $k \geq 1$, $m_{k+1}$ is obtained from $m_k$ via the relation
\begin{equation} \label{eq:excep_scales_FP}
m_{k+1}(N) = \sqrt{\frac{N}{\pi_1 \big( m_k(N) \big)}}
\end{equation}
(up to multiplicative constants), where $\pi_1(n)$ denotes the one-arm probability for Bernoulli percolation at criticality, i.e. at $p = p_c$, which is the probability that the occupied cluster of $0$ has a radius at least $n$ (see Section \ref{sec:notations} for precise definition of this and other quantities). Hidden behind \eqref{eq:excep_scales_FP} is a transformation, defined roughly as
\begin{equation} \label{eq:map_FP}
\next_N: R \mapsto t \in (t_c, \infty) \text{ s.t. } R^2 \theta(t) = N
\end{equation}
(see \eqref{eq:def_next_FP}). It is used to predict when the first (macroscopic) freezing event occurs in the ball $\Ball_R$. In this definition, $\theta(t)$ denotes the probability for $0$ to lie in an infinite cluster at time $t$, which we know is $> 0$ \emph{iff} $t > t_c$. This map happens to induce an approximate ``fixed point'', in a sense made precise in Section~\ref{sec:successive}. We denote it by $m_{\infty}(N)$, and one can prove that it satisfies
$$m_{\infty}(N)^2 \pi_1(m_{\infty}(N)) \asymp N.$$
In this relation, the l.h.s. gives the order of magnitude, for Bernoulli percolation at criticality, of the volume of the largest cluster in $\Ball_{m_{\infty}(N)}$. In the case of the triangular lattice, it is known that $\pi_1(n) = n^{-\frac{5}{48} + o(1)}$ as $n \to \infty$, so that $m_{\infty}(N) = N^{\frac{48}{91} + o(1)}$ as $N \to \infty$. We often think of $m_{\infty}$ as a ``characteristic length'' naturally associated with frozen percolation.

The dichotomy above tells only part of the story, since for every $k \geq 1$, $m_k(N) \ll m_{\infty}(N)$ as $N \to \infty$. More work is required for boxes with a bigger side length, and for the full-lattice process. In \cite{BKN2015} it was shown in particular (Theorem~1.1) that for $N$-frozen percolation on $\TT$, the density of frozen vertices vanishes as $N \to \infty$:
$$\lim_{N \to \infty} \PP_N^{(\TT)} \big( 0 \text{ is frozen at time } \infty \big) = 0.$$
One can also extract from the proofs that a typical point lies in a mesoscopic cluster, and that the number of frozen clusters surrounding $0$ tends to $\infty$ in probability. In addition, the same conclusions hold for the process in $\Ball_{M(N)}$, if $M(N) \gg m_k(N)$ for all $k \geq 1$ (see Theorem~1.2 of \cite{BKN2015}). This is true in particular if $M(N)$ is at least of order $m_{\infty}(N)$ (or even $\frac{m_{\infty}(N)}{\upsilon(N)}$, as long as $\upsilon(N) = o(N^{\ve})$ for each $\ve > 0$). For these results, the aforementioned map \eqref{eq:map_FP} plays a central role, allowing one to follow the dynamics around a given vertex. The condition $M(N) \gg m_k(N)$ is used in \cite{BKN2015} to ensure that at least $k$ clusters encircling $0$ freeze successively, closer and closer to it, yielding a ``deconcentration'' property for the cluster of $0$ (its diameter) as the number of steps $k \to \infty$.

\subsubsection{Forest fires: exceptional scales, percolation with impurities} \label{sec:background_FF}

The FFWoR process mentioned in Section~\ref{sec:intro_intro} was shown in \cite{BN2018} to display a similar dichotomy as the ignition rate $\zeta \searrow 0$, for a sequence of exceptional scales\footnote{For the sake of readability, we often use the same notations as for frozen percolation, since it will always be clear from the context which process we are referring to.} $m_k(\zeta)$, $k \geq 1$, satisfying $\frac{1}{\sqrt{\zeta}} \asymp m_1(\zeta) \ll m_2(\zeta) \ll m_3(\zeta) \ll \ldots$ as $\zeta \searrow 0$. However, we get for this process a different relation between $m_k$ and $m_{k+1}$, namely
\begin{equation} \label{eq:excep_scales_FF}
m_{k+1}(\zeta) = \frac{m_k(\zeta)}{\sqrt{\zeta}} \sqrt{\frac{\pi_4 \big( m_k(\zeta) \big)}{\pi_1 \big( m_k(\zeta) \big)}}
\end{equation}
(again, up to multiplicative constants). In this formula, $\pi_4(n)$ ($= n^{-\frac{5}{4} + o(1)}$ as $n \to \infty$) is the so-called four-arm probability for critical percolation (see Section \ref{sec:notations}): it describes for example the probability that there exist two disjoint occupied clusters, each containing a neighbor of $0$ and a vertex outside $\Ball_n$.

Consider the FFWoR process with rate $\zeta$ in $\Ball_{M(\zeta)}$, for a function $M(\zeta)$. The following holds true (see \cite{BN2018}, Theorem~1.3).
\begin{itemize}
\item[(i)] If $M(\zeta) \asymp m_k(\zeta)$ for some $k \geq 1$, then for all $t > t_c$,
$$\liminf_{\zeta \searrow 0} \PP_\zeta^{(\Ball_{M(\zeta)})} \big( 0 \text{ burns before time } t \big) > 0.$$

\item[(ii)] If $m_k(\zeta) \ll M(\zeta) \ll m_{k+1}(\zeta)$ for some $k \geq 1$, then for all $t > 0$,
$$\lim_{\zeta \searrow 0} \PP_\zeta^{(\Ball_{M(\zeta)})} \big( 0 \text{ burns before time } t \big) = 0.$$
\end{itemize}
In the first case, $0$ lies in a cluster with volume of order $\frac{1}{\zeta}$ or $1$, while in the second one, the cluster of $0$ has a volume $\gg 1$ and $\ll \frac{1}{\zeta}$ (w.h.p.).

Again, \eqref{eq:excep_scales_FF} comes from a map $\next_{\zeta}: R \mapsto t \in (t_c, \infty)$, where now $t$ satisfies $R^2 \theta(t) (t - t_c) \asymp \frac{1}{\zeta}$ (see \eqref{eq:def_next_FF}), and gives the approximate time when the first large cluster burns in $\Ball_R$. It gives rise to a fixed point $m_{\infty}(\zeta) = \zeta^{- \frac{48}{55} + o(1)}$, which can also be guessed heuristically, and can be interpreted as a characteristic length for the FFWoR process.

However, proofs in this case are more complicated, due to fires occurring all over the lattice. Even though only tiny clusters burn in the beginning, larger and larger ones get ignited as time approaches $t_c$, which are far from microscopic. This motivated the introduction in \cite{BN2018} of a class of percolation models with \emph{heavy-tailed impurities}. These models were used to understand the cumulative effect of fires on the connectivity of the lattice, all the way into the near-critical window. They are defined as follows, for a parameter $m \to \infty$, and two sequences $(\pi^{(m)})$ and $(\rho^{(m)})$. First, for each vertex $v$, there is an impurity $H_v$ centered on it with probability $\pi^{(m)}$. In this case, the radius $r_v$ (with respect to the $L^{\infty}$ norm) of $H_v$ is drawn randomly according to the distribution $\rho^{(m)}$ on $[0,+\infty)$, i.e. all vertices in $H_v = v + \Ball_{r_v}$ are declared vacant. The sequences $\pi^{(m)}$ and $\rho^{(m)}$ are essentially chosen in the following way: for some constants $c_1, c_2, c_3 > 0$, and $\alpha < \beta < 2$,
\begin{equation} \label{eq:ass_impurities}
\pi^{(m)} = c_1 m^{- \beta}, \quad \text{and for all } r \geq 1, \:\: \rho^{(m)} \big( [r,+\infty) \big) = c_2 r^{\alpha-2} e^{-c_3 \frac{r}{m}}.
\end{equation}
Finally, in the complement of the union of all the impurities, we consider Bernoulli percolation with a parameter $p$.

The particular case where for all $m$, $\rho^{(m)} = \delta_0$ (the Dirac mass at $0$, i.e. all impurities consist of a single site) simply corresponds to Bernoulli percolation with a slightly lower parameter. In this case, it is known that if $\beta < \frac{3}{4}$ and $p = p_c^{\textrm{site}}(\TT)$, then the impurities make the configuration subcritical, even though they have a vanishing density (for example, some clusters in $\Ball_m$ have a diameter of order $m$ in critical Bernoulli percolation, but all of them have a diameter $\ll m$ once the impurities are added, w.h.p.). In our situation, the resulting configuration will still be approximately critical, thanks to an additional hypothesis, but as observed in \cite{BN2018}, this comes from a subtle balance: we have to show that the impurities do not substantially hinder the formation of occupied connections.

\subsection{Statement of results: near-critical avalanches} \label{sec:statements}

We now describe our results, first for frozen percolation, and then forest fires. For all $t \geq 0$, we denote by $\calF_t$ the set of frozen / burnt clusters (depending on the process) surrounding the origin $0$ at time $t$ (that is, clusters $\cluster$ such that either $0 \in \cluster$ or the connected component of $0$ in $V_{\TT} \setminus \cluster$ is finite). We are interested in the set $\calF := \calF_{\infty}$ of such clusters in the final configuration in the FP and FFWoR processes\footnote{For the FFWoR process, we count two adjacent clusters as distinct if they burn at different times (there is no ambiguity for frozen percolation since frozen clusters cannot touch, due to the layer of vacant sites along their outer boundaries).} (note that in these two cases, $\calF_t$ is clearly increasing in $t$ as a set). In both cases, we study avalanches at scales of order $m_{\infty}$ or below.

In particular, we are able to determine the precise order of magnitude of $|\mathcal{F}|$, which is, respectively, $\log \log N$ or $\log \log \frac{1}{\zeta}$. Additionally, it is even possible to derive the exact constants in front, and prove limit theorems, which was surprising to us. In our opinion, this is an unexpected and remarkable phenomenon, which seems to be specific to such processes.

\subsubsection{Frozen percolation}

Let
$$\avalnb^{\textrm{FP}} := \frac{1}{\log \big( \frac{96}{5} \big)}.$$
Our main result for frozen percolation is the following.

\begin{theorem} \label{thm:mainFP}
Let $K > 0$, and consider $N$-frozen percolation in $\Ball_{K m_\infty(N)}$. For all $\ve > 0$,
$$\PP_N^{(\Ball_{K m_\infty(N)})} \bigg( \frac{|\calF|}{\log \log N} \in (\avalnb^{\textnormal{FP}} - \ve, \avalnb^{\textnormal{FP}} + \ve) \bigg) \stackrel[N \to \infty]{}{\longrightarrow} 1.$$
\end{theorem}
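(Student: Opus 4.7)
The plan is to relate $|\calF|$ to the exceptional scales $m_k(N)$ of \cite{BN2015}, which form an increasing sequence $m_1 \ll m_2 \ll \ldots$ converging to $m_\infty(N) = N^{48/91+o(1)}$ and whose $k$-th element corresponds to configurations in which $k$ successive freezings can be fitted around $0$. I want to show that in $\Ball_{K m_\infty(N)}$, the number of frozen clusters surrounding $0$ is concentrated around $K^* := \log\log N / \log(96/5) = \avalnb^{\textnormal{FP}} \log\log N$.

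The constant $96/5$ arises as the linearization rate of \eqref{eq:excep_scales_FP}. Using $\pi_1(n) = n^{-5/48+o(1)}$, that recurrence reads $m_{k+1} \asymp N^{1/2} m_k^{5/96}$, i.e., $\log m_{k+1} \approx (5/96) \log m_k + (1/2) \log N$, with unique fixed point $\log m_\infty = (48/91) \log N$ and contraction rate $5/96 < 1$. Iterating, $|\log m_\infty - \log m_k| \asymp (5/96)^k \log N$. Our dynamics runs in the \emph{inverse} direction: starting from $\log R_0 = \log(K m_\infty(N)) = (48/91) \log N + O(1)$, each successive freezing reduces the effective inner scale by applying the inverse map, which is repelling with rate $96/5 > 1$. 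The number of iterations needed to push $\log R_k$ down to $O(1)$ (i.e., microscopic scale, beyond which no further cluster can surround $0$) is therefore $\log\log N / \log(96/5) + O(1)$.

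To make this rigorous I would proceed inductively: for each $k \leq (\avalnb^{\textnormal{FP}} + \ve) \log\log N$, define a good event $G_k$ on which the $k$-th freezing scale around $0$ lies in a controlled window about the predicted $R_k$, and establish $\PP(G_k \mid G_1, \ldots, G_{k-1}) \to 1$. The necessary inputs are: the analysis of $\next_N$ and the exceptional scales from \cite{BN2015, BKN2015}; a spatial Markov / decoupling property from the vacant boundary layer along each frozen cluster's outer boundary, so that the inner dynamics is essentially an independent copy of $N$-frozen percolation in a smaller domain; and standard near-critical tools on the triangular lattice (RSW, FKG/BK, one-arm and four-arm exponents via SLE$_6$). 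Both the upper and lower bounds on $|\calF|/\log\log N$ follow by running the induction for $(\avalnb^{\textnormal{FP}} \pm \ve)\log\log N$ steps respectively.

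The main difficulty, in my view, is error control across the $\asymp \log\log N$ iterations. Because the effective dynamics is repelling with rate $96/5$, a multiplicative error at step $k$ is amplified by $(96/5)^{K^* - k}$ over the remaining steps, and the cumulative amplification reaches order $\log N$. Thus the per-step relative error in $\log R_k$ must be $o(1/\log\log N)$, which pushes near-critical estimates near their limits. Arm-exponent tails provide polynomial concentration on each individual scale (giving the polylogarithmic slack needed), while independence across widely separated spatial regions controls the joint probability. Boundary contributions---freezings close to the initial scale $K m_\infty(N)$ or to the microscopic terminal scale---add only $O(1)$ clusters to $|\calF|$, absorbed into the $\ve$ slack in the theorem.
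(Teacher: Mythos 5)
Your high-level picture does match the paper's strategy: $96/5$ is the expansion rate of $\next_N$ at its fixed point $m_\infty$, and the iteration needs roughly $\log\log N/\log(96/5)$ steps to shrink from scale $m_\infty$ to $\sqrt{N}$. Two of your claims, however, would derail a proof along these lines.

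First, freezings near $K m_\infty(N)$ do \emph{not} contribute only $O(1)$ clusters, and they cannot simply be absorbed as boundary effects the way you describe. The only available control (RSW in a tuned near-critical parameter window, here Lemma~\ref{lem:disjoint_circuits_FP}) gives $|\calF\setminus\calF^{(\Ball_n)}|\lesssim\log(Km_\infty/n)$, which is of order $\log\log N$ unless $n\asymp m_\infty$. But the iteration cannot start at scale $\asymp m_\infty$: the separation-of-scales property $R^{(i+1)}\ll r^{(i)}$, on which the per-step construction and the union bound over $\log\log N$ steps rest, propagates from an initial gap $R^{(1)}/r^{(0)}\to 0$, and would fail if $R^{(0)}/m_\infty$ were bounded away from $0$ (then $R^{(1)}/r^{(0)}$ stays $O(1)$ by \eqref{eq:diam_apart}). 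The paper's fix is to first compare $\PP_N^{(\Ball_{Km_\infty})}$ to the process in a stopping set $\Lambda^\#$ at the intermediate scale $m_\infty/(\log N)^\alpha$ (via Prop.~7.2 of \cite{BKN2015}), bound the clusters outside $\Lambda^\#$ by $C\alpha\log\log N$ via Lemma~\ref{lem:disjoint_circuits_FP}, and make this $<\tfrac{\ve}{2}\log\log N$ by choosing $\alpha$ small. Your plan has no analogue of this reduction, and without it there is no valid starting point for the iteration.

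Second, your error bookkeeping is too pessimistic, and if you insisted on a per-step relative error $o(1/\log\log N)$ you could not close the argument, because the $o(1)$ in \eqref{eq:arm_exp} gives no such rate. You do not need it: a fixed small $\ve'>0$ in the exponent $\aval\pm\ve'$ of Lemma~\ref{lem:one_iteration} suffices. The point is that the quantity being estimated is the iteration \emph{count} $J$, not the terminal scale. Writing $x_k=\log(m_\infty/R^{(k)})$, the bound $x_{k+1}\in[(\aval-\ve')x_k,(\aval+\ve')x_k]$ together with the exit condition $x_J\asymp\log N$ gives $J/\log\log N\in[1/\log(\aval+\ve'),1/\log(\aval-\ve')]+o(1)$, a window of width $\asymp\ve'$ that can be made $<\ve$ by fixing $\ve'$ small (independently of $N$). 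This is precisely why knowing the exact one-arm exponent suffices for the present theorem, whereas ruling out logarithmic corrections would be needed for tightness of $|\calF|-\avalnb^{\textnormal{FP}}\log\log N$ — a genuinely harder statement. Relatedly, the paper avoids your conditional chain $\PP(G_k\mid G_1,\ldots,G_{k-1})$: it states events $E_1^{(i)},\ldots,E_6^{(i)}$ for the pure birth process at deterministic times $\ol{t}^{(i)},\ul{t}^{(i)}$, shows each has probability $\geq 1-\kappa/(\log\log N)^2$ uniformly in $i$, and union-bounds, then deduces the frozen-percolation behavior deterministically on the intersection; this sidesteps the conditioning and decoupling machinery your inductive scheme would need to set up rigorously.
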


In the beginning of the proof of Theorem~\ref{thm:mainFP}, we use a result from Section~7 of \cite{BKN2015}, where it is explained how to compare frozen percolation on the whole lattice $\TT$ to the process in finite, large enough as a function of $N$, domains (with a radius, seen from $0$, a little smaller than $m_\infty(N)$). More precisely, we observe that in this result from \cite{BKN2015}, we can replace the full-lattice process by frozen percolation in $\Ball_{K m_\infty(N)}$, for any $K > 0$. For similar reasons, the following corollary holds, where for each $t \geq 0$, $\calF_t^{(\Ball_{K m_\infty(N)})}$ denotes the set of clusters in $\calF_t$ which are entirely contained in $\Ball_{K m_\infty(N)}$.

\begin{corollary} \label{cor:main}
Let $K > 0$. For all $\ve > 0$,
$$\PP_N^{(\TT)} \bigg( \frac{|\calF^{(\Ball_{K m_\infty(N)})}|}{\log \log N} \in (\avalnb^{\textnormal{FP}} - \ve, \avalnb^{\textnormal{FP}} + \ve) \bigg) \stackrel[N \to \infty]{}{\longrightarrow} 1.$$
\end{corollary}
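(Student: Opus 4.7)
The strategy is to transfer Theorem~\ref{thm:mainFP} (which concerns the process inside a finite ball) to the full-lattice process by a coupling argument, exploiting the comparison result from Section~7 of \cite{BKN2015} that is already invoked in the proof of Theorem~\ref{thm:mainFP} itself. I would first pick an auxiliary constant $K' > K$ and work in $\Ball_{K' m_\infty(N)}$, applied to both $\PP_N^{(\TT)}$ and $\PP_N^{(\Ball_{K' m_\infty(N)})}$. The plan is to construct, on a common probability space, the two processes in such a way that, with probability tending to $1$ as $N \to \infty$, the restrictions of the frozen configurations to $\Ball_{K m_\infty(N)}$ coincide, and in particular the families $\calF^{(\Ball_{K m_\infty(N)})}$ agree.

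Concretely, the comparison in Section~7 of \cite{BKN2015} allows us to show that, up to an event of vanishing probability, no frozen cluster in either process touches both $\Ball_{K m_\infty(N)}$ and the outer boundary of $\Ball_{K' m_\infty(N)}$: by choosing $K' / K$ large, one can rule out such ``long-range interference'' frozen clusters via the standard RSW / five-arm type bounds combined with the estimate on the diameters of frozen clusters that is implicit in the dichotomy of \cite{BN2015, BKN2015}. Once this is established, the dynamics inside $\Ball_{K m_\infty(N)}$ are driven only by clusters confined to $\Ball_{K' m_\infty(N)}$, and the two processes can be coupled to coincide there until time $\infty$. Consequently,
\begin{equation*}
\calF^{(\Ball_{K m_\infty(N)})} \text{ under } \PP_N^{(\TT)} \;=\; \calF^{(\Ball_{K m_\infty(N)})} \text{ under } \PP_N^{(\Ball_{K' m_\infty(N)})}
\end{equation*}
with high probability.

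It remains to control the latter. Theorem~\ref{thm:mainFP} applied with the constant $K'$ gives $|\calF| / \log \log N \to \avalnb^{\textnormal{FP}}$ in probability under $\PP_N^{(\Ball_{K' m_\infty(N)})}$. The final step is to show that, under this measure, $|\calF| - |\calF^{(\Ball_{K m_\infty(N)})}| = o(\log \log N)$ w.h.p., i.e.\ only boundedly many of the frozen clusters surrounding $0$ exit $\Ball_{K m_\infty(N)}$. This should follow because macroscopic freezing events on the last few scales occur well inside the domain (by the characteristic-length property $m_\infty(N)^2 \pi_1(m_\infty(N)) \asymp N$), so only the very first, outermost layers of frozen clusters can cross the boundary of $\Ball_{K m_\infty(N)}$, and that number is $O(1)$ in probability.

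The main obstacle will be the first step: proving the no-interference statement requires ruling out rare scenarios in which a frozen cluster in the full-lattice process straddles the annulus $\Ball_{K' m_\infty(N)} \setminus \Ball_{K m_\infty(N)}$ and alters the freezing times of clusters inside $\Ball_{K m_\infty(N)}$. Handling this cleanly needs the arm-exponent machinery already developed in \cite{BKN2015}, applied at the characteristic scale $m_\infty(N)$, together with the observation that the map $\next_N$ has its near-fixed point there so that exit probabilities decay uniformly as $K'/K$ grows. Once this barrier is cleared, the rest of the argument is a short coupling bookkeeping on top of Theorem~\ref{thm:mainFP}.
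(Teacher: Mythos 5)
Your Step~3 contains the critical gap. Excluding frozen clusters that straddle the annulus $\Ann_{K m_\infty(N), K' m_\infty(N)}$ does not imply that the full-lattice process and the $\Ball_{K' m_\infty(N)}$-process agree inside $\Ball_{K m_\infty(N)}$. Frozen percolation is not locally determined: a discrepancy near $\din \Ball_{K' m_\infty(N)}$ (a cluster that freezes in one process but not the other because of what lies beyond the box) propagates inward through a chain of subsequent freezings and non-freezings, altering the configuration well inside $\Ball_{K m_\infty(N)}$ without any single frozen cluster ever crossing the buffer annulus. Indeed, Proposition~7.2 of \cite{BKN2015} --- the very result you and the paper both invoke --- does not yield agreement on any fixed macroscopic ball: it produces a \emph{random} stopping set $\Lambda^\#$ with $\Ball_{c_1 L(t^\#)} \subseteq \Lambda^\# \subseteq \Ball_{c_2 L(t^\#)}$, where $L(t^\#)$ is of order $m_\infty(N)/(\log N)^{\alpha}$, i.e.\ polylogarithmically \emph{smaller} than $m_\infty(N)$, and guarantees that the full-lattice process matches the process in $\Lambda^\#$ only inside $\Lambda^\#$. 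Your coupling claim on the fixed ball $\Ball_{K m_\infty(N)}$ is strictly stronger than what the stopping-set machinery provides, and no argument resting merely on the absence of a straddling frozen cluster can deliver it.

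The paper's route avoids the auxiliary box $\Ball_{K' m_\infty(N)}$ altogether. Proposition~7.2 of \cite{BKN2015} is stated for the full-lattice process to begin with (the observation in Section~\ref{sec:proof_FP1} is that it \emph{also} holds with $\Ball_{K m_\infty(N)}$ in place of $\TT$, which is what Theorem~\ref{thm:mainFP} needed). For the corollary one applies it directly under $\PP_N^{(\TT)}$ to obtain $\Lambda^\#$; the iteration of Steps~2--4 of the proof of Theorem~\ref{thm:mainFP} is carried out unchanged inside $\Lambda^\#$, giving roughly $\avalnb^{\textnormal{FP}}\log\log N$ frozen clusters surrounding $0$ there. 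What remains is to bound $|\calF^{(\Ball_{K m_\infty(N)})}| - |\calF^{(\Lambda^\#)}|$ by $\tfrac{\ve}{2}\log\log N$ under $\PP_N^{(\TT)}$, which is a full-lattice analogue of Lemma~\ref{lem:disjoint_circuits_FP}: the same RSW count of $t_\infty^{-\lambda}$-vacant circuits extracted from the outer boundaries of the nested frozen clusters in $\Ann_{\ul{n}^{(0)}, K m_\infty(N)}$ (the no-early-freezing claim in that proof must be verified on a slightly enlarged box, since under $\PP_N^{(\TT)}$ early freezings far from the origin are possible, but this is a minor adjustment). Your Step~4 is consistent with this --- it is essentially Lemma~\ref{lem:disjoint_circuits_FP} applied near scale $m_\infty(N)$ --- but it cannot be reached without first repairing Step~3.
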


Our proofs proceed by iterating a construction similar to the ones in \cite{BN2015}, for the existence of exceptional scales. However, all the results in \cite{BN2015} involve only a finite number of iterations, which means that it is not problematic to ``lose a little bit'' at every step. At first sight, the proofs seem to be really too crude when the number of successive freezings tends to $\infty$. One of our main contributions is to show that it is possible to quantify the error made at every step in order to control all freezings taking place starting from the scale $m_{\infty}$, at which the ``avalanche'' is expected to begin, even though the number of such freezings tends to $\infty$ as $N \to \infty$.

It requires estimating very precisely the ``uncertainty'' produced at every step of the iteration. A first key observation is that the number of such steps still grows very slowly, like $ \log \log N$. In particular, it is smaller than the number of disjoint occupied circuits (or clusters) in $\Ball_{K m_\infty(N)}$ surrounding the origin in critical Bernoulli percolation, which can be shown to be of order $\log N$, from the Russo-Seymour-Welsh lemma (see \eqref{eq:RSW}) -- or in the whole lattice $\TT$, the number of such circuits with volume $< N$.

On the other hand, it is possible to prove that the number of disjoint frozen \emph{circuits} grows at least as a power law in $N$, so much faster than $\log N$.

\begin{proposition} \label{prop:circuits}
Let $C_{\calF}$ denote the maximal number of disjoint frozen circuits surrounding $0$, in the final configuration. There exists a universal constant $\xi > 0$ such that: for all $K > 0$,
$$\PP_N^{(\Ball_{K m_\infty(N)})} \big( C_{\calF} \geq N^{\xi} \big) \stackrel[N \to \infty]{}{\longrightarrow} 1.$$
Moreover, the same is true for $N$-frozen percolation on the full lattice $\TT$.
\end{proposition}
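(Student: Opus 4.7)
}
The key observation is that if the iterative freezing construction can be pushed all the way down to the smallest exceptional scale $m_1(N) \asymp \sqrt{N}$, then the innermost surrounding frozen cluster ends up in a \emph{strongly} supercritical regime, so that this single cluster already contains polynomially many disjoint circuits around $0$, yielding a much stronger lower bound than $N^{\xi}$.

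First, using the near-critical iterative framework of \cite{BN2015, BKN2015}, together with the quantitative control developed in the proof of Theorem~\ref{thm:mainFP}, one shows that with probability tending to $1$ as $N \to \infty$, the $N$-frozen percolation process in $\Ball_{K m_\infty(N)}$ produces a nested sequence $\cluster_1 \supset \cluster_2 \supset \ldots \supset \cluster_{k^*}$ of frozen clusters surrounding $0$, whose scales decrease from $R_1 \asymp m_\infty(N)$ down to $R_{k^*} \asymp m_1(N) \asymp \sqrt{N}$ (with $k^* \asymp \log\log N$). Let $t_{k^*}$ denote the freezing time of the innermost cluster. The defining relation $R_{k^*}^{2} \, \theta(t_{k^*}) \asymp N$, combined with $R_{k^*} \asymp \sqrt{N}$, forces $\theta(t_{k^*}) \asymp 1$, so that $p(t_{k^*}) \geq p_c + \delta$ for some $\delta > 0$ independent of $N$.

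At this strongly supercritical density, the correlation length is $O(1)$, and by uniqueness of the infinite cluster, $\cluster_{k^*}$ is essentially the restriction of the infinite cluster at time $t_{k^*}$ to the inner box of radius $R_{k^*}$. Classical supercritical estimates (Peierls-type arguments, or supercritical RSW applied in disjoint concentric annuli $A(r,r+C)$ of bounded width) then give that, with probability at least $1 - e^{-c}$, any such annulus contains an occupied circuit around $0$ lying inside $\cluster_{k^*}$. Taking $\Theta(R_{k^*})$ such disjoint annuli inside $\Ball_{R_{k^*}}$, a union bound yields at least $c\, R_{k^*} \asymp \sqrt{N}$ vertex-disjoint circuits within $\cluster_{k^*}$ around $0$ with probability $\to 1$. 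This already gives $C_\calF \geq \sqrt{N} \geq N^{\xi}$ for any $\xi < \tfrac{1}{2}$. The full-plane version follows by invoking the exploration / coupling procedure from Section~7 of \cite{BKN2015}, whose extension to finite (but large enough) domains $\Ball_{K m_\infty(N)}$ is precisely the adaptation noted just after Theorem~\ref{thm:mainFP}.

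The main technical challenge lies in the first step: pushing the iterative construction all the way down to scale $\sqrt{N}$ (rather than halting at some intermediate scale $m_k$ with $k$ bounded) requires simultaneously controlling all $\log\log N$ successive freezings in a quantitative way. This is exactly the purpose of the refined machinery developed for Theorem~\ref{thm:mainFP}, so Proposition~\ref{prop:circuits} should come out as a relatively direct byproduct of that construction, with the counting in Steps~2--3 being essentially deterministic once the innermost freezing has been reached at a scale where $\theta$ is bounded away from $0$.
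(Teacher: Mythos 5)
There is a genuine gap in your argument, located at the central claim that the iterative construction terminates at a scale $R_{k^*} \asymp m_1(N) \asymp \sqrt{N}$ where $\theta(t_{k^*}) \asymp 1$. The iteration stops at the first index $j$ with $r^{(j)} \geq 3 c_{\TT}^{-1/2}\sqrt{N}$ and $r^{(j+1)} < 3 c_{\TT}^{-1/2}\sqrt{N}$, but because each step of the iteration takes the ratio $r^{(i)}/m_{\infty}$ to (roughly) its $\aval^{\textrm{FP}}$-th power, with $\aval^{\textrm{FP}} = 96/5 \approx 19.2$, the last step overshoots the threshold $\sqrt{N}/m_\infty \approx N^{-5/182}$ by a large margin. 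Quantitatively, $r^{(j)}/m_{\infty}$ can be as large as roughly $(\sqrt{N}/m_\infty)^{1/\aval^{\textrm{FP}}} \approx N^{-5/(182 \cdot 19.2)}$, giving $r^{(j)} \approx N^{48/91 - 0.0014} \approx N^{0.526}$, which is \emph{polynomially} larger than $\sqrt{N}$. Plugging into the defining relation \eqref{eq:theta_t}, this yields $\theta(\ul{t}^{(j)}) \asymp N / (r^{(j)})^2 \approx N^{-0.05}$, which tends to $0$ as a power of $N$. The innermost freezing therefore takes place in a near-critical regime where the correlation length $L(\ul{t}^{(j)})$ is itself polynomially large in $N$, not $O(1)$, so the strongly supercritical circuit-counting argument does not apply. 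There is also no way to force the iteration to land exactly on $\sqrt{N}$: the discrete geometric decay of the ratios is intrinsic to the construction.

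The paper's actual route is quite different and in fact much lighter: it does not iterate all the way down at all. One starts the exploration at a scale $L(t) = m_\infty(N)/N^{\delta}$ (polynomially, rather than logarithmically, below $m_\infty$), then performs a \emph{single} step of the iterative construction. Because the starting ratio $r^{(0)}/m_{\infty}$ is already a negative power of $N$, one step of the map amplifies the gap to $r^{(0)}/R^{(1)} \geq c\, N^{\xi}$ for a universal $\xi > 0$ (this is \eqref{eq:diam_apart} applied once, using $\aval^{\textrm{FP}} > 1$). In the polynomially wide annulus between $R^{(1)}$ and $r^{(0)}$, one introduces an extra event $E_7^{(0)}$ asking for $N^{\xi/3}$ disjoint occupied circuits at time $\ol{t}^{(0)}$; by \eqref{eq:exp_decay} this event has probability $1 - o(1)$. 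All these circuits intersect the occupied arm provided by $E_5^{(0)}$, so they all belong to the first cluster that freezes, and hence all freeze. The lesson is that the power-law number of disjoint circuits comes from a \emph{single} frozen cluster spanning a polynomially wide annulus, not from the terminal behaviour of the full avalanche. Your idea of using the coupling to finite domains and supercritical RSW for the circuit count is structurally sound, but the regime in which you want to apply it never arises.
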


As a side contribution, we explain in Section~\ref{sec:scale_minf_FP} how to handle the process on scales of order $m_{\infty}(N)$ in the case when the so-called \emph{boundary rules} are modified: if vertices along the outer boundary of a frozen cluster are no longer kept vacant forever, and are allowed to become occupied, and possibly freeze, at later times. We believe that this observation is of independent interest, and may be useful to study related processes such as diameter-frozen percolation (see Section~\ref{sec:other_proc}). Indeed, the macroscopic behavior of this latter process is known to be significantly affected by the boundary rules \cite{BN2017}, leading to the creation of highly supercritical regions, which did not exist in the original process. Furthermore, this idea is then used in Section~\ref{sec:scale_minf_FF} to analyze the FFWoR process around scale $m_{\infty}(\zeta)$.

\subsubsection{Forest fires}

The FFWoR process displays a similar behavior, but with a different constant in front:
$$\avalnb^{\textrm{FF}} := \frac{1}{\log \big( \frac{96}{41} \big)}.$$
Our main result in this case is the following analog of Theorem~\ref{thm:mainFP}.

\begin{theorem} \label{thm:mainFF}
Let $K > 0$, and consider the FFWoR process with ignition rate $\zeta$ in $\Ball_{K m_\infty(\zeta)}$. For all $\ve > 0$,
$$\PP_\zeta^{(\Ball_{K m_\infty(\zeta)})} \bigg( \frac{|\calF|}{\log \log \frac{1}{\zeta}} \in (\avalnb^{\textnormal{FF}} - \ve, \avalnb^{\textnormal{FF}} + \ve) \bigg) \stackrel[\zeta \searrow 0]{}{\longrightarrow} 1.$$
\end{theorem}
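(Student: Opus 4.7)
The strategy is to mimic the iteration behind Theorem~\ref{thm:mainFP}, replacing the frozen-percolation map $\next_N$ by its forest-fire analog $\next_\zeta$, but with substantially more delicate error control at each step. Define inductively scales $R_0 > R_1 > \ldots$ and times $t_c =: t_0 < t_1 < \ldots$ by $R_0 := K m_\infty(\zeta)$, $t_{k+1} := \next_\zeta(R_k)$, and $R_{k+1}$ of the order of the near-critical correlation length $L(t_{k+1})$, i.e.\ the scale at which the next macroscopic burning encircling $0$ inside the previously burnt ring is expected to occur. The main task is then to show that, with probability tending to $1$, the actual burning times and annular radii of the successive clusters in $\calF$ track $(t_k)_k$ and $(R_k)_k$ within a tight multiplicative window, uniformly over $k$ up to $(1+\ve)\avalnb^{\textnormal{FF}}\log\log(1/\zeta)$.

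Using $\theta(t) \asymp L(t)^{-5/48}$ and $t - t_c \asymp L(t)^{-3/4}$ (from the one-arm exponent $5/48$ and $\nu = 4/3$ on $\TT$), the defining relation $R^2 \theta(t)(t-t_c) \asymp 1/\zeta$ becomes $R^2 L^{-41/48} \asymp 1/\zeta$, hence $R_{k+1} \asymp (R_k^2 \zeta)^{48/41}$. Writing $R_k = \zeta^{-a_k}$, this is the linear recursion $a_{k+1} = (96/41) a_k - 48/41$, with fixed point $a^* = 48/55$ (matching $m_\infty(\zeta) = \zeta^{-48/55+o(1)}$), and with the deviation $a^* - a_k$ multiplied by \emph{exactly} $96/41$ at each step. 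Since the choice $R_0 = K m_\infty(\zeta)$ produces an initial deviation of order $\log\log(1/\zeta)/\log(1/\zeta)$, the iteration reaches microscopic scale ($a_k \downarrow 0$) after exactly $k^* = (1+o(1))\log\log(1/\zeta)/\log(96/41)$ steps; this is where the constant $\avalnb^{\textnormal{FF}}$ comes from.

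The hardest part is to justify each step $R_k \to R_{k+1}$ probabilistically, in the presence of continuous ignition. Unlike in frozen percolation, where frozen clusters come with clean vacant boundaries, in the FFWoR process many small clusters keep burning between two macroscopic events, scattering impurities throughout the enclosed region. To control this, I would rely on the percolation-with-impurities framework of \cite{BN2018}, in the extended form (up to a \emph{positive} density of impurities) developed in the present paper. At step $k$, one would (i) bound the impurities accumulated between $\hat t_k$ and $\hat t_{k+1}$ inside the region enclosed at step $k-1$, (ii) show that the induced near-critical configuration has effective correlation length $\asymp L(\hat t_{k+1})$, and (iii) use RSW-type arguments to produce a macroscopic burnt circuit in a tight multiplicative time window around $\next_\zeta(\hat R_k)$. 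Because errors compound over $\log\log(1/\zeta)$ steps, each step must be accurate up to $o(1/\log\log(1/\zeta))$, which is precisely why an extension of the impurity theory is necessary.

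Two further ingredients are needed: an analog of Proposition~\ref{prop:circuits} giving many disjoint burnt circuits surrounding $0$ (so that the step-$k$ analysis can be localized to a clean annulus, decoupled from burnings elsewhere), and a coupling between the FFWoR process in $\Ball_{K m_\infty(\zeta)}$ and the full-plane process, in order to control fires entering from outside; the \cite{BKN2015}-style barrier argument available for frozen percolation relies on the permanent vacant layer surrounding frozen clusters and is not available here, which is where I expect the novel exploration procedure announced in the abstract to enter. Combined with the recursion above, these pieces should yield $|\calF|/\log\log(1/\zeta) \to \avalnb^{\textnormal{FF}}$ in probability.
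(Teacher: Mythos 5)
Your high-level strategy---iterate the forest-fire map $\next_\zeta$, track scales via the exponent recursion $a_{k+1} = \frac{96}{41} a_k - \frac{48}{41}$ with fixed point $48/55$ (so deviations are multiplied by $96/41$ at each step), use the extended percolation-with-impurities model as a stochastic minorant, and couple the box process to a smaller clean subdomain---is indeed the paper's strategy, and your recursion computation is exactly how the constant $\avalnb^{\textnormal{FF}}$ emerges. You also correctly identify that per-step accuracy must be $o(1/\log\log(1/\zeta))$ and that a new exploration procedure is required in place of the \cite{BKN2015}-style stopping-set argument.

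There is nonetheless a real gap in the initialization of your iteration. Starting from $R_0 := K m_\infty(\zeta)$ does not work: by construction $m_\infty$ is a fixed point of the scale map ($L(\next_\zeta(m_\infty)) = m_\infty$), so at that scale the iteration is not contractive. For $K > 1$ the initial exponent exceeds $48/55$ and the recursion moves \emph{away} from the fixed point; for general $K$, the sign and order of the initial deviation are dominated by the uncontrolled $o(1)$ corrections in the arm exponents (possibly polylogarithmic), which current knowledge cannot rule out. Your assertion that $R_0 = K m_\infty$ ``produces an initial deviation of order $\log\log(1/\zeta)/\log(1/\zeta)$'' is therefore unjustified. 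The paper instead begins the iteration at $R^{(0)} \asymp m_\infty(\zeta)/(\log\frac{1}{\zeta})^\alpha$ for a small $\alpha = \alpha(\ve,\eta,K)$: this forces the initial deviation to be of the right sign and of order $\alpha \log\log(1/\zeta)/\log(1/\zeta)$, small enough that the compounding $\pm\ve'$ exponent errors from Lemma~\ref{lem:one_iteration} remain controllable over $\log\log(1/\zeta)$ steps, yet large enough that the separation-of-scales property $r^{(i)} \gg R^{(i+1)}$ survives to the end (see Remark~\ref{rem:separation_scales}). The burnt clusters surrounding $0$ in the gap region $\Ann_{R^{(0)}, K m_\infty(\zeta)}$ are then counted separately by a near-critical-parameter-scale argument (Lemma~\ref{lem:disjoint_circuits_FF}), yielding at most $O(\alpha \log\log(1/\zeta)) < (\ve/2)\log\log(1/\zeta)$ extra clusters; this lemma crucially exploits the without-recovery structure, extracting from the union of inner and outer external boundaries of each burnt cluster a circuit of vacant-or-burnt sites at time $t_\infty^{-\lambda_0}$, counted via the extended RSW bound for percolation with impurities. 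This whole piece---which is what actually licenses starting the iteration at $R^{(0)}$ rather than $K m_\infty$---is absent from your outline. A minor further point: the forest-fire analog of Proposition~\ref{prop:circuits} that you list as a needed ingredient is not in fact used in the paper's proof; localization of the step-$k$ analysis is obtained directly from the nested domains $\Lambda^{(i)}$ and the circuit/connection events built into the iteration.
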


Analyzing avalanches for this process requires significantly more work than for frozen percolation. Again, the proof proceeds by quantifying very accurately the error created at every step of an iterative procedure, and uses that the number of such steps grows like $\log \log \frac{1}{\zeta}$. However, much more convoluted constructions are necessary, in order not too lose too much along the way. Moreover, additional technical difficulties arise, related to the process with impurities from \cite{BN2018}, discussed in Section~\ref{sec:background_FF}. This process is used in an instrumental way in the iteration, and we will explain in Section~\ref{sec:perc_impurities} that some non-trivial modifications are required in our setting, especially to estimate the volume of the largest cluster in a box. The stochastic minorant derived in \cite{BN2018} for forest fires is only suitable on scales below the $k$th exceptional scale $m_k(\zeta)$, for each fixed $k \geq 1$, while here we need a good approximation of the FFWoR process up to the characteristic scale $m_{\infty}(\zeta) = \zeta^{- \frac{48}{55} + o(1)}$, leading to study configurations where fires have destroyed a non-negligible fraction of the lattice.

More specifically, we replace the two assumptions from \cite{BN2018}, mentioned (roughly) in \eqref{eq:ass_impurities}, by a single hypothesis on the product $\pi^{(m)} \cdot \rho^{(m)}$, see Assumption~\ref{ass:impurities} in Section~\ref{sec:perc_impurities}. This allows us to extend stability results from that paper up to a positive density of impurities, while in the original setting this density decreased at least as a power law in $m$. In addition, the comparison with forest fires uses the (easy, but crucial) observation that the ignition rate $\zeta$ can be expressed in terms of $m_{\infty}$ without logarithmic corrections: as shown in \eqref{eq:estim_zeta}, we have
\begin{equation} \label{eq:zeta_minf}
\zeta \asymp \frac{\pi_4(m_{\infty})}{\pi_1(m_{\infty})} \quad \text{as $\zeta \searrow 0$.}
\end{equation}

Contrary to the case of frozen percolation, no result seems to be readily available in the literature to connect full-plane forest fires with the corresponding processes in suitable finite domains, i.e. with a radius a bit below $m_{\infty}$. We thus have to develop an analogous approach to the one in Section~7 of \cite{BKN2015}, as a first step in the proof of Theorem~\ref{thm:mainFF}. This allows us to move from the scale $K m_\infty$ down to a somewhat smaller scale, and start to study the avalanche. Because of ignitions taking place all over the lattice and at all times, we were not able to construct exploration procedures which are as neat as in \cite{BKN2015}, producing ``stopping sets'' (which are a natural analog of stopping times). On the other hand, the fact that ignitions follow a Poisson process provides some valuable spatial independence, which allows us to simplify significantly some of the arguments. Altogether, these reasonings are quite involved. We believe that they will be useful in the future when studying forest fire processes on the full lattice, and that they thus represent a noteworthy contribution of this paper.

Here we decided to state our main result in terms of the number of burnt clusters surrounding a given vertex, but the proof can be used to derive much more information about the near-critical behavior of the FFWoR process. As a by-product, we can obtain for example that as $\zeta \searrow 0$, most clusters burning around $0$ have a volume of order, roughly,
\begin{equation} \label{eq:def_Vinfty}
V_{\infty}(\zeta) := m_{\infty}(\zeta)^2 \pi_1(m_{\infty}(\zeta)).
\end{equation}

\begin{proposition} \label{prop:large_burnt_clusters}
For all $\ve > 0$, there exists $\xi = \xi(\ve) \in (0,\frac{1}{2})$ such that the following holds. For all $K > 0$,
$$\PP_\zeta^{(\Ball_{K m_\infty(\zeta)})} \bigg( \bigg| \bigg\{ \cluster \in \calF \: : \: \frac{V_{\infty}(\zeta)}{e^{( \log \frac{1}{\zeta} )^{1 - \xi}}} \leq |\cluster| \leq \frac{V_{\infty}(\zeta)}{e^{( \log \frac{1}{\zeta} )^{\xi}}} \bigg\} \bigg| \geq \big( \avalnb^{\textnormal{FF}} - \ve \big) \log \log \frac{1}{\zeta} \bigg) \stackrel[\zeta \searrow 0]{}{\longrightarrow} 1.$$
\end{proposition}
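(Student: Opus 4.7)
The plan is to prove Proposition~\ref{prop:large_burnt_clusters} as a refinement of the proof of Theorem~\ref{thm:mainFF}. That proof constructs, with probability tending to $1$, a sequence of $J = (\avalnb^{\textrm{FF}} + o(1)) \log\log \zeta^{-1}$ burning events around $0$, each producing one cluster of $\calF$, at successively smaller scales $R_1 > R_2 > \cdots > R_J$ with $R_1 \asymp m_\infty(\zeta)$. My strategy is to carry, in parallel with $R_j$, the volume $V_j$ of the $j$-th burning cluster, and show that $V_j$ lies in the prescribed window for all but an $o(\log\log \zeta^{-1})$ fraction of indices $j$.

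The first step is to identify the typical relation between $V_j$ and $R_j$. At the burning time $t_j$, the configuration inside the previously created enclosure is near-critical with some correlation length $L_j$. Balancing the ignition rate $\zeta V_j$ against the duration $\asymp L_j^{-3/4}$ of the relevant near-critical window, together with the largest-cluster volume estimate $\asymp R_j^2 \pi_1(L_j)$ in $\Ball_{R_j}$ (slightly supercritical at scale $R_j$), yields a deterministic relation between $V_j$, $R_j$ and $\zeta$. Combined with the characterization \eqref{eq:zeta_minf} of $\zeta$ via $m_\infty(\zeta)$, this gives $\log V_j \approx \log V_\infty(\zeta) - c \cdot u_j$ for a positive constant $c$, where $u_j := \log m_\infty(\zeta) - \log R_j$. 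In particular $V_j \asymp V_\infty(\zeta)$ when $R_j = m_\infty(\zeta)$, and $V_j$ is strictly smaller for $R_j < m_\infty(\zeta)$. Up to harmless multiplicative constants (which can be absorbed into $\xi$), the window condition on $V_j$ thus becomes the two-sided bound
$$(\log \zeta^{-1})^{\xi} \lesssim u_j \lesssim (\log \zeta^{-1})^{1-\xi}.$$

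The second step uses the fact, implicit in the proof of Theorem~\ref{thm:mainFF}, that the iteration map $\next_\zeta$ is hyperbolic at its fixed point $m_\infty(\zeta)$, with (backward) expansion rate $96/41$ on the logarithmic scale -- this is precisely what produces the constant $\avalnb^{\textrm{FF}} = 1/\log(96/41)$. Linearizing the iteration near the fixed point gives $u_{j+1} \approx (96/41)\, u_j$ modulo controlled noise, so that $u_j \approx (96/41)^{j-1} u_1$ with $u_1 = O(1)$. The two-sided bound above therefore holds on an interval of indices of the form $(\xi J + O(1),\, (1-\xi) J + O(1))$, which contains at least $(1 - 2\xi)\, \avalnb^{\textrm{FF}} \log\log \zeta^{-1} - O(1)$ values. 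Choosing $\xi \in (0, 1/2)$ small enough that $2\xi \cdot \avalnb^{\textrm{FF}} < \ve$, this exceeds $(\avalnb^{\textrm{FF}} - \ve) \log\log \zeta^{-1}$, which is the desired conclusion.

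The main obstacle is making the linearization $u_{j+1} \approx (96/41)\, u_j$ rigorous and, crucially, uniform over all $J = \Theta(\log\log \zeta^{-1})$ iterations. Two sources of random error enter at each step. First, the volume of the burning cluster fluctuates around its typical value, which requires sharp near-critical estimates, relying on the extended percolation-with-impurities framework discussed in Section~\ref{sec:perc_impurities} pushed up to a positive density of impurities. Second, the effective dynamics inside the successively nested burnt regions only approximates the full-plane forest fire, so the comparison argument of Section~\ref{sec:scale_minf_FF} has to be carried along the iteration. Controlling both errors so that the multiplicative noise accumulated over $J$ steps remains $(\log \zeta^{-1})^{o(1)}$ is delicate, but one is helped by the sub-polynomial yet comfortably large thickness $(\log \zeta^{-1})^{1-\xi}$ of the target window. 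A union bound over the $O(\log\log \zeta^{-1})$ iterations then completes the proof.
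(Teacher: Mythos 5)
Your approach is correct and matches the paper's: reuse the iteration from the proof of Theorem~\ref{thm:mainFF}, restrict to a middle range of indices for which $R^{(i)}/m_\infty$ lies between $e^{-(\log\frac{1}{\zeta})^{1-\xi}}$ and $e^{-(\log\frac{1}{\zeta})^\xi}$, and verify two-sided volume bounds for the burnt cluster produced at each such index; the paper implements exactly this via the index range $\{i_{\textrm{min}},\ldots,i_{\textrm{max}}-2\}$ together with the events $E_{10}^{(i)}$, $E_{11}^{(i)}$, built on the quantitative largest-cluster estimates \eqref{eq:largest_cluster_quant} and Proposition~\ref{prop:largest_cluster_impurities}, and a union bound. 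A few small corrections: the iteration does not start at $R_1\asymp m_\infty$ (i.e.\ $u_1=O(1)$) but at $R^{(0)}\asymp m_\infty/(\log\frac{1}{\zeta})^\alpha$, i.e.\ $u_0\asymp\log\log\frac{1}{\zeta}$ --- separation of scales (Remark~\ref{rem:separation_scales}) forces pulling back below $m_\infty$ before iterating, though this only shifts the index window by $O(\log\log\log\frac{1}{\zeta})$ and leaves the count unaffected; the fraction of excluded indices is $\approx 2\xi$, a constant, not $o(1)$ as your opening sentence suggests (your final step correctly uses $2\xi\,\avalnb^{\textrm{FF}}<\ve$); and there is no need to ``carry along'' the comparison with percolation with impurities, since Lemma~\ref{lem:stoch_domin} is invoked afresh at each step (with a new $m$), so no error accumulates from that source.
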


One has $V_{\infty}(\zeta) = \zeta^{- \frac{91}{55} + o(1)}$ as $\zeta \searrow 0$, so the clusters appearing in this result have, in particular, a volume $\gg \frac{1}{\zeta}$.

\begin{remark}
There does not seem to be a natural counterpart of this observation for frozen percolation, since all frozen clusters have a volume of order $N$ (more precisely, between $N$ and $3N-2$, any such cluster being formed from at most three clusters, each with a volume $\leq N-1$). On the other hand, a result similar to Proposition~\ref{prop:circuits} holds for the FFWoR process with rate $\zeta$ in $\Ball_{K m_\infty(\zeta)}$, for all $K > 0$, although we do not include a detailed proof in this paper for the sake of conciseness. Analogs of Corollary~\ref{cor:main} and Proposition~\ref{prop:circuits} for the full-lattice FFWoR process can also be derived without any additional difficulty. But strictly speaking, this would require to check first the existence of the process in this case, which can be done thanks to the reasonings in \cite{Du2006}.
\end{remark}

\subsubsection{Outline of proofs} \label{sec:outline}

First, we want to mention that our results use, as a key ingredient, a detailed understanding of near-critical site percolation on the triangular lattice, coming in large part from the groundbreaking work \cite{Ke1987}. They rely indirectly on the SLE (Schramm-Loewner Evolution) technology \cite{LSW_2001a, LSW_2001b} and the conformal invariance property of critical percolation in the scaling limit \cite{Sm2001}: these works provide the exact value of several critical exponents \cite{LSW2002, SW2001} (see also \cite{We2009}), and this is a crucial input in our proofs. Indeed, this is what enables us to obtain sharp limit theorems for $|\calF|$ for both frozen percolation and the FFWoR process.

As a matter of fact, one could hope for even better asymptotic estimates on $|\calF|$. For frozen percolation for instance, it is tempting to guess that $(|\calF| - \avalnb^{\textrm{FP}} \log \log N)$ is tight as $N \to \infty$ (and similarly for the FFWoR process). However, obtaining such strengthened results from our strategy of proof would require improved estimates for critical Bernoulli percolation. More precisely, the potential existence of logarithmic corrections in arm probabilities, as in \eqref{eq:arm_exp}, would need to be ruled out, which, to the best of our knowledge, is still an open problem at the moment (it is not clear how to obtain it from the fact that $SLE_6$ processes describe boundaries of clusters in the scaling limit). Actually, such logarithmic terms turn out to be a substantial source of technical difficulties in our proofs. A key idea to circumvent this issue is to compare, in a systematic way, all scales to the characteristic scale $m_\infty$ (and in the case of the FFWoR process, use the expression \eqref{eq:zeta_minf} of $\zeta$ in terms of $m_\infty$).

The proofs of our main results, Theorems~\ref{thm:mainFP} and \ref{thm:mainFF}, can each be decomposed into three consecutive steps, roughly. Let $K > 0$, and consider frozen percolation or the FFWoR process in $\Ball_{K m_\infty}$. In a first step, we show that the number of frozen / burnt clusters surrounding $0$ with a diameter of order $m_\infty$ is essentially tight. This allows us to move to scales slightly below $m_\infty$ (typically $m_\infty / (\log *)^{\alpha}$, for some exponent $\alpha = \alpha(K)$ which must be taken sufficiently small, where $* = N$ or $\frac{1}{\zeta}$ depending on the model). In a second step which is the core of the proof, we analyze iteratively the successive frozen / burnt clusters around $0$. Finally, in a third (and easy) step, we explain how to terminate the iteration scheme. Informally speaking, there is ``too much randomness'' at scale $m_\infty$, and this is why we have to start the second stage on smaller scales (this ensures that a property of ``separation of scales'', between successive frozen / burnt clusters, remains valid during the whole procedure). Our arguments to control the number of clusters around scale $m_\infty$ are quite crude: they proceed by comparing the processes to critical percolation, thanks to ad-hoc near-critical parameter scales\footnote{We want to emphasize that we were able to obtain quite strong limit theorems only because such an intermediate scale $m_\infty / (\log *)^{\alpha}$ exists, which is both small enough to start the iteration, and large enough so that not too many near-critical circuits can form between this scale and the scale $K m_\infty$. As it can be seen from the proofs, there is a little freedom in the choice of this scale, but not much.}. This is good enough for our purpose, but this would constitute another roadblock to bypass in order to obtain the tightness property mentioned above.

For frozen percolation on other two-dimensional lattices, such as the square lattice $\ZZ^2$, or variants defined in terms of bond percolation, it is possible to obtain partial results, as we point out in Remark~\ref{rem:other_lattices3}. In particular, we can still prove that $|\calF|$ grows like $\log \log N$, even though an asymptotic result as precise as Theorem \ref{thm:mainFP} seems to be out of reach. The situation is more problematic with forest fire processes, since even more detailed knowledge about near-critical percolation is required in this case, in order to derive stability results for the process with impurities discussed earlier (i.e. to show that the burnt regions do not disconnect too much the lattice, even as $t$ approaches $t_c$). For instance, the inequality $\alpha_2 \geq \alpha_4 - 1$ between the so-called two- and four-arm exponents (see \eqref{eq:arm_exp} and below) is used.

Our methods could also be used to study the joint avalanches around a finite number of vertices, for both frozen percolation and the FFWoR process. The ``branching structure'' of such avalanches depends on how the pair-wise distances between the vertices compare to $m_{\infty}$. In the present paper, we do not attempt to describe the processes on scales significantly above $m_{\infty}$, even though we expect, for each of these models, that clusters around $0$ with a diameter much larger than $m_{\infty}$ are very unlikely to emerge. In particular, we believe that the conclusion of Corollary~\ref{cor:main} remains true for \emph{all} frozen clusters surrounding $0$, i.e. with $\calF$ ($= \calF^{(\TT)}$) instead of $\calF^{(\Ball_{K m_\infty(N)})}$, and an analogous result should hold for the FFWoR process. Analyzing the dynamics of the processes on such scales requires a different strategy, and we plan to tackle this issue in a future work.

\subsection{Discussion: extensions and related works}

\subsubsection{Processes with recovery}

Our results should also give insight into processes with recovery. In two dimensions, the behavior near $t_c$ of such processes was studied in particular in \cite{BB2004} and \cite{BB2006}. There, an important question about a process called ``self-destructive percolation'' was asked. Roughly speaking, the authors conjectured that after a macroscopic fire, a uniformly positive time is needed for the forest to recover, i.e. for new large-scale connections to emerge. Conditionally on this conjecture, several remarkable consequences were established in these papers. But the question itself remained unanswered for almost ten years, and was finally settled (affirmatively) in \cite{KMS2015}.

Based on the main result in \cite{KMS2015}, Theorem~4, we can expect the FFWR process to display the same near-critical avalanches as the FFWoR process (without recovery) studied in the present paper. Moreover, such a result should still hold, approximately, up to a supercritical time $t_c + \delta$, where $\delta > 0$ is a universal constant. But in order to establish this, an analog of Theorem~4 of \cite{KMS2015} would be needed, incorporating the Poisson ignitions all over the lattice, see the discussion in Section~8 of \cite{BN2018}. In addition, some difficulties arise in our analysis around scale $m_\infty$, as we explain in Remark~\ref{rem:FFWR_minf}. On the other hand, we believe that our reasonings for frozen percolation (in particular the proof of Theorem~\ref{thm:mainFP}) can be adapted to a frozen percolation model ``with recovery'', where frozen sites are allowed to become occupied again (in other words, a forest fire process where clusters burn when they reach a volume $N$). For this process, the results of \cite{KMS2015} should be more or less directly applicable.

\subsubsection{Other processes} \label{sec:other_proc}

Still on two-dimensional lattices, a variant of frozen percolation was introduced in \cite{BLN2012}, where connected components stop growing when their diameter (for the $L^{\infty}$ norm) reaches $N \geq 1$, the parameter of the model. This diameter-frozen version was then further studied in \cite{Ki2015}, yielding in particular the following properties for the full-lattice process. For any given $K > 1$, only a finite number of frozen clusters intersect $\Ball_{KN}$ (the probability that there are at most $k$ of them tends to $1$ as $k \to \infty$, uniformly in $N$), and they all freeze in a near-critical window around $t_c$, of width $N^{-\frac{3}{4} + o(1)}$. All these clusters have a vanishing density as $N \to \infty$, and w.h.p., $0$ belongs to a macroscopic non-frozen cluster (i.e. with a diameter smaller than $N$, but of the same order of magnitude).

Frozen percolation and forest fires have also been considered on various other graphs. As mentioned earlier, rather explicit quantitative results can be derived for frozen percolation when the graph $G$ is a tree \cite{Al2000}, and this special case was further studied in \cite{BKN2012} and \cite{RST2019}, in particular. Related processes have also been analyzed on the one-dimensional lattice (see e.g. \cite{BF2013}), and on the complete graph (see \cite{RT2009, Ra2009}). The reader can consult Section~1.7 of \cite{RST2019} for an extensive list of references, containing a brief summary of each paper.


\subsection{Organization of the paper}

In Section \ref{sec:prelim}, we first describe the setting and set notation for site percolation in two dimensions. We then state results about the critical and near-critical behavior of this process, which are later used repeatedly in our proofs, and we present the above-mentioned percolation model with ``large'' impurities. In Section \ref{sec:FP_FF}, we define precisely the processes under consideration, and we introduce the transformation which yields the successive freezing / burning times. We also state precisely the stochastic domination provided by percolation with impurities, which is key to analyze forest fire processes near the critical time. In Section \ref{sec:scale_minf}, we study the frozen / burnt clusters surrounding $0$ with a diameter of order $m_\infty$, for both frozen percolation and forest fires. To this end, we associate a ``near-critical'' parameter scale with each of the two processes. We then establish the results for frozen percolation and forest fires (stated in Section~\ref{sec:statements}) in Sections \ref{sec:proof_FP} and \ref{sec:proof_FF}, respectively. Finally, we provide proofs for some of the auxiliary results in Appendix.

\section{Preliminaries on near-critical 2D percolation} \label{sec:prelim}

In this preliminary section, we discuss the behavior of Bernoulli site percolation on the triangular lattice $\TT$. First, we define properly site percolation in Section~\ref{sec:notations}, and we set notation. We then collect in Section~\ref{sec:classical} classical results pertaining to its critical and near-critical regimes on $\TT$. Finally, Section~\ref{sec:perc_impurities} is specific to the study of forest fire processes. It is devoted to the percolation model with impurities, which is a useful stochastic minorant of forest fires. We define this model precisely, and explain how to extend the results of \cite{BN2018} to our setting.

\subsection{Setting and notations} \label{sec:notations}

In the whole paper, we consider the triangular lattice $\TT = (V_{\TT}, E_{\TT})$. This is the lattice having vertex set
$$V = V_{\TT} := \big\{ x + y e^{i \pi / 3} \in \CC \: : \: x, y \in \ZZ \big\},$$
and edge set $E = E_{\TT} := \{ \{v, v'\} \: : \: v, v' \in V \text{ with } |v - v'| = 1 \}$. A path of length $k$ ($k \geq 0$) is a sequence of vertices $v_0 \sim v_1 \sim \ldots \sim v_k$, where for $v, v' \in V$, $v \sim v'$ means that $\{v, v'\} \in E$ ($v$ and $v'$ are connected by an edge). Such a path is said to be (vertex-) self-avoiding if it does not use twice the same vertex: $v_i \neq v_j$ for all $i, j \in \{0, \ldots, k\}$ with $i \neq j$. A circuit is a path of length $k$, for some $k \geq 0$, which satisfies $v_k = v_0$ but is otherwise self-avoiding (i.e. the vertices $v_0, \ldots, v_{k-1}$ are distinct).

Consider a subset of vertices $A \subseteq V$. The number of vertices that it contains is denoted by $|A|$, and called volume of $A$. The outer and inner vertex boundaries of $A$ are defined, respectively, as
$$\dout A := \{ v \in V \setminus A \: : \: \exists v' \in A \text{ with } v' \sim v \}$$
and $\din A := \dout(V \setminus A)$ ($\subseteq A$), while the edge boundary of $A$ is
$$\de A := \{ \{v,v'\} \in E \: : \: v \in A \text{ and } v' \in V \setminus A\}.$$
In the particular case when $A$ is finite, we also consider its ``external'' outer and inner boundaries: $\dout_{\infty} A := (\dout A) \cap (V \setminus A)_{\infty}$ and $\din_{\infty} A := \dout (V \setminus A)_{\infty}$, where $(V \setminus A)_{\infty}$ is the unique connected component of $V \setminus A$ which is infinite. The external edge boundary $\de_{\infty} A$ is defined correspondingly.

Bernoulli site percolation on the triangular lattice is obtained by tossing a biased coin for each vertex $v \in V$: for some given percolation parameter $p \in [0,1]$, $v$ is occupied with probability $p$ and vacant with probability $1-p$, independently of the other vertices. The corresponding product probability measure (on configurations $(\omega_v)_{v \in V} \in \{0,1\}^V =: \Omega$) is denoted by $\PP_p$.

Two vertices $v, v' \in V$ are connected (denoted by $v \lra v'$) if there exists an occupied path of length $k$ from $v$ to $v'$, for some $k \geq 0$, i.e. a path containing only occupied sites (in particular, $v$ and $v'$ have to be occupied). For a vertex $v \in V$, we can consider the maximal occupied connected component $\cluster(v)$ containing $v$, that we call (occupied) cluster of $v$ (if $v$ is vacant, we simply set $\cluster(v) = \emptyset$). The event that $|\cluster(v)| = \infty$, i.e. $v$ lies in an infinite cluster, is denoted by $v \lra \infty$, and we write $\theta(p) := \PP_p(0 \lra \infty)$. If $A, B \subseteq V$, we use the notation $A \lra B$ for the existence of $v \in A$ and $v' \in B$ such that $v \lra v'$. Similarly, $A \lra \infty$ means that $v \lra \infty$ for some $v \in A$.

A phase transition occurs for Bernoulli site percolation on $\TT$ at the percolation threshold $p_c = p_c^{\textrm{site}}(\TT) = \frac{1}{2}$ \cite{Ke1980}. More precisely, for each $p \leq p_c = \frac{1}{2}$, there is almost surely no infinite cluster, while for $p > \frac{1}{2}$, there is almost surely a unique such cluster. For more background about percolation theory, we refer the reader to the classical references \cite{Ke1982, Gr1999}.

The existence of a horizontal crossing in a rectangle $R = [x_1,x_2] \times [y_1,y_2]$ ($x_1 < x_2$, $y_1 < y_2$) (i.e. an occupied path connecting two vertices adjacent to the left and right sides, respectively) is denoted by $\Ch(R)$, and the existence of a vertical crossing by $\Cv(R)$. We also use the notation $\Ch^*(R)$ and $\Cv^*(R)$ for the corresponding events with paths of vacant vertices. We denote by $\Ball_n := [-n,n]^2$ the ball of radius $n \geq 0$ around $0$ for the $L^{\infty}$ norm $\|.\| = \|.\|_{\infty}$, and for $0 \leq n_1 < n_2$, by $\Ann_{n_1,n_2} := \Ball_{n_2} \setminus \Ball_{n_1}$ the annulus centered at $0$ with radii $n_1$ and $n_2$. We sometimes write, for $z \in \CC$, $\Ball_n(z) := z + \Ball_n$ and $\Ann_{n_1, n_2}(z) := z + \Ann_{n_1,n_2}$.

For an annulus $A= \Ann_{n_1,n_2}(z)$, we denote by $\circuitevent(A)$ the existence of an occupied circuit in $A$ (and similarly $\circuitevent^*(A)$ for a vacant circuit). We also introduce the arm events $\arm_{\sigma}(A)$, for $k \geq 1$ and $\sigma \in \colorseq_k := \{o,v\}^k$ (where $o$ and $v$ stand for ``occupied'' and ``vacant'', respectively). The event $\arm_{\sigma}(A)$ corresponds to the existence of $k$ disjoint paths $(\gamma_i)_{1 \leq i \leq k}$ in $A$, in counter-clockwise order, each connecting $\dout \Ball_{n_1}$ and $\din \Ball_{n_2}$, with type prescribed by $\sigma_i$ (i.e. occupied or vacant). We let
\begin{equation}
\pi_{\sigma}(n_1,n_2) := \PP_{p_c}\big( \arm_{\sigma}(\Ann_{n_1,n_2}) \big),
\end{equation}
and $\pi_{\sigma}(n) := \pi_{\sigma}(1,n)$. Finally, we use the notation $\arm_1$, $\pi_1$ when $\sigma = (o)$, and $\arm_4$, $\pi_4$ when $\sigma = (ovov)$.

\subsection{Classical results} \label{sec:classical}

Our proofs rely heavily on precise properties of near-critical Bernoulli percolation in two dimensions, that we recall now. This detailed description of the phase transition comes, to a large extent, from the pioneering work \cite{Ke1987}.

First, the characteristic length $L$ is defined by:
\begin{equation} \label{eq:def_L}
\text{for $p < p_c = \frac{1}{2}$,} \quad L(p) := \min \big\{ n \geq 1 \: : \: \PP_p \big( \Cv( [0,2n] \times [0,n] ) \big) \leq 0.001 \big\},
\end{equation}
and $L(p) = L(1-p)$ for $p > p_c$. Since $L(p) \to \infty$ as $p \to p_c$, we also set $L(p_c) := \infty$.

The function $L$ is discontinuous and piece-wise constant, so we rather work with a regularized version $\tilde L$, defined as follows. First, we set $\tilde{L}(0) = \tilde{L}(1) = 0$, and $\tilde L(p) = L(p)$ at each point of discontinuity $p \in(0,p_c) \cup (p_c,1)$ of $L$. We then extend linearly $\tilde L$ to $[0,1] \setminus \{p_c\}$. The function $\tilde L$ has the additional property of being continuous on $[0,1] \setminus \{p_c\}$, as well as strictly increasing (resp. strictly decreasing) on $[0,p_c)$ (resp. $(p_c,1]$). In particular, it is a bijection from $[0,p_c)$ (resp. $(p_c,1]$) to $[0,\infty)$. From now on, we write $L$ instead of $\tilde L$ to simplify notation.

\begin{enumerate}[(i)]
	\item \emph{Russo-Seymour-Welsh bounds.} For all $K \geq 1$, there exists $\delta_4 = \delta_4(K) > 0$ such that: for all $p \in (0,1)$ and $n \leq K L(p)$,
	\begin{equation} \label{eq:RSW}
	\PP_p \big( \Ch( [0,4n] \times [0,n] ) \big) \geq \delta_4 \quad \text{and} \quad \PP_p \big( \Ch^*( [0,4n] \times [0,n] ) \big) \geq \delta_4.
	\end{equation}
	
	\item \emph{Exponential decay property.} For some universal constants $\kappa_1, \kappa_2 > 0$, we have: for all $p > p_c$ and $n \geq 1$,
	\begin{equation} \label{eq:exp_decay}
	\PP_p \big( \Ch( [0, 4n] \times [0,n] ) \big) \geq 1 - \kappa_1 e^{- \kappa_2 \frac{n}{L(p)}}
	\end{equation}
	(see Lemma 39 in \cite{No2008}). An analogous statement holds for $p < p_c$ and the existence of a horizontal vacant crossing.
	
	Using a standard construction (e.g. with a sequence of overlapping rectangles), it follows from \eqref{eq:exp_decay} that for some $\kappa'_1, \kappa'_2 > 0$: for all $p > p_c$ and $n \geq 1$,
	\begin{equation} \label{eq:connection_infinity}
	\PP_p \big( \dout \Ball_n \lra \infty \big) \geq 1 - \kappa'_1 e^{- \kappa'_2 \frac{n}{L(p)}}.
	\end{equation}
	
	\item \emph{Arm exponents at criticality.} For all $k \geq 1$ and $\sigma \in \colorseq_k$, there exists an ``arm exponent'' $\alpha_{\sigma} > 0$ such that
	\begin{equation} \label{eq:arm_exp}
	\pi_{\sigma}(k,n) = n^{- \alpha_{\sigma} + o(1)} \quad \text{as $n \to \infty$.}
	\end{equation}
	Moreover, the value of $\alpha_{\sigma}$ is known is the following cases.
	\begin{itemize}
		\item For $k=1$, $\alpha_{\sigma} = \frac{5}{48}$ \cite{LSW2002}.
		
		\item For all $k \geq 2$, and $\sigma \in \colorseq_k$ containing both types, $\alpha_{\sigma} = \frac{k^2-1}{12}$ \cite{SW2001}.
	\end{itemize}
	In our proofs we use the following, more ``uniform'', version (see Lemma 2.6 in \cite{BN2018}). For all $\ve > 0$, there exist $0 < C_1 < C_2$ (depending on $\sigma$ and $\ve$) such that: for all $1 \leq n_1 < n_2$,
	\begin{equation} \label{eq:uniform_arm_exp}
	C_1 \bigg( \frac{n_1}{n_2} \bigg)^{\alpha_{\sigma} + \ve} \leq \pi_{\sigma}(n_1, n_2) \leq C_2 \bigg( \frac{n_1}{n_2} \bigg)^{\alpha_{\sigma} - \ve}.
	\end{equation}
	
	\item \emph{Quasi-multiplicativity of arm events.} For all $k \geq 1$ and $\sigma \in \colorseq_k$, there exist $C_1, C_2 > 0$ (depending on $\sigma$) such that: for all $0 \leq n_1 < n_2 < n_3$,
	\begin{equation} \label{eq:quasi_mult}
	C_1 \pi_{\sigma}(n_1, n_3) \leq \pi_{\sigma}(n_1, n_2) \pi_{\sigma}(n_2, n_3) \leq C_2 \pi_{\sigma}(n_1, n_3)
	\end{equation}
	(see Proposition 17 in \cite{No2008}).
	
	\item \emph{Near-critical stability of arm events.} For all $k \geq 1$, $\sigma \in \colorseq_k$, and $K \geq 1$, there exist constants $C_1, C_2 > 0$ (depending on $\sigma$ and $K$) such that: for all $p \in (0,1)$ and $0 \leq n_1 < n_2 \leq K L(p)$,
	\begin{equation} \label{eq:near_critical_arm}
	C_1 \pi_{\sigma}(n_1, n_2) \leq \PP_p \big( \arm_{\sigma}(\Ann_{n_1, n_2}) \big) \leq C_2 \pi_{\sigma}(n_1, n_2)
	\end{equation}
	(see Theorem 27 in \cite{No2008}).
	
	\item \emph{Asymptotic equivalences for $\theta$ and $L$.} As $p$ approaches $p_c$, the following estimates hold true:
	\begin{equation} \label{eq:equiv_theta}
	\theta(p) \asymp \pi_1(L(p)) \quad \text{as $p \searrow p_c$}
	\end{equation}
	(see Theorem 2 in \cite{Ke1987}, or (7.25) in \cite{No2008}), and
	\begin{equation} \label{eq:equiv_L}
	\big| p - p_c \big| L(p)^2 \pi_4 \big( L(p) \big) \asymp 1 \quad \text{as $p \to p_c$}
	\end{equation}
	(see (4.5) in \cite{Ke1987}, or Proposition 34 in \cite{No2008}).
	
	Note that combining \eqref{eq:equiv_theta} and \eqref{eq:equiv_L} with \eqref{eq:arm_exp} (for $\sigma = (o)$ and $\sigma = (ovov)$, respectively) yields
	\begin{equation} \label{eq:exp_theta_L}
	\theta(p) = (p - p_c)^{\frac{5}{36} + o(1)} \text{ as } p \searrow p_c \quad \text{and} \quad L(p) = |p - p_c|^{-\frac{4}{3} + o(1)} \text{ as } p \to p_c.
	\end{equation}
	
	\item \emph{Volume estimates.} Let $(p_k)_{k \geq 1}$ satisfying $p_c < p_k < 1$. If $(n_k)_{k \geq 1}$ is a sequence of integers such that $n_k \to \infty$ and $L(p_k) \ll n_k$ as $k \to \infty$, then
	\begin{equation} \label{eq:largest_cluster}
	\text{for all $\ve \in (0,1)$,} \quad \PP_{p_k} \left( \frac{|\lclus_{\Ball_{n_k}}|}{\theta(p_k) |\Ball_{n_k}|} \notin (1 - \ve, 1 + \ve) \right) \stackrel[k \to \infty]{}{\longrightarrow} 0,
	\end{equation}
	where $|\lclus_{\Ball_{n_k}}|$ is the volume of the largest occupied cluster in $\Ball_{n_k}$ (see Theorem 3.2 in \cite{BCKS2001}).
	
	We will also need the following upper bound for the existence of abnormally large clusters at criticality. There exist universal constants $C_1, C_2 > 0$ such that: for all $n \geq 1$ and $x \geq 0$,
	\begin{equation} \label{eq:largest_cluster_exp_tail}
	\PP_{p_c} \big( |\lclus_{\Ball_n}| \geq x n^2 \pi_1(n) \big) \leq C_1 e^{-C_2 x}
	\end{equation}
	(see Proposition 6.3 (i) in \cite{BCKS1999}).
\end{enumerate}

We will actually use the following more quantitative version of \eqref{eq:largest_cluster}, which can be obtained by adapting the reasoning in \cite{BCKS2001}. For the reader's convenience, we include a proof in Appendix (see Section \ref{sec:proof_largest}).
\begin{lemma} \label{lem:largest_cluster_quant}
	For all $\ve \in (0,1)$, there exists $C = C(\ve)$ such that: for all $p > p_c$ and $n \geq 1$,
	\begin{equation} \label{eq:largest_cluster_quant}
	\PP_p \bigg( \bigg\{ \frac{|\lclus_{\Ball_n}|}{\theta(p) |\Ball_n|} \in (1 - \ve, 1 + \ve) \bigg\} \cap \circuitarm \Big( \Ann_{\frac{1}{2} n, n} \, \big| \, \lclus_{\Ball_n} \Big) \bigg) \geq 1 - C \, \frac{L(p)}{n},
	\end{equation}
	where $\circuitarm ( \Ann_{\frac{1}{2} n, n} \, | \, \lclus_{\Ball_n} )$ denotes the event that $\lclus_{\Ball_n}$ contains an occupied circuit and an occupied crossing (i.e. arm) in $\Ann_{\frac{1}{2} n, n}$.
\end{lemma}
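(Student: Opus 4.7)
The plan is to adapt the proof of Theorem~3.2 in \cite{BCKS2001} and track the rate of convergence. The target event decomposes into a connectivity component, which produces $\circuitarm(\Ann_{n/2,n}|\lclus_{\Ball_n})$ and identifies the giant cluster with $\Cinf \cap \Ball_n$ up to boundary corrections, and a volume concentration component, from which the announced rate arises.

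For the connectivity step, I would use \eqref{eq:exp_decay} (via overlapping-rectangle surgery) together with \eqref{eq:connection_infinity} to build an event $\mathcal{G}$, of probability at least $1 - C_1 e^{-C_2 n / L(p)}$, on which $\Ann_{n/2, n}$ contains nested, mutually connected occupied circuits all linked to $\Cinf$. On $\mathcal{G}$, these circuits lie in a single $\Ball_n$-cluster, which must be $\lclus_{\Ball_n}$; this yields $\circuitarm(\Ann_{n/2,n}|\lclus_{\Ball_n})$. A planar topological argument, using that any occupied path from $v \in \Cinf \cap \Ball_{n/2}$ to infinity must cross the outermost circuit, gives $\Cinf \cap \Ball_{n/2} \subseteq \lclus_{\Ball_n} \subseteq \Cinf \cap \Ball_n$ on $\mathcal{G}$. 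Since $e^{-cn/L(p)} \ll L(p)/n$, this step contributes negligibly to the final bound.

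For the volume concentration, set $N := |\Cinf \cap \Ball_n| = \sum_v X_v$ with $X_v := \ind_{v \in \Cinf}$, so that $\EE_p[N] = \theta(p) |\Ball_n|$. I would control covariances $\text{Cov}_p(X_u, X_v)$ via: the trivial estimate $\theta(p)$ for $\|u-v\| \leq L(p)$; and a decoupling bound $C\theta(p) e^{-c\|u-v\|/L(p)}$ for $\|u-v\| > L(p)$, obtained by approximating $\{v \in \Cinf\}$ with the local event $\{v \lra \dout \Ball_{\|u-v\|/3}(v)\}$ (the error $\PP_p(v \lra \dout \Ball_R(v)) - \theta(p)$ being $O(e^{-cR/L(p)})$ via a dual vacant-circuit argument from \eqref{eq:exp_decay}). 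Summing yields $\text{Var}_p(N) \leq C \theta(p) n^2 L(p)^2$. The discrepancy $|\Cinf \cap \Ball_n| - |\lclus_{\Ball_n}|$ on $\mathcal{G}$ is concentrated in the boundary strip $\Ball_n \setminus \Ball_{n/2}$ and can be bounded through arm estimates of relative order $O(L(p)/n)$.

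The main obstacle is converting this variance bound, via Chebyshev, into the sharp rate $CL(p)/n$: the naive estimate yields only $L(p)^2/(\theta(p) n^2)$, which beats $L(p)/n$ solely when $n \gtrsim L(p)/\theta(p) \asymp L(p)^{1 + 5/48 + o(1)}$ by \eqref{eq:exp_theta_L}. To close the intermediate range $L(p) \lesssim n \lesssim L(p)/\theta(p)$, I would follow the block decomposition of \cite{BCKS2001}: at scale $L(p)$, declare a block ``good'' if prescribed local crossings and connections occur (each with probability $1 - O(e^{-c})$ by \eqref{eq:exp_decay}), and express $|\lclus_{\Ball_n}|$ as a deterministic function of the good-block pattern plus a controllable residual. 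The count of good blocks concentrates at the rate $L(p)/n$ through a near-independent law of large numbers on the renormalized lattice of size $(n/L(p))^2$, producing the announced estimate after folding in the boundary remainder and the exponentially small contribution from $\mathcal{G}^c$.
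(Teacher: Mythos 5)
Your outline starts in the right direction, but the variance step has a genuine gap, which you in fact flag yourself. Your covariance estimate $\textrm{Cov}_p(X_u,X_v)\leq \theta(p)$ for $\|u-v\|\leq L(p)$ is wasteful by a factor $\theta(p)$, and it yields only $\sum_v\textrm{Cov}_p(X_0,X_v)\lesssim \theta(p)L(p)^2$, hence a Chebyshev rate of $O(L(p)^2/(\theta(p)n^2))$, missing $L(p)/n$ unless $n\gtrsim L(p)/\theta(p)$. The key input, which the paper simply imports as \eqref{eq:chi_cov} from Section~6.4 of \cite{BCKS2001}, is the sharper bound $\chi^{\textrm{cov}}(p):=\sum_v\textrm{Cov}_p(\mathbbm{1}_{0\lra\infty},\mathbbm{1}_{v\lra\infty})\leq CL(p)^2\theta(p)^2$; the extra factor of $\theta(p)$ gives $\textrm{Var}_p(|\Cinf\cap\Ball_n|)/(\theta(p)|\Ball_n|)^2\leq C(L(p)/n)^2$, and Chebyshev already closes at rate $(L(p)/n)^2$, strictly better than what is needed --- no intermediate range remains. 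Your proposed rescue via an $L(p)$-block renormalization does not fill the hole: $|\lclus_{\Ball_n}|$ is \emph{not} a function of which blocks are ``good'' --- the amount of $\Cinf$-mass inside a good block is still random --- so to control the per-block variance you would need precisely the sharp covariance estimate $\theta(p)^2L(p)^2$ you were trying to avoid. Block arguments control crossing and connection probabilities, not volume fluctuations.

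A second, smaller gap: on your connectivity event $\mathcal{G}$ you assert $\lclus_{\Ball_n}\subseteq\Cinf\cap\Ball_n$, but nothing in $\mathcal{G}$ precludes a large cluster in $\Ball_n$ disjoint from $\Cinf$ from being the largest. The paper handles this via a separate Markov-inequality argument using Kesten's bound $\chi^{\textrm{fin}}(p):=\EE_p\big[|\cluster(0)|\mathbbm{1}_{|\cluster(0)|<\infty}\big]\leq CL(p)^2\theta(p)^2$ (\eqref{eq:chi_fin}), showing that the largest cluster in $\Ball_n$ not meeting $\Cinf$ has volume $O((L(p)/n)\,\theta(p)|\Ball_n|)$ except with probability $O(L(p)/n)$. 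Your ``boundary strip'' remark does not capture this, and the step is needed.
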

Moreover, note that in annuli of the form $\Ann_{\eta n, n}$, $\eta \in (0,1)$ fixed, an analogous result (for $|\lclus_{\Ann_{\eta n, n}}|$) holds, for similar reasons (with some $C = C(\ve,\eta)$).

\begin{remark} \label{rem:other_lattices}
All results above remain valid for other two-dimensional lattices with enough symmetry, and also for bond percolation, except the precise derivation of arm exponents (\eqref{eq:arm_exp} and below, which rely on Smirnov's proof of conformal invariance \cite{Sm2001} mentioned earlier), and the immediately related properties \eqref{eq:uniform_arm_exp} and \eqref{eq:exp_theta_L}. Indeed, the article \cite{Ke1987} is written in a more general setting, which includes in particular site percolation and bond percolation on the square lattice (we refer the reader to the introduction of that paper for more details). In such a case, we have at our disposal the following a-priori bounds on the one- and four-arm events: there exist $\alpha, \alpha' > 0$ and $c_1, c_2, c'_1, c'_2 > 0$ such that for all $1 \leq n_1 < n_2$,
\begin{equation} \label{eq:apriori_arm}
c_1 \bigg( \frac{n_1}{n_2} \bigg)^{\frac{1}{2}} \leq \pi_1(n_1, n_2) \leq c_2 \bigg( \frac{n_1}{n_2} \bigg)^{\alpha} \quad \text{and} \quad c'_1 \bigg( \frac{n_1}{n_2} \bigg)^{2 - \alpha} \leq \pi_4(n_1, n_2) \leq c'_2 \bigg( \frac{n_1}{n_2} \bigg)^{1 + \alpha'}
\end{equation}
(see e.g. the explanations below (2.12) and (2.13) in \cite{BN2018}).

These bounds are enough to obtain partial results for frozen percolation, as explained in Remark~\ref{rem:other_lattices3} below. However, in the case of forest fires, our proofs rely crucially on a comparison to a percolation process with impurities, as we explain in Section~\ref{sec:perc_impurities}. In order to study this latter process, the precise knowledge of $\alpha_4$, as well as of $\alpha_{(oo)}$, is required (for the time being). More specifically, the results in \cite{BN2018} use the inequality $\alpha_{(oo)} > \alpha_4 - 1$ to estimate the impact of impurities on four-arm events, see the discussion in Section~4.3 of that paper.
\end{remark}

\subsection{Near-critical percolation with impurities} \label{sec:perc_impurities}

We now present tools to analyze the forest fire processes described informally in the Introduction (we define them precisely in Section~\ref{sec:def_processes}). It was shown in \cite{BN2018} that the intricate behavior of these processes, when the density of trees approaches $p_c$, could be well understood thanks to an auxiliary percolation process, where additional ``impurities'' (also called ``holes'' later) are created on the lattice. The impurities appear in an independent fashion, their size following a well-chosen distribution related to forest fires, which is ``heavy-tailed'' in some sense. This process, introduced in \cite{BN2018}, is instrumental in that paper to estimate the joint effect of fires (the precise comparison with forest fires is explained in Section~\ref{sec:stoch_domin_FF}). Hence, it plays a crucial role in our proofs as well, in Sections~\ref{sec:scale_minf_FF} and \ref{sec:proof_FF}.

We want to emphasize that in principle, it could well be the case that the obstacles created by the burnt areas affect significantly the phase transition of percolation, even when they do not strongly disconnect the lattice. Already in the case of usual Bernoulli configuration, consider a configuration at criticality in $\Ball_n$: if every site is switched from occupied to vacant with probability $n^{-\xi}$, for some exponent $\xi > 0$ (thus creating ``single-site impurities''), then the connectivity properties of the resulting configuration, as $n \to \infty$, depend heavily on the value of $\xi$. If $\xi > \xi_c := \frac{3}{4}$ ($= \frac{1}{\nu}$, where $\nu$ is the critical exponent associated with $L$, see \eqref{eq:exp_theta_L}), then the configuration in $\Ball_n$ remains comparable to critical percolation (and as shown in \cite{NW2009}, the scaling limit as $n \to \infty$ is just the same as in the critical regime), while if $\xi \in (0,\xi_c)$, it behaves like subcritical percolation: there are ``too many impurities'', even though their density tends to $0$ and they do not disconnect the lattice.

First, let us define the process precisely. It is parametrized by a parameter $m \geq 1$, and we are interested in its behavior as $m \to \infty$ (which intuitively corresponds to $t \nearrow t_c$ for forest fire processes). We follow the notation from Section~3 of \cite{BN2018}. For all $m \geq 1$, we consider a distribution $\rho^{(m)}$ on $[0,+\infty)$, and a family $(\pi_v^{(m)})_{v \in V}$ of probabilities indexed by the vertices of the lattice (typically very small, as $m \to \infty$). First, we perform Bernoulli percolation with a parameter $p \in [0,1]$ on the lattice. For each $v\in V$, independently of other vertices, we put a hole centered at $v$ with probability $\pi_v^{(m)}$, and the radius $r_v$ of the hole is distributed according to $\rho^{(m)}$: all vertices in the hole $H_v := \Ball_{r_v}(v)$ are turned vacant.

If we denote by $I_v$ the indicator function that there is a hole centered at $v$, the families $(r_v)_{v \in V}$ and $(I_v)_{v \in V}$ are always assumed to be independent. For notational convenience, we set $H_v := \emptyset$ if there is no hole centered at $v$. We denote by $\PPh_p^{(m)} := \PPh_p^{\pi^{(m)},\rho^{(m)}}$ the corresponding probability measure, on percolation configurations together with holes. When we are considering events involving only the configuration of holes, we stress it by using the notation $\PPh^{(m)}$. Finally, we mention that the process with impurities satisfies the FKG inequality, as observed in Remark~3.1 of \cite{BN2018}.

The setting introduced in \cite{BN2018} does not extend to scales close to $m_\infty(\zeta)$ (the characteristic scale mentioned in the Introduction, and defined precisely in Section~\ref{sec:successive}). Indeed, it only applies up to $m_\infty(\zeta) \cdot \zeta^{\delta}$, for any given $\delta > 0$ (arbitrarily small). This is enough for the applications discussed in \cite{BN2018}, in particular to establish the existence of exceptional scales, but in our situation, we need to consider a generalization. We make the following hypothesis on the distribution $\rho^{(m)}$ and the family $(\pi_v^{(m)})_{v \in V}$.
\begin{assumption} \label{ass:impurities}
There exist $c>0$, $\gamma \in (1,2)$, and a function $\upsilon(m)$ ($m \geq 1$) such that
\begin{equation} \label{eq:impurities_assumption}
\text{for all $m \geq 1$, $r \geq 0$, and $v \in V$,} \quad \pi_v^{(m)} \cdot \rho^{(m)}([r, +\infty)) \leq \frac{\upsilon(m)}{m^2} \left(\frac{r \vee 1}{m}\right)^{\gamma - 2} e^{-c\frac{r}{m}}.
\end{equation}
\end{assumption}
The quantity $\upsilon(m)$ should be thought of as measuring, up to a constant factor, the maximum ``density'' of impurities (that is, the probability $\PPh_p^{(m)}(\exists v \in V \: : \: 0 \in H_v)$ that $0$ belongs to at least one impurity, as can be seen from a short computation), and we will have to make an additional assumption on it for our results (see also Remark~\ref{rem:large_density} below). The precise condition will vary, but we typically require the holes to cover a fraction of the lattice which is rather small, but possibly bounded away from $0$, i.e. that $\upsilon(m) \leq \delta_0$ for some $\delta_0 > 0$ (which has to be chosen sufficiently small, depending on the context).

\begin{remark}
Note that in the setting of \cite{BN2018}, with some exponents $\alpha < 2$ and $\beta > 0$, Assumptions~1 and 2 in that paper imply our assumption \eqref{eq:impurities_assumption} in the case $\alpha > 1$, with, in our notation, $\gamma = \alpha$ and $\upsilon(m) = m^{\alpha-\beta}$ ($\to 0$ as $m \to \infty$, since $\beta > \alpha$ from Assumption~2). The condition considered here is thus more general for $\alpha > 1$, but on the other hand, we do not address the case $\alpha \in (\frac{3}{4},1]$. Values in this range are also studied in \cite{BN2018}, where a complete ``phase diagram'' is obtained. They require more care, but they are not needed for forest fires (which correspond, roughly speaking, to values $\alpha$ close to $\frac{55}{48} > 1$).
\end{remark}

In most applications to forest fires later on, $\upsilon(m) \to 0$ as $m \to \infty$, but not necessarily as a power law in $m$: we also need to allow e.g. $\upsilon(m) \leq (\log m)^{-\delta}$, which creates additional difficulties. Moreover, in order to study avalanches all the way up to $m_{\infty}(\zeta)$, we also have to include the case when $\upsilon(m)$ is small, but remains non-negligible. We need results analogous to some of the properties listed in Section~\ref{sec:classical}, that we state now.

\subsubsection*{Crossing holes}

For an annulus $A= \Ann_{n_1, n_2}(z)$ ($z\in \mathbb{C}, 1\leq n_1<n_2$), consider the event
$$\calH(A) :=\{\exists v\in V \: : \: H_v \cap \dout \Ball_{n_1}(z) \neq \emptyset \text{ and } H_v \cap \din \Ball_{n_2}(z) \neq \emptyset\}$$
that $A$ is crossed by a hole.
\begin{lemma} \label{lem:crossing_hole}
There exist constants $C, C'>0$ (depending only on $c$ and $\gamma$) such that the following holds. For all $m\geq 1$, for all annuli $A= A_{n_1, n_2}(z)$ with $z\in V$ and $1\leq n_1\leq \frac{n_2}{2}$,
$$\PPh^{(m)} \left(\calH(A)\right) \leq C \upsilon(m) e^{-C' \frac{n_1}{m}}.$$
\end{lemma}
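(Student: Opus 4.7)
\medskip

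\noindent\textbf{Proof plan for Lemma~\ref{lem:crossing_hole}.}

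The plan is to apply a union bound over the possible centers $v \in V$ of a crossing hole, then use Assumption~\ref{ass:impurities} and carefully sum the resulting bounds via a dyadic decomposition in the distance of $v$ to the annulus $A$. First I would record the following purely geometric observation: if the hole $H_v = \Ball_{r_v}(v)$ crosses $A = \Ann_{n_1, n_2}(z)$, then $H_v$ must touch both the sphere of radius $n_1$ and the sphere of radius $n_2$ around $z$ (in $L^\infty$), which forces
\[
r_v \;\geq\; R(v) \;:=\; \max\bigl(d(v,\partial \Ball_{n_1}(z)),\, d(v,\partial \Ball_{n_2}(z))\bigr) - O(1).
\]
Since the two spheres are at distance $n_2 - n_1$ apart, this entails $R(v) \geq (n_2-n_1)/2$, and the hypothesis $n_1 \leq n_2/2$ (so that $n_2 - n_1 \geq n_1$) gives $R(v) \geq n_1/2$ for every candidate $v$.

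Next, by independence of the marks and the radii,
\[
\PPh^{(m)}\bigl(\calH(A)\bigr) \;\leq\; \sum_{v \in V} \pi_v^{(m)}\,\rho^{(m)}\bigl([R(v),+\infty)\bigr) \;\leq\; \frac{\upsilon(m)}{m^2} \sum_{v: R(v)<\infty} \Bigl(\frac{R(v)\vee 1}{m}\Bigr)^{\gamma-2} e^{-c\,R(v)/m},
\]
where the second inequality uses Assumption~\ref{ass:impurities}. To control the sum, I would decompose the set $\{v \in V : R(v) < \infty\}$ into dyadic shells $S_k = \{v : R(v) \in [2^{k-1}(n_2-n_1), 2^k(n_2-n_1)]\}$ for $k \geq 0$. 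A direct computation shows that $\{v : R(v) \leq R\}$ is the $L^\infty$-annulus $\{w : n_2 - R \leq \|w-z\|_\infty \leq n_1 + R\}$, which has cardinality at most $C\, n_2\,(2R - (n_2-n_1))$, and so $|S_k| \leq C\, n_2\, (n_2-n_1)\, 2^k$.

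Substituting $x := (n_2-n_1)/m$, the $k$-th dyadic shell contributes
\[
C\,\upsilon(m)\,\frac{n_2}{m}\, x^{\gamma-1}\, 2^{k(\gamma-1)+(3-\gamma)} e^{-c\,x\,2^{k-1}},
\]
and the use of $n_1 \leq n_2/2$ once more gives $n_2/m \leq 2x$. Summing the geometric-type series in $k$ and splitting into the regimes $x \lesssim 1$ and $x \gtrsim 1$, the polynomial prefactor $x^\gamma$ is either bounded by a constant (when $x$ is small, so that $e^{-c' n_1/m}$ is already of order $1$) or absorbed into the exponential at the cost of a slightly smaller rate (when $x$ is large, since $x^\gamma e^{-cx/2} \leq C e^{-c'x}$ for any $c' < c/2$). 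Using finally $n_1 \leq n_2 - n_1$ so that $e^{-c'x} \leq e^{-c' n_1/m}$, the lemma follows.

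The main obstacle is the careful matching of the polynomial prefactor $(R(v)/m)^{\gamma-2}$ against the exponential decay, while simultaneously handling the shell cardinalities $|S_k|$ that grow with $k$: the three factors interact through $x = (n_2-n_1)/m$, and only the constraint $n_1 \leq n_2/2$ allows one to absorb the problematic $n_2/m$ term into the exponential. The bound $C'\upsilon(m) e^{-c' n_1/m}$ is then obtained with $c' \in (0, c/2)$ and some universal $C' = C'(c,\gamma)$, as required.
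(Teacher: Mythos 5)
Your overall strategy (union bound over possible hole centers, geometric lower bound on $r_v$, then summation against Assumption~\ref{ass:impurities}) is exactly the intended one; the paper itself does not re-prove Lemma~\ref{lem:crossing_hole}, but delegates to the analogous Lemma~3.2 of \cite{BN2018}, and the ``straightforward modification'' they allude to is precisely what you carry out: replacing the product form of $\pi^{(m)}$ and $\rho^{(m)}$ by the single hypothesis \eqref{eq:impurities_assumption} when performing the sum. Your geometric reduction is correct: the distance function $R(v)$ as a function of $d_v := \|v-z\|_\infty$ is V-shaped with minimum $(n_2-n_1)/2$, and $n_1 \le n_2/2$ gives both $R(v) \ge n_1/2$ and $n_2 \le 2(n_2-n_1)$, which is what lets you trade the $n_2/m$ prefactor against $x = (n_2-n_1)/m$ and the exponential.

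There is, however, a genuine (though repairable) slip in your cardinality estimate for the dyadic shells. You assert that $\{v : R(v) \le R\}$ has cardinality at most $C\,n_2\,(2R-(n_2-n_1))$ and hence $|S_k| \lesssim n_2(n_2-n_1)2^k$. The set $\{v : R(v) \le R\}$ is indeed $\{w : n_2 - R \le \|w-z\|_\infty \le n_1 + R\}$, and the annulus area formula you used, $(n_1+n_2)(2R-(n_2-n_1))$, is only valid while $n_2 - R \ge 0$. Once $R > n_2$ the set is the full box $\Ball_{n_1+R}(z)$, whose cardinality is of order $R^2$; for $R \gg n_2$ this is much larger than $n_2 R$. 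Since $n_1 \le n_2/2$ forces $n_2/(n_2-n_1) \le 2$, the threshold $R_k > n_2$ is crossed already at $k=1$ or $2$, so your stated bound $|S_k| \lesssim n_2(n_2-n_1)2^k$ is false from essentially the start of the sum, not just in the tail. The argument still goes through, but one must instead use $|S_k| \lesssim R_k^2 = (n_2-n_1)^2 4^k$ for those $k$; the extra factor $2^k$ (relative to your bound) multiplies the shell contribution to $\sim \upsilon(m)\,x^\gamma\,2^{k\gamma} e^{-cx2^{k-1}}$, and since $\gamma < 2$ the sum over $k$ is still dominated by $2^k \approx 1/x$, giving the same order $\upsilon(m)$ (for $x \lesssim 1$) or $\upsilon(m)\,x^{\gamma}e^{-cx}$ (for $x\gtrsim 1$) and hence the same final bound. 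So the conclusion is right, but as written the intermediate cardinality claim, and therefore the displayed form of the per-shell contribution, cannot be taken at face value; you should split into the regimes $R_k \le n_2$ (annulus) and $R_k > n_2$ (ball), or simply use the crude bound $|S_k| \lesssim (n_1 + R_k)^2$ throughout and let the $\gamma < 2$ hypothesis do the work.
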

This lemma is used repeatedly when establishing the results below, and for similar reasons, it is helpful in Section~\ref{sec:proof_FF} to produce some form of spatial independence.

\subsubsection*{Russo-Seymour-Welsh bounds}

We need to adapt the a-priori estimate on box-crossing probabilities \eqref{eq:RSW}.
\begin{proposition} \label{prop:crossing_impurities}
Let $K \geq 1$. There exists $C = C(c, \gamma, K) > 0$ such that: for all $p \in (0,1)$ and $1 \leq n \leq K (m \wedge L(p))$,
\begin{equation} \label{eq:RSW_impurities}
\PPh_p^{(m)} \big( \Ch([0,2n] \times [0,n]) \big) \geq \big( 1 - C \upsilon(m) \big) \, \PP_p \big( \Ch([0,2n] \times [0,n]) \big).
\end{equation}
\end{proposition}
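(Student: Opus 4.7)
The strategy is to reduce the multiplicative estimate to an additive one, then use an RSW-gluing scheme robust to a sparse impurity configuration. By the RSW bound \eqref{eq:RSW}, $\PP_p(\Ch([0,2n]\times[0,n])) \geq \delta_4(K) > 0$ whenever $n \leq K L(p)$; dividing through by this positive lower bound, \eqref{eq:RSW_impurities} is equivalent (up to adjusting $C$) to the additive form
$$\PPh_p^{(m)}\big( \Ch(R) \big) \geq \PP_p\big( \Ch(R) \big) - C_0 \upsilon(m).$$
Since any horizontal crossing in the impurity model automatically gives a Bernoulli crossing, the difference $\PP_p(\Ch(R)) - \PPh_p^{(m)}(\Ch(R))$ equals the probability that a horizontal crossing of $R$ exists in $\omega$ but every such crossing is destroyed by the deletion of the holes. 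This is the quantity I would bound by $C_0 \upsilon(m)$.

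For this I would follow the approach of \cite{BN2018}: cover $R$ by a collection $R_1, \ldots, R_L$ of $L = L(K)$ overlapping rectangles with bounded aspect ratios and side lengths of order $n$, chosen so that simultaneous Bernoulli crossings of the $R_i$ (in the appropriate orientations) force $\Ch(R)$ by standard FKG-gluing. Let $G$ denote the event that no single impurity $H_v$ spans any $R_i$, i.e.\ meets two opposite sides of some $R_i$. Adapting Lemma~\ref{lem:crossing_hole} from annuli to the rectangles $R_i$ and taking a union bound over $i=1,\ldots,L$ yields $\PPh^{(m)}(G^c) \leq C_1 \upsilon(m)$, where the constraint $n \leq Km$ is used to absorb the exponential factor $e^{-C' n/m}$ into a universal constant. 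On $G$, and thanks to the independence of the Bernoulli configuration and the impurity field, a local topological-routing argument should let one realize the glued Bernoulli crossing in $V \setminus H$ at the cost of a multiplicative factor $1 - O(\upsilon(m))$ in the combined crossing probability. Summing, this produces the additive bound.

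The main obstacle is the last implication --- that on $G$ the Bernoulli crossings of each $R_i$ can be turned into crossings in $V \setminus H$ --- under the weaker assumption \eqref{eq:impurities_assumption}. In \cite{BN2018} one had $\upsilon(m) \to 0$ as a power of $m$, which made several union bounds forgiving, whereas here $\upsilon(m)$ may decay only logarithmically or even be of constant order, so every source of loss must be tracked. The cleanest route seems to be a local stability lemma: for each $R_i$, conditionally on the hole configuration in a neighborhood of $R_i$ and on the event that no hole spans $R_i$, the probability of an $\omega \cap (V \setminus H)$ crossing of $R_i$ differs from $\PP_p(\Ch(R_i))$ by at most $O(\upsilon(m))$. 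One would then establish this by iterating Lemma~\ref{lem:crossing_hole} at successively smaller scales inside $R_i$ together with a secondary application of RSW, so that removing the (non-spanning, hence small in the relevant sense) impurities disturbs the crossing probability only by an amount proportional to $\upsilon(m)$. The global additive bound then follows by combining across the $R_i$ via independence and FKG.
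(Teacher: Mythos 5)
Your proposal has a genuine gap at the key step. You correctly identify the difficulty yourself (``the main obstacle is the last implication''), but the proposed fix — iterating Lemma~\ref{lem:crossing_hole} at successively smaller scales — cannot close it. The issue is not whether an individual impurity spans a rectangle, but whether the aggregate of many small, non-spanning impurities destroys crossings. As the paper stresses in Section~\ref{sec:perc_impurities}, even \emph{single-site} impurities occurring independently with density $n^{-\xi}$ make the configuration in $\Ball_n$ subcritical whenever $\xi < \frac{3}{4}$, despite vanishing density and no spanning holes at all. So a ``topological routing'' argument conditioned only on the absence of spanning holes is bound to fail: it would prove too much, contradicting the subcritical regime of the phase diagram. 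The losses in your iteration are not summable — going down a dyadic scale multiplies the number of rectangles by a constant while the probability of a spanning hole in a rectangle of that scale does not decrease fast enough under Assumption~\ref{ass:impurities} (small holes are the abundant ones), so the union bound diverges rather than stabilizing at $O(\upsilon(m))$.

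The paper's route is structurally different: it first establishes the four-arm stability result, Theorem~\ref{thm:four-arm} (the analog of Theorem~4.1 of \cite{BN2018}), and then derives Proposition~\ref{prop:crossing_impurities} from it, following Propositions~5.1--5.2 of \cite{BN2018}. The underlying mechanism is a \emph{pivotality} bound: an impurity $H_v$ can change the crossing event only if there is a four-arm configuration emanating from its boundary, so the change in crossing probability is controlled by a sum over impurities of (enlarged) four-arm probabilities — not by whether any impurity spans a macroscopic rectangle. The delicate induction in Theorem~\ref{thm:four-arm} shows this sum is $O(\upsilon(m)) \pi_4(\cdot)$, and this relies on the precise values of the arm exponents, in particular the inequality $\alpha_{(oo)} > \alpha_4 - 1$ (see Remark~\ref{rem:other_lattices}). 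This exponent input is exactly what your elementary ``routing'' scheme has no access to, and is what makes the stability hold in the ``good'' regime of Assumption~\ref{ass:impurities} and fail below the threshold $\xi_c = \tfrac{3}{4}$.
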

In particular, note that $\upsilon(m)$ has to be small enough (in terms of $c, \gamma, K$) for the statement to be non-trivial.

\subsubsection*{Stretched exponential decay property}

We also make use of the following (slightly weaker) version of the exponential decay property \eqref{eq:exp_decay}.
\begin{proposition} \label{prop:exp_decay_impurities}
Let $K \geq 1$. There exist $\lambda_1, \lambda_2, \delta_0 > 0$ (depending only on $c$, $\gamma$, $K$) such that if we assume that $\upsilon(m) \leq \delta_0$ for all $m \geq 1$, then the following holds. For all $m \geq 1$, $n \geq 1$, and $p > p_c$ with $L(p) \leq K m$,
\begin{equation} \label{eq:stretched_exp_decay}
\PPh_p^{(m)} \big( \Ch([0,2n] \times [0,n]) \big) \geq 1 - \lambda_1 e^{- \lambda_2 ( \frac{n}{m} )^{\frac{1}{2}}}.
\end{equation}
\end{proposition}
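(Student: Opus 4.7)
The plan is a multi-scale construction, where Proposition~\ref{prop:crossing_impurities} provides a base case at a scale $n_0$ of the order of $m \wedge L(p)$, and an iterative doubling scheme, controlled at each step via Assumption~\ref{ass:impurities} (through Lemma~\ref{lem:crossing_hole}), propagates it up to the target scale $n$. For the base case, I fix a large constant $\rho$ (to be chosen in terms of $c, \gamma, K$) and set $n_0 := \lceil \rho \, (m \wedge L(p)) \rceil$. Since $L(p) \leq K m$, we have $n_0/L(p) \geq \rho/K$, and \eqref{eq:exp_decay} yields $\PP_p(\Ch([0,2n_0] \times [0,n_0])) \geq 1 - \kappa_1 e^{-\kappa_2 \rho/K}$. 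Since $n_0 \leq \rho (m \wedge L(p))$, Proposition~\ref{prop:crossing_impurities} (with its parameter $K$ taken equal to $\rho$) then gives
$$\PPh_p^{(m)}\bigl(\Ch([0,2n_0] \times [0,n_0])\bigr) \geq \bigl(1 - C(\rho)\upsilon(m)\bigr)\bigl(1 - \kappa_1 e^{-\kappa_2 \rho/K}\bigr) \geq 1 - q_0,$$
where $q_0$ can be made arbitrarily small by first taking $\rho$ large enough and then $\delta_0$ small enough in terms of $\rho$.

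For the iteration step, let $A \geq n_0$ and suppose $\PPh_p^{(m)}(\Ch([0,2A] \times [0,A])) \geq 1 - q$. Using the standard RSW doubling construction (two horizontal $R_A$-crossings glued via a vertical bridging crossing), together with FKG for $\PPh_p^{(m)}$ (noted in Remark~3.1 of \cite{BN2018}), the event $\Ch([0,4A] \times [0,2A])$ contains the intersection of two ``combined'' events, each built from $O(1)$ crossings of $R_A$-sized sub-rectangles and hence having failure probability at most $c_1 q$. These two combined events can be placed so that their defining regions are separated by a gap of width $\geq A/4$. Conditional on no impurity with radius $\geq A/8$ being centered in a surrounding box of side $O(A)$, they are independent, and Assumption~\ref{ass:impurities} (equivalently, Lemma~\ref{lem:crossing_hole} applied to a finite family of annuli with inner radius $\geq A/8$) bounds the probability of such a large impurity by $C \upsilon(m) (A/m)^\gamma e^{-c' A/m}$. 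This yields the recursion
$$q_{k+1} \leq c_2 q_k^2 + C \upsilon(m) (A_k/m)^\gamma e^{-c' A_k/m}, \qquad A_k := 2^k n_0.$$

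Unwinding this recursion from $q_0$ up to the target scale $n$, over $K_* \sim \log_2(n/n_0)$ doublings, the quadratic term would push $q_k$ down double-exponentially fast in $k$, while the impurity term adds a correction that decays essentially exponentially in $A_k/m$. A careful balancing of the two contributions across all scales (absorbing the polynomial pre-factor $(A_k/m)^\gamma$ into the exponent) yields the stretched exponential bound $\lambda_1 e^{-\lambda_2 \sqrt{n/m}}$ claimed in the proposition.

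The main obstacle is controlling uniformly in $m$ the accumulated impurity errors over the $\sim \log(n/m)$ scales involved, given that $\upsilon(m)$ need not vanish but only satisfies $\upsilon(m) \leq \delta_0$. Choosing $\delta_0$ sufficiently small (depending on $c, \gamma, K$, and on the constants $\rho$, $c_1$, $c_2$ of the gluing argument) ensures that $q_0$ starts well below the fixed point of the impurity-free quadratic recursion, so that the impurity contribution stays dominated by the quadratic gain throughout the iteration.
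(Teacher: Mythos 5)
Your overall strategy — a multi-scale recursion seeded by Proposition~\ref{prop:crossing_impurities} and propagated via a gluing step, with impurity errors controlled via Lemma~\ref{lem:crossing_hole} — is indeed the right framework, and it is essentially the one the paper uses. But there are two linked problems with your execution of the inductive step that prevent it from closing.

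First, the exact doubling $A \mapsto 2A$ is incompatible with the spatial gap you need for (conditional) independence. If you insist on a gap of width $\Omega(A)$ inside a rectangle of height $2A$, the two ``halves'' that you want to treat independently are forced to live at a \emph{smaller} scale than $A$; but $f(\cdot) := \PPh_p^{(m)}(\Cv^*([0,2\cdot]\times[0,\cdot]))$ is nonincreasing, so the failure probability at that smaller scale is $\geq q = f(A)$, not $\leq c_1 q$. The recursion $q_{k+1} \leq c_2 q_k^2 + (\text{impurity})$ therefore cannot be derived this way. The paper sidesteps this by inflating the scale by a factor $\lambda = 2(1+\eta)$ with $\eta$ small but positive: the extra slack provides room for a separating band of width $\Theta(\eta n)$ while the two sub-rectangles each retain height exactly $n$ (only wider, which costs a harmless multiplicative constant in the aspect-ratio reduction). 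Relatedly, you want the \emph{bad} event $\Cv^*([0,4A]\times[0,2A])$ to be \emph{contained in} an intersection of two conditionally independent bad events to get a product bound; as written (``$\Ch$ contains the intersection of two combined events'') you would only get a union bound, i.e.\ a linear recursion.

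Second, and more fundamentally, the stretched exponent $1/2$ does not come from ``balancing'' the quadratic term against the exponentially small impurity term. It comes from the rescaling factor $\lambda = 2(1+\eta) > 2$. After $k$ steps, the scale is $\lambda^k n_0$, so $k = \log(n/n_0)/\log\lambda$ and $2^k = (n/n_0)^{\log 2/\log\lambda}$; the doubly-exponential gain then reads $2^{-2^k} = 2^{-(n/n_0)^{\alpha}}$ with $\alpha = \log 2/\log\lambda < 1$. Choosing $\eta$ small enough so that $\alpha \geq 1/2$ (as the paper does, with $\eta$ depending only on $\alpha$) is precisely what produces the exponent $\tfrac12$. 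In the paper's first pass the impurity term $\upsilon(m)e^{-C'n/m}$ is even bounded crudely by $\upsilon(m)$, leaving a residual $\Theta(\upsilon(m))$ term; only a second pass restricted to $n \geq m$, exploiting the actual exponential decay of the impurity factor, removes that residual and yields the form \eqref{eq:stretched_exp_decay}. Your proposal as written would give, if its recursion held, an estimate of the form $c^{-(n/n_0)}$ (plus an impurity error) — exponential, not stretched exponential — which is inconsistent with the fact that you then need the $(1+\eta)$-inflation to obtain the recursion in the first place.
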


\bigskip

The above results can all be obtained, to a great extent, from the corresponding proofs in \cite{BN2018}, through minor adjustments (in order to replace the use of Assumptions~1 and 2 from \cite{BN2018} by our Assumption~\ref{ass:impurities} just above). In Appendix (Section~\ref{sec:proof_impurities}), we highlight the non-trivial modifications which are required.

\subsubsection*{Volume estimates}

Finally, we derive an analog of the quantitative Lemma~\ref{lem:largest_cluster_quant}, on the size of the largest connected component in a box.
\begin{proposition} \label{prop:largest_cluster_impurities}
Let $K \geq 1$ and $\ve \in (0,1)$. There exist $C, \delta_0 > 0$ (depending on $c$, $\gamma$, $K$, $\ve$) such that if we assume that $\upsilon(m) \leq \delta_0$ for all $m \geq 1$, then the following holds. For all $m \geq 1$, $p > p_c$ with $L(p) \leq K m$ and $\upsilon(m) \log \big( \frac{m}{L(p)} \big) \leq 2 \delta_0$, and all $n \geq 1$,
\begin{equation} \label{eq:largest_cluster_impurities}
\PPh_p^{(m)} \bigg( \bigg\{ \frac{|\lclus_{\Ball_n}|}{\theta(p) |\Ball_n|} \in (1 - \ve, 1 + \ve) \bigg\} \cap \circuitarm \Big( \Ann_{\frac{1}{2} n, n} \, \big| \, \lclus_{\Ball_n} \Big) \bigg) \geq 1 - C \, \frac{m}{n}.
\end{equation}
Furthermore, the same conclusion holds if we include, into the l.h.s. of \eqref{eq:largest_cluster_impurities}, the additional property
$$\Bigg\{ \frac{\big|\lclus_{\Ball_n} \cap \Ball_{\frac{1}{2} n}\big|}{\theta(p) \big|\Ball_{\frac{1}{2} n}\big|} \in (1 - \ve, 1 + \ve) \Bigg\}.$$
\end{proposition}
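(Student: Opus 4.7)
The plan is to follow the strategy underlying Lemma~\ref{lem:largest_cluster_quant}, with $m$ playing the role that the characteristic length $L(p)$ plays in the pure percolation setting. Indeed, Proposition~\ref{prop:exp_decay_impurities} guarantees a stretched exponential decay of crossing events in the impurity process on all scales $\geq m$, so that at these scales the configuration already looks ``very supercritical'', and we expect the largest cluster in $\Ball_n$ to inherit the usual volume concentration with an error of order $m/n$. We may assume throughout that $n \geq c_0 m$ for a large constant $c_0 = c_0(\ve, K, c, \gamma)$, since otherwise the bound holds trivially by taking $C$ sufficiently large.

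First, I would tile $\Ball_n$ by disjoint sub-boxes $B_i = z_i + \Ball_{\ell/2}$ of side $\ell = c_0 m$. For each $i$, call $B_i$ \emph{good} if the $\circuitarm$-type event holds in the surrounding annulus $\Ann_{\ell/2, \ell}(z_i)$, i.e.\ this annulus contains an occupied circuit connected to an occupied crossing of $\Ann_{\ell/4, \ell/2}(z_i)$. By Proposition~\ref{prop:exp_decay_impurities} combined with the standard RSW-type gluing of crossings in overlapping rectangles, each $B_i$ is good with probability at least $1 - c_1 e^{-c_2 \sqrt{c_0}}$, which can be made as close to $1$ as desired. A renormalization argument then shows that, off an event of probability at most $O(m/n)$, all good sub-boxes meeting $\Ball_n$ lie in a common cluster of the impurity percolation, which also contains an occupied circuit in $\Ann_{n/2, n}$. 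This yields both the $\circuitarm$ part of the statement and the uniqueness of the candidate for $\lclus_{\Ball_n}$, and handles the added property in the \emph{furthermore} part by the same argument applied to $\Ball_{n/2}$.

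The core step is then to estimate, inside each good sub-box $B_i$, the number of vertices of the local cluster, and to sum these contributions. Applying Lemma~\ref{lem:largest_cluster_quant} at scale $\ell$ to the underlying pure Bernoulli configuration at parameter $p$ (which is legitimate since $L(p) \leq K m \leq \ell$) shows that, up to an event of probability $O(L(p)/\ell) = O(1/c_0)$, the pure percolation cluster in $B_i$ has volume in $(1 - \ve/2, 1 + \ve/2) \theta(p) |B_i|$. I would then treat the impurities as a perturbation: the total volume they cover in $B_i$ has expectation $O(\upsilon(m) |B_i|)$ by a direct computation from Assumption~\ref{ass:impurities}, and choosing $\delta_0$ small enough compared to $\ve$ keeps the net volume in the prescribed window. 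Summing over good sub-boxes and combining with the connectivity step then yields $|\lclus_{\Ball_n}| \in (1 - \ve, 1 + \ve) \theta(p) |\Ball_n|$ with probability $1 - O(m/n)$.

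The main obstacle is controlling the interaction between impurities and the pure percolation cluster on \emph{all} scales from $L(p)$ up to $m$. An impurity of intermediate radius $\ell'$ with $L(p) \ll \ell' \ll m$ can locally disconnect the cluster and thereby remove sites that were meant to be counted; by Assumption~\ref{ass:impurities}, the expected number of such ``damaging'' impurities of radius $\asymp 2^j L(p)$ contributing to a given sub-box is of order $\upsilon(m)$, essentially uniformly in $j$, and there are $O(\log(m/L(p)))$ relevant dyadic scales. Summing over them explains the appearance of the technical condition $\upsilon(m) \log(m/L(p)) \leq 2 \delta_0$: it ensures that the aggregated disconnecting effect of impurities across all intermediate scales stays a small fraction of $\theta(p) |B_i|$, so that the per-box volume estimate remains inside the prescribed window. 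Assembling these ingredients then gives both conclusions of the proposition.
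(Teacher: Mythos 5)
Your proof takes a genuinely different route from the paper's. The paper adapts the first/second-moment strategy of Lemma~\ref{lem:largest_cluster_quant} directly: it establishes impurity analogs $\chi^{\textrm{fin},(m)}(p) \leq C_1 m^2\theta(p)^2$ and $\chi^{\textrm{cov},(m)}(p) \leq C_2 m^2\theta(p)^2$ (this second bound needs real care, since in the impurity process the events $\{0 \not\lra \din\Ball_n(0)\}$ and $\{v \not\lra \din\Ball_n(v)\}$ are no longer independent even for $\|v\|$ large), then runs the same Chebyshev argument with $m$ in place of $L(p)$, using Corollary~\ref{cor:theta_imp} in place of the identity $\EE_p[|\Cinf\cap\Ball_n|]=|\Ball_n|\theta(p)$. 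Your approach instead tiles by boxes of scale $\asymp m$, controls each box with the pure-Bernoulli volume estimate, and tries to treat impurities as a perturbation. The renormalization scaffolding (circuits in annuli, uniqueness of the surviving component, gluing via Proposition~\ref{prop:exp_decay_impurities}) is reasonable.

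However, the volume step has a genuine gap, and it is exactly the step where the hypothesis $\upsilon(m)\log(m/L(p)) \leq 2\delta_0$ must enter. You compare the volume $O(\upsilon(m)|B_i|)$ covered by impurities with the target $\theta(p)|B_i|$ and conclude that taking $\delta_0$ small compared to $\ve$ is enough. But $\theta(p)$ is not bounded below: as $p\searrow p_c$ the per-box cluster volume $\theta(p)|B_i|$ is much smaller than $\upsilon(m)|B_i|$ for any fixed $\upsilon(m)>0$, so this comparison fails. The correct dimensional count of ``cluster sites removed'' is $O(\upsilon(m)\theta(p)|B_i|)$, which you do not obtain; and even that does not account for disconnection, whereby a single impurity at scale $\ell'\in(L(p),m)$ can sever a piece of the cluster of volume $\asymp \ell'^2\theta(p)$ far exceeding the volume it covers. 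Your attempt to sum this effect over dyadic scales with ``$\upsilon(m)$ damaging impurities per scale'' is also inconsistent with Assumption~\ref{ass:impurities}: the expected number of impurities of radius $\asymp 2^jL(p)$ in a box of side $\asymp m$ scales like $\upsilon(m)(2^jL(p)/m)^{\gamma-2}$, which is \emph{not} uniform in $j$. What is actually at stake is that the proposition normalizes by the Bernoulli density $\theta(p)$, while the natural density in the impurity process is $\PPh_p^{(m)}(0\lra\infty)$; the relation between the two is precisely Corollary~\ref{cor:theta_imp}, whose upper bound carries a factor $(1+\ol\lambda\upsilon(m))^{\log(m/L(p))}$. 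This multiplicative correction, accumulated over $\log(m/L(p))$ dyadic scales of one-arm connectivity, is what the extra hypothesis controls — not an additive bound on removed volume. Your plan never invokes anything playing the role of Corollary~\ref{cor:theta_imp} and therefore cannot close this gap.
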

Recall that the event $\circuitarm ( \Ann_{\frac{1}{2} n, n} \, | \, \lclus_{\Ball_n} )$ requires the existence, inside $\lclus_{\Ball_n}$, of an occupied circuit and an occupied crossing in $\Ann_{\frac{1}{2} n, n}$.

This result is the only one whose proof requires some extra work, compared with the corresponding property in \cite{BN2018}, Proposition~5.5. First, it is not difficult to deduce a quantitative statement of the form \eqref{eq:largest_cluster_impurities} from the proof in \cite{BN2018}, which is essentially based on estimating the expectation and the variance of a well-chosen quantity. However, the proof uses boxes with side lengths $n \gg m (\log m)^2$, and we need to get rid of this hypothesis for our applications. Indeed, when analyzing avalanches for forest fire processes in Section~\ref{sec:proof_FF}, we follow the successive burnt clusters starting from scales of order, roughly, $\frac{m_\infty(\zeta)}{(\log \frac{1}{\zeta})^{\alpha}}$, for arbitrarily small $\alpha > 0$. This leads us to apply the estimate above in boxes with a side length $n \approx m (\log m)^{\beta}$, where $\beta = \frac{55}{41} \, \alpha \to 0$ as $\alpha \to 0$ (see Remark~\ref{rem:hypothesis_box}). In Section~\ref{sec:proof_largest_imp}, we explain how to handle this issue by adapting the proof of Lemma~\ref{lem:largest_cluster_quant}, given in Section~\ref{sec:proof_largest}.

Moreover, Proposition~5.5 requires $L(p)$ and $m$ to remain comparable to each other, while in our iterative procedure, we have to consider $n \gg m \gg L(p)$. The additional condition $\upsilon(m) \log \big( \frac{m}{L(p)} \big) \leq 2 \delta_0$ is here to ensure that $m$ is not too much larger than $L(p)$ (in our setting, it will be clear that this requirement is satisfied). For this purpose, we need to revisit the proof of Proposition~\ref{prop:exp_decay_impurities}, in order to derive an improved lower bound in the case when $m \gg L(p)$.

\begin{remark} \label{rem:large_density}
It is somewhat remarkable that the stability results remain valid even for a positive, small enough, density of impurities. On the other hand, as we explain now, it is easy to see that if this density is too large, the impurities disconnect strongly the lattice as $m \to \infty$, so that the resulting configuration looks subcritical.

To fix ideas, let us choose some arbitrary $c>0$ and $\gamma \in (1,2)$, and assume, in this remark only, that we have: for some $\delta > 0$,
$$\text{for all } v \in V, \:\: \pi_v^{(m)} = \frac{\delta}{m^{\gamma}}, \quad \text{and for all } r \geq 1, \:\: \rho^{(m)} \big( [r,+\infty) \big) = r^{\gamma - 2} e^{-c\frac{r}{m}}.$$
Note that this makes sense, at least for all $m$ large enough (depending on $\delta$), and Assumption~\ref{ass:impurities} is clearly satisfied. For each of the (disjoint) balls $\Ball_{\frac{m}{10}}(m \mathsf{v})$, $\mathsf{v} \in V$, consider the event
$$E_{\mathsf{v}} := \Big\{ \exists v \in \Ball_{\frac{m}{10}}(m \mathsf{v}) \: : \: r_v \geq 3m \Big\}$$
that a ``large'' impurity has its center in this ball. The events $E_{\mathsf{v}}$, $\mathsf{v} \in V$, are independent, and a small computation shows that $\PPh^{(m)}(E_{\mathsf{v}})$ can be made arbitrarily close to $1$ by choosing $\delta$ large enough, uniformly in $m$ (sufficiently large). In particular, we can ensure that $\PPh^{(m)}(E_{\mathsf{v}}) > p_c$, so that a.s., there are infinitely many disjoint circuits of large impurities around $0$. Indeed, for any two vertices $\mathsf{v} \sim \mathsf{v}'$, if both $E_{\mathsf{v}}$ and $E_{\mathsf{v}'}$ occur, then the corresponding large impurities overlap.
\end{remark}

\section{Frozen percolation and forest fires} \label{sec:FP_FF}

In this section, we turn to the frozen percolation and forest fire processes studied in the present paper. First, we introduce them formally in Section~\ref{sec:def_processes}. We then consider, in Section~\ref{sec:successive}, the transformations $R \mapsto t \in (t_c, \infty)$ (alluded to in the Introduction), in each case. Finally, we state the stochastic domination of early fires by independent impurities in Section~\ref{sec:stoch_domin_FF}.

\subsection{Definition of the processes} \label{sec:def_processes}

Let the graph $G = (V_G, E_G)$ be either the triangular lattice $\TT = (V_{\TT}, E_{\TT})$, or a finite subgraph of it (i.e. $E_G$ contains all edges of $E_{\TT}$ connecting two vertices in $V_G$). We now introduce more notation for the processes on $G$ considered in the present paper: the pure birth process, volume-frozen percolation, and forest fires. These processes are all of the form $\omega = (\omega(t))_{t \geq 0}$, where $\omega(t) = (\omega_v(t))_{v \in V_G}$ is a vertex configuration for all $t \geq 0$. Initially, at time $t=0$, every vertex $v \in V_G$ is vacant ($\omega_v(0) = 0$) in each of the processes. It may then be occupied (state $\omega_v = 1$) at later times, or (for some of the processes) it can be in a third state $\omega_v = -1$, whose meaning we explain later on. In the following, we denote by $\cluster_t(v)$ the cluster of $v$ at any time $t \geq 0$, i.e. the occupied cluster $\cluster(v)$ in the configuration $\omega(t)$ (recall that it is empty if $v$ is not occupied).

\subsubsection*{Pure birth process}

First, in the pure birth process each vertex $v \in V_G$, independently of the other ones, becomes occupied ($\omega_v = 1$) at rate $1$, and then simply remains occupied. Obviously, at each given time $t \geq 0$ the vertex configuration $\omega(t) = (\omega_v(t))_{v \in V_G}$ is distributed as Bernoulli site percolation with parameter
\begin{equation}
p(t) := 1 - e^{-t}.
\end{equation}
For all $t \geq 0$, the configuration $\omega(t)$ belongs to $\Omega_G := \{0,1\}^{V_G}$. When referring to this process, we simply use the notation $\PP$.

\subsubsection*{Frozen percolation}

Next, we consider frozen percolation, with parameter $N \geq 1$. In this process, each vertex $v \in V_G$ again tries to become occupied ($\omega_v = 1$) at rate $1$, but it is prevented from doing so (and thus remains vacant) if one of its neighbors belongs to an occupied cluster with volume at least $N$. In other words, occupied connected components keep growing as long as they contain at most $N-1$ vertices: if such a component happens to reach a volume $N$ or more, its growth is immediately stopped, and the vertices along its outer vertex boundary, which are all vacant, stay in this state forever. In this process, we say that a vertex $v$ is frozen if it belongs to an occupied cluster of volume $\geq N$ (in particular, $v$ has to be occupied). Again, $(\omega_v(t))_{v \in V_G} \in \Omega_G = \{0,1\}^{V_G}$ for all $t \geq 0$. Recall that in the Introduction, we introduced the notation $\PP_N^{(G)}$ for the probability measure governing this process.

In Section~\ref{sec:scale_minf_FP} (and only in this section), we also consider temporarily a process called \emph{frozen percolation with modified boundary rules}, where now each vertex $v \in V_G$ can be in three states: vacant ($\omega_v = 0$), occupied ($\omega_v = 1$), or frozen ($\omega_v = -1$). We denote by $\ol \Omega_G := \{-1,0,1\}^{V_G}$ the corresponding set of vertex configurations. This process is defined in a similar way as the previous one, except that when an occupied cluster reaches a volume $N$ and freezes, all its vertices become frozen, while the vertices along its outer boundary remain unaffected: these boundary vertices stay vacant immediately after the freezing time, and they may become occupied (and then possibly freeze) at later times. More precisely, when a vacant vertex $v$ tries to change its state, say at time $t$, we consider the union of the occupied clusters adjacent to $v$ at time $t^-$:
$$\ol{\cluster}_{t^-}(v) := \bigcup_{\substack{v' \in V_G\\ v' \sim v}} \cluster_{t^-}(v').$$
If $|\{v\} \cup \ol{\cluster}_{t^-}(v)|\geq N$, then all vertices in $\{v\} \cup \ol{\cluster}_{t^-}(v)$ become frozen at time $t$. Otherwise, $v$ simply becomes occupied at time $t$ (and obviously, $|\cluster_t(v)| \leq N-1$).

\subsubsection*{Forest fires with / without recovery}

Finally, we introduce forest fire processes, with parameter $\zeta > 0$. We consider two variants, with or without recovery, abbreviated as FFWR and FFWoR respectively. Again, the vertex configuration at each time $t \geq 0$ belongs to $\ol \Omega_G$. In both processes, every vertex becomes occupied at rate $1$, and is hit by lightning at rate $\zeta$, and we assume that the corresponding Poisson processes (the birth and lightning processes) are independent. When a vertex $v \in V_G$ is hit at a time $t$, nothing happens if $v$ is vacant, while if $v$ is occupied, all vertices in its occupied cluster $\cluster_t(v)$ become vacant (i.e. with state $0$) for the forest fire process with recovery, or burnt (state $-1$) for the process without recovery, instantaneously. Burnt vertices remain so in the future, while vacant vertices become occupied at later birth times. We use the notations $\PP_\zeta^{(G)}$ and $\ol{\PP}_\zeta^{(G)}$ for the FFWoR and FFWR processes, respectively, and denote them by $\sigma = (\sigma(t))_{t \geq 0}$ and $\ol{\sigma} = (\ol{\sigma}(t))_{t \geq 0}$.

\subsubsection*{Additional comments}

Note that $N$-frozen percolation can be represented as a finite-range interacting particle system (each vertex interacts only with the vertices within a distance $N$ from it). Hence, this process can be constructed using the general theory of such systems (see e.g. \cite{Li2005}). For forest fire processes on $\TT$, existence is much less clear, and can be established using arguments by D\"urre \cite{Du2006}.

As far as processes on a finite subgraph $G$ of $\TT$ are concerned, they do not necessarily coincide with the restriction to $G$ of the corresponding full-lattice processes (except of course in the case of the pure birth process). In the following, we consider all processes above as being coupled, in an obvious way, via the same family of Poisson processes $(\varsigma_v(t))_{t \in [0,\infty)}$, $v \in V_{\TT}$, of birth times (each with intensity $1$).

\subsection{Successive freezings / burnings} \label{sec:successive}

Let $t_c := - \log(1 - p_c) = \log 2$ be the time at which the percolation parameter $p(t_c)$ in the pure birth process equals $p_c = \frac{1}{2}$. With a slight abuse of notation, we now write $\PP_t = \PP_{p(t)}$ when referring to events for the pure birth process. In particular, we write $\theta(t) = \theta(p(t))$ and $L(t) = L(p(t))$ for $t \in [0,\infty)$, and we set $\theta(\infty) = \lim_{t \to \infty} \theta(t) = 1$ and $L(\infty) = \lim_{t \to \infty} L(t) = 0$ (recall that we use the regularized version of the characteristic length $L$). We also let $c_{\TT} := \frac{2}{\sqrt{3}}$, such that $|\Ball_n| \sim c_{\TT} \, n^2$ as $n \to \infty$.

We introduce a transformation $t \in (t_c, \infty) \mapsto \hat{t} \in (t_c, \infty)$ for volume-frozen percolation and forest fire processes, which describes the successive freezings / burnings, and already appeared (essentially) in \cite{BKN2015, BN2018}. We need to distinguish frozen percolation and forest fires, since the precise definition differs between these two cases.

\subsubsection*{Frozen percolation}

Consider first volume-frozen percolation, and let $N \geq 1$ be given. Since $\theta$ is continuous and strictly increasing on $[t_c,\infty)$, it is a bijection from $[t_c,\infty)$ to $[0,1)$. Hence, for all $R > c_{\TT}^{-\frac{1}{2}} \sqrt{N}$, there exists a unique $t \in (t_c,\infty)$ satisfying
\begin{equation} \label{eq:def_next_FP}
c_{\TT} R^2 \theta(t) = N,
\end{equation}
which we denote by $\next_N(R)$. Roughly speaking, $\next_N(R)$ gives the (approximate) time around which we expect the first cluster to freeze in a box with side length $R$ (at least if $R \ll m_{\infty}$, where the scale $m_{\infty}$ is introduced below). Clearly $\next_N$ is strictly decreasing. For convenience, we also set $\next_N(R) = \infty$ for $R \in [0, c_{\TT}^{-\frac{1}{2}} \sqrt{N}]$.

We then define $t \mapsto \hat{t} = \hat{t}(t,N) > t_c$ by $\hat{t} = \next_N(L(t))$, i.e. via the relation
\begin{equation} \label{eq:def_t_hat_FP}
c_{\TT} L(t)^2 \theta(\hat{t}) = N.
\end{equation}
This time $\hat{t}$ is well-defined for all $t \in (t_c, L^{-1}(c_{\TT}^{-\frac{1}{2}} \sqrt{N}))$ ($L^{-1}$ denoting the inverse function of $L$, seen as a function of time, on $(t_c, \infty)$), and we set $\hat{t} = \infty$ for $t \geq L^{-1}(c_{\TT}^{-\frac{1}{2}} \sqrt{N})$.

For future reference, note that $t \mapsto \hat{t}$ is strictly increasing on $(t_c, L^{-1}(c_{\TT}^{-\frac{1}{2}} \sqrt{N}))$, with $\hat{t} \to t_c$ as $t \searrow t_c$, and $\hat{t} \to \infty$ as $t \nearrow L^{-1}(c_{\TT}^{-\frac{1}{2}} \sqrt{N})$. Moreover,
$$c_{\TT} L(t)^2 \theta(t) = (t-t_c)^{-2 \cdot \frac{4}{3} + \frac{5}{36} + o(1)} \to \infty \quad \text{as $t \searrow t_c$}$$
(from \eqref{eq:exp_theta_L}), and $c_{\TT} L(t)^2 \theta(t) < N$ for $t = L^{-1}(c_{\TT}^{-\frac{1}{2}} \sqrt{N})$, so the equation $c_{\TT} L(t)^2 \theta(t) = N$, i.e. $\hat{t} = t$, has at least one solution $t \in (t_c, L^{-1}(c_{\TT}^{-\frac{1}{2}} \sqrt{N}))$. We thus introduce
\begin{equation} \label{eq:def_t_infty_FP}
t_{\infty} = t_{\infty}(N) := \sup \{ t > t_c \: : \: \hat{t} = t \} \in (t_c, L^{-1}(c_{\TT}^{-\frac{1}{2}} \sqrt{N}))
\end{equation}
(note that there does not seem to be any reason why $t \mapsto L(t)^2 \theta(t)$ should be strictly decreasing, though it is ``essentially the case''), and $m_{\infty} := L(t_{\infty})$. It then follows immediately from \eqref{eq:exp_theta_L} that
\begin{equation} \label{eq:exp_t_infty_FP}
t_{\infty}(N) = t_c + N^{- \frac{36}{91} + o(1)} \quad \text{and} \quad m_{\infty}(N) = N^{\frac{48}{91} + o(1)}
\end{equation}
as $N \to \infty$. From the definition \eqref{eq:def_t_infty_FP} of $t_{\infty}$, we have: for all $t > t_{\infty}$, $\hat{t} > t$.

\subsubsection*{Forest fires}

For forest fire processes, we define $R \mapsto \next_{\zeta}(R)$, for each given $\zeta > 0$, as follows: for all $R > 0$, $\next_{\zeta}(R)$ is the unique $t > t_c$ satisfying
\begin{equation} \label{eq:def_next_FF}
c_{\TT} R^2 \theta(t) \big( t - t_c \big) = \zeta^{-1}.
\end{equation}
In this case, the time $\next_{\zeta}(R)$ is well-defined for all $t > t_c$ (since $t \mapsto \theta(t) (t - t_c)$ is an increasing bijection from $[t_c,\infty)$ to $[0,\infty)$). It has a similar interpretation as for volume-frozen percolation.

As before, we then introduce $t \mapsto \hat{t} = \hat{t}(t,\zeta) > t_c$ via the relation $\hat{t} = \next_{\zeta}(L(t))$, i.e.
\begin{equation} \label{eq:def_t_hat_FF}
c_{\TT} L(t)^2 \theta(\hat{t}) \big( \hat{t} - t_c \big) = \zeta^{-1}.
\end{equation}

Observe that $t \mapsto \hat{t}$ is strictly increasing on $(t_c, \infty)$, with $\hat{t} \to t_c$ as $t \searrow t_c$ and $\hat{t} \to \infty$ as $t \to \infty$. We can again deduce from \eqref{eq:exp_theta_L} that the equation $\hat{t} = t$ has at least one solution $t \in (t_c, \infty)$ (note that by definition, $L(t)$ tends to $0$ exponentially fast as $t \to \infty$), so we introduce
\begin{equation} \label{eq:def_t_infty_FF}
t_{\infty} = t_{\infty}(\zeta) := \sup \{ t > t_c \: : \: \hat{t} = t \} \in (t_c, \infty)
\end{equation}
and $m_{\infty} := L(t_{\infty})$. We can then get from \eqref{eq:exp_theta_L} that
\begin{equation} \label{eq:exp_t_infty_FF}
t_{\infty}(\zeta) = t_c + \zeta^{\frac{36}{55} + o(1)} \quad \text{and} \quad m_{\infty}(\zeta) = \zeta^{- \frac{48}{55} + o(1)}
\end{equation}
as $\zeta \searrow 0$. Again, the definition \eqref{eq:def_t_infty_FF} of $t_{\infty}$ implies that: for all $t > t_{\infty}$, $\hat{t} > t$.

\subsubsection*{Iteration exponent}

The next lemma studies the transformation $R \mapsto \next_{\cdot}(R)$, comparing both $L(\next_{\cdot}(R))$ and $R$ to ${m_{\infty}}$. It will be central in our reasonings, in order to control the speed at which one ``moves down'' the scales, starting from $m_{\infty}$.

\begin{lemma} \label{lem:one_iteration}
Consider $N$-volume-frozen percolation. For all $\ve > 0$, there exist $0 < C_1 < C_2$ (depending on $\ve$) such that: for all $N \geq 1$ and $2 c_{\TT}^{-\frac{1}{2}} \sqrt{N} \leq r \leq R$,
\begin{equation} \label{eq:ratioL}
C_1 \bigg( \frac{r}{R} \bigg)^{\aval + \ve} \leq \frac{L(\next_N(r))}{L(\next_N(R))} \leq C_2 \bigg( \frac{r}{R} \bigg)^{\aval - \ve},
\end{equation}
where $\aval = \aval^{\textnormal{FP}} = \frac{96}{5}$. Moreover, the same statement holds for forest fire processes, but with $\aval = \aval^{\textnormal{FF}} = \frac{96}{41}$ instead (i.e. for the corresponding map $\next_{\zeta}$, and for all $\zeta > 0$ and $\frac{1}{\sqrt{\zeta}} \leq r \leq R$).
\end{lemma}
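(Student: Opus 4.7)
The plan is to reduce both cases to estimating the ratio $\lambda := L(t_r)/L(t_R)$, where $t_r := \next_{\cdot}(r)$ and $t_R := \next_{\cdot}(R)$ (so that $t_c < t_R \leq t_r$ and $\lambda \in (0,1]$ by monotonicity of $L$ on $(t_c,\infty)$). First, I will apply the defining equation of $\next_{\cdot}$ to express a ratio of near-critical quantities at $t_r$ versus $t_R$ as a power of $R/r$, then use the asymptotic equivalences from Section~\ref{sec:classical} to rewrite it purely in terms of $\pi_1$, $\pi_4$ and $L$, and finally invoke quasi-multiplicativity \eqref{eq:quasi_mult} together with the uniform arm-exponent bounds \eqref{eq:uniform_arm_exp} to read off $\lambda$ as a power of $r/R$. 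In the frozen percolation case, \eqref{eq:def_next_FP} directly yields $\theta(t_r)/\theta(t_R) = (R/r)^2$, and \eqref{eq:equiv_theta} converts this into $\pi_1(L(t_r))/\pi_1(L(t_R)) \asymp (R/r)^2$. By \eqref{eq:quasi_mult} the left-hand side is $\asymp 1/\pi_1(L(t_r), L(t_R))$, so $\pi_1(L(t_r), L(t_R)) \asymp (r/R)^2$; plugging \eqref{eq:uniform_arm_exp} with $\alpha_1 = \tfrac{5}{48}$ and inverting gives $\lambda \asymp (r/R)^{96/5 \pm O(\ve)}$, i.e., $\aval^{\textrm{FP}} = 2/\tfrac{5}{48} = \tfrac{96}{5}$.

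For forest fires the relation \eqref{eq:def_next_FF} gives instead $\theta(t_r)(t_r-t_c) / [\theta(t_R)(t_R-t_c)] = (R/r)^2$, so the combined quantity $\theta(t)(t-t_c)$ must be estimated. Using $p(t) - p_c \asymp t - t_c$ (from $p(t) = 1 - e^{-t}$) together with \eqref{eq:equiv_L} gives $(t - t_c) \asymp 1/[L(t)^2 \pi_4(L(t))]$, and multiplying by \eqref{eq:equiv_theta} produces $\theta(t)(t-t_c) \asymp \pi_1(L(t)) / [L(t)^2 \pi_4(L(t))]$. Taking ratios at $t_r$ and $t_R$ I obtain
\[
\frac{\pi_1(L(t_r))}{\pi_1(L(t_R))} \cdot \frac{L(t_R)^2 \, \pi_4(L(t_R))}{L(t_r)^2 \, \pi_4(L(t_r))} \asymp \frac{R^2}{r^2}.
\]
Applying \eqref{eq:quasi_mult} and \eqref{eq:uniform_arm_exp} with $\alpha_1 = \tfrac{5}{48}$ and $\alpha_4 = \tfrac{5}{4}$, the left-hand side becomes $\lambda^{-5/48} \cdot \lambda^{-2} \cdot \lambda^{5/4} = \lambda^{-41/48}$ up to factors of $\lambda^{\pm\ve}$. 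Inverting gives $\lambda \asymp (r/R)^{96/41}$, with $\aval^{\textrm{FF}} = 2 \cdot \tfrac{48}{41} = \tfrac{96}{41}$.

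The main subtlety is that the equivalences \eqref{eq:equiv_theta} and \eqref{eq:equiv_L} are stated only as $p \to p_c$, whereas for $r$ close to the lower boundary the times $t_r, t_R$ can lie substantially above $t_c$. However, the hypothesis $r \geq 2 c_{\TT}^{-1/2} \sqrt{N}$ (respectively $r \geq 1/\sqrt{\zeta}$) forces $\theta(t_r) \leq \tfrac{1}{4}$ (respectively $\theta(t_r)(t_r - t_c) \leq c_{\TT}^{-1}$), pinning both $t_r$ and $t_R$ inside a fixed compact subinterval of $(t_c, \infty)$ that is independent of $N$ or $\zeta$. On such a compact interval all the positive continuous quantities appearing above are bounded away from $0$ and $\infty$, so the asymptotic equivalences extend uniformly there without affecting the multiplicative constants. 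Granted this uniformization, the remainder of the proof is just careful bookkeeping of the $\ve$-errors generated by \eqref{eq:uniform_arm_exp}; tightening $\ve$ at the end delivers \eqref{eq:ratioL} in the stated form.
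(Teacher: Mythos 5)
Your proof is essentially the same as the paper's: you start from the defining equation of $\next_{\cdot}$, use \eqref{eq:equiv_theta} (and for forest fires also \eqref{eq:equiv_L}) to convert the ratio of near-critical quantities into ratios of $\pi_1$, $\pi_4$ and $L$, then apply quasi-multiplicativity \eqref{eq:quasi_mult} and the uniform arm-exponent bounds \eqref{eq:uniform_arm_exp} to read off the exponent; the paper's computation is identical, and the paper itself does not comment on the uniformity issue you raise at the end. That said, your justification of the uniformity is incorrect as stated: the lower bound $r \geq 2 c_{\TT}^{-1/2}\sqrt{N}$ only gives a \emph{uniform upper bound} $t_r, t_R \leq t^*$ for some universal $t^*$; it does \emph{not} pin these times inside a compact subinterval of $(t_c,\infty)$, because $t_R \searrow t_c$ as $R \to \infty$, and on $(t_c, t^*]$ the quantities $L(t) \to \infty$ and $\theta(t) \to 0$ are certainly not bounded away from $0$ and $\infty$. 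The correct version of the argument is the standard one: the equivalences \eqref{eq:equiv_theta} and \eqref{eq:equiv_L} hold (with uniform constants) on a neighborhood $(t_c, t_c+\delta]$ of $t_c$ by the stated asymptotics, while on $[t_c+\delta, t^*]$ all quantities involved are continuous, strictly positive, and finite on a genuinely compact interval, so both sides of each equivalence are bounded above and below there and the comparison constants can be absorbed. With that repair, the remark is sound; the rest of the proof is fine and matches the paper.
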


\begin{proof}[Proof of Lemma \ref{lem:one_iteration}]
We first consider the case of frozen percolation. We have $c_{\TT} r^2 \theta(\next_N(r)) = N = c_{\TT} R^2 \theta(\next_N(R))$ (from \eqref{eq:def_t_hat_FP}), so
\begin{equation}
\frac{r^2}{R^2} = \frac{\theta(\next_N(R))}{\theta(\next_N(r))} \asymp \frac{\pi_1(L(\next_N(R)))}{\pi_1(L(\next_N(r)))} \asymp \pi_1(L(\next_N(r)), L(\next_N(R)))
\end{equation}
(using \eqref{eq:equiv_theta} and \eqref{eq:quasi_mult}). The desired result now follows from \eqref{eq:uniform_arm_exp} with $\sigma = (o)$.

In the case of forest fires, we have $c_{\TT} r^2 \theta(\next_{\zeta}(r)) \big| \next_{\zeta}(r) - t_c \big| = \zeta^{-1} = c_{\TT} R^2 \theta(\next_{\zeta}(R)) \big| \next_{\zeta}(R) - t_c \big|$ (from \eqref{eq:def_t_hat_FF}). Hence, it follows from \eqref{eq:equiv_theta} and \eqref{eq:equiv_L} that
\begin{equation}
\frac{r^2}{R^2} = \frac{\theta(\next_{\zeta}(R))}{\theta(\next_{\zeta}(r))} \cdot \frac{\big| \next_{\zeta}(R) - t_c \big|}{\big| \next_{\zeta}(r) - t_c \big|} \asymp  \frac{\pi_1(L(\next_{\zeta}(R)))}{\pi_1(L(\next_{\zeta}(r)))} \cdot \frac{L(\next_{\zeta}(r))^2 \pi_4(L(\next_{\zeta}(r)))}{L(\next_{\zeta}(R))^2 \pi_4(L(\next_{\zeta}(R)))}.
\end{equation}
Hence, using \eqref{eq:quasi_mult},
\begin{equation}
\frac{r^2}{R^2} \asymp \pi_1(L(\next_{\zeta}(r)), L(\next_{\zeta}(R))) \cdot \frac{L(\next_{\zeta}(r))^2}{L(\next_{\zeta}(R))^2} \cdot \pi_4(L(\next_{\zeta}(r)), L(\next_{\zeta}(R)))^{-1}.
\end{equation}
This allows us to conclude, using twice \eqref{eq:uniform_arm_exp}, with $\sigma = (o)$ and $\sigma = (ovov)$.
\end{proof}

In both cases, we can get from $L(\next_{\cdot}(m_{\infty})) = m_{\infty}$ that: for all $r \leq m_{\infty}$,
\begin{equation} \label{eq:compar_minf}
C_1 \bigg( \frac{r}{m_{\infty}} \bigg)^{\aval + \ve} \leq \frac{L(\next_{\cdot}(r))}{m_{\infty}} \leq C_2 \bigg( \frac{r}{m_{\infty}} \bigg)^{\aval - \ve}.
\end{equation}
This yields in particular
\begin{equation} \label{eq:diam_apart}
\frac{L(\next_{\cdot}(r))}{r} = \frac{L(\next_{\cdot}(r))}{m_{\infty}} \cdot \bigg( \frac{r}{m_{\infty}} \bigg)^{-1} \leq C_2 \bigg( \frac{r}{m_{\infty}} \bigg)^{\aval - 1 - \ve},
\end{equation}
so the fact that $\aval > 1$ ensures that for the successive frozen / burnt clusters, their typical diameters get further and further apart.

\begin{remark} \label{rem:other_lattices2}
We can also consider other two-dimensional lattices as in Remark~\ref{rem:other_lattices}, e.g. the square lattice. If the a-priori bounds \eqref{eq:apriori_arm} are available, as a substitute for \eqref{eq:uniform_arm_exp}, the reader can check that the proof of Lemma~\ref{lem:one_iteration} yields that \eqref{eq:ratioL} can be replaced by
$$C_1 \bigg( \frac{r}{R} \bigg)^{\frac{2}{\alpha}} \leq \frac{L(\next_N(r))}{L(\next_N(R))} \leq C_2 \bigg( \frac{r}{R} \bigg)^4 \quad \text{and} \quad C'_1 \bigg( \frac{r}{R} \bigg)^{\frac{1}{\alpha}} \leq \frac{L(\next_{\zeta}(r))}{L(\next_{\zeta}(R))} \leq C'_2 \bigg( \frac{r}{R} \bigg)^{\frac{4}{3 - 2 \alpha'}},$$
in the case of frozen percolation and forest fires, respectively. In particular, the exponents in the r.h.s. are $> 1$ for both processes, so that the previous observation about successive frozen or burnt clusters (\eqref{eq:diam_apart} and below) applies.

Furthermore, \eqref{eq:exp_t_infty_FP} and \eqref{eq:exp_t_infty_FF} become (resp.)
$$C_3 N^{\frac{1}{2 - \alpha}} \leq m_{\infty}(N) \leq C_4 N^{\frac{2}{3}} \quad \text{and} \quad C'_3 \zeta^{- \frac{1}{2 - 2 \alpha}} \leq m_{\infty}(\zeta) \leq C'_4 \zeta^{- \frac{2}{2 \alpha' + 1}}$$
for some $C_3, C_4, C'_3, C'_4 > 0$. Observe that the exponents $\frac{1}{2 - \alpha}$ and $\frac{1}{2 - 2 \alpha}$ in the l.h.s. are both $> \frac{1}{2}$.
\end{remark}

\subsection{Stochastic domination by percolation with impurities} \label{sec:stoch_domin_FF}

We now explain how the percolation process with impurities can be used as a stochastic lower bound for forest fire processes. More precisely, we compare it to forest fires where ignitions are stopped at a time $T \in (0,t_c)$ (i.e. we ignore ignitions occurring at later times $s > T$), without or with recovery. We denote these processes by $\sigma^{[T]} = (\sigma^{[T]}(t))_{t \geq 0}$ and $\ol{\sigma}^{[T]} = (\ol{\sigma}^{[T]}(t))_{t \geq 0}$, respectively. In particular, $\sigma^{[T]}(t) = \sigma(t)$ for all $t \in [0,T]$, and similarly for $\ol{\sigma}$. In the result below, we let
$$\rad(C) := \inf \{ n \geq 0 \: : \: C \subseteq \Ball_n\}$$
be the radius, seen from $0$, of a subset $C \subseteq V_{\TT}$.

\begin{lemma} \label{lem:stoch_domin}
Assume that the graph $G=(V,E)$ is finite. Let $\zeta \in (0,\frac{1}{2})$, $\bar{\ve} \in (0, \frac{t_c}{2})$, and $m = L(t_c - \bar{\ve})$. Consider percolation with impurities obtained from $\pi_v^{(m)} = t_c \, \zeta \cdot e^{t_c \, \zeta}$ for all $v \in V$, and the distribution $\rho^{(m)}$ of $\rad(\cluster(0))$ in Bernoulli percolation with parameter $\tau$, where $\tau$ is uniform in $[0,t_c - \bar{\ve}]$ (denote this process by $\tilde{\omega}$).
\begin{enumerate}[(i)]
\item For all $t \geq t_c - \bar{\ve}$, $(\ind_{\sigma^{[t_c - \bar{\ve}]}_v(t) = 1})_{v \in V}$ stochastically dominates $(\tilde{\omega}_v(t))_{v \in V}$.

\item Moreover, there exists $c > 0$ (universal) such that the following holds. For any $\ve \in (0,\frac{7}{48})$, if $m \leq m_{\infty}(\zeta)$, then Assumption~\ref{ass:impurities} is satisfied with $c$,
$$\gamma = \frac{55}{48} - \ve \:\: ( \in (1,2) ), \quad \text{and} \quad \upsilon(m) = c' \bigg( \frac{m}{m_{\infty}} \bigg)^{\frac{55}{48} - \ve},$$
for some $c' = c'(\ve) > 0$.
\end{enumerate}
\end{lemma}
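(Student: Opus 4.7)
The proof has two parts that call for quite different techniques. Part (i) is a coupling argument; part (ii) is an exponent comparison.

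For part (i), the plan is to build the impurity process and the stopped forest fire from a common set of Poisson processes, and to verify that the burnt region at time $t_c - \bar{\ve}$ is contained in the union of impurities. Since $x e^x \geq 1 - e^{-x}$ for $x \geq 0$, one has $\pi^{(m)} = t_c \zeta e^{t_c \zeta} \geq 1 - e^{-\zeta(t_c - \bar{\ve})}$, so the indicator $B_v$ of ``at least one ignition at $v$ in $[0, t_c - \bar{\ve}]$'' can be coupled with the impurity indicator $X_v$ so that $B_v \leq X_v$ a.s. Conditional on $B_v = 1$, the first ignition time $s_v$ has density $\zeta e^{-\zeta s}/(1 - e^{-\zeta(t_c - \bar{\ve})})$; the elementary observation that $t \mapsto (1 - e^{-\zeta t})/t$ is decreasing shows that $s_v$ is stochastically smaller than a uniform variable on $[0, t_c - \bar{\ve}]$, so I can quantile-couple $s_v \leq \tau_v$ a.s., where $\tau_v$ is the uniform time used by the impurity. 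Set the impurity radius $r_v$ to be the radius of the pure-birth cluster of $v$ at time $\tau_v$; averaging over $\tau_v$ gives exactly the marginal distribution $\rho^{(m)}$, and since $s_v \leq \tau_v$ and pure-birth clusters are monotone increasing in time, the burnt cluster of $v$ at time $s_v$ (a subset of its pure-birth cluster at $s_v$) sits inside the pure-birth cluster at $\tau_v$, hence inside $\Ball_{r_v}(v)$. Therefore the burnt sites at time $t_c - \bar{\ve}$ are contained in the union of impurities, so for $t \geq t_c - \bar{\ve}$, any vertex occupied in $\tilde{\omega}(t)$ (born by $t$ and outside every impurity) is also occupied in $\sigma^{[t_c - \bar{\ve}]}(t)$, giving the stochastic domination.

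For part (ii), first note $\pi^{(m)} \asymp \zeta$. To estimate $\rho^{(m)}([r, +\infty))$, I would start from the subcritical radius bound $\PP_\tau(\rad(\cluster(0)) \geq r) \lesssim \pi_1(r \wedge L(\tau)) \, e^{-c'' r/L(\tau)}$ (standard, from \eqref{eq:near_critical_arm}, \eqref{eq:exp_decay} and \eqref{eq:quasi_mult}), and integrate over $\tau$ uniform in $[0, t_c - \bar{\ve}]$. The change of variable $u = L(\tau)$ together with \eqref{eq:equiv_L} gives $|d\tau| \asymp |d(1/(u^2 \pi_4(u)))|$, and a routine integral (split at $u = r$, substitute $v = r/u$ in the small-$u$ part) yields
\begin{equation*}
\rho^{(m)}([r, +\infty)) \lesssim \frac{\pi_1(r \wedge m)}{(r \wedge m)^2 \, \pi_4(r \wedge m)} \, e^{-c'' r/m}.
\end{equation*}
Combined with $\zeta \asymp \pi_4(m_\infty)/\pi_1(m_\infty)$ from \eqref{eq:zeta_minf} and quasi-multiplicativity \eqref{eq:quasi_mult},
\begin{equation*}
\pi^{(m)} \, \rho^{(m)}([r, +\infty)) \lesssim \frac{1}{(r \wedge m)^2} \cdot \frac{\pi_4(r \wedge m, \, m_\infty)}{\pi_1(r \wedge m, \, m_\infty)} \cdot e^{-c'' r/m}.
\end{equation*}
For $r \leq m$, applying \eqref{eq:uniform_arm_exp} with a small parameter $\delta < \ve/2$ bounds the ratio of arm probabilities by $C (r/m_\infty)^{55/48 - 2\delta}$, and $r/m_\infty \leq 1$ gives in turn $C (r/m_\infty)^{55/48 - \ve}$, matching Assumption~\ref{ass:impurities} with $\gamma = 55/48 - \ve$ and $\upsilon(m) = c'(\ve) (m/m_\infty)^{55/48 - \ve}$. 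For $r > m$, the exponential $e^{-c'' r/m}$ absorbs the polynomial $(r/m)^{41/48 + \ve}$ produced by the comparison, provided the universal constant $c$ in Assumption~\ref{ass:impurities} is chosen strictly smaller than $c''$.

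The main obstacle, in my view, is the coupling in part (i): ensuring that the impurity radius dominates the burnt-cluster radius. It rests on two monotonicities --- of pure-birth clusters in time, and of $s_v$ versus $\tau_v$ in the stochastic order --- that fit together only because the impurity time distribution is designed to be uniform on $[0, t_c - \bar{\ve}]$ rather than matching the Poisson first-event distribution (which would be too concentrated near $0$). For part (ii), the delicate point is preventing the $o(1)$ corrections in \eqref{eq:uniform_arm_exp} from degrading the exponent below $55/48 - \ve$, which the systematic comparison to $m_\infty$ via quasi-multiplicativity accomplishes.
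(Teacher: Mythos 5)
Your part (ii) matches the paper's computation essentially step by step: the paper also invokes the two-regime bounds on $\rho^{(m)}([r,+\infty))$ (polynomial $\pi_1(r)/(r^2\pi_4(r))$ for $r \leq m$, exponential cutoff for $r \geq m$), then uses $\zeta \asymp \pi_4(m_\infty)/\pi_1(m_\infty)$, quasi-multiplicativity, and \eqref{eq:uniform_arm_exp} to arrive at $\upsilon(m) = c'(m/m_\infty)^{55/48-\ve}$. The only difference is that the paper imports those bounds on $\rho^{(m)}$ directly from the proof of Lemma~6.8 of \cite{BN2018}, whereas you re-derive them by integrating the subcritical one-arm estimate over $\tau$; that is fine.

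Part (i), however, has a genuine gap. You couple $\tau_v$ with the \emph{first} ignition time $s_v$ at $v$, and conclude that the burnt cluster of $v$ at time $s_v$ is inside $\Ball_{r_v}(v)$. But the ignition that actually triggers a burn at $v$ need not be the first one. In the FFWoR process, the first ignition at $v$ does nothing if $v$ is not yet born at $s_v$; in that case $v$ may be born later and then be ignited while occupied at some $s > s_v$, burning a cluster that is a subset of the pure-birth cluster of $v$ at time $s$, not at time $s_v$. Your coupling only gives $\tau_v \geq s_v$, not $\tau_v \geq s$, and worse, on the event $s_v < B_v \leq \tau_v$ the impurity radius $r_v$ may well be $0$ (the Bernoulli cluster of $0$ at parameter $p(\tau_v)$ can be empty), while the burnt cluster at time $s$ can be macroscopic. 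A concrete obstruction: on a path $v_1\sim v_2\sim v_3$, take an ignition at $v_1$ at $0.1$, birth of $v_1$ at $0.2$, ignition at $v_1$ at $0.3$, birth of $v_2$ at $0.05$, birth of $v_3$ at $0.25$, no other ignitions; then $v_3$ burns at time $0.3$, but your covering collection is $\{$pure-birth cluster of $v_1$ at $s_{v_1}=0.1\} = \emptyset$, which misses $v_3$. A corrected coupling must compare $\tau_v$ with the first ignition \emph{after} $v$'s birth (conditioned suitably), and one can check this quantity is \emph{not} stochastically dominated by a uniform on $[0,t_c-\bar\ve]$, so the required inequality has to exploit the atom of $\rho^{(m)}$ at radius $0$ together with the slack in $\pi^{(m)}$ in a more careful way. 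The paper sidesteps all of this by citing Lemma~6.2 and the final paragraph of the proof of Lemma~6.8 in \cite{BN2018}, which is where the coupling is actually constructed; your reconstruction would need that extra care to close.
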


Note that this result also holds for $\ol{\sigma}$ instead of $\sigma$ (from the same proof), but we will not use this fact later.

\begin{proof}[Proof of Lemma \ref{lem:stoch_domin}]
(i) The stochastic domination follows directly by combining two ingredients from \cite{BN2018}: Lemma~6.2, and the last paragraph in the proof of Lemma~6.8.

\bigskip

(ii) We can then use the computation in the proof of Lemma~6.8 to check that Assumption~\ref{ass:impurities} is satisfied, as we explain now. This computation shows the existence of universal constants $C, C' > 0$ such that: for all $r \geq 1$,
\begin{equation} \label{eq:upper_bd_rho}
\rho^{(m)}([r, +\infty)) \leq C \frac{\pi_1(m)}{m^2 \pi_4(m)} e^{- C' \frac{r}{m}} \: \: \: (r \geq m), \quad \text{and} \: \: \: \rho^{(m)}([r, +\infty)) \leq C \frac{\pi_1(r)}{r^2 \pi_4(r)} \: \: \: (r \leq m).
\end{equation}
Furthermore, it follows from \eqref{eq:def_t_hat_FF} and \eqref{eq:def_t_infty_FF} that
$$\zeta^{-1} \asymp L(t_{\infty})^2 \theta(t_{\infty}) \big( t_{\infty} - t_c \big).$$
Combining it with \eqref{eq:equiv_L} and \eqref{eq:equiv_theta} (and using $m_{\infty} = L(t_{\infty})$), we obtain
\begin{equation} \label{eq:estim_zeta}
\zeta \asymp \frac{\pi_4(m_{\infty})}{\pi_1(m_{\infty})}.
\end{equation}
For all $v \in V$, observe that
\begin{equation} \label{eq:upper_bd_pi}
\pi_v^{(m)} \leq 2 t_c \cdot \zeta
\end{equation}
from the assumption $\zeta < \frac{1}{2}$. We distinguish the two cases $r \geq m$ and $r \leq m$.
\begin{itemize}
\item \ul{Case 1}: $r \geq m$. We get from \eqref{eq:upper_bd_rho}, \eqref{eq:estim_zeta} and \eqref{eq:upper_bd_pi} that
\begin{equation}
\pi_v^{(m)} \cdot \rho^{(m)}([r, +\infty)) \leq C'' \frac{\pi_4(m_{\infty})}{\pi_1(m_{\infty})} \cdot \frac{\pi_1(m)}{m^2 \pi_4(m)} e^{- C' \frac{r}{m}} \leq \frac{\upsilon'(m)}{m^2} e^{- C' \frac{r}{m}},
\end{equation}
where $C''$ is universal, and
$$\upsilon'(m) = C'' \frac{\pi_4(m_{\infty})}{\pi_4(m)} \cdot \frac{\pi_1(m)}{\pi_1(m_{\infty})} \leq \tilde{C} \bigg( \frac{m}{m_{\infty}} \bigg)^{\frac{5}{4} - \frac{\ve}{2}} \cdot \bigg( \frac{m}{m_{\infty}} \bigg)^{- \frac{5}{48} - \frac{\ve}{2}} = \tilde{C} \bigg( \frac{m}{m_{\infty}} \bigg)^{\frac{55}{48} - \ve},$$
for some $\tilde{C} = \tilde{C}(\ve)$ (using twice \eqref{eq:quasi_mult} and \eqref{eq:uniform_arm_exp} for the inequality). We can then use
$$e^{- C' \frac{r}{m}} \leq \tilde{C}' \bigg( \frac{r}{m} \bigg)^{\frac{55}{48} - 2 - \ve} e^{- \frac{C'}{2} \cdot \frac{r}{m}}$$
to deduce \eqref{eq:impurities_assumption} (with $c = \frac{C'}{2}$, $\gamma = \frac{55}{48} - \ve$, and $\upsilon(m) = \tilde{C}' \cdot \tilde{C} ( \frac{m}{m_{\infty}} )^{\frac{55}{48} - \ve}$).

\item \ul{Case 2}: $0 \leq r \leq m$. If $r \geq 1$,
\begin{equation}
\pi_v^{(m)} \cdot \rho^{(m)}([r, +\infty)) \leq C'' \frac{\pi_4(m_{\infty})}{\pi_1(m_{\infty})} \cdot \frac{\pi_1(r)}{r^2 \pi_4(r)} \leq \frac{\upsilon''(r,m)}{m^2},
\end{equation}
(using again \eqref{eq:upper_bd_rho}, \eqref{eq:estim_zeta} and \eqref{eq:upper_bd_pi}), with
\begin{align*}
\upsilon''(r,m) & = C'' \frac{\pi_4(m_{\infty})}{\pi_4(r)} \cdot \frac{\pi_1(r)}{\pi_1(m_{\infty})} \cdot \bigg( \frac{m}{r} \bigg)^2\\[1mm]
& \leq \tilde{C} \bigg( \frac{r}{m_{\infty}} \bigg)^{\frac{55}{48} - \ve} \cdot \bigg( \frac{m}{r} \bigg)^2 = \tilde{C} \bigg( \frac{m}{m_{\infty}} \bigg)^{\frac{55}{48} - \ve} \cdot \bigg( \frac{r}{m} \bigg)^{\frac{55}{48} - 2 - \ve}
\end{align*}
(where the inequality comes from \eqref{eq:quasi_mult} and \eqref{eq:uniform_arm_exp}). We have thus checked \eqref{eq:impurities_assumption} in this case as well. Moreover, \eqref{eq:impurities_assumption} also holds for $0 \leq r \leq 1$, starting from
$$\pi_v^{(m)} \cdot \rho^{(m)}([r, +\infty)) \leq \pi_v^{(m)} \leq C''' \frac{\pi_4(m_{\infty})}{\pi_1(m_{\infty})},$$
and following a similar computation.
\end{itemize}
Since \eqref{eq:impurities_assumption} is verified in both cases, the proof is complete.
\end{proof}

Note that formally, Lemma~6.2 of \cite{BN2018} is written for finite graphs only, and this is why we assumed $G$ to be finite. However, in our situation all connected components which burn are finite, since we stop ignitions at the subcritical time $t_c - \bar{\ve}$ (we even have an exponentially decaying upper bound on their diameter). Hence, it would not be difficult to extend the proof of Lemma~6.2 to the full lattice $\TT$ (but this is not needed for our applications).

\bigskip

\textbf{In the remainder of the paper, we always take $\ve = \frac{1}{48}$, so that $\gamma = \frac{9}{8}$, $c' = c'(\ve)$ can be considered as an absolute constant, and $\upsilon(m) = c' \big( \frac{m}{m_{\infty}} \big)^{\frac{9}{8}}$.}

\section{Frozen percolation and forest fires at scale $m_{\infty}$} \label{sec:scale_minf}

In this section, we explain how to handle the processes around their respective characteristic scales ($m_{\infty}(N)$ or $m_{\infty}(\zeta)$): frozen percolation in Section~\ref{sec:scale_minf_FP}, and the FFWoR process in Section~\ref{sec:scale_minf_FF}. In each case, we show that the number of frozen / burnt clusters surrounding $0$ and with a diameter of order $m_{\infty}$ is, roughly speaking, ``tight''. This is achieved through the introduction of a near-critical parameter scale associated with $m_{\infty}$, which allows one to compare the process studied to near-critical percolation, and use an argument based on the Russo-Seymour-Welsh bounds \eqref{eq:RSW} (for frozen percolation) or \eqref{eq:RSW_impurities} (in the case of forest fires).

Recall that we introduced the notation $\calF_t$ for the set of frozen / burnt clusters (depending on the process) surrounding the origin $0$, at every time $t \geq 0$ (this includes possibly the cluster containing $0$), with $\calF := \calF_{\infty}$ (two adjacent such clusters being considered as distinct if they freeze / burn at different times).

\subsection{Frozen percolation} \label{sec:scale_minf_FP}

\subsubsection{Near-critical parameter scale} \label{sec:def_nc_parameter}

In order to analyze the first frozen clusters in a box with side length of order $m_{\infty}$, we introduce a near-critical parameter scale as follows. For this purpose, we denote $t_{\infty}(N) = t_c + \ve_{\infty}(N)$.
\begin{definition}
For $\lambda \in \RR$ and $N \geq 1$, let
\begin{equation} \label{eq:def_scale_m_inf}
t_{\infty}^{\lambda}(N) := t_c + \lambda \, \ve_{\infty}(N).
\end{equation}
\end{definition}

We know from \eqref{eq:exp_t_infty_FP} that $t_{\infty}^{\lambda}(N) = t_c + \lambda \cdot N^{- \frac{36}{91} + o(1)}$. In particular, for any fixed $\lambda$, $t_{\infty}^{\lambda}(N) \to t_c$ as $N \to \infty$, so for all $N$ large enough, $t_{\infty}^{\lambda}(N) \in (0,\infty)$ (and we always assume it to be the case, implicitly).

\begin{remark}
Note that $( t_{\infty} - t_c ) (L(t_{\infty}))^2 \pi_4(L(t_{\infty})) \asymp 1$ (from \eqref{eq:equiv_L}), so (using $m_{\infty} = L(t_{\infty})$) $\ve_{\infty} (m_{\infty})^2 \pi_4(m_{\infty}) \asymp 1$. Hence, such a near-critical parameter scale could be defined equivalently as
$$\tilde{t}_{\infty}^{\lambda}(N) := t_c + \frac{\lambda}{(m_{\infty}(N))^2 \pi_4(m_{\infty}(N))}.$$
This expression is just the usual near-critical parameter scale, after replacement of $N$ by $m_{\infty}(N)$, which turned out to be very convenient to study near-critical percolation and related processes, for example in \cite{NW2009, Ki2015, GPS2018a, BN2017}.
\end{remark}

We will make use of the following elementary properties.
\begin{enumerate}[(i)]
\item For each fixed $\lambda \in \RR \setminus \{ 0 \}$,
\begin{equation} \label{eq:equiv_scale_m_inf}
L \big( t_{\infty}^{\lambda}(N) \big) \asymp m_{\infty}(N) \quad \text{as $N \to \infty$.}
\end{equation}

\item For all $\lambda \geq 0$ and $K > 0$, there exists $\bar{\delta}_4 = \bar{\delta}_4(\lambda, K) > 0$ such that: for all $N \geq 1$, $n \leq K m_\infty(N)$, and $t \in \big[ t_{\infty}^{-\lambda}(N) \vee \frac{t_c}{2}, t_{\infty}^{\lambda}(N) \big]$,
\begin{equation} \label{eq:RSW_scale_m_inf}
\PP_t \big( \Ch( [0,4n] \times [0,n] ) \big) \geq \bar{\delta}_4 \quad \text{and} \quad \PP_t \big( \Ch^*( [0,4n] \times [0,n] ) \big) \geq \bar{\delta}_4.
\end{equation}

\item For all $\ve > 0$, there exists $\lambda > 0$ large enough so that: for all $N$ sufficiently large,
\begin{equation} \label{eq:small_scale_m_inf}
\frac{L \big( t_{\infty}^{-\lambda}(N) \big)}{m_{\infty}(N)} \leq \ve.
\end{equation}
\end{enumerate}
Properties (i) and (iii) both follow, using standard arguments, from \eqref{eq:def_scale_m_inf}, \eqref{eq:equiv_L}, \eqref{eq:quasi_mult} and \eqref{eq:uniform_arm_exp}. Property (ii) can then be obtained from \eqref{eq:equiv_scale_m_inf} and \eqref{eq:RSW}.

\subsubsection{Frozen clusters around $0$}

We use this near-critical parameter scale to analyze the frozen clusters surrounding $0$ which have a diameter of order $m_{\infty}(N)$.

\begin{lemma} \label{lem:disjoint_circuits_FP}
Let $K > 0$. For all $\ve > 0$, there exists $C_1 = C_1(K, \ve)$ such that for all $N$ large enough, we have: for all $n \leq \frac{K}{2} m_{\infty}(N)$,
$$\PP_N^{(\Ball_{K m_\infty(N)})} \Big( \big| \calF \setminus \calF^{(\Ball_n)} \big| \geq C_1 \log \Big( \frac{K m_{\infty}(N)}{n} \Big) \Big) \leq \ve.$$
Moreover, the same conclusion holds (with a possibly larger $C_1$) for frozen percolation with modified boundary rules.
\end{lemma}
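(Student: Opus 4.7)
The plan is to associate each frozen cluster in $\calF \setminus \calF^{(\Ball_n)}$ with a disjoint occupied circuit around $0$ in the pure birth process, and then bound the number of such circuits via RSW together with independence across dyadic annuli. For each $\cluster \in \calF$ with $0 \notin \cluster$, the cluster surrounds $0$ and hence contains an occupied circuit around $0$. Distinct frozen clusters being disjoint (and, in the standard rules case, further separated by vacant boundary layers), the extracted circuits are pairwise disjoint. Since $N$-frozen percolation is pointwise dominated by the pure birth process (any site occupied in FP at time $t$ is occupied in the birth process at time $t$), each such circuit is also occupied in the birth process at the freezing time $t_\cluster$ and, by monotonicity of births, at all later times. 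The cluster containing $0$, if any lies in $\calF \setminus \calF^{(\Ball_n)}$, contributes at most $1$ to the count and is absorbed into an additive constant.

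Next, partition $[n, K m_\infty(N)]$ into dyadic scales $\rho_k = 2^k n$ for $k = 0, \ldots, K_{\max} := \lceil \log_2(K m_\infty(N)/n) \rceil$, and let $A_k := \Ann_{\rho_k, 4 \rho_k}$. Each extracted circuit can be assigned to a dyadic scale $k$, say via its minimum distance from $0$ (long circuits spanning many scales are an exponentially rare correction, controlled by a union bound using the polynomial decay of the two-arm probabilities). Let $N_k$ denote the corresponding count at scale $k$. The key estimate is a uniform tail bound $\PP(N_k \geq j) \leq C_0 \, e^{-c_0 j}$ for $j \geq 0$, obtained as follows. A frozen cluster confined at scale $\rho_k$ freezes at a time $t_k$ determined by $\rho_k^2 \theta(t_k) \asymp N$, which by \eqref{eq:equiv_theta} and the definition of $\aval = 96/5$ gives $L(t_k) \asymp m_\infty(N) \cdot (\rho_k/m_\infty(N))^{\aval}$, consistent with Lemma~\ref{lem:one_iteration}. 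When $\rho_k \asymp m_\infty(N)$, the birth process on $A_k$ at time $t_k$ is near-critical at scale $\rho_k$, and iterating \eqref{eq:RSW_scale_m_inf} gives the exponential tail on disjoint occupied circuits. When $\rho_k \ll m_\infty(N)$, the birth process on $A_k$ at $t_k$ is deeply supercritical (since $L(t_k) \ll \rho_k$); by \eqref{eq:exp_decay}, only the ``giant'' cluster typically contributes a circuit in $A_k$, the probability of $j$ additional disjoint occupied circuits being bounded by $e^{-c_0 j}$.

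Finally, since the annuli $A_k$ depend on disjoint families of Poisson birth processes, the random variables $N_k$ are mutually independent, even though the relevant times $t_k$ differ across scales. Hence $|\calF \setminus \calF^{(\Ball_n)}| \leq 1 + \sum_k N_k$ is bounded by a sum of $K_{\max}$ independent exponential-tailed random variables, and a standard Chernoff estimate yields $\PP(\sum_k N_k \geq C_1 K_{\max}) \leq \ve$ for $C_1 = C_1(K, \ve)$ sufficiently large, giving the desired bound (when $K_{\max} = 0$ or $1$, the claim holds directly by choosing $C_1$ large). The modified boundary rules case is handled identically, since the circuit extraction step uses only disjointness of frozen clusters and not the vacant boundary separation. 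The main technical obstacle is the supercritical per-scale tail when $\rho_k \ll m_\infty(N)$: ruling out several macroscopic components crossing $A_k$ simultaneously, together with handling circuits spanning multiple dyadic scales, requires combining \eqref{eq:exp_decay} applied in sub-annuli of scale $L(t_k)$ with a union bound over pairs of scales.
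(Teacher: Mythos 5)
There is a genuine gap here, and it is conceptual rather than technical: you extract \emph{occupied} circuits from the frozen clusters and then try to count them via monotonicity of births, but monotonicity goes the wrong way for occupation. ``Occupied at the freezing time implies occupied at all later times'' is vacuous (everything is eventually occupied), and at any fixed supercritical time $t$ with $L(t) \ll \rho$, the number of disjoint \emph{occupied} circuits in an annulus $\Ann_{\rho,4\rho}$ is of order $\rho/L(t)$, not $O(1)$ with exponential tail. So the per-scale tail $\PP(N_k \geq j) \leq C_0 e^{-c_0 j}$ that your argument needs is false as stated; you flag this yourself at the end, but the issue cannot be patched by sub-annuli at scale $L(t_k)$, because counting occupied circuits is simply the wrong quantity to bound. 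The information that actually limits the number of frozen clusters surrounding $0$ is the layer of \emph{vacant} sites on the outer boundary of each frozen cluster, and the useful direction of monotonicity is ``vacant at the (later, supercritical) freezing time $\Rightarrow$ vacant at any earlier time.'' That is what the paper exploits: after showing that with high probability no cluster freezes before the slightly subcritical time $t_\infty^{-\lambda}(N)$ (via a vacant-net construction in $\Ball_{Km_\infty}$ and the exponential-tail volume estimate \eqref{eq:largest_cluster_exp_tail}), each frozen cluster in $\calF \setminus \calF^{(\Ball_n)}$ produces a $t_\infty^{-\lambda}(N)$-vacant circuit around $0$ in the pure birth process. Counting disjoint vacant circuits at the single near-critical time $t_\infty^{-\lambda}(N)$ via \eqref{eq:RSW_scale_m_inf} then yields the $C_1 \log(Km_\infty/n)$ bound, with no dyadic decomposition and no scale-dependent times.

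A secondary but also fatal problem is your independence claim: the variables $N_k$ you define count frozen clusters assigned to scale $k$, which is a quantity depending on the full frozen-percolation process, not just on the birth process restricted to $A_k$. Since freezing is a global, non-local dynamics (a cluster spanning several scales determines freezing everywhere it reaches), disjointness of the annuli does not give independence of the $N_k$. Even if you replaced $N_k$ by ``the number of disjoint occupied circuits in $A_k$ at time $t_k$ in the pure birth process,'' you would recover independence but lose the domination $|\calF \setminus \calF^{(\Ball_n)}| \leq \sum_k N_k$, because nothing forces the freezing time of a scale-$k$ cluster to lie before $t_k$ (the relation $\rho_k^2\theta(t_k)\asymp N$ is a heuristic order of magnitude, not a deterministic event). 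Finally, your claim that the modified-boundary-rules case ``is handled identically'' undersells the point: the paper needs a separate argument there precisely because boundary vertices of a frozen cluster may be frozen rather than vacant at the freezing time, and one must combine $\dout_\infty\cluster$ with $\din_\infty\cluster$ to extract a vacant path; the disjointness-only extraction you describe would not suffice even if the rest of your argument worked.
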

Note that for the process in $\Ball_{K m_\infty(N)}$, $\calF \setminus \calF^{(\Ball_n)}$ denotes the set of frozen clusters (at time $t = \infty$) surrounding $0$ and intersecting $\Ann_{n, K m_{\infty}(N)}$. In particular, $\calF \setminus \calF^{(\Ball_n)} \supseteq \calF^{(\Ann_{n, K m_{\infty}(N)})}$, the set of frozen clusters surrounding $0$ and entirely contained in $\Ann_{n, K m_{\infty}(N)}$.

\begin{proof}[Proof of Lemma \ref{lem:disjoint_circuits_FP}]
We first consider frozen percolation with ``original'' boundary rules. We start by claiming the following. In $\Ball_{K m_{\infty}(N)}$ (recall that $K$ is fixed, but it can be arbitrarily large), it is possible to find $\lambda = \lambda(K, \ve) > 0$ large enough so that: with probability at least $1 - \frac{\ve}{2}$, there is no occupied cluster with volume at least $N$ (so no vertex has frozen yet) at time $t_{\infty}^{-\lambda}(N)$.

In order to prove this claim, let $\kappa > 0$ (we explain how to choose it later), consider all the (horizontal and vertical) rectangles of the form
$$\kappa m_{\infty}(N) \cdot \big( (i, j) + [0, 2] \times [0, 1] \big) \quad \text{and} \quad \kappa m_{\infty}(N) \cdot \big( (i, j) + [0, 1] \times [0, 2] \big) \quad (i,j \in \ZZ)$$
intersecting the box $\Ball_{K m_{\infty}(N)}$, and introduce the event $\calN^*(K m_{\infty}(N),\kappa m_{\infty}(N))$ that there exists a vacant crossing in the ``difficult direction'' in each of these rectangles.

We have: for all $N \geq 1$ and $t \in [0,t_c)$,
\begin{equation} \label{eq:net_upper_bd}
\PP_t \big( \calN^*(K m_{\infty}(N),\kappa m_{\infty}(N)) \big) \geq 1 - c \Big( \frac{K m_{\infty}(N)}{\kappa m_{\infty}(N)} \Big)^2 e^{-c' \frac{\kappa m_{\infty}(N)}{L(t)}}
\end{equation}
for some universal constants $c, c' > 0$. Indeed, the event $\calN^*(K m_{\infty}(N),\kappa m_{\infty}(N))$ involves crossings in of order $\big( \frac{K m_{\infty}(N)}{\kappa m_{\infty}(N)} \big)^2$ rectangles, each with side lengths $\kappa m_{\infty}(N)$ and $2\kappa m_{\infty}(N)$, so \eqref{eq:net_upper_bd} follows directly from \eqref{eq:exp_decay}.

\begin{figure}[t]
\begin{center}

\includegraphics[width=.72\textwidth]{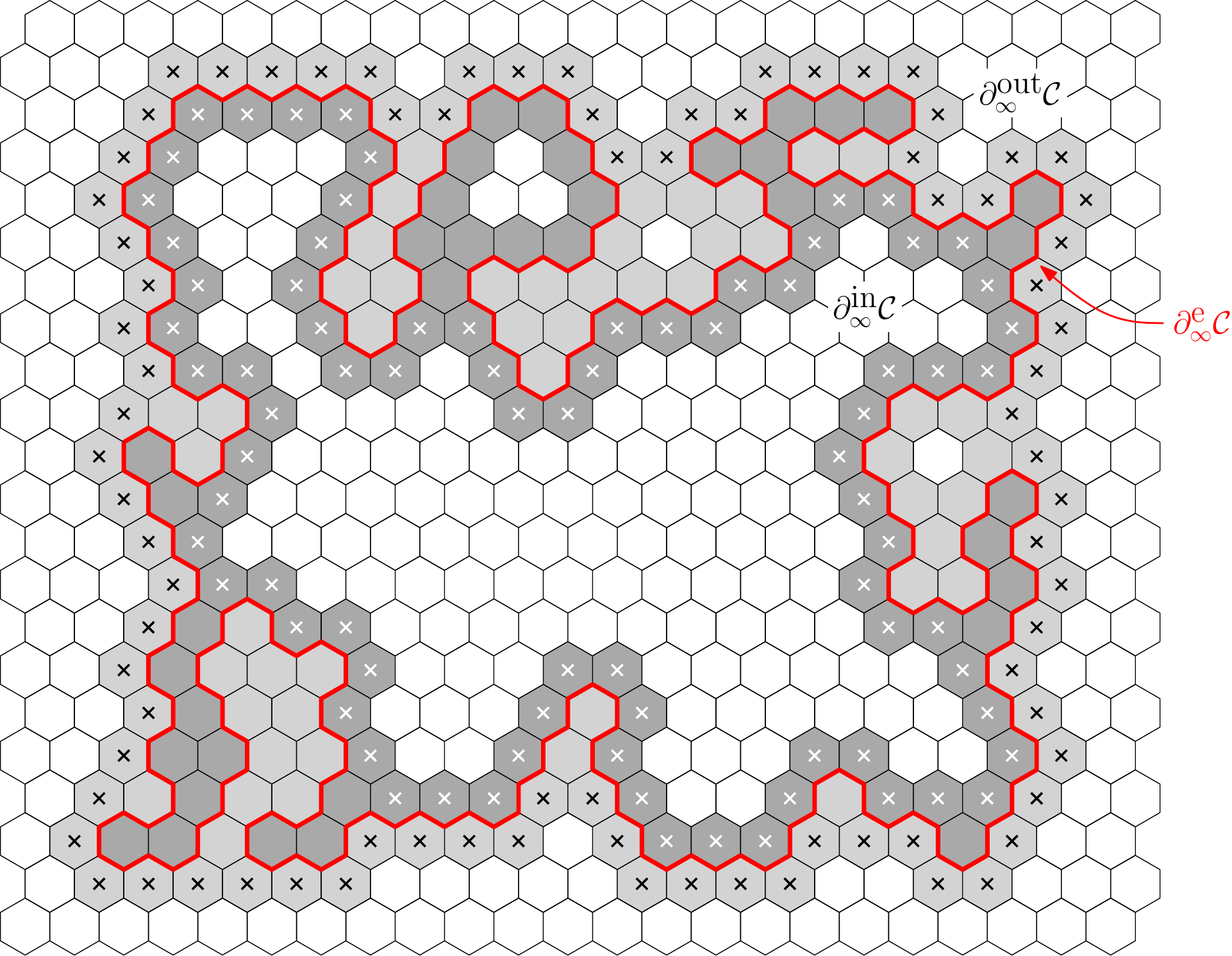}
\caption{\label{fig:boundaries} This figure uses the usual representation of site percolation on $\TT$ as face percolation on the dual hexagonal lattice (that is, we draw a hexagon around each vertex of $V_{\TT}$). It depicts the external inner and outer vertex boundaries of an occupied cluster $\cluster$ surrounding $0$, at time $\tau$: $\din_{\infty} \cluster$ (in dark gray) and $\dout_{\infty} \cluster$ (in light gray), respectively. Note that they consist of $\tau$-occupied and $\tau$-vacant sites (resp.), and they each contain a circuit surrounding $0$ (marked in white and black, resp.). The external edge boundary $\de_{\infty} \cluster$ can be drawn as a circuit on the hexagonal lattice (in red).}

\end{center}
\end{figure}

For the percolation configuration inside $\Ball_{K m_{\infty}(N)}$, the event $\calN^*(K m_{\infty}(N), \kappa m_{\infty}(N))$ implies the existence of a vacant connected set $\calN^*$ such that all the connected components of its complement have a diameter at most $2 \kappa m_{\infty}(N)$. At time $t_{\infty}^{-\lambda}(N)$ ($\leq t_c$), the probability that one of these ``cells'' contains an occupied cluster with volume at least $N$ is thus at most:
\begin{equation} \label{eq:exists_large_cluster}
C_1 \Big( \frac{K}{\kappa} \Big)^2 \PP_{t_c}\Big( \big|\lclus_{\Ball_{2 \kappa m_{\infty}(N)}}\big| \geq x \cdot (2 \kappa m_{\infty}(N))^2 \pi_1(2 \kappa m_{\infty}(N)) \Big) \leq \frac{C_2}{\kappa^2} e^{-C_3 x}
\end{equation}
for some constants $C_2 = C_2(K)$ and $C_3 > 0$ (using \eqref{eq:largest_cluster_exp_tail}), with
$$x = \frac{N}{(2 \kappa m_{\infty}(N))^2 \pi_1(2 \kappa m_{\infty}(N))}.$$
We have
$$x \geq \frac{C_4}{\kappa^2} \cdot \frac{\pi_1(m_{\infty}(N))}{\pi_1(2 \kappa m_{\infty}(N))} \geq C'_4 \kappa^{-\frac{91}{48} + \frac{1}{10}}$$
for some universal constants $C_4, C'_4 > 0$. Indeed, the first inequality follows by noting that $(m_{\infty}(N))^2 \pi_1(m_{\infty}(N)) \asymp N$, from \eqref{eq:def_t_hat_FP}, \eqref{eq:def_t_infty_FP}, \eqref{eq:equiv_theta}, and $m_{\infty} = L(t_{\infty})$, and the second inequality comes from \eqref{eq:uniform_arm_exp} (with $\sigma = (o)$). We can thus choose $\kappa = \kappa(K, \ve) > 0$ sufficiently small so that the right-hand side of \eqref{eq:exists_large_cluster} is at most $\frac{\ve}{4}$. For this particular choice of $\kappa$, we can then find $\lambda = \lambda(K, \ve) > 0$ large enough so that at time $t = t_{\infty}^{-\lambda}(N)$, the right-hand side of \eqref{eq:net_upper_bd} is at least $1 - \frac{\ve}{4}$ (using \eqref{eq:small_scale_m_inf}). This completes the proof of the claim.

Consider then an occupied cluster $\cluster$ surrounding $0$ and intersecting $\Ann_{n,K m_{\infty}(N)}$, which freezes at some time $\tau > t_{\infty}^{-\lambda}(N)$. We may assume that $\cluster$ does not intersect $\din \Ball_{K m_{\infty}(N)}$ (since at most one cluster surrounding $0$ does). Hence, at time $\tau$, its external outer boundary $\dout_{\infty} \cluster$ (see Figure \ref{fig:boundaries}) is contained in $\Ball_{K m_{\infty}(N)}$, and it is made of vertices which are vacant in the frozen percolation process (in $\Ball_{K m_{\infty}(N)}$). Each such vertex $v$ is either vacant in the pure birth process, or it lies along the outer boundary of a cluster $\cluster'$ which froze at an earlier time $\tau' < \tau$, which means that $v$ was vacant at time $\tau'$ in the pure birth process (see Figure \ref{fig:frozen_circuits}), and necessarily $\tau' > t_{\infty}^{-\lambda}(N)$ (since at this time, nothing has frozen yet). In both cases, $v$ has to be vacant in the pure birth process at time $t_{\infty}^{-\lambda}(N)$ (we want to stress that here, we use our particular choice of ``boundary rules'' for frozen percolation). From $\dout_{\infty} \cluster$, we can extract a circuit which is $t_{\infty}^{-\lambda}(N)$-vacant, is contained in $\Ball_{K m_{\infty}(N)}$ (from $\dout_{\infty} \cluster \subseteq \Ball_{K m_{\infty}(N)}$), and intersects $\Ann_{n, K m_{\infty}(N)}$ (since it surrounds $\cluster$, which itself intersects $\Ann_{n,K m_{\infty}(N)}$).

From the reasoning in the previous paragraph, we deduce that
$$\big| \calF \setminus \calF^{(\Ball_n)} \big| \leq 1 + \big| \calF^*_{t_{\infty}^{-\lambda}(N)} \big|,$$
where for each $t \geq 0$, $|\calF^*_t|$ denotes the maximal number of disjoint $t$-vacant circuits (in the pure birth process) surrounding $0$, contained in $\Ball_{K m_{\infty}(N)}$ and intersecting $\Ann_{n,K m_{\infty}(N)}$.

\begin{figure}[t]
\begin{center}

\includegraphics[width=.5\textwidth]{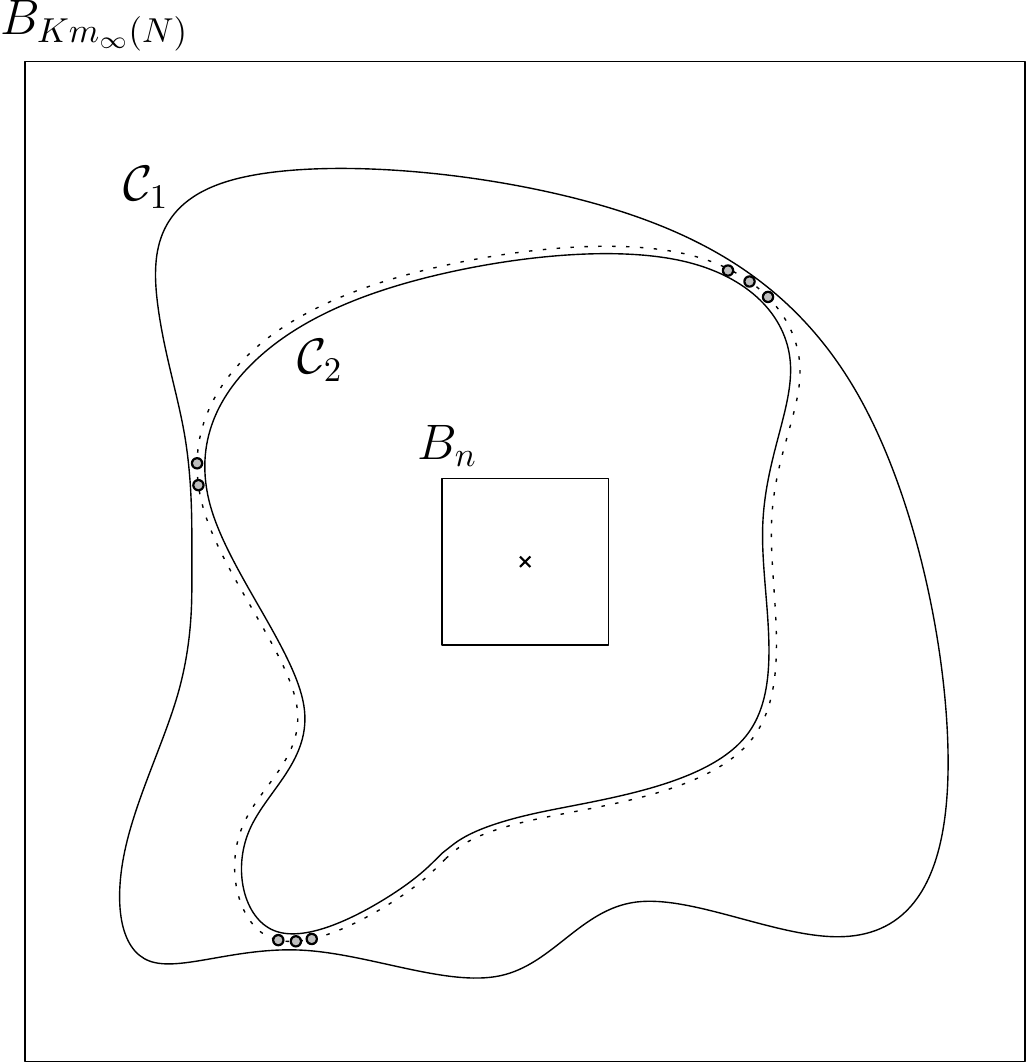}
\caption{\label{fig:frozen_circuits} The two occupied circuits $\circuit_1$ and $\circuit_2$ freeze at times $\tau_1$ and $\tau_2$, respectively. If for instance $\tau_1 < \tau_2$, the vacant vertices along the outer boudary of $\circuit_2$ may still be vacant at time $\tau_2$ in the pure birth process, but not necessarily: they can also be adjacent to a frozen cluster. For example, the grey vertices lie on the outer boundary of $\circuit_1$, so in the pure birth process, they were vacant at time $\tau_1$, but they may be occupied at time $\tau_2$. In both cases, in the pure birth process, these vertices were vacant at the earlier time $\tau_1$. Moreover, the claim ensures that with probability at least $1 - \frac{\ve}{2}$, all freezing times are larger than $t_{\infty}^{-\lambda}(N)$, for some well-chosen $\lambda = \lambda(K, \ve) > 0$.}

\end{center}
\end{figure}

From the RSW-type estimate provided by \eqref{eq:RSW_scale_m_inf} (combined with \eqref{eq:equiv_scale_m_inf}), we can deduce, using standard arguments, that for some $C_5 = C_5(K, \ve) > 0$,
\begin{equation}
\PP \Big( \big| \calF^*_{t_{\infty}^{-\lambda}(N)} \big| \geq C_5 \log \Big( \frac{K m_{\infty}(N)}{n} \Big) \Big) \leq \frac{\ve}{2}.
\end{equation}
This establishes Lemma \ref{lem:disjoint_circuits_FP} for the process with original boundary rules.

In the case of modified boundary rules, the proof proceeds essentially in the same way but it requires more care. One additional difficulty in this case is that when a cluster $\cluster$ freezes, say at some time $\tau$, one cannot necessarily produce a $t_{\infty}^{-\lambda}(N)$-vacant circuit from its external outer boundary $\dout_{\infty} \cluster$. Indeed, some of the vertices along $\dout_{\infty} \cluster$ may now be frozen at time $\tau$, so it is possible that they were already occupied at time $t_{\infty}^{-\lambda}(N)$.

However, we claim that it is possible to construct a $t_{\infty}^{-\lambda}(N)$-vacant path $\gamma$ surrounding $0$ by adding the vertices in $\din_{\infty} \cluster$, i.e. from $(\dout_{\infty} \cluster) \cup (\din_{\infty} \cluster)$. For this purpose, consider any frozen vertex $v \in \dout_{\infty} \cluster$, which froze at an earlier time $\tau' \in (t_{\infty}^{-\lambda}(N), \tau)$: then necessarily, all its neighbors which are not frozen at time $\tau$ (which means that in particular, they did not freeze together with $v$ at time $\tau'$) were vacant at time $\tau'$, so at time $t_{\infty}^{-\lambda}(N)$ as well. Hence, all the ($\tau$-occupied) neighbors of $v$ on $\din_{\infty} \cluster$ were vacant at time $t_{\infty}^{-\lambda}(N)$.

Using this observation, it is then easy to construct the path $\gamma$ by following the edge boundary $\de \cluster$, as illustrated on Figure~\ref{fig:boundaries_modified}: for each edge $e = \{v,v'\} \in E$ with $v \in \dout_{\infty} \cluster$ and $v' \in \din_{\infty} \cluster$, use $v$ or $v'$ depending on whether $v$ is vacant or frozen at time $\tau$ (resp.). It is easy to convince oneself that this procedure produces a $t_{\infty}^{-\lambda}(N)$-vacant path $\gamma$ surrounding $0$, from which one can extract a circuit around $0$ (by removing the ``loops'').

\begin{figure}[t]
\begin{center}

\includegraphics[width=.75\textwidth]{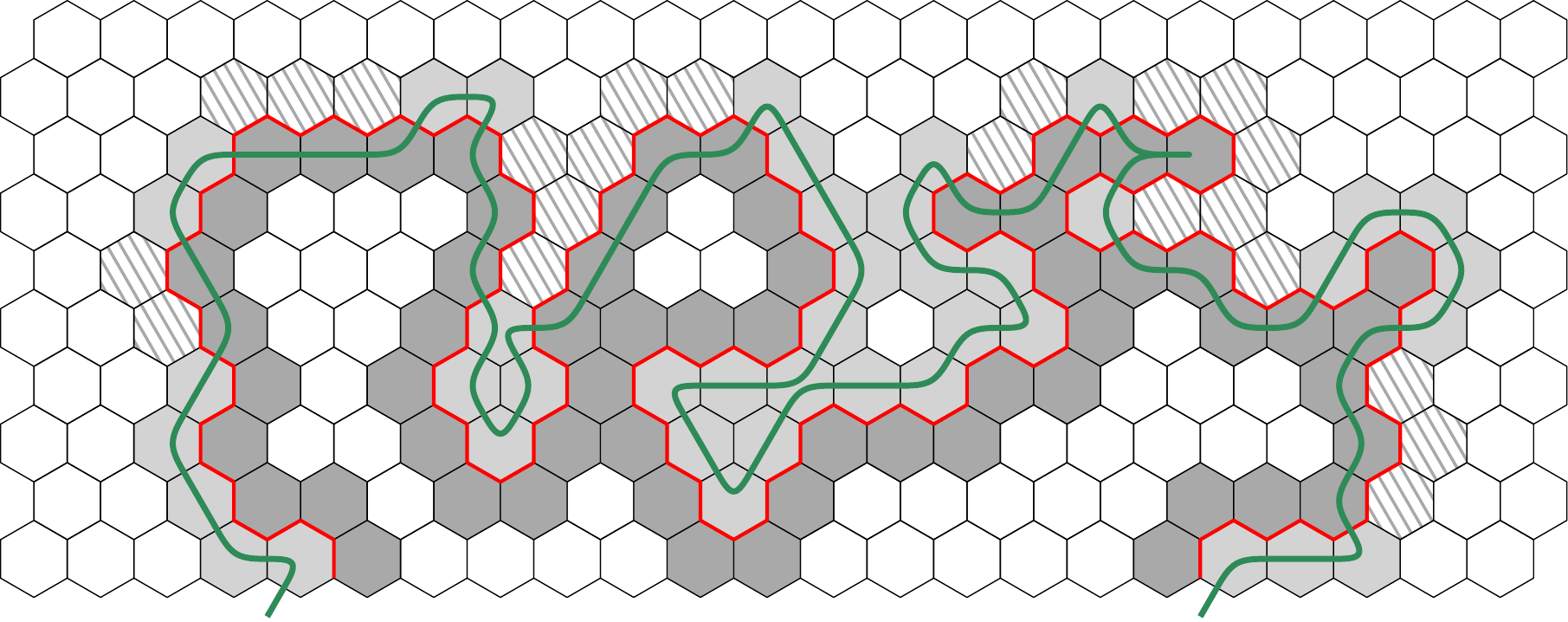}
\caption{\label{fig:boundaries_modified} This figure illustrates the procedure to construct a $t_{\infty}^{-\lambda}(N)$-vacant path $\gamma$ (in green) from $(\dout_{\infty} \cluster) \cup (\din_{\infty} \cluster)$. The dark gray and light gray vertices are respectively $\tau$-occupied and $\tau$-vacant, as before, while the dashed vertices are frozen at time $\tau$. The path $\gamma$ is obtained by following $\de \cluster$, drawn in red.}

\end{center}
\end{figure}

At this point, one has to be a bit careful, since the $t_{\infty}^{-\lambda}(N)$-vacant circuits constructed from distinct frozen clusters may intersect (or even completely coincide). However, one can use that the frozen clusters are ``nested'': if we list them as $\cluster_1, \ldots, \cluster_k$, starting from the outside, then the circuits corresponding to $\cluster_i$ for odd $i$ are disjoint. Hence, the number of frozen clusters in $\Ball_{K m_{\infty}(N)}$ which surround $0$ and intersect $\Ann_{n,K m_{\infty}(N)}$ (and do not intersect $\din \Ball_{K m_{\infty}(N)}$) is at most $2 \, \big| \calF^*_{t_{\infty}^{-\lambda}(N)} \big|$, which allows us to conclude the proof in the same way as for the original process.
\end{proof}

\subsection{Forest fires} \label{sec:scale_minf_FF}

\subsubsection{Near-critical parameter scale}

We now study forest fire processes in boxes with a side length of order $m_{\infty}$. To this end, as in Section~\ref{sec:def_nc_parameter}, we introduce a near-critical parameter scale, writing $t_{\infty}(\zeta) = t_c + \ve_{\infty}(\zeta)$.
\begin{definition}
For $\lambda \in \RR$ and $\zeta > 0$, let
\begin{equation} \label{eq:def_scale_m_inf_FF}
t_{\infty}^{\lambda}(\zeta) := t_c + \lambda \, \ve_{\infty}(\zeta).
\end{equation}
\end{definition}
This parameter scale satisfies analogous properties to \eqref{eq:equiv_scale_m_inf} and \eqref{eq:small_scale_m_inf}, for exactly the same reasons.

Obtaining an analog of \eqref{eq:RSW_scale_m_inf} requires the use of Proposition~\ref{prop:crossing_impurities}. In order to study the FFWoR process in a box $\Ball_{K m_{\infty}(\zeta)}$, $K > 0$, we will first stop ignitions at time $t_{\infty}^{-\lambda} = t_c - \lambda \ve_{\infty}$, and consider percolation with impurities where the parameter is
\begin{equation} \label{eq:param_m_nc_FF}
m = m_{\lambda}(\zeta) = L(t_{\infty}^{-\lambda}(\zeta)).
\end{equation}
Using the analogs of \eqref{eq:equiv_scale_m_inf} and \eqref{eq:small_scale_m_inf}, $m \asymp m_{\infty}$ for any fixed $\lambda \in \RR \setminus \{ 0 \}$, and it can be made arbitrarily small compared to $m_{\infty}$, uniformly in $\zeta > 0$, by considering $\lambda$ large enough. In particular, we can choose $\lambda$ so that
\begin{equation} \label{eq:upper_bd_ups}
\upsilon(m) = c' \bigg( \frac{m}{m_{\infty}} \bigg)^{\frac{9}{8}} \leq \frac{1}{2} C^{-1},
\end{equation}
where $C = C(c, \gamma, 1) > 0$ is from Proposition~\ref{prop:crossing_impurities}. From now on, we fix such a value, and we denote it by $\lambda_0$ to stress that it is a universal constant. Indeed, recall that the constants $c$, $c'$ and $\gamma = \frac{9}{8}$ are either absolute, or considered to be so, hence $\lambda_0$ as well (see the last sentence of Section~\ref{sec:stoch_domin_FF}).

From Proposition~\ref{prop:crossing_impurities} (with $p = p(t_{\infty}^{-\lambda_0})$, so that $L(p) = m$), we have: for all $n \leq m$,
$$\PPh_{p(t_{\infty}^{-\lambda_0})}^{(m)} \big( \Ch([0,2n] \times [0,n]) \big) \geq \big( 1 - C \upsilon(m) \big) \, \PP_{p(t_{\infty}^{-\lambda_0})} \big( \Ch([0,2n] \times [0,n]) \big) \geq \frac{1}{2} \cdot \delta_4$$
uniformly in $\zeta > 0$ (for the second inequality, we used the upper bound \eqref{eq:upper_bd_ups} on $\upsilon(m)$, as well as \eqref{eq:RSW}). We deduce, for any $K > 0$, the existence of $\tilde{\delta} = \tilde{\delta}(K) > 0$ so that
\begin{equation} \label{eq:RSW_FF}
\PPh_{p(t_{\infty}^{-\lambda_0})}^{(m)} \big( \Cv^*( [0,4n] \times [0,n] ) \big) \leq 1 - \tilde{\delta},
\end{equation}
uniformly in $\zeta > 0$ and $n \leq K m_{\infty}(\zeta)$. Indeed, this follows by combining a bounded number of occupied crossings in rectangles, thanks to the FKG inequality ($K$ is fixed and $\lambda_0$ is universal, so $m = L(t_{\infty}^{-\lambda_0}) \asymp K m_{\infty}$).

\subsubsection{Burnt clusters around $0$}

We are now in a position to derive an analog of Lemma~\ref{lem:disjoint_circuits_FP} for the FFWoR process.

\begin{lemma} \label{lem:disjoint_circuits_FF}
Let $K > 0$. For all $\ve > 0$, there exists $C_1 = C_1(K, \ve)$ such that for all $\zeta$ small enough, we have: for all $n \leq \frac{K}{2} m_{\infty}(\zeta)$,
$$\PP_\zeta^{(\Ball_{K m_\infty(\zeta)})} \Big( \big| \calF \setminus \calF^{(\Ball_n)} \big| \geq C_1 \log \Big( \frac{K m_{\infty}(\zeta)}{n} \Big) \Big) \leq \ve.$$
\end{lemma}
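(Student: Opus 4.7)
\medskip

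\noindent\textbf{Proof plan.} The strategy will be to mirror the proof of Lemma~\ref{lem:disjoint_circuits_FP}, replacing the pure birth process by the percolation with impurities of Section~\ref{sec:perc_impurities}, accessed through the stochastic domination of Lemma~\ref{lem:stoch_domin}. We work with $m = L(t_{\infty}^{-\lambda_0}) \asymp m_{\infty}(\zeta)$, where $\lambda_0$ is the universal constant fixed in Section~\ref{sec:scale_minf_FF}; by construction $\upsilon(m) \leq \frac{1}{2C}$, so the RSW estimate \eqref{eq:RSW_FF} holds for the impurity process at the Bernoulli parameter $p(t_{\infty}^{-\lambda_0})$.

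The key extraction will proceed through the auxiliary process $\sigma^{[t_{\infty}^{-\lambda_0}]}$ obtained from FFWoR by discarding all ignitions after $t_{\infty}^{-\lambda_0}$, coupled with FFWoR via the common Poisson clocks. Because $\sigma^{[t_{\infty}^{-\lambda_0}]}$ undergoes no burnings after $t_{\infty}^{-\lambda_0}$ and births are monotone in time, its occupied configuration contains that of FFWoR at every later time. Consider a burnt cluster $\cluster_i \in \calF \setminus \calF^{(\Ball_n)}$ that burns in FFWoR at some time $\tau_i > t_{\infty}^{-\lambda_0}$. Then the cluster $\cluster'_i$ of any $v \in \cluster_i$ in $\sigma^{[t_{\infty}^{-\lambda_0}]}$ at $\tau_i^-$ contains $\cluster_i$, hence also surrounds $0$, and its external outer boundary $\dout_{\infty} \cluster'_i$ consists of sites which are not occupied in $\sigma^{[t_{\infty}^{-\lambda_0}]}$ at $\tau_i^-$. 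Since in this auxiliary process no new burning occurs after $t_{\infty}^{-\lambda_0}$, each such site is already not occupied in $\sigma^{[t_{\infty}^{-\lambda_0}]}$ at the earlier time $t_{\infty}^{-\lambda_0}$ itself. Applying Lemma~\ref{lem:stoch_domin} at $t = t_{\infty}^{-\lambda_0}$ then yields that all these boundary sites are vacant in the impurity process at parameter $p(t_{\infty}^{-\lambda_0})$, so $\dout_{\infty} \cluster'_i$ contains a vacant circuit in that process around $0$ intersecting $\Ann_{n, K m_{\infty}(\zeta)}$.

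Two additional points require care. First, burnings at ``early'' times $\tau_i \leq t_{\infty}^{-\lambda_0}$ fall outside the scope of the above argument. A dyadic RSW/exponential-decay argument using \eqref{eq:RSW} and \eqref{eq:exp_decay} at subcritical times $\tau \leq t_{\infty}^{-\lambda_0}$ bounds the expected number of disjoint occupied circuits of the pure birth process surrounding $0$ and intersecting $\Ann_{n, K m_{\infty}(\zeta)}$ by $O(\log(K m_{\infty}(\zeta)/n))$; since FFWoR-occupied is contained in pure birth-occupied and pure birth is monotone, the total number of early FFWoR burnings of clusters surrounding $0$ and hitting the annulus is $O(\log(K m_{\infty}(\zeta)/n))$ with probability at least $1-\ve/4$. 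Second, two clusters $\cluster'_i$ and $\cluster'_j$ may coincide in $\sigma^{[t_{\infty}^{-\lambda_0}]}$ (the cluster of a fixed vertex grows monotonically in this auxiliary process), so that their outer-boundary circuits are equal. Following the ``nested clusters / every other circuit'' trick from the modified boundary rules case of Lemma~\ref{lem:disjoint_circuits_FP}, one retains at least $\frac{1}{2} |\calF \setminus \calF^{(\Ball_n)}|$ circuits which are pairwise disjoint. An FKG argument over $\asymp \log(K m_{\infty}(\zeta)/n)$ disjoint dyadic concentric sub-annuli, combined with \eqref{eq:RSW_FF}, then bounds the probability of having more than $C_1 \log(K m_{\infty}(\zeta)/n)$ disjoint vacant circuits in the impurity process at $p(t_{\infty}^{-\lambda_0})$ by $\ve/4$, completing the proof.

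The main difficulty will be this disjointness issue: unlike in frozen percolation, where frozen clusters are separated by a layer of forever-vacant boundary sites giving a clean one-to-one correspondence with the extracted circuits, burnt clusters in FFWoR can arise in succession within a single monotonically growing connected component of $\sigma^{[t_{\infty}^{-\lambda_0}]}$, so the same vacant circuit may be produced by several burnings. The nested spatial structure of the $\cluster_i$'s around $0$, together with the monotone growth of $\sigma^{[t_{\infty}^{-\lambda_0}]}$, nonetheless permits salvaging at least half of the circuits as pairwise disjoint, which is sufficient for the $\log$ bound.
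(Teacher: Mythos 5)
The stochastic-domination step and the monotonicity observation (a site not occupied in $\sigma^{[t_{\infty}^{-\lambda_0}]}$ at $\tau_i^-$ was already not occupied at $t_{\infty}^{-\lambda_0}$, since the auxiliary process undergoes no burnings after $t_{\infty}^{-\lambda_0}$) are both correct and lead to the right object, namely vacant circuits in the impurity process at parameter $p(t_{\infty}^{-\lambda_0})$, controlled via \eqref{eq:RSW_FF}. However, the extraction-and-disjointness step has a genuine gap. You take $\cluster'_i$ to be the cluster of $\cluster_i$ in the auxiliary process at time $\tau_i^-$, and extract the circuit $\dout_{\infty} \cluster'_i$. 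The problem is that $\cluster'_i$ can be far larger than $\cluster_i$: since no burnings occur in $\sigma^{[t_{\infty}^{-\lambda_0}]}$ after $t_{\infty}^{-\lambda_0}$, at time $\tau_i^-$ all the clusters $\cluster_j$, $j \ne i$, that burn in the real process after $\tau_i$ are still occupied in $\sigma^{[t_{\infty}^{-\lambda_0}]}$, and $\cluster'_i$ may engulf a whole range of them together with the region in between. Consequently the map $\cluster_i \mapsto \dout_{\infty} \cluster'_i$ is not local to $\cluster_i$, and nothing prevents all the circuits $\dout_{\infty} \cluster'_i$ from coinciding (or only $O(1)$ of them being distinct) even when $|\calF \setminus \calF^{(\Ball_n)}|$ is large. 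The ``every other circuit'' trick from Lemma~\ref{lem:disjoint_circuits_FP} relies crucially on the circuit associated with each cluster lying within one lattice step of that cluster, so that the intermediate cluster $\cluster_{i+1}$ automatically separates the circuits of $\cluster_i$ and $\cluster_{i+2}$; with your non-local extraction this separation is lost, and the assertion that at least half the circuits are pairwise disjoint is unjustified. (A secondary defect of the same nature: $\cluster'_i$ may touch $\din \Ball_{K m_\infty(\zeta)}$ for many indices $i$, even though at most one $\cluster_i$ does, so $\dout_{\infty} \cluster'_i$ is not always a well-formed circuit inside the box.)

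The paper avoids this by extracting the circuit from $(\dout_{\infty} \cluster_i) \cup (\din_{\infty} \cluster_i)$ in the FFWoR process itself. For a burnt vertex $v \in \dout_{\infty} \cluster_i$ that burned at some $\tau' \in (t_{\infty}^{-\lambda_0}, \tau_i)$, any neighbour $v' \sim v$ in $\din_{\infty} \cluster_i$ cannot have been burnt at or before $\tau'$ (it is occupied at $\tau_i^- > \tau'$ and there is no recovery), hence $v'$ was vacant at $\tau'$ and so at $t_{\infty}^{-\lambda_0}$; one then traces $\de \cluster_i$ choosing $v$ or $v'$ at each dual edge exactly as in the modified-boundary-rules case. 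This produces a vacant-or-burnt circuit in $\sigma^{[t_{\infty}^{-\lambda_0}]}(t_{\infty}^{-\lambda_0})$ that stays within one lattice step of $\cluster_i$; the nesting of the burnt clusters then guarantees that every other such circuit is disjoint, and the remainder of your plan (stochastic domination, \eqref{eq:RSW_FF}, FKG over dyadic annuli) goes through.
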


\begin{proof}[Proof of Lemma \ref{lem:disjoint_circuits_FF}]
Observe that by standard arguments, for any $\ve > 0$, \eqref{eq:RSW_FF} implies the following. With a probability at least $1 - \ve$, the maximal number of disjoint vacant circuits (and so of disjoint vacant clusters) in $\Ann_{n, K m_{\infty}(\zeta)}$, at time $t_{\infty}^{-\lambda_0}$, is at most $C_1 \log(\frac{K m_{\infty}(\zeta)}{n})$, for some $C_1 = C_1(K,\ve)$ (recall that $\lambda_0$ is considered as a universal constant). More precisely, this holds true for the process with impurities (with parameter $m = m_{\lambda_0}(\zeta)$, see \eqref{eq:param_m_nc_FF}), and also, using in addition Lemma~\ref{lem:stoch_domin}, for the FFWoR process $\sigma^{[t_{\infty}^{-\lambda_0}]} = \big( \sigma^{[t_{\infty}^{-\lambda_0}]}(t) \big)_{t \geq 0}$, with ignitions stopped at time $t_{\infty}^{-\lambda_0}$, if we consider instead circuits made of vacant \emph{and} burnt vertices.

The proof then relies on a similar reasoning as for frozen percolation with \emph{modified} boundary rules. More specifically, if some cluster $\cluster$ burns at a time $\tau > t_{\infty}^{-\lambda_0}$, then its external outer boundary $\dout_{\infty} \cluster$ is composed both of vertices which are vacant, so were already vacant at time $t_{\infty}^{-\lambda_0}$, and of burnt vertices. Any such burnt vertex $v$ was either already burnt at time $t_{\infty}^{-\lambda_0}$, and thus in $\sigma^{[t_{\infty}^{-\lambda_0}]}$, or if it burnt at some time $\tau' \in (t_{\infty}^{-\lambda_0}, \tau)$, then any neighboring vertex $v' \sim v$ belonging to $\din_{\infty} \cluster$ had to be vacant at that time $\tau'$, so at the earlier time $t_{\infty}^{-\lambda_0}$. Indeed, $v'$ cannot burn at time $\tau'$, nor be already burnt at time $\tau'^-$, since it is occupied just before time $\tau$ ($> \tau'$): here we use the absence of recoveries. We can thus proceed in the same way as for frozen percolation with modified boundary rules, and extract from $(\dout_{\infty} \cluster) \cup (\din_{\infty} \cluster)$ a circuit (as illustrated earlier on Figure~\ref{fig:boundaries_modified}, for frozen percolation) which, at time $t_{\infty}^{-\lambda_0}$ in $\sigma^{[t_{\infty}^{-\lambda_0}]}$, contains only vacant and burnt vertices. This shows that the number of disjoint burnt clusters is at most twice the number of disjoint vacant / burnt circuits in $\sigma^{[t_{\infty}^{-\lambda_0}]}(t_{\infty}^{-\lambda_0})$, which allows us to use the observation above, based on \eqref{eq:RSW_FF}, and completes the proof.
\end{proof}

\begin{remark} \label{rem:FFWR_minf}
If we considered instead the FFWR process, potential recoveries between time $t_c - \lambda_0 \ve_{\infty}$ and time $\tau$ would be problematic, causing the argument above to break down. Indeed, it is possible that two neighboring vertices $v$ and $v'$ as above are both occupied at time $t_c - \lambda_0 \ve_{\infty}$ and burn together at some time $\tau' \in (t_c - \lambda_0 \ve_{\infty}, \tau)$, and that the vertex $v$ then becomes occupied again during $(\tau',\tau)$.
\end{remark}

\section{Avalanches for frozen percolation} \label{sec:proof_FP}

We now prove our results for frozen percolation: Theorem~\ref{thm:mainFP} in Section~\ref{sec:proof_FP1}, and Proposition~\ref{prop:circuits} in Section~\ref{sec:proof_FP2}. In order to simplify notation, we denote
\begin{equation} \label{eq:nu_FP}
\aval = \aval^{\textrm{FP}} = \frac{96}{5} \quad \text{and} \quad \avalnb = \avalnb^{\textrm{FP}} = \frac{1}{\log \aval^{\textrm{FP}}}
\end{equation}
in this section (only).

\subsection{Proof of Theorem \ref{thm:mainFP}} \label{sec:proof_FP1}

The main ideas of the proof of Theorem~\ref{thm:mainFP} are as follows.
\begin{itemize}
\item[(1)] First, in Step~1, we decompose the box $\Ball_{Km_\infty(N)}$ into two regions: $\Lambda^\#$ and $\Ball_{Km_\infty(N)}\setminus \Lambda^\#$, where $\Lambda^\#$ is ``nice'' and its radius is of order $\frac{m_\infty(N)}{(\log{N})^{\beta}}$, for some well-chosen $\beta > 0$ (we have to take it sufficiently small). This is achieved by using an intermediate result from Section~7 of \cite{BKN2015}, which allows one to compare the process in $\Ball_{Km_\infty(N)}$ to the process in domains with a radius which is both $\ll m_\infty(N)$, but also sufficiently large (as a function of $N$). If $\beta$ is chosen small enough, Lemma~\ref{lem:disjoint_circuits_FP} ensures that there are at most $\frac{\ve}{2} \log \log N$ frozen clusters in $\Ball_{Km_\infty(N)}\setminus \Lambda^\#$.

\item[(2)] Hence, there remains to analyze the frozen clusters surrounding $0$ in $\Lambda^\#$, and we show that there are approximately $\avalnb \log\log{N}$ of them. For this purpose, we follow closely the ``dynamics'' of the successively freezing clusters, using an iterative construction inspired by \cite{BN2015}. This construction, developed in Steps~2 and 3, produces a (finite) sequence of nested domains $\Lambda^{(0)} = \Lambda^\# \supseteq \Lambda^{(1)} \supseteq \Lambda^{(2)} \supseteq \ldots$ (and a corresponding sequence of times $t_c < \tau^{(0)} < \tau^{(1)} < \tau^{(2)} < \ldots$). These domains are such that for every $i$, $\Ball_{r^{(i)}} \subseteq \Lambda^{(i)} \subseteq \Ball_{R^{(i)}}$, with $1 \leq r^{(i)} \leq R^{(i)}$, and $\Lambda^{(i)} \setminus \Lambda^{(i+1)}$ contains exactly one frozen cluster surrounding $0$ (in the final configuration). The ``uncertainty'' $\frac{R^{(i)}}{r^{(i)}}$ on the precise location of their boundaries increases as we move down scales. The main difficulty in the proof is to control this uncertainty, and show that it does not grow too quickly. In particular, the construction uses crucially a property of separation of scales, i.e. that $r^{(i)} \gg R^{(i+1)}$ for each $i$. We have to make sure that this property remains valid along the way, even after of order $\log \log N$ steps. This is the reason why we start from the scale $\frac{m_\infty(N)}{(\log{N})^{\beta}}$, instead of a scale of order $m_\infty(N)$ directly (see Remark~\ref{rem:separation_scales}).

\item[(3)] Finally, we study the end of the iteration in Step~4, and show that at most one additional frozen cluster can form around $0$ after the last step of the scheme. We then explain quickly how to combine Steps~1--4, and conclude the proof, in Step~5.
\end{itemize}
We now present these stages in detail.

\begin{proof}[Proof of Theorem \ref{thm:mainFP}]

We let $\ve, \eta > 0$, and $\delta = 0.001$. Without loss of generality, we assume that $N \geq e^{e^e}$, so that $\log \log \log N$ is well-defined, and it is $\geq 1$.

\bigskip

\textbf{Step 1}: We first need to introduce the notion of ``stopping sets'', as in \cite{BKN2015}. It refers to the natural analog for percolation of stopping times: we say that a set of vertices $\Lambda^\#$ is a stopping set if for any finite $\Lambda$, the event $\{ \Lambda^\# = \Lambda \}$ is measurable with respect to the percolation configuration in $\Lambda^c$. This property allows us to condition on the value of $\Lambda^\#$ while leaving the percolation configuration inside it unaffected, in other words to consider $\Lambda^\#$ as being fixed.

By Proposition~7.2 in \cite{BKN2015}, there exist $c_2>c_1>0$ and $M>0$ (depending only on $\eta$) such that: for all $t>t_c$ with $L(t)\leq \frac{m_\infty(N)}{M}$, we can construct a simply connected stopping set $\Lambda^\#$ so that with probability $\geq 1- \frac{\eta}{4}$, the following two properties hold true.
\begin{enumerate}[(i)]
\item We have $\Ball_{c_1L(t^\#)}\subseteq \Lambda^\# \subseteq \Ball_{c_2L(t^\#)}$, for some $t^\# > t$ satisfying $t^\# \leq \hat{\hat{\hat{t}}}$.

\item For frozen percolation on the whole lattice $\TT$, its restriction to $\Lambda^\#$ coincides with frozen percolation in $\Lambda^\#$ directly.
\end{enumerate}
We denote by $\calE_{(i)}$ and $\calE_{(ii)}$ the corresponding events, so that $\PP_N^{(\TT)}(\calE_{(i)} \cap \calE_{(ii)}) \geq 1 - \frac{\eta}{4}$. In addition, we also note that the proof in \cite{BKN2015} yields the same conclusions for frozen percolation in $\Ball_{Km_\infty(N)}$, instead of $\TT$.

Here in particular, if $L(t) = \frac{m_\infty(N)}{(\log{N})^{\alpha}}$, we get from Lemma~\ref{lem:one_iteration} (applied three times) that:
\begin{equation} \label{eq:t_sharp}
C_1 \frac{m_\infty(N)}{(\log{N})^{\alpha (\aval + \delta)^3}} \leq L(t^\#) \leq \frac{m_\infty(N)}{(\log{N})^{\alpha}}.
\end{equation}
Let
$$\ul{n}^{(0)}(N) := c_1 C_1 \frac{m_\infty(N)}{(\log{N})^{\alpha (\aval + \delta)^3}} \: \: \big( \leq c_1 L(t^\#) \big) \quad \text{and} \quad \ol{n}^{(0)}(N) := c_2 L(t) = c_2 \frac{m_\infty(N)}{(\log{N})^{\alpha}}.$$
It follows from Lemma~\ref{lem:disjoint_circuits_FP} that the number of frozen clusters surrounding $0$, contained in $\Ball_{K m_\infty(N)}$, and intersecting $\Ann_{\ul{n}^{(0)}(N), K m_{\infty}(N)}$, is at most
\begin{equation} \label{eq:step1_FP}
C_2 \log \bigg( \frac{K m_{\infty}(N)}{\ul{n}^{(0)}(N)} \bigg) \leq C_2 \alpha (\aval + \delta)^3 \log \log N + O(1)
\end{equation}
with probability $\geq 1 - \frac{\eta}{4}$, for some $C_2$ depending only on $\eta$ and $K$. We can thus make sure that this number is at most $\frac{\ve}{2} \log \log N$ for all sufficiently large $N$, by choosing $\alpha$ small enough so that $C_2 \alpha (\aval + \delta)^3 \leq \frac{\ve}{4}$. From now on, we fix such an $\alpha = \alpha(\ve,\eta,K)$, and we assume that $N$ is large enough so that $(\log{N})^{\alpha} \geq M$.

\bigskip

\textbf{Step 2}: We now claim that there exists $N_0 = N_0(\ve, \eta, K)$ such that for all $\Lambda^\#$ as in Step~1, the following holds. Consider frozen percolation with parameter $N$ in $\Lambda^\#$, and recall that $\calF$ denotes the set of frozen clusters surrounding the origin (in the final configuration). We have
\begin{equation} \label{eq:claim_Lambda}
\text{for all $N \geq N_0$,} \quad \PP_N^{(\Lambda^\#)} \bigg( \frac{|\calF|}{\log \log N} \in \Big( \avalnb - \frac{\ve}{2}, \avalnb + \frac{\ve}{2} \Big) \bigg) \geq 1 - \frac{\eta}{4}.
\end{equation}
This claim is proved in Steps~2--4. Once it is established, Theorem~\ref{thm:mainFP} will follow, as we explain briefly in Step~5.

For future use, note the following fact about near-critical percolation, which follows easily from \eqref{eq:exp_decay} and \eqref{eq:connection_infinity}. There exist universal constants $\kappa_3, \kappa_4 > 0$ such that: for all $p > p_c$ and $n \geq 1$,
\begin{equation} \label{eq:a-priori}
\PP_p \big( \circuitevent ( \Ann_{n, 2n} ) \cap \{ \dout \Ball_n \lra \infty \} \big) \geq 1 - \kappa_3 e^{- \kappa_4 \frac{n}{L(p)}}.
\end{equation}

We show the claim by iterating a percolation construction, in a similar way as for the proof of Theorem~2 in \cite{BN2015}. For that, we define by induction two (deterministic) sequences $(r^{(i)})_{i \geq 0}$ and $(R^{(i)})_{i \geq 0}$, with $r^{(i)} \leq R^{(i)}$ for all $i \geq 0$.

For some $\Lambda^\#$, $t^\#$, and $0 < c_1 < c_2$ as above, we start from $\Lambda^{(0)} := \Lambda^\#$, and $r^{(0)} < R^{(0)}$ defined by
$$r^{(0)} := c_1 L(t^\#) \quad \text{and} \quad R^{(0)} := c_2 L(t^\#)$$
(so that $\Ball_{r^{(0)}} \subseteq \Lambda^{(0)} \subseteq \Ball_{R^{(0)}}$). It follows from \eqref{eq:t_sharp} that
\begin{equation} \label{eq:R0_minf}
\frac{c}{(\log{N})^{\beta}} \leq \frac{r^{(0)}}{m_\infty(N)} < \frac{R^{(0)}}{m_\infty(N)} \leq \frac{c'}{(\log{N})^{\alpha}}
\end{equation}
for some constants $c, c' > 0$ (depending only on $\eta$), and $\alpha, \beta = \alpha (\aval + \delta)^3 > 0$ (which depend on $\ve, \eta, K$).

If $r^{(i)} \leq R^{(i)}$ are determined for some $i \geq 0$, we define the times
\begin{equation} \label{eq:t_def}
\ul{t}^{(i)} := \next_N \bigg( \frac{1}{1 + \delta} \, \frac{9}{10} r^{(i)} \bigg) \quad \text{and} \quad \ol{t}^{(i)} := \next_N \bigg( \frac{1}{1 - \delta} R^{(i)} \bigg).
\end{equation}
We have clearly $t_c < \ol{t}^{(i)} \leq \ul{t}^{(i)} \leq \infty$, since $\next_N$ is nonincreasing, so $L(\ul{t}^{(i)}) \leq L(\ol{t}^{(i)})$, and they satisfy
\begin{equation} \label{eq:theta_t}
c_{\TT} \bigg( \frac{9}{10} r^{(i)} \bigg)^2 \theta(\ul{t}^{(i)}) = N (1 + \delta) \quad \text{and} \quad c_{\TT} \big( R^{(i)} \big)^2 \theta(\ol{t}^{(i)}) = N (1 - \delta)
\end{equation}
(from \eqref{eq:def_next_FP}), unless, of course, $\ul{t}^{(i)} = \infty$ (for the first equality) or $\ol{t}^{(i)} = \infty$ (for the second one). Note that $\ul{t}^{(i)}$ and $\ol{t}^{(i)}$ may (and, in fact, will) be equal to $\infty$ after some point. We introduce
$$j := \min \big\{ i \geq 1 \: : \: r^{(i)} < 3 c_{\TT}^{-\frac{1}{2}} \sqrt{N} \big\} - 1 \quad \text{and} \quad J := \min \big\{ i \geq 1 \: : \: R^{(i)} < 3 c_{\TT}^{-\frac{1}{2}} \sqrt{N} \big\} - 1.$$
Obviously $j \leq J$. We show later that they are finite, and differ by at most $1$.

We then let
\begin{equation} \label{eq:r_def}
r^{(i+1)} := \frac{1}{(\log \log N)^{24}} \, L \big( \ul{t}^{(i)} \big) \quad \text{and} \quad R^{(i+1)} = \frac{4}{\kappa_4} \big( \log \log \log N \big) \, L \big( \ol{t}^{(i)} \big),
\end{equation}
where $\kappa_4$ is as in \eqref{eq:a-priori} (we assume that $\frac{4}{\kappa_4} (\log \log \log N) \geq 1$). Note that $0 \leq r^{(i+1)} \leq R^{(i+1)} < \infty$.

Let $\ve' > 0$, that we explain how to choose later (as a function of $\ve$ only). In the remainder of the proof, all the constants appearing are allowed to depend on $\ve$ (or, equivalently, $\ve'$), $\eta$, and $K$, but not on $i$.

First, it follows immediately from \eqref{eq:compar_minf}, together with \eqref{eq:t_def} and \eqref{eq:r_def}, that if $R^{(i)} \geq 3 c_{\TT}^{-\frac{1}{2}} \sqrt{N}$,
\begin{equation} \label{eq:comp_R_minf}
C_1 \bigg( \frac{R^{(i)}}{m_{\infty}} \bigg)^{\aval + \ve'} \leq \frac{R^{(i+1)}}{m_{\infty}} \leq C_2 \bigg( \frac{R^{(i)}}{m_{\infty}} \bigg)^{\aval - \ve'} \big( \log \log \log N \big),
\end{equation}
for some $C_1, C_2 > 0$. By induction, starting from \eqref{eq:R0_minf}, we deduce that for all $i = 0, \ldots, J+1$,
\begin{equation} \label{eq:r_estimate}
\bigg( \frac{c_1}{\log N} \bigg)^{\beta (\aval + \ve')^i} \leq \frac{R^{(i)}}{m_{\infty}} \leq \bigg( \frac{c_2 \log \log \log N}{(\log N)^{\alpha}} \bigg)^{(\aval - \ve')^i} \leq \bigg( \frac{c'_2}{\log N} \bigg)^{\frac{\alpha}{2} (\aval - \ve')^i},
\end{equation}
where $c_1, c_2, c'_2 > 0$.

Second, using \eqref{eq:compar_minf} and \eqref{eq:R0_minf} again (and $r^{(0)} = \frac{c_1}{c_2} R^{(0)}$), we have
$$\frac{R^{(1)}}{r^{(0)}} = c_3 \, \frac{R^{(1)}}{m_{\infty}} \cdot \bigg( \frac{R^{(0)}}{m_{\infty}} \bigg)^{-1} \leq c'_3 \big( \log \log \log N \big) \bigg( \frac{R^{(0)}}{m_{\infty}} \bigg)^{\aval - \ve' - 1} \leq \bigg( \frac{c''_3}{\log N} \bigg)^{\xi}$$
for some $c_3, c'_3, c''_3, \xi > 0$ (it is important, here, that $\aval > 1$). By applying repeatedly Lemma \ref{lem:one_iteration}, we get that: for all $i = 0, \ldots, J$,
\begin{equation} \label{eq:r_estimate2}
\frac{R^{(i+1)}}{r^{(i)}} \leq \bigg( \frac{c_4 (\log \log N)^{25}}{(\log N)^{\xi}} \bigg)^{(\aval - \ve')^i} \leq \bigg( \frac{c'_4}{\log N} \bigg)^{\frac{\xi}{2} (\aval - \ve')^i}
\end{equation}
(so in particular $j \geq J-1$). This implies that for all $N$ large enough: for all $i = 0, \ldots, J$, $R^{(i+1)} < \frac{1}{10} r^{(i)}$.

Finally, $J$ can easily be estimated from \eqref{eq:r_estimate} (and so $j$, which is either equal to $J$ or $J-1$). On the one hand, we deduce from $R^{(J)} \geq 3 c_{\TT}^{-\frac{1}{2}} \sqrt{N}$ that
$$\frac{R^{(J)}}{m_{\infty}} \geq c N^{- \alpha}$$
where $c > 0$ and $\alpha = \frac{48}{91} - \frac{1}{2} + \delta > 0$ are universal (using \eqref{eq:exp_t_infty_FP}). Since
$$\frac{R^{(J)}}{m_{\infty}} \leq \bigg( \frac{c'_2}{\log N} \bigg)^{\frac{\beta'}{2} (\aval - \ve')^J}$$
(from \eqref{eq:r_estimate}), we obtain (for all $N$ large enough)
\begin{align}
J & \leq \frac{1}{\log(\aval - \ve')} \bigg( \log \bigg( \frac{2}{\beta'} \bigg) + \log \bigg( \frac{\alpha \log N - \log c}{\log \log N - \log c'_2} \bigg) \bigg) \nonumber \\[2mm]
& = \frac{1}{\log(\aval - \ve')} \big( \log \log N + O(\log \log \log N) \big). \label{eq:J_est1}
\end{align}
On the other hand, $R^{(J+1)} < 3 c_{\TT}^{-\frac{1}{2}} \sqrt{N}$ so
$$\frac{R^{(J+1)}}{m_{\infty}} \leq c' N^{- \alpha'},$$
with $c' > 0$ and $\alpha' = \frac{48}{91} - \frac{1}{2} - \delta > 0$, from which we get
\begin{equation} \label{eq:J_est2}
J + 1 \geq \frac{1}{\log(\aval + \ve')} \big( \log \log N + O(\log \log \log N) \big).
\end{equation}
Recall that $\avalnb = \avalnb^{\textrm{FP}} = \frac{1}{\log \aval}$ (see \eqref{eq:nu_FP}), and choose $\ve'$ small enough so that $\frac{1}{\log(\aval - \ve')} \leq \avalnb + \frac{\ve}{3}$ and $\frac{1}{\log(\aval + \ve')} \geq \avalnb - \frac{\ve}{3}$. By combining \eqref{eq:J_est1} and \eqref{eq:J_est2}, we obtain
\begin{equation} \label{eq:bounds_J}
\avalnb - \frac{\ve}{3} + o(1) \leq \frac{J}{\log \log N} \leq \avalnb + \frac{\ve}{3} + o(1).
\end{equation}

\bigskip

\textbf{Step 3}: We show that with high probability, the number of frozen clusters surrounding $0$ in the final configuration is roughly $J$ (more precisely, between $J$ and $J+2$). For that, we consider the following events, for all $i =0, \ldots, j$, involving the percolation configuration in the pure birth process at times $\ol{t}^{(i)}$ and $\ul{t}^{(i)}$:
\begin{itemize}
\item $E_1^{(i)} := \Big\{$at $\ol{t}^{(i)}$, $\Big| \lclus_{\Ball_{R^{(i)}}} \Big| < N \Big\}$,

\item $E_2^{(i)} := \Big\{$at $\ul{t}^{(i)}$, $\Big| \lclus_{\Ball_{\frac{9}{10}r^{(i)}}} \Big| \geq N \Big\}$,

\item $E_3^{(i)} := \Big\{$at $\ul{t}^{(i)}$, $\Big| \lclus_{\Ball_{\frac{8}{10}r^{(i)}}} \Big| < N$ and $\Big| \lclus_{\Ann_{\frac{7}{10}r^{(i)}, r^{(i)}}} \Big| < N \Big\}$,

\item $E_4^{(i)} := \Big\{$at $\ol{t}^{(i)}$, $\circuitevent \big( \Ann_{\frac{7}{10}r^{(i)}, \frac{8}{10}r^{(i)}} \big)$ and $\circuitevent \big( \Ann_{\frac{9}{10}r^{(i)},r^{(i)}} \big)$ occur$\Big\}$,

\item $E_5^{(i)} := \Big\{$at $\ol{t}^{(i)}$, $\circuitevent \big( \Ann_{\frac{1}{2} R^{(i+1)}, R^{(i+1)}} \big)$ occurs and $\dout \Ball_{\frac{1}{2} R^{(i+1)}} \lra \infty \Big\}$,

\item $E_6^{(i)} := \Big\{$at $\ul{t}^{(i)}$, $\circuitevent^* \big( \Ann_{r^{(i+1)}, \frac{1}{2} R^{(i+1)}} \big)$ occurs$\Big\}$.
\end{itemize}

\begin{figure}[t]
\begin{center}

\includegraphics[width=.88\textwidth]{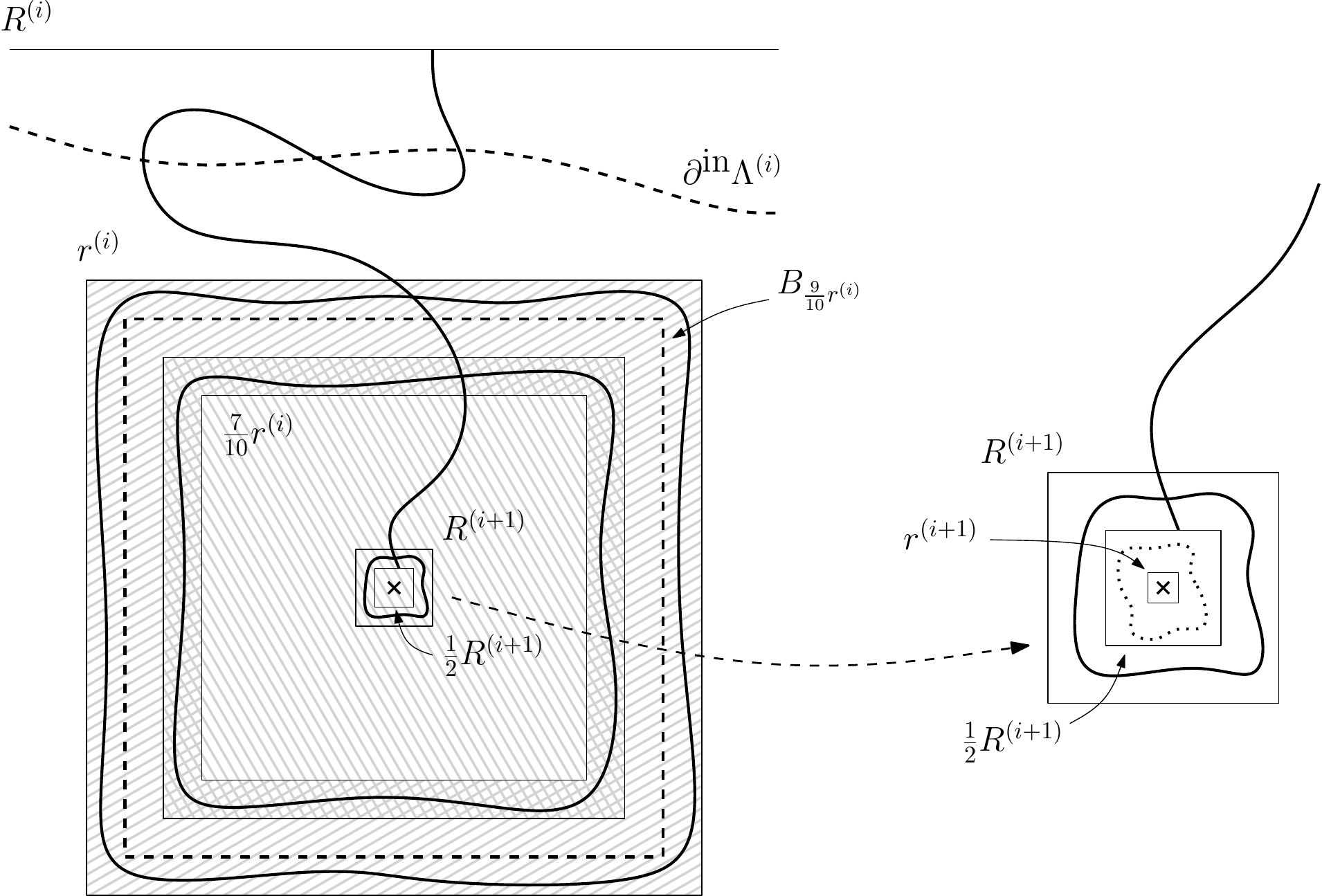}
\caption{\label{fig:iteration_FP} This figure shows the $i$th step of the construction producing the successive frozen clusters surrounding $0$. In a domain $\Lambda^{(i)}$ with $\Ball_{r^{(i)}} \subseteq \Lambda^{(i)} \subseteq \Ball_{R^{(i)}}$, we introduce times $t_c < \ol{t}^{(i)} < \ul{t}^{(i)}$ such that the next freezing occurs between these two times. At time $\ol{t}^{(i)}$, nothing has frozen yet, and the solid paths are occupied, while the dotted path is vacant at time $\ul{t}^{(i)}$. Vertices in the box $\Ball_{\frac{9}{10}r^{(i)}}$ are used to trigger the next freezing, and we make sure that the two regions $\Ball_{\frac{8}{10}r^{(i)}}$ and $\Ann_{\frac{7}{10}r^{(i)}, r^{(i)}}$ are too small to freeze separately.}

\end{center}
\end{figure}

We first show that these events have a high probability as $N \to \infty$, uniformly in $i$. More precisely, we prove that for some $\kappa > 0$ (depending on $\ve,\eta,K$, but not on $i$), we have: for all $i =0, \ldots, j$,
\begin{equation} \label{eq:unif_lower_bound}
\PP(E_k^{(i)}) \geq 1 - \frac{\kappa}{(\log \log N)^2} \quad (k = 1, \ldots, 6).
\end{equation}
This will allow us to simply use the union bound in the end, over the $j+1$ successive steps.

Let $i \in \{0, \ldots, j\}$. We start with $E_1^{(i)}$, $E_2^{(i)}$ and $E_3^{(i)}$, for which \eqref{eq:unif_lower_bound} can be obtained from \eqref{eq:largest_cluster_quant} (in combination with \eqref{eq:theta_t}). First,
\begin{equation}
\PP(E_1^{(i)}) \geq 1 - C \, \frac{L(\ol{t}^{(i)})}{R^{(i)}} = 1 - C \, \bigg( \frac{4}{\kappa_4} \big( \log \log \log N \big) \bigg)^{-1} \frac{R^{(i+1)}}{R^{(i)}}.
\end{equation}
(we also used \eqref{eq:r_def}). We can then write
\begin{equation}
\frac{R^{(i+1)}}{R^{(i)}} \leq \frac{R^{(i+1)}}{r^{(i)}} \leq \bigg( \frac{c_5}{\log N} \bigg)^{\frac{\xi}{2} (\aval - \ve')^i}
\end{equation}
(using $r^{(i)} \leq R^{(i)}$, and \eqref{eq:r_estimate2} for the second inequality), so
\begin{equation}
\PP(E_1^{(i)}) \geq 1 - \frac{C'}{(\log N)^{\frac{\xi}{2}}}
\end{equation}
for some $C' > 0$. Similarly,
\begin{equation}
\PP(E_2^{(i)}) \geq 1 - C \, \frac{L(\ul{t}^{(i)})}{\frac{9}{10}r^{(i)}} \quad \text{and} \quad \PP(E_3^{(i)}) \geq 1 - C \, \frac{L(\ul{t}^{(i)})}{\frac{8}{10}r^{(i)}} - C \, \frac{L(\ul{t}^{(i)})}{r^{(i)}}.
\end{equation}
We then use \eqref{eq:r_def}, $r^{(i+1)} \leq R^{(i+1)}$, and \eqref{eq:r_estimate2} to get
\begin{equation} \label{eq:ineq_L_r}
\frac{L(\ul{t}^{(i)})}{r^{(i)}} = (\log \log N)^{24} \, \frac{r^{(i+1)}}{r^{(i)}} \leq (\log \log N)^{24} \, \frac{R^{(i+1)}}{r^{(i)}} \leq (\log \log N)^{24} \, \bigg( \frac{c_5}{\log N} \bigg)^{\frac{\xi}{2} (\aval - \ve')^i},
\end{equation}
and thus the existence of $C'' > 0$ so that
\begin{equation}
\PP(E_2^{(i)}), \,\, \PP(E_3^{(i)}) \geq 1 - \frac{C''}{(\log N)^{\frac{\xi}{4}}}.
\end{equation}
For $E_4^{(i)}$, we deduce from several applications of \eqref{eq:exp_decay} that
\begin{equation}
\PP(E_4^{(i)}) \geq 1 - \kappa'_1 e^{-\kappa'_2 \frac{\frac{1}{10} r^{(i)}}{L(\ol{t}^{(i)})}} \geq 1 - \kappa'_1 e^{-\kappa''_2 (\log N)^{\frac{\xi}{2}}} \geq 1 - \frac{\tilde{C}}{\log N}
\end{equation}
for some $\tilde{C} > 0$. For the second inequality, we used that (again, from \eqref{eq:r_def} and \eqref{eq:r_estimate2})
\begin{equation}
\frac{r^{(i)}}{L(\ol{t}^{(i)})} = \frac{4}{\kappa_4} \big( \log \log \log N \big) \frac{r^{(i)}}{R^{(i+1)}} \geq \tilde{C}' (\log N)^{\frac{\xi}{2}}.
\end{equation}
For $E_5^{(i)}$ we use \eqref{eq:a-priori} directly:
\begin{equation}
\PP(E_5^{(i)}) \geq 1 - \kappa_3 e^{- \kappa_4 \frac{\frac{1}{2} R^{(i+1)}}{L(\ol{t}^{(i)})}} = 1 - \kappa_3 e^{- 2 \log \log \log N} = 1 - \frac{\kappa_3}{(\log \log N)^2}.
\end{equation}
Finally, using \eqref{eq:r_def} and then \eqref{eq:near_critical_arm}, we have
\begin{align}
\PP(E_6^{(i)}) & \geq \PP_{p(\ul{t}^{(i)})} \big( \circuitevent^* \big( \Ann_{(\log \log N)^{-24} L(\ul{t}^{(i)}), L(\ul{t}^{(i)})} \big) \big) \nonumber\\[1.5mm]
& = 1 - \PP_{p(\ul{t}^{(i)})} \big( \arm_1 \big( \Ann_{(\log \log N)^{-24} L(\ul{t}^{(i)}), L(\ul{t}^{(i)})} \big) \big) \nonumber\\[1.5mm]
& \geq 1 - c \, \pi_1 \big( (\log \log N)^{-24} L(\ul{t}^{(i)}), L(\ul{t}^{(i)}) \big). \label{eq:P_E6a}
\end{align}
Hence, it follows from \eqref{eq:uniform_arm_exp} that
\begin{equation} \label{eq:P_E6b}
\PP(E_6^{(i)}) \geq 1 - c' (\log \log N)^{- 24 \cdot ( \frac{5}{48} - \delta )} \geq 1 - \frac{c''}{(\log \log N)^2}.
\end{equation}
We have thus established \eqref{eq:unif_lower_bound}, from which we can deduce that
\begin{equation}
\PP \Bigg( \bigcup_{\substack{0 \leq i \leq j\\ 1 \leq k \leq 6}} E_k^{(i)} \Bigg) \geq 1 - (j+1) \cdot 6 \cdot \frac{\kappa}{(\log \log N)^2} \geq 1 - \frac{\eta}{8}
\end{equation}
for all $N$ large enough (using the union bound, combined with \eqref{eq:bounds_J} and the fact that $j \leq J$).

We observe that if the events $E_1^{(i)}$--$E_6^{(i)}$ occur simultaneously for some $i \in \{0, \ldots, j\}$, then for any simply connected domain $\Lambda^{(i)}$ with $\Ball_{r^{(i)}} \subseteq \Lambda^{(i)} \subseteq \Ball_{R^{(i)}}$, frozen percolation in $\Lambda^{(i)}$ has the following properties.
\begin{itemize}
\item Exactly one cluster surrounding $0$ freezes in the time interval $(\ol{t}^{(i)}, \ul{t}^{(i)}]$, at a time $\tau^{(i)}$. This cluster is also the unique cluster intersecting $\Ball_{\frac{9}{10}r^{(i)}}$ to freeze in this time interval.

\item Moreover, this frozen cluster surrounds $0$, which is left in a simply connected ``island'' $\Lambda^{(i+1)}$ whose boundary is contained in $\Ann_{ r^{(i+1)},  R^{(i+1)}}$.
\end{itemize}
We want to stress that our construction does not preclude other frozen clusters to emerge in $\Lambda^{(i)}$ before time $\ul{t}^{(i)}$, but it ensures that such clusters cannot surround $0$.

This implies that if all events $(E_k^{(i)})_{0 \leq i \leq j, 1 \leq k \leq 6}$ occur, then exactly $j+1$ clusters surrounding the origin freeze successively, at times $\tau^{(i)} \in (\ol{t}^{(i)}, \ul{t}^{(i)}]$, $0 \leq i \leq j$. After the ($j+1$)th freezing, the origin is left in a domain $\Lambda^{(j+1)}$ satisfying $\Ball_{r^{(j+1)}} \subseteq \Lambda^{(j+1)} \subseteq \Ball_{R^{(j+1)}}$.

Before we conclude this step, we have to mention that a small technical issue was ``swept under the rug'': in the last step $i = j$, it is well possible that $r^{(j+1)} < 1$ (and even much smaller), in which case \eqref{eq:P_E6a} and \eqref{eq:P_E6b} do not make much sense. If this happens, we simply discard $E_6^{(j+1)}$, i.e. let it be the whole sample space, so that \eqref{eq:unif_lower_bound} clearly holds. We keep the other events $E_1^{(j+1)}$--$E_5^{(j+1)}$, so the only consequence in this last step is that the vacant circuit in $\Ann_{r^{(i+1)}, \frac{1}{2} R^{(i+1)}}$ does not necessarily exist (the rest is unchanged). Hence, we can deduce that $\Lambda^{(j+1)} \subseteq \Ball_{R^{(j+1)}}$, but $\Lambda^{(j+1)}$ might be empty.

\bigskip

\textbf{Step 4}: We now explain how to end the iteration scheme, and obtain the total number of frozen clusters in $\Lambda^\#$. For that, we show that at most one extra frozen cluster surrounding $0$ can arise after time $\tau^{(j)}$ (in $\Lambda^{(j+1)}$). We need to distinguish two cases. Recall that by definition, $r^{(j)} \geq 3 c_{\TT}^{-\frac{1}{2}} \sqrt{N}$ and $r^{(j+1)} < 3 c_{\TT}^{-\frac{1}{2}} \sqrt{N}$.
\begin{itemize}
\item \ul{Case 1}: $R^{(j+1)} < \frac{1}{2} c_{\TT}^{-\frac{1}{2}} \sqrt{N}$. We have $j=J$, and $0$ is left in an island with volume $<N$, where no cluster can thus freeze. The procedure just stops here, and there are exactly $j+1$ frozen clusters surrounding $0$ in the final configuration.

\item \ul{Case 2}: $R^{(j+1)} \geq \frac{1}{2} c_{\TT}^{-\frac{1}{2}} \sqrt{N}$. In this case, $j = J-1$ or $j = J$, and the island $\Lambda^{(j+1)}$ may have a volume $< N$ or $\geq N$ (we may assume it to be non-empty, otherwise we just stop). We perform one more step, as we explain now. Let $\ol{t}^{(j+1)}$ and $R^{(j+2)}$ be associated with $\tilde{R}^{(j+1)} := \max(R^{(j+1)}, 3 c_{\TT}^{-\frac{1}{2}} \sqrt{N})$. From $r^{(j+1)} < 3 c_{\TT}^{-\frac{1}{2}} \sqrt{N}$, we can deduce that $R^{(j+2)} \ll \sqrt{N}$ (by the same reasoning that led to \eqref{eq:r_estimate2}). We can consider the corresponding events $E_1^{(j+1)}$ and $E_5^{(j+1)}$ (but not the other events $E_2^{(j+1)}$, $E_3^{(j+1)}$, and so on). They again satisfy the lower bound \eqref{eq:unif_lower_bound}, and $E_1^{(j+1)} \cap E_5^{(j+1)}$ ensures that if one frozen cluster $\cluster$ surrounding $0$ appears, then necessarily $\cluster$ freezes after time $\ol{t}^{(j+1)}$, and $\cluster$ contains the circuit in the definition of $E_5^{(j+1)}$. In this case, $0$ is left in an island with volume $\ll N$. This proves that at most one additional frozen cluster surrounding $0$ can appear.
\end{itemize}
We deduce that in both cases, the total number of frozen clusters surrounding $0$ satisfies
$$J \leq |\calF| \leq J+2$$
with a probability at least $1 - \frac{\eta}{4}$ (by the union bound). This allows us to obtain the claim \eqref{eq:claim_Lambda}, thanks to \eqref{eq:bounds_J}.

\bigskip

\textbf{Step 5}: We finally wrap up the proof. Consider the events
$$\calE := \bigg\{ \text{for frozen percolation in } \Lambda^\#, \,\, \frac{|\calF|}{\log \log N} \in \Big( \avalnb - \frac{\ve}{2}, \avalnb + \frac{\ve}{2} \Big) \bigg\}$$
and
$$\calE' := \bigg\{ \frac{|\calF^{(\Ball_{\ol{n}^{(0)}(N)})}|}{\log \log N} \geq \avalnb - \frac{\ve}{2}, \,\, \frac{|\calF^{(\Ball_{\ul{n}^{(0)}(N)})}|}{\log \log N} \leq \avalnb + \frac{\ve}{2} \bigg\}.$$
We have
$$\PP_N^{(\Ball_{K m_\infty(N)})}\big( \calE \big) \geq \sum_{\Lambda} \PP_N^{(\Ball_{K m_\infty(N)})}\big( \calE \, | \, \Lambda^\# = \Lambda \big) \cdot \PP_N^{(\Ball_{K m_\infty(N)})}\big( \Lambda^\# = \Lambda \big),$$
where the sum is over all simply connected domains $\Lambda$ as in $\calE_{(i)}$. We have
$$\PP_N^{(\Ball_{K m_\infty(N)})}\big( \calE \, | \, \Lambda^\# = \Lambda \big) = \PP_N^{(\Lambda)}\bigg( \frac{|\calF|}{\log \log N} \in \Big( \avalnb - \frac{\ve}{2}, \avalnb + \frac{\ve}{2} \Big) \bigg) \geq 1 - \frac{\eta}{4}$$
for all $N \geq N_0(\ve, \eta, K)$, where the equality comes from the property that $\Lambda^\#$ is a stopping set (so conditioning on $\{\Lambda^\# = \Lambda\}$ leaves the process inside $\Lambda$ unaffected), and the inequality from \eqref{eq:claim_Lambda}. Hence,
$$\PP_N^{(\Ball_{K m_\infty(N)})}\big( \calE \big) \geq \bigg( 1 - \frac{\eta}{4} \bigg) \, \sum_{\Lambda} \PP_N^{(\Ball_{K m_\infty(N)})}\big( \Lambda^\# = \Lambda \big) \geq \bigg( 1 - \frac{\eta}{4} \bigg) \, \PP_N^{(\Ball_{K m_\infty(N)})}\big( \calE_{(i)} \big) \geq 1 - \frac{\eta}{2},$$
from which we deduce
\begin{equation} \label{eq:end_proof_FP1}
\PP_N^{(\Ball_{K m_\infty(N)})}\big( \calE' \big) \geq \PP_N^{(\Ball_{K m_\infty(N)})}\big( \calE \cap \calE_{(i)} \cap \calE_{(ii)} \big) \geq \PP_N^{(\Ball_{K m_\infty(N)})}\big( \calE \big) - \frac{\eta}{4} \geq 1 - \frac{3 \eta}{4}.
\end{equation}
On the other hand, we know that from our choice of $\alpha$,
\begin{equation} \label{eq:end_proof_FP2}
\PP_N^{(\Ball_{K m_\infty(N)})} \bigg( \big| \calF \setminus \calF^{(\Ball_{\ul{n}^{(0)}(N)})} \big| \leq \frac{\ve}{2} \log \log N \bigg) \geq 1 - \frac{\eta}{4}
\end{equation}
(for all $N$ large enough). The desired result now follows immediately by combining \eqref{eq:end_proof_FP1} and \eqref{eq:end_proof_FP2}:
 $$\PP_N^{(\Ball_{K m_\infty(N)})} \bigg( \frac{|\calF|}{\log \log N} \in \Big( \avalnb - \frac{\ve}{2}, \avalnb + \ve \Big) \bigg) \geq 1 - \eta.$$

\end{proof}

\begin{remark} \label{rem:separation_scales}
In order to control the avalanche of successive frozen clusters, as in Step~3 above, we use the fact that the successive scales get more and more separated as $N \to \infty$, i.e. that $r^{(i)}$ (the lower bound in the $i$th step) remains $\gg R^{(i+1)}$ (the upper bound in the $(i+1)$th step) as $i$ increases. This is ensured by \eqref{eq:r_estimate2}, which is obtained inductively from $\frac{R^{(1)}}{r^{(0)}}$. This explains why we started from a scale of order $\frac{m_\infty(N)}{(\log{N})^{\beta}}$. As the reader can check, we could even start the analysis from $\frac{m_\infty(N)}{(\log \log N)^{\alpha}}$, with $\alpha$ large enough (i.e. for all $\alpha \geq \alpha_0 > 0$). In any case, we have to handle separately the scales close to $m_\infty(N)$.

We also want to stress that in order to check this separation of scales, the upper bound on $\frac{R^{(i)}}{r^{(i)}}$ obtained thanks to a repeated use of Lemma~\ref{lem:one_iteration} would not be good enough, after of order $\log \log N$ steps (because of the error $\ve'$ in the exponent). In fact, even though we expect $r^{(i)}$ to stay much closer to $R^{(i)}$ than to $R^{(i+1)}$ (as we know for $i=0$, and as we can check for the first few steps), this does not seem to follow from our reasoning.

Finally, we mention that exactly the same issues arise for forest fire processes in Section~\ref{sec:proof_FF}.
\end{remark}

\subsection{Proof of Proposition \ref{prop:circuits}} \label{sec:proof_FP2}

By modifying slightly the previous proof, it is possible to obtain Proposition \ref{prop:circuits}: with high probability, the number of disjoint frozen circuits surrounding the origin grows at least as a power law in $N$. In particular, it is much larger than the number of frozen clusters, which is of order $\log \log N$.

\begin{proof}[Proof of Proposition \ref{prop:circuits}]

Let $\delta = 10^{-10}$ and $\eta > 0$. As in Step 1 of the proof of Theorem \ref{thm:mainFP}, we use Proposition~7.2 in \cite{BKN2015}, but starting from $t > t_c$ with $L(t) = \frac{m_\infty(N)}{N^{\delta}}$ (instead of $L(t) = \frac{m_\infty(N)}{(\log{N})^{\alpha}}$, for some $\alpha > 0$). This proves the existence of a simply connected stopping set $\Lambda^\#$ such that with probability $\geq 1- \frac{\eta}{3}$, the following two properties are satisfied.
\begin{enumerate}[(i)]
\item $\Ball_{c_1L(t^\#)}\subseteq \Lambda^\# \subseteq \Ball_{c_2L(t^\#)}$, where $t \leq t^\# \leq \hat{\hat{\hat{t}}}$.

\item For frozen percolation on the whole lattice $\TT$, its restriction to $\Lambda^\#$ coincides with frozen percolation in $\Lambda^\#$ directly.
\end{enumerate}
From Lemma \ref{lem:one_iteration},
\begin{equation} \label{eq:rem_start}
C_1 \frac{m_\infty(N)}{N^{\delta (\aval + \delta)^3}} \leq L(t^\#) \leq \frac{m_\infty(N)}{N^{\delta}}.
\end{equation}
We first let
$$r^{(0)} := c_1 L(t^\#) \quad \text{and} \quad R^{(0)} := c_2 L(t^\#),$$
and perform one step as in the proof above: define $\ul{t}^{(1)}$ and $\ol{t}^{(1)}$, and then $r^{(1)}$ and $R^{(1)}$ (see \eqref{eq:t_def}, \eqref{eq:r_def}). Observe that by \eqref{eq:rem_start}, \eqref{eq:compar_minf}, and \eqref{eq:exp_t_infty_FP}, $r^{(1)} \gg \sqrt{N}$. We also have
$$\frac{r^{(0)}}{R^{(1)}} \geq c N^{\xi}$$
for some universal $c, \xi > 0$. Indeed,
$$\frac{r^{(0)}}{R^{(1)}} = \frac{c_1}{c_2} \cdot \frac{R^{(0)}}{m_{\infty}} \cdot \frac{m_{\infty}}{R^{(1)}}\geq c' \bigg( \frac{m_{\infty}}{R^{(0)}} \bigg)^{\aval - \delta - 1} \big( \log \log \log N \big)^{-1} \geq c'' (N^{\delta})^{\aval - 2 \delta - 1},$$
where we used \eqref{eq:comp_R_minf} and \eqref{eq:rem_start} for the first and second inequalities, respectively. We thus let $\xi := \delta (\aval - 2 \delta - 1) > 0$.

We then consider the same events $E_1^{(0)}$--$E_6^{(0)}$ as before, together with the additional event
$$E_7^{(0)} := \Big\{\text{at } \ol{t}^{(0)}, \,\, \circuitevent \big( \Ann_{k N^{\frac{\xi}{3}} R^{(1)}, (k+1) N^{\frac{\xi}{3}} R^{(1)}} \big) \text{ occurs for all } k=1, \ldots, N^{\frac{\xi}{3}} \Big\}.$$
From \eqref{eq:exp_decay},
$$\PP(E_7^{(0)}) \geq 1 - \sum_{k=1}^{N^{\frac{\xi}{3}}} (c k) \cdot \kappa_1 e^{-\kappa_2 N^{\frac{\xi}{3}}} \geq 1 - \kappa'_1 N^{\frac{2 \xi}{3}} e^{-\kappa_2 N^{\frac{\xi}{3}}}.$$
Indeed, a circuit in $\Ann_{k N^{\frac{\xi}{3}} R^{(1)}, (k+1) N^{\frac{\xi}{3}} R^{(1)}}$ can be obtained from occupied crossings in rectangles with side lengths $4 N^{\frac{\xi}{3}} R^{(1)}$ and $N^{\frac{\xi}{3}} R^{(1)}$: at most $c k$ of them, where $c$ is universal. Hence,
\begin{equation}
\PP \Bigg( \bigcup_{1 \leq k \leq 7} E_k^{(0)} \Bigg) \geq 1 - \frac{\eta}{3}
\end{equation}
for all $N$ large enough (for $E_k^{(0)}$, $k = 1, \ldots, 6$, we use similar computations as for \eqref{eq:unif_lower_bound}).

The extra event $E_7^{(0)}$ ensures the existence of disjoint occupied circuits $(\circuit_k)_{1 \leq k \leq N^{\frac{\xi}{3}}}$, each of them intersecting the occupied path from $\dout \Ball_{\frac{1}{2} R^{(1)}}$ to $\infty$ provided by $E_5^{(0)}$. Hence, all these circuits $\circuit_k$ freeze, since by construction, they are contained in the first cluster surrounding $0$ to freeze (which happens in the time interval $(\ol{t}^{(0)}, \ul{t}^{(0)}]$). This allows us to conclude.

\end{proof}

\begin{remark} \label{rem:other_lattices3}
The proof of Theorem \ref{thm:mainFP} uses crucially \eqref{eq:ratioL}, which relies on the exact value of the one-arm exponent $\alpha_1$. On other two-dimensional lattices (as considered in Remark~\ref{rem:other_lattices}), we can still derive a weaker, but non-trivial, statement. The reader can check that the observations from Remark~\ref{rem:other_lattices2} yield the following: for all $K, \ve > 0$,
$$\PP_N^{(\Ball_{K m_\infty(N)})} \bigg( \frac{|\calF|}{\log \log N} \in \bigg( \frac{1}{\log \frac{2}{\alpha}} - \ve, \frac{1}{\log 4} + \ve \bigg) \bigg) \stackrel[N \to \infty]{}{\longrightarrow} 1.$$
Moreover, the proof of Proposition \ref{prop:circuits} applies in this case as well, after minor adjustments, leading to the same result. For both proofs, we also need to observe that the reasoning in Section~7 of \cite{BKN2015} does not use any of the fine properties of near-critical percolation which are known only for site percolation on $\TT$ at the moment (contrary to earlier sections in that paper, where, in particular, the full scaling limit of near-critical percolation \cite{GPS2018a} is used).
\end{remark}

\section{Avalanches for forest fires} \label{sec:proof_FF}

In this final section, we establish the results for the FFWoR process, namely Theorem~\ref{thm:mainFF} (Section \ref{sec:proof_FF1}) and Proposition \ref{prop:large_burnt_clusters} (Section~\ref{sec:proof_FF2}). We let
\begin{equation}
\aval = \aval^{\textrm{FF}} = \frac{96}{41} \quad \text{and} \quad \avalnb = \avalnb^{\textrm{FF}} = \frac{1}{\log \aval^{\textrm{FF}}}.
\end{equation}

\subsection{Preliminaries}

We will make use of the following ``uniform'' versions of \eqref{eq:exp_theta_L}.

\begin{lemma} \label{lem:unif_theta_L}
For all $\ve > 0$, we have: for all $0 < \ve_1 \leq \ve_2$,
\begin{equation} \label{eq:uniform_theta}
C_1 \bigg( \frac{\ve_1}{\ve_2} \bigg)^{\frac{5}{36} + \ve} \leq \frac{\theta(t_c+\ve_1)}{\theta(t_c+\ve_2)} \leq C_2 \bigg( \frac{\ve_1}{\ve_2} \bigg)^{\frac{5}{36} - \ve},
\end{equation}
and similarly
\begin{equation} \label{eq:uniform_L}
C'_1 \bigg( \frac{\ve_1}{\ve_2} \bigg)^{-\frac{4}{3} + \ve} \leq \frac{L(t_c+\ve_1)}{L(t_c+\ve_2)} \leq C'_2 \bigg( \frac{\ve_1}{\ve_2} \bigg)^{-\frac{4}{3} - \ve},
\end{equation}
for some constants $C_1, C_2, C'_1, C'_2 > 0$ that depend only on $\ve$.
\end{lemma}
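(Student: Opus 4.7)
The plan is to derive both inequalities from the three ingredients already stated in Section \ref{sec:classical}: the asymptotic equivalences \eqref{eq:equiv_theta} and \eqref{eq:equiv_L}, the quasi-multiplicativity \eqref{eq:quasi_mult}, and, crucially, the uniform arm-event bounds \eqref{eq:uniform_arm_exp}. The idea is that \eqref{eq:exp_theta_L} is merely the ``diagonal'' consequence of these inputs, and exactly the same computation carried out between two scales $\ve_1 \leq \ve_2$ yields the desired uniform estimate. I would handle \eqref{eq:uniform_L} first, then deduce \eqref{eq:uniform_theta} from it.

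For \eqref{eq:uniform_L}, write $L_i := L(t_c+\ve_i)$ for $i=1,2$; since $L$ is decreasing in $p > p_c$ and $p(t)$ is increasing in $t$, the assumption $\ve_1 \leq \ve_2$ gives $L_1 \geq L_2 \geq 1$. Applying \eqref{eq:equiv_L} (and the asymptotics $|p(t_c+\ve)-p_c|\asymp \ve$ for $\ve$ small) at both $\ve_1$ and $\ve_2$ and taking the ratio yields
\begin{equation*}
\frac{\ve_2}{\ve_1} \asymp \frac{L_1^2\, \pi_4(L_1)}{L_2^2\, \pi_4(L_2)} \asymp \Bigl(\frac{L_1}{L_2}\Bigr)^{2} \pi_4(L_2, L_1),
\end{equation*}
where the second step uses \eqref{eq:quasi_mult}. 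By \eqref{eq:uniform_arm_exp} applied to $\sigma=(ovov)$ (so $\alpha_\sigma = 5/4$), for any given $\ve' > 0$ one has $\pi_4(L_2,L_1) \asymp (L_2/L_1)^{5/4 \mp \ve'}$, whence
\begin{equation*}
\frac{\ve_2}{\ve_1} \asymp \Bigl(\frac{L_1}{L_2}\Bigr)^{3/4 \pm \ve'}.
\end{equation*}
Inverting and choosing $\ve'$ small enough (as a function of the target $\ve$) gives the bounds in \eqref{eq:uniform_L} with exponent $4/3 \pm \ve$.

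For \eqref{eq:uniform_theta}, I would then use \eqref{eq:equiv_theta} at both $\ve_1$ and $\ve_2$, which yields $\theta(t_c+\ve_i) \asymp \pi_1(L_i)$. Quasi-multiplicativity of $\pi_1$ (i.e.\ \eqref{eq:quasi_mult} for $\sigma=(o)$) then gives
\begin{equation*}
\frac{\theta(t_c+\ve_1)}{\theta(t_c+\ve_2)} \asymp \frac{\pi_1(L_1)}{\pi_1(L_2)} \asymp \pi_1(L_2, L_1).
\end{equation*}
A second invocation of \eqref{eq:uniform_arm_exp}, this time with $\sigma=(o)$ and $\alpha_\sigma = 5/48$, controls $\pi_1(L_2, L_1)$ by $(L_2/L_1)^{5/48 \pm \ve''}$ for arbitrarily small $\ve''>0$. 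Feeding in the bound on $L_1/L_2$ obtained in the previous step gives $\theta(t_c+\ve_1)/\theta(t_c+\ve_2) \asymp (\ve_1/\ve_2)^{(4/3)(5/48) \pm \text{error}} = (\ve_1/\ve_2)^{5/36 \pm \text{error}}$, and choosing $\ve'$ and $\ve''$ small enough makes the error at most the prescribed $\ve$.

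There is no conceptual obstacle here; the work is purely bookkeeping. The only thing to be a little careful about is propagating the small parameters: one first picks the target $\ve$ in \eqref{eq:uniform_theta}, then chooses $\ve''$ in the one-arm estimate, then chooses $\ve'$ in the four-arm estimate small enough so that the two multiplicative errors compose into something bounded by $\ve$. Since \eqref{eq:uniform_arm_exp} allows $\ve', \ve''$ to be taken arbitrarily small (at the cost of larger constants), this pull-back causes no difficulty, and the constants $C_1, C_2, C'_1, C'_2$ depend only on the final $\ve$.
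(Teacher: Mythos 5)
Your proof is correct and follows essentially the same route as the paper: use \eqref{eq:equiv_L} and \eqref{eq:quasi_mult} to reduce the ratio of characteristic lengths to a four-arm probability, control it via \eqref{eq:uniform_arm_exp}, and then repeat with \eqref{eq:equiv_theta} and the one-arm event for the $\theta$ ratio, feeding in \eqref{eq:uniform_L}. The order of deduction and the bookkeeping with nested $\ve', \ve''$ match the paper's (unstated but implicit) strategy.
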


\begin{proof}[Proof of Lemma \ref{lem:unif_theta_L}]
First, \eqref{eq:uniform_L} follows from
$$\ve_1 \, L(t_c + \ve_1)^2 \pi_4(L(t_c + \ve_1)) \asymp 1 \asymp \ve_2 \, L(t_c + \ve_2)^2 \pi_4(L(t_c + \ve_2))$$
(using \eqref{eq:equiv_L}), and
$$\frac{\pi_4(L(t_c + \ve_1))}{\pi_4(L(t_c + \ve_2))} \asymp \pi_4(L(t_c + \ve_2), L(t_c + \ve_1)),$$
(from \eqref{eq:quasi_mult}), which can be estimated using \eqref{eq:uniform_arm_exp} (with $\sigma = (ovov)$). For \eqref{eq:uniform_theta}, it then suffices to write
$$\frac{\theta(t_c+\ve_1)}{\theta(t_c+\ve_2)} \asymp \frac{\pi_1(L(t_c+\ve_1))}{\pi_1(L(t_c+\ve_2))} \asymp \pi_1(L(t_c + \ve_2), L(t_c + \ve_1))$$
(using \eqref{eq:equiv_theta} and \eqref{eq:quasi_mult}), apply \eqref{eq:uniform_arm_exp} again (now with $\sigma = (o)$), and use \eqref{eq:uniform_L}.
\end{proof}

\subsection{Proof of Theorem \ref{thm:mainFF}} \label{sec:proof_FF1}

We turn to Theorem~\ref{thm:mainFF}. Again, we proceed by iterating a percolation construction, now inspired by the proof of Theorem~7.2 of \cite{BN2018}. It involves events for the configuration in the pure birth process at successive times after $t_c$, as in Section \ref{sec:proof_FP}, and it also uses the existence of connections and large clusters in the presence of impurities, created by fires occurring before $t_c$ (more precisely, we stop ignitions at a corresponding sequence of times smaller than $t_c$). However, the iteration from \cite{BN2018} turns out to be too crude, and it cannot be carried out over a number of steps tending to infinity. The scheme that we present is quite involved, requiring the introduction of nine different events at every step.

As for Theorem~\ref{thm:mainFP}, the proof can be decomposed, roughly, into three successive stages.
\begin{itemize}
\item[(1)] First, in Step~1, we explain how to explore the process in $\Ball_{K m_\infty(\zeta)}$ from the boundary, in order to produce a random subdomain $\Lambda^+$ having the following property. With high probability, the FFWoR process in the bigger box $\Ball_{K m_\infty(\zeta)}$, when restricted to $\Lambda^+$, approximately coincides with the FFWoR process in this domain. For this purpose, we perform the exploration in such a way that little information is read on the birth and ignition processes inside $\Lambda^+$ (or, at least, ``deep'' inside it). This domain $\Lambda^+$ cannot be too irregular, and we ensure that its boundary is contained in an annulus $\Ann_{r,R}$, with $\frac{r}{R}$ bounded away from $0$. In addition, $\Lambda^+$ needs to have a radius slightly below $m_\infty(\zeta)$, in order to initiate the iterative construction, but not too much so that Lemma~\ref{lem:disjoint_circuits_FF} can be applied, to take care of the burnt circuits in $\Ball_{K m_\infty(\zeta)} \setminus \Lambda^+$. In the case of frozen percolation, we were able to appeal directly to a proposition from Section~7 of \cite{BKN2015}. No such results is available for forest fires, so we need to develop a new approach here.

\item[(2)] We then perform the iteration scheme itself, which is the heart of the proof. In a similar fashion as for frozen percolation, we show that in $\Lambda^+$, $(\avalnb + o(1)) \log\log \frac{1}{\zeta}$ clusters burn around $0$. We produce again a sequence of domains $\Lambda^{(0)} = \Lambda^+ \supseteq \Lambda^{(1)} \supseteq \Lambda^{(2)} \supseteq \ldots$ such that for each $i$, exactly one cluster surrounding $0$ burns in $\Lambda^{(i)} \setminus \Lambda^{(i+1)}$. In order to explain one step of the iteration, consider the FFWoR process in a domain $\Lambda$ with a radius of order $L(t)$, for some $t > t_{\infty}$. The strategy to analyze the first macroscopic burning in such a domain can be described informally as follows, where we write $\hat{t} = t_c + \ve$ ($> t$ since $t > t_{\infty}$). We use the times $t_c + \kappa_1 \ve < t_c + \kappa_2 \ve$, for some $\kappa_1, \kappa_2 > 0$ which have to be chosen sufficiently small and sufficiently large, respectively. We want to ensure that with high probability, the largest cluster at time $t_c + \kappa_1 \ve$ is not hit by lightning in $[t_c - \kappa_1 \ve, t_c + \kappa_1 \ve]$, and hit in $[t_c + \kappa_1 \ve, t_c + \kappa_2 \ve]$. This uses the definition of $\hat{t}$, and the process with ignitions stopped at time $t_c - \kappa_1 \ve$. However, the scheme turns out to be quite subtle to implement, so as not to ``lose too much'' along the way (i.e. keep the property of ``separation of scales'', analogously to frozen percolation), and non-trivial technicalities arise.

\item[(3)] In the final stage (Step~4), we explain how to terminate the iterative procedure, by showing that at most two extra clusters burn around $0$. We then conclude the proof.
\end{itemize}

\begin{proof}[Proof of Theorem~\ref{thm:mainFF}]

Let $\ve, \eta > 0$, and $\delta = 0.001$. We can assume, without loss of generality, that $\zeta > 0$ is small enough so that $\log \log \log \frac{1}{\zeta}$ is well-defined, and it is $\geq 1$.

\bigskip

\textbf{Step 1}: In this first step, we explain how to relate the full-plane FFWoR process to the process in a suitable (random) finite domain, with a diameter sufficiently smaller than $m_\infty(\zeta)$. We achieve this through an exploration ``from outside'', producing a random island $\Lambda^+$ which contains $0$, while leaving the processes inside $\Lambda^+$ more or less untouched.

However, performing such explorations for forest fire processes is somewhat trickier than for Bernoulli or frozen percolation. In these latter processes, when e.g. exploring the vertices connected to the boundary of a box (to determine the innermost circuit around $0$, i.e. the island where $0$ lies after a freezing), one only needs to read an additional layer of vertices: the vertices which are vacant along the outer boundary. In the case of forest fires, these boundary vertices might be burnt because of some earlier fires: we thus need to ``explore more'', and potentially move rather deep inside the domain. In this situation, we use truncated ignition processes and the absence of crossing holes in well-chosen annuli (implied by Lemma~\ref{lem:crossing_hole}) as a substitute for spatial independence.

Roughly speaking, our reasoning in this step can be described as follows. It is made of two successive substeps. Given a time $t_0$ sufficiently later than $t_{\infty}$ (in a sense to be made precise below), we first show that some large cluster burns in $\Ball_{L(t_0)}$ ``not too much later'' than $\widehat{t_0}$: before a time $t^*$, which is such that $L(t^*)$ is at least of order $L(\widehat{t_0})$. This burning leaves $0$ in an island $\Lambda^*$ which has a diameter at least of order $L(t^*)$, but possibly much bigger (up to $L(t_0)$).

In a second substep, we then introduce a time $t^{**}$ slightly later than $t^*$, so that on the one hand no macroscopic cluster has already burnt at that time in $\Lambda^*$, and on the other hand we still have $L(t^{**}) \ll L(t^*)$. This can be formulated as: ``the island in which $0$ lies at time $t^{**}$ is big'', in the sense that its diameter is much larger than the characteristic length $L(t^{**})$ at this time. This property is used for an ad hoc construction, showing that we can determine the next burning event without looking at the processes inside a ball with a radius which is both $\ll L(t^*)$ and $\gg L(t^{**})$. In this way, we manage to keep the birth and ignition processes inside the ball sufficiently ``fresh'', which allows us to start the iterative procedure, carried out in the subsequent steps.

Along the way, we introduce several ``modified'' FFWoR processes. We want to emphasize that these processes are only needed here, they will not reappear later. They are used to ensure that certain random times have the right measurability property with respect to the birth and ignition processes.

Let us now describe the argument in detail, and for this purpose, let $r(\zeta) := \frac{m_\infty(\zeta)}{(\log \frac{1}{\zeta})^{\alpha}}$, for some $\alpha > 0$ (small enough) that we explain how to choose later. Let $t_0 = t_0(\zeta) > t_c$ so that $L(t_0) = r(\zeta)$ (from now on, we often drop the dependence on $\zeta$ in the notation). We write $t_0 = t_c + \ve_0$, $t_1 = \widehat{t_0} = t_c + \ve_1$, and $t_2 = \widehat{\widehat{t_0}} = t_c + \ve_2$. We introduce the following time
$$t'_0 = t_c + \ve'_0 := t_c + (\ve_0)^{\frac{1}{2}} (\ve_1)^{\frac{1}{2}},$$
intermediate between $t_0$ and $t_1$. In addition, we consider the times
$$\ol{t}_1 = t_c + \ol{\ve}_1 := t_c + \ol{\kappa} \, \ve_1 \quad \text{and} \quad \ul{t}_1 = t_c + \ul{\ve}_1 := t_c + \ul{\kappa} \, \ve_1,$$
where $\ol{\kappa} \in (0,1)$ and $\ul{\kappa} > 1$ depend only on $\eta$. These constants need to be taken sufficiently small and large, respectively, as we explain just below. Observe that, for all $\zeta$ small enough,
$$t_0 < t'_0 < \ol{t}_1 < t_1 < \ul{t}_1 < t_2.$$

We use later the largest cluster $\lclus$ in $\Ball_{L(t_0)}$ in the configuration $\sigma^{[t_c - \ol{\ve}_1]}$:
$$\PP \bigg( \text{at $\ol{t}_1$, in $\sigma^{[t_c - \ol{\ve}_1]}$, } \frac{|\lclus_{\Ball_{L(t_0)}}|}{\theta(\ol{t}_1) |\Ball_{L(t_0)}|} \in \Big( \frac{1}{2}, \frac{3}{2} \Big) \text{ and } \circuitarm \Big( \Ann_{\frac{1}{2} L(t_0), L(t_0)} \, \big| \, \lclus_{\Ball_{L(t_0)}} \Big) \text{ occurs}\bigg) \geq 1 - \frac{\eta}{100}$$
for all $\zeta$ sufficiently small (from Proposition~\ref{prop:largest_cluster_impurities}, using $L(t_c - \ol{\ve}_1) \asymp L(\ol{t}_1) \ll L(t_0)$). Since
$$c_{\TT} L(t_0)^2 \theta ( t_1 ) \ve_1 = \zeta^{-1},$$
and
$$C_1 \ol{\kappa}^{\frac{5}{36} + \delta} \leq \frac{\theta(t_c+ \ol{\kappa} \ve_1)}{\theta(t_c+\ve_1)} \leq C_2 \ol{\kappa}^{\frac{5}{36} - \delta}$$
for some constants $C_1, C_2 > 0$ (from \eqref{eq:uniform_theta}), we deduce that $\ol{\kappa}(\eta)$ and $\ul{\kappa}(\eta)$ can be chosen so that the following holds. With a probability at least $1 - \frac{\eta}{100}$, no vertex of $\lclus$ gets ignited during the time interval $[t_c - \ol{\ve}_1, \ol{t}_1]$, and at least one of its vertices gets ignited in $(\ol{t}_1, \ul{t}_1]$.

Finally, we introduce the event
$$E_0 := \Big\{ \text{at $t_0$, } \circuitevent^* \big( \Ann_{\delta' L(t_0), L(t_0)} \big) \text{ occurs} \Big\} \cap \Big\{ \text{at $t_0$, in $\sigma^{[t_c - \ve_0]}$, } \circuitevent \big( \Ann_{\delta'^2 L(t_0), \delta' L(t_0)} \big) \text{ occurs} \Big\}.$$
Here, we choose $\delta' = \delta'(\eta) > 0$ small enough so that
\begin{equation} \label{eq:2circuits}
\PP(E_0) \geq 1 - \frac{\eta}{100}
\end{equation}
(this is possible, thanks to \eqref{eq:RSW}, as well as Proposition~\ref{prop:crossing_impurities} combined with Lemma~\ref{lem:stoch_domin}, using that $L(t_c - \ve_0) \asymp L(t_0)$). Observe that by monotonicity, any occupied circuit as in $E_0$, in the configuration $\sigma^{[t_c - \ve_0]}$, is also occupied in $\sigma^{[t_c - \ol{\ve}_1]}$, where we stop ignitions at an earlier time.

We denote by $\circuit$ the outermost occupied circuit in $\Ann_{\delta'^2 L(t_0), \delta' L(t_0)}$ at time $t_0$ in $\sigma^{[t_c - \ol{\ve}_1]}$, when such a circuit exists (and we let $\circuit = \dout \Ball_{\delta'^2 L(t_0)}$ otherwise). Note that with probability at least $1 - \frac{\eta}{100}$, $\circuit$ is contained in $\lclus$ at time $\ol{t}_1$. Indeed, this follows from the occurrence of $\circuitarm \big( \Ann_{\frac{1}{2} L(t_0), L(t_0)} \, | \, \lclus_{\Ball_{L(t_0)}} \big)$, combined with an occupied arm in $\Ann_{\delta'^2 L(t_0), L(t_0)}$ (provided by Proposition~\ref{prop:exp_decay_impurities}).

Let us now consider for a moment a modified FFWoR process $\sigma'$, where ignitions in $\Ball_{L(t_0)}$ after time $t_c - \ol{\ve}_1$ are discarded. From the observation above, as far as $\lclus$ is concerned, $\sigma'$ coincides with the original process $\sigma$ up to time $\ol{t}_1$ (with probability at least $1 - \frac{\eta}{100}$). Furthermore, the circuit $\circuit$ is ``protected'' by the vacant circuit in $\Ann_{\delta' L(t_0), L(t_0)}$ up to time $t_0$ (i.e. over the interval $(t_c - \ol{\ve}_1, t_0]$), so necessarily none of its vertices is burnt at time $t_0$. Hence, all vertices of $\circuit$ burn simultaneously at a later time, that we denote by $t'$. We first consider the situation at time $t'_0$ (for the process $\sigma'$), and we distinguish the following two cases.
\begin{itemize}
\item If $\circuit$ has burnt already, i.e. $t' \leq t'_0$, this is necessarily because of a fire coming from an ignition outside $\Ball_{L(t_0)}$. We define $t^* = t'_0$, and we introduce the event
$$E'_0 := \Big\{ \text{at $t'_0$, } \circuitevent^* \big( \Ann_{\delta' L(t'_0), L(t'_0)} \big) \text{ occurs} \Big\}.$$
Note that from our choice of $\delta'$,
$$\PP(E'_0) \geq 1 - \frac{\eta}{100},$$
and the vacant circuit that it provides (in $\Ann_{\delta' L(t'_0), L(t'_0)}$) is also vacant at time $t_0$. Hence, at time $t^*$, $0$ is surrounded by a burnt circuit lying in $\Ann_{\ul{r}', \ol{r}'}$, where
\begin{equation} \label{eq:burnt_island1}
\ul{r}' = \delta' L(t'_0) \quad \text{and} \quad \ol{r}' = \delta' L(t_0).
\end{equation}
We define the simply connected domain $\Lambda^*$ as the connected component of vertices containing $0$ when one removes the burnt cluster of $\circuit$.

\item If $\circuit$ is not yet burnt at time $t'_0$, then analogously to the first case, we consider the event
$$E'_1 := \Big\{ \text{at $\ul{t}_1$, } \circuitevent^* \big( \Ann_{\delta' L(\ul{t}_1), L(\ul{t}_1)} \big) \text{ occurs} \Big\},$$
which has a probability at least $1 - \frac{\eta}{100}$. We also introduce the additional event
$$\ol{E}_1 := \Big\{ \text{at $t'_0$, in $\sigma^{[t_c - \ve'_0]}$, } \circuitevent \big( \Ann_{C L(t'_0), 2 C L(t'_0)} \big) \text{ occurs, and } \dout \Ball_{C L(t'_0)} \lra \circuit \Big\},$$
where $C = C(\eta) \geq 1$ is chosen large enough so that
$$\PP(\ol{E}_1) \geq 1 - \frac{\eta}{100}$$
(this uses Proposition~\ref{prop:exp_decay_impurities}). By monotonicity, such occupied paths are present in $\sigma^{[t_c - \ol{\ve}_1]}$ as well.

Let us consider the modified FFWoR process $\sigma''$, where ignitions in $\Ball_{2 C L(t'_0)}$ after time $t_c - \ol{\ve}_1$ are discarded. Again, if we restrict ourselves to $\lclus$ (or even the largest cluster at time $\ul{t}_1$), $\sigma''$ coincides with $\sigma$ up to time $\ul{t}_1$. Indeed, $\lclus \cap \Ball_{2 C L(t'_0)}$ contains too few vertices for it to be ignited during $[t_c - \ol{\ve}_1, \ul{t}_1]$.

Moreover, we observed earlier that at least one of the vertices of $\lclus$ gets ignited during $(\ol{t}_1, \ul{t}_1]$, and $\circuit \subseteq \lclus$. Hence, either $\circuit$ burns in this time interval, or it burns earlier, i.e. in the time interval $(t'_0, \ol{t}_1]$. Note that the circuit in $\Ann_{C L(t'_0), 2 C L(t'_0)}$, which at time $t'_0$ is occupied and connected to $\circuit$, has to burn together with $\circuit$, so that at time $\ul{t}_1$, $0$ is surrounded by a burnt circuit lying in $\Ann_{\ul{r}', \ol{r}'}$, with
\begin{equation} \label{eq:burnt_island2}
\ul{r}' = \delta' L(\ul{t}_1) \quad \text{and} \quad \ol{r}' = 2 C L(t'_0)
\end{equation}
in this case. We let $t^* = \ul{t}_1$, and we define $\Lambda^*$ in a similar way as before.
\end{itemize}

In both cases, we can condition on the birth and ignition processes outside a suitable ball: hence, $t'$ and $t^*$ can be seen as fixed. Note that
\begin{equation} \label{eq:hole_Lambda*1}
\Ball_{\delta' L(t^*)} \subseteq \Lambda^*
\end{equation}
(from the definition of $t^*$ in each case, \eqref{eq:burnt_island1} and \eqref{eq:burnt_island2}). Moreover, we have
\begin{equation} \label{eq:hole_Lambda*2}
\Lambda^* \subseteq \Ball_{\ol{r}'},
\end{equation}
where the radius $\ol{r}'$ satisfies the following property: writing $t^* = t_c + \ve^*$,
\begin{equation} \label{eq:small_vol_Lambda*1}
( \ol{r}' )^2 \theta ( t_c + \ve^* ) \ve^* \ll \zeta^{-1} \quad \text{as $\zeta \searrow 0$.}
\end{equation}
In other words, $\next_\zeta(\ol{r}')$ is much later than $t^*$, which ensures that ``nothing substantial'' burns in $\Lambda^*$ before time $t^*$. In addition, using the ``no crossing hole'' event $\calH(\Ann_{\frac{1}{2} \delta' L(t^*), \delta' L(t^*)})$, which has a probability $\geq 1 - \frac{\eta}{100}$ for all $\zeta$ sufficiently small, from Lemma~\ref{lem:crossing_hole}, we can consider the situation inside $\Ball_{\frac{1}{2} \delta' L(t^*)}$ as ``fresh''. More precisely, on this event, the stochastic lower bound provided by percolation with impurities remains valid in $\Ball_{\frac{1}{2} \delta' L(t^*)}$. Indeed, the conditioning only impacts the impurities centered outside $\Ball_{\delta' L(t^*)}$. For the FFWoR process (with ignitions stopped at suitable times), this allows us in the following to obtain that certain occupied connections occur with high probability.

We now have to study the FFWoR process inside $\Lambda^*$. First, we introduce
\begin{equation} \label{eq:def_t**}
t^{**} = t_c + \ve^{**} := t_c + \bigg( \log \log \frac{1}{\zeta} \bigg)^2 \ve^*.
\end{equation}
It follows immediately from \eqref{eq:uniform_L} that for some universal $\kappa' > 0$,
\begin{equation} \label{eq:L_t**}
L(t^{**}) \leq \kappa' \bigg( \log \log \frac{1}{\zeta} \bigg)^{-2} L(t^*).
\end{equation}
On the other hand, using \eqref{eq:uniform_theta}, a similar reasoning as for \eqref{eq:small_vol_Lambda*1} yields
\begin{equation} \label{eq:small_vol_Lambda*2}
( \ol{r}' )^2 \theta ( t_c + \ve^{**} ) \ve^{**} \ll \zeta^{-1} \quad \text{as $\zeta \searrow 0$.}
\end{equation}
We then introduce the radii
\begin{equation} \label{eq:radii_big_hole}
\ul{r}'' = \bigg( \log \log \frac{1}{\zeta} \bigg) L(t^{**}) \quad \text{and} \quad \ol{r}'' = \frac{1}{2} \delta' L(t^*).
\end{equation}
Note that $\ul{r}'' < \ol{r}''$ for all $\zeta$ sufficiently small, and $\Ball_{\ol{r}''} \subseteq \Lambda^*$ (from \eqref{eq:hole_Lambda*1}).

We know that for all $\zeta$ small enough,
$$\PP \Big( \text{at $t^{**}$, in $\sigma^{[t_c - \ve^{**}]}$, } \circuitevent \big( \Ann_{\frac{1}{2} \ol{r}'', \ol{r}''} \big) \text{ occurs} \Big) \geq 1 - \frac{\eta}{100}.$$
Indeed, this follows from Proposition~\ref{prop:exp_decay_impurities}, since $L(t_c - \ve^{**}) \asymp L(t^{**}) \ll \ol{r}''$ (using \eqref{eq:L_t**}). We denote by $\circuit^*$ the outermost such occupied circuit in $\Ann_{\frac{1}{2} \ol{r}'', \ol{r}''}$, and by $\cluster_{\circuit^*}$ its connected component in $\Lambda^*$ (at $t^{**}$, in $\sigma^{[t_c - \ve^{**}]}$). It follows from \eqref{eq:small_vol_Lambda*2} and \eqref{eq:hole_Lambda*2} that among the vertices in $\cluster_{\circuit^*}$, none of them gets ignited in the time interval $(t_c - \ve^{**}, t^{**}]$. We deduce that $\circuit^*$ is not yet burnt at time $t^{**}$.

Let us introduce
\begin{equation} \label{eq:next_zeta''}
t'' = t_c + \ve'' := \next_\zeta ( \ol{r}'' ).
\end{equation}
Note that
$$\ve'' \gg \ve^{**}.$$
Indeed, $\ve^{**} = \big( \log \log \frac{1}{\zeta} \big)^2 \ve^*$ (from \eqref{eq:def_t**}), while it follows from our initial choice $L(t_0) = r(\zeta) = \frac{m_\infty(\zeta)}{(\log \frac{1}{\zeta})^{\alpha}}$ that for some $\xi > 0$ (depending on $\alpha$) and $c'' > 0$,
$$\ve'' \geq c'' \bigg( \log \frac{1}{\zeta} \bigg)^{\xi} \ve^*.$$
More precisely, this last inequality can be obtained by applying \eqref{eq:diam_apart} with $\ol{r}'' \asymp L(t^*)$ and $L(\next_\zeta ( \ol{r}'' )) = L(t'')$ (from \eqref{eq:radii_big_hole} and \eqref{eq:next_zeta''}, respectively), using $L(t^*) \leq L(t_0)$ (since $t^* \geq t_0 > t_c$), and then combining it with \eqref{eq:uniform_L}.

We consider the associated times
\begin{equation} \label{eq:times''}
\ol{t}'' = t_c + \ol{\ve}'' := t_c + \ol{\kappa}'' \, \ve'' \quad \text{and} \quad \ul{t}'' = t_c + \ul{\ve}'' := t_c + \ul{\kappa}'' \, \ve'',
\end{equation}
where, in a similar way as before, $\ol{\kappa}''(\eta) \in (0,1)$ and $\ul{\kappa}''(\eta) > 1$ are chosen (small and large enough, respectively) so that for the largest cluster $\lclusbar$ in $\Ball_{\ol{r}''}$ at time $\ol{t}''$, in the configuration $\sigma^{[t_c - \ol{\ve}'']}$: with a probability at least $1 - \frac{\eta}{100}$, no vertex of $\lclusbar$ gets ignited during the time interval $[t_c - \ol{\ve}'', \ol{t}'']$, and at least one of its vertices gets ignited in $(\ol{t}'', \ul{t}'']$. In addition, we can require the event $\circuitarm \big( \Ann_{\frac{1}{2} \ol{r}'', \ol{r}''} \, | \, \lclus_{\Ball_{\ol{r}''}} \big)$ to occur, which ensures that $\circuit^* \subseteq \lclusbar$, since $\circuit^*$ is also occupied in $\sigma^{[t_c - \ol{\ve}'']}$ (using that $\ve'' \gg \ve^{**}$). The above observations show, together, that $\circuit^*$ must burn as a whole, during the time interval $(t^{**},\ul{t}'']$.

Finally, we have
$$\ul{r}'' = \bigg( \log \log \frac{1}{\zeta} \bigg) L(t^{**}) \leq \kappa' \bigg( \log \log \frac{1}{\zeta} \bigg)^{-1} L(t^*)$$
(from \eqref{eq:radii_big_hole} and \eqref{eq:L_t**}), which, combined with \eqref{eq:next_zeta''} and \eqref{eq:times''}, yields
$$( \ul{r}'' )^2 \theta ( t_c + \ul{\ve}'' ) \ul{\ve}'' \ll \zeta^{-1} \quad \text{as $\zeta \searrow 0$.}$$
Hence, with probability at least $1 - \frac{\eta}{100}$, no vertex of $\lclusbar \cap \Ball_{\ul{r}''}$ gets ignited before time $\ul{t}''$. We thus consider the modified FFWoR process $\sigma^*$ in $\Lambda^*$, where ignitions in $\Ball_{\ul{r}''}$ are discarded after time $t_c - \ol{\ve}''$, and in this process, we denote by $t^+$ ($> t^{**}$) the time at which $\circuit^*$ burns. We then define $\Lambda^+$ as the connected component containing $0$ in $\Ball_{\ul{r}''}$ after removing the burnt cluster of $\circuit^*$.

Observe that $t^+$ can be seen as fixed if we condition on the processes outside $\Ball_{\ul{r}''}$, which we do from now on. From \eqref{eq:radii_big_hole},
$$\ul{r}'' = \bigg( \log \log \frac{1}{\zeta} \bigg) L(t^{**}) \gg L(t^{**}) > L(t^+)$$
so we can deduce that for some universal constants $0 < \ul{c}^+ < \ol{c}^+$,
\begin{equation}
\PP \Big( \Ball_{\ul{c}^+ L(t^+)} \subseteq \Lambda^+ \subseteq \Ball_{\ol{c}^+ L(t^+)} \Big) \geq 1 - \frac{\eta}{100}.
\end{equation}
This can be seen by considering the events, for some $0 < \ul{c}^+ < \ol{c}^+$ (depending on $\eta$),
$$\ul{E}^+ := \Big\{ \text{at $t^+$, } \circuitevent^* \big( \Ann_{\ul{c}^+ L(t^+), 2 \ul{c}^+ L(t^+)} \big) \text{ occurs} \Big\}$$
and
$$\ol{E}^+ := \Big\{ \text{at $t^+$, in $\sigma^{[t_c - \ve^{**}]}$, } \circuitevent \big( \Ann_{\frac{1}{2} \ol{c}^+ L(t^+), \ol{c}^+ L(t^+)} \big) \text{ occurs, and } \dout \Ball_{\frac{1}{2} \ol{c}^+ L(t^+)} \lra \circuit^* \Big\}.$$
Indeed, we can ensure that $\PP(\ul{E}^+ \cap \ol{E}^+) \geq 1 - \frac{\eta}{100}$ by choosing $\ul{c}^+(\eta)$ and $\ol{c}^+(\eta)$ small and large enough, respectively, and the occupied paths from $\ol{E}^+$ are also present in $\sigma^{[t_c - \ol{\ve}'']}$ (using again $\ve'' \gg \ve^{**}$). Moreover, $\Lambda^+$ coincides with the connected component of $0$ when one removes all vertices connected to $\din \Ball_{\frac{1}{2} \ul{r}''}$ at time $t^+$, and the situation in $\Ball_{\frac{1}{2} \ul{r}''}$ can be considered as fresh (in the same sense as before, i.e. for the percolation processes with impurities used as stochastic lower bounds). For this, it suffices to introduce the event $\calH(\Ann_{\frac{1}{2} \ul{r}'', \ul{r}''})$, which has a probability $\geq 1 - \frac{\eta}{100}$ for all $\zeta$ sufficiently small (using Lemma~\ref{lem:crossing_hole}).

Finally, let us estimate $L(t^+)$. On the one hand, we clearly have $t^+ \geq t_0$, which gives the upper bound
$$L(t^+) \leq L(t_0) = r(\zeta) = \frac{m_\infty(\zeta)}{(\log \frac{1}{\zeta})^{\alpha}}.$$
On the other hand, a lower bound can be derived by noting that $t^+ \leq \ul{t}''$, which is itself essentially $t_2$. More precisely,
\begin{equation} \label{eq:L_t+1}
L(t^+) \geq L(\ul{t}'') = L(t_c + \ul{\kappa}'' \, \ve'') \asymp L(t_c + \ve'') = L \big( \next_\zeta ( \ol{r}'' ) \big) = L \bigg( \next_\zeta \bigg( \frac{1}{2} \delta' L(t^*) \bigg) \bigg) \asymp L \big( \widehat{t^*} \big),
\end{equation}
and since $t^* \leq \ul{t}_1$,
\begin{equation} \label{eq:L_t+2}
L(t^*) \geq L(\ul{t}_1) = L(t_c + \ul{\kappa} \, \ve_1) \asymp L(t_c + \ve_1) = L(\widehat{t_0}).
\end{equation}
By combining \eqref{eq:L_t+1} and \eqref{eq:L_t+2} with $L(t_0) = r(\zeta) = \frac{m_\infty(\zeta)}{(\log \frac{1}{\zeta})^{\alpha}}$, we obtain
\begin{equation} \label{eq:L_t+3}
L(t^+) \geq c_1^+ L \big( \widehat{\widehat{t_0}} \big) \geq c_2^+ \frac{m_\infty(\zeta)}{(\log \frac{1}{\zeta})^{\alpha (\aval + \delta)^2}}
\end{equation}
(by applying Lemma~\ref{lem:one_iteration} twice). This can be used to estimate, in a similar way as for frozen percolation (see \eqref{eq:step1_FP}), the number of burnt clusters surrounding $0$, contained in $\Ball_{K m_\infty(\zeta)}$, and intersecting $\Ann_{\ul{c}^+ L(t^+), K m_{\infty}}$. From Lemma~\ref{lem:disjoint_circuits_FF}, we obtain that with probability $\geq 1 - \frac{\eta}{100}$, there are at most
\begin{equation} \label{eq:apriori_circuits_FF}
C_2 \log \bigg( \frac{K m_{\infty}}{\ul{c}^+ L(t^+)} \bigg) \leq C_2 \alpha (\aval + \delta)^2 \log \log \frac{1}{\zeta} + O(1)
\end{equation}
such clusters, for some $C_2$ which depends only on $\eta$ and $K$. In particular, this allows us to choose $\alpha = \alpha(\ve,\eta,K)$ sufficiently small so that the r.h.s. of \eqref{eq:apriori_circuits_FF} is at most $\frac{\ve}{2} \log \log \frac{1}{\zeta}$ for all $\zeta$ small enough.

\bigskip

\textbf{Step 2}: We prove the result by following an iterative construction, which, in some sense, refines the one used for Theorem 7.2 in \cite{BN2018}. Note that all sequences that we introduce now can be considered as deterministic, owing to the conditionings that we made in order to ensure that the ``starting time'' $t^+$ is fixed.

We consider two sequences $(r^{(i)})_{i \geq 0}$ and $(R^{(i)})_{i \geq 0}$, with $r^{(i)} \leq R^{(i)}$ ($i \geq 0$), defined by induction as follows. We start from $\Lambda^{(0)} := \Lambda^+$,
\begin{equation} \label{eq:start_FF}
r^{(0)} := \ul{c}^+ L(t^+) \quad \text{and} \quad R^{(0)} := \ol{c}^+ L(t^+),
\end{equation}
so that $\Ball_{r^{(0)}} \subseteq \Lambda^{(0)} \subseteq \Ball_{R^{(0)}}$. Given $r^{(i)} \leq R^{(i)}$ for some $i \geq 0$, we introduce the times
\begin{equation} \label{eq:t_FFWoR}
\ul{t}^{(i)} = t_c + \ul{\ve}^{(i)} := \next_\zeta \big( r^{(i)} \big) \quad \text{and} \quad \ol{t}^{(i)} = t_c + \ol{\ve}^{(i)} := \next_\zeta \big( R^{(i)} \big).
\end{equation}
Note that $t_c < \ol{t}^{(i)} \leq \ul{t}^{(i)} < \infty$ (since $\next_\zeta$ is nonincreasing), and thus $0 < L(\ul{t}^{(i)}) \leq L(\ol{t}^{(i)})$. We then let
\begin{equation} \label{eq:tt_FFWoR}
\dul{t}^{(i)} = t_c + \dul{\ve}^{(i)} := t_c + \bigg( \log \log \frac{1}{\zeta} \bigg) \ul{\ve}^{(i)} \quad \text{and} \quad \dol{t}^{(i)} = t_c + \dol{\ve}^{(i)} := t_c + \frac{1}{\big( \log \log \frac{1}{\zeta} \big)^2} \ol{\ve}^{(i)},
\end{equation}
and
\begin{equation} \label{eq:r_FFWoR}
r^{(i+1)} := \frac{1}{\big( \log \log \frac{1}{\zeta} \big)^{24}} \, L \big( \dul{t}^{(i)} \big) \quad \text{and} \quad R^{(i+1)} = \bigg( \log \log \log \frac{1}{\zeta} \bigg)^4 \, L \big( \dol{t}^{(i)} \big).
\end{equation}
We have clearly $0 < r^{(i+1)} \leq R^{(i+1)} < \infty$. We then define, as for frozen percolation,
$$j := \min \bigg\{ i \geq 1 \: : \: r^{(i)} < \frac{1}{\sqrt{\zeta}} \bigg\} - 1 \quad \text{and} \quad J := \min \bigg\{ i \geq 1 \: : \: R^{(i)} < \frac{1}{\sqrt{\zeta}} \bigg\} - 1$$
(observe that $j \leq J$). Finally, we also make use of the intermediate time
\begin{equation} \label{eq:ttt_FFWoR}
\doul{t}^{(i)} = t_c + \doul{\ve}^{(i)} := t_c + \frac{1}{\big( \log \log \frac{1}{\zeta} \big)^2} \ul{\ve}^{(i)}.
\end{equation}

\begin{figure}[t]
\begin{center}

\includegraphics[width=.8\textwidth]{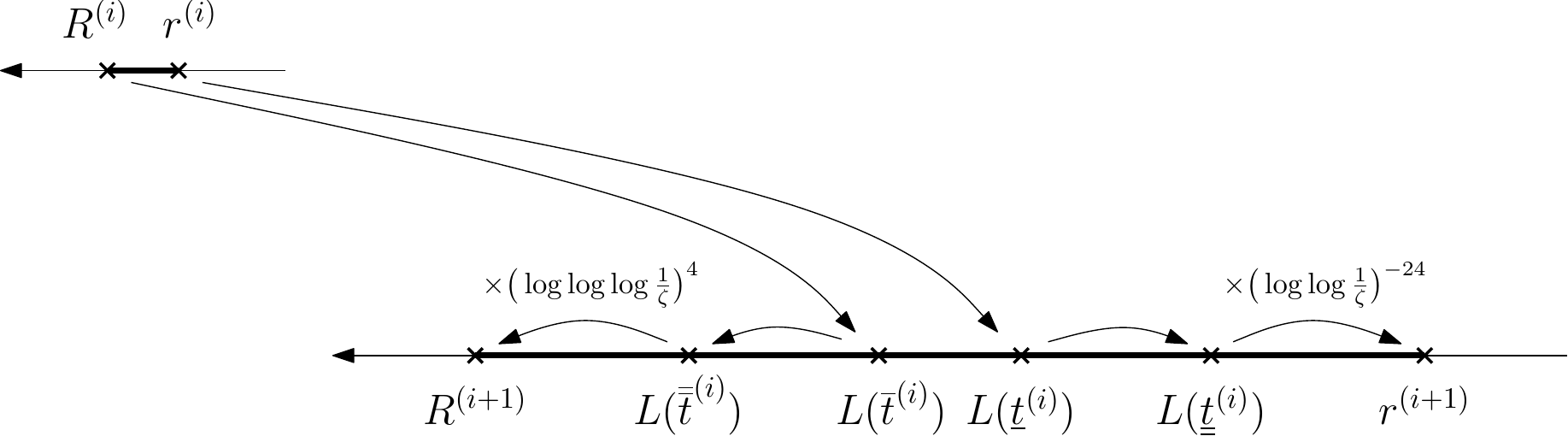}
\caption{\label{fig:scales_FF} This figure presents schematically the successive scales involved in the iterative construction. If we have a domain with boundary contained in $\Ann_{r^{(i)},R^{(i)}}$, the construction ensures that one cluster surrounding $0$ burns, leaving $0$ in an island with boundary contained in $\Ann_{r^{(i+1)},R^{(i+1)}}$.}

\end{center}
\end{figure}

For future reference, note that
$$t_c < \dol{t}^{(i)} \leq \ol{t}^{(i)}, \, \doul{t}^{(i)} \leq \ul{t}^{(i)} < \dul{t}^{(i)}.$$
We also need later that, from \eqref{eq:uniform_theta} and \eqref{eq:ttt_FFWoR},
\begin{equation} \label{eq:lower_vol}
\frac{\theta(\doul{t}^{(i)})}{\theta(\ul{t}^{(i)})} \geq \kappa'_1 \bigg( \frac{\doul{\ve}^{(i)}}{\ul{\ve}^{(i)}} \bigg)^{\frac{5}{36}+\delta} = \kappa'_1 \bigg( \log \log \frac{1}{\zeta} \bigg)^{- 2 (\frac{5}{36}+\delta)} \geq \kappa'_1 \bigg( \log \log \frac{1}{\zeta} \bigg)^{- \frac{1}{3}}
\end{equation}
for some universal constant $\kappa'_1 > 0$. Similarly, from \eqref{eq:uniform_L} and \eqref{eq:tt_FFWoR},
\begin{equation} \label{eq:L_t_estimate}
L(\dul{t}^{(i)}) \geq \kappa'_2 \bigg( \log \log \frac{1}{\zeta} \bigg)^{-2} L(\ul{t}^{(i)}) \quad \text{and} \quad L(\dol{t}^{(i)}) \leq \kappa'_3 \bigg( \log \log \frac{1}{\zeta} \bigg)^3 L(\ol{t}^{(i)}),
\end{equation}
where $\kappa'_2, \kappa'_3 > 0$ are universal as well. By combining \eqref{eq:r_FFWoR} and \eqref{eq:L_t_estimate}, we obtain
\begin{equation} \label{eq:r_bounds_FFWoR}
\frac{\kappa'_2}{\big( \log \log \frac{1}{\zeta} \big)^{26}} \, L \big( \ul{t}^{(i)} \big) \leq r^{(i+1)} \leq L \big( \ul{t}^{(i)} \big) \quad \text{and} \quad L \big( \ol{t}^{(i)} \big) \leq R^{(i+1)} \leq \kappa'_3 \bigg( \log \log \frac{1}{\zeta} \bigg)^7 \, L \big( \ol{t}^{(i)} \big).
\end{equation}

Let $\ve' > 0$, that we specify later as a function of $\ve$. In the following, all constants may depend only on $\ve$ (or $\ve'$), but not on $i$. The relations \eqref{eq:r_bounds_FFWoR} are similar to \eqref{eq:r_def} in the proof of Theorem \ref{thm:mainFP}. After making the necessary adjustments, we can deduce by induction, starting from
$\frac{R^{(0)}}{m_{\infty}} = \frac{1}{(\log \frac{1}{\zeta})^{\alpha}}$ (see \eqref{eq:start_FF}) and using repeatedly \eqref{eq:r_bounds_FFWoR} and \eqref{eq:compar_minf} (together with \eqref{eq:t_FFWoR}), that: for all $i = 0, \ldots, J+1$,
\begin{equation} \label{eq:r_estimate_FFWoR}
\bigg( \frac{c_1}{\log \frac{1}{\zeta}} \bigg)^{\alpha (\aval + \ve')^i} \leq \frac{R^{(i)}}{m_{\infty}} \leq \bigg( \frac{c'_1}{\log \frac{1}{\zeta}} \bigg)^{\frac{\alpha}{2} (\aval - \ve')^i},
\end{equation}
for some $c_1, c'_1 > 0$ (analogously to \eqref{eq:r_estimate}).

Also, using \eqref{eq:start_FF}, \eqref{eq:t_FFWoR}, \eqref{eq:r_bounds_FFWoR} and \eqref{eq:compar_minf}, we have
$$\frac{R^{(1)}}{r^{(0)}} = \frac{R^{(1)}}{m_{\infty}} \cdot \bigg( \frac{R^{(0)}}{m_{\infty}} \bigg)^{-1} \leq c_2 \bigg( \log \log \frac{1}{\zeta} \bigg)^7 \bigg( \frac{R^{(0)}}{m_{\infty}} \bigg)^{\aval - \ve' - 1} \leq \bigg( \frac{c'_2}{\log \frac{1}{\zeta}} \bigg)^{\xi},$$
for some $c_2, c'_2, \xi > 0$ (again, we use that $\aval > 1$). By induction, using Lemma \ref{lem:one_iteration}, we obtain the following analog of \eqref{eq:r_estimate2}: for all $i = 0, \ldots, J$,
\begin{equation} \label{eq:r_estimate2_FFWoR}
\frac{R^{(i+1)}}{r^{(i)}} \leq \bigg( \frac{c_3 \big( \log \log \frac{1}{\zeta} \big)^{33}}{\big( \log \frac{1}{\zeta} \big)^{\xi}} \bigg)^{(\aval - \ve')^i} \leq \bigg( \frac{c'_4}{\log \frac{1}{\zeta}} \bigg)^{\frac{\xi}{2} (\aval - \ve')^i}.
\end{equation}
In particular, $j \geq J-1$, and for all $\zeta > 0$ small enough, we have: for all $i = 0, \ldots, J$, $R^{(i+1)} < \frac{1}{10} r^{(i)}$.

Similarly to frozen percolation, we can estimate $J$ thanks to \eqref{eq:r_estimate_FFWoR}, starting from $R^{(J)} \geq \frac{1}{\sqrt{\zeta}} > R^{(J+1)}$ and using \eqref{eq:exp_t_infty_FF}. We have
$$\frac{R^{(J)}}{m_{\infty}} \geq c \, \zeta^{\alpha + \delta} \quad \text{and} \quad \frac{R^{(J+1)}}{m_{\infty}} \leq c' \, \zeta^{\alpha - \delta},$$
where $c, c' > 0$ and $\alpha = \frac{48}{55} - \frac{1}{2} > 0$ are universal. This yields
\begin{equation} \label{eq:bound_J_FFWoR}
\frac{1}{\log(\aval + \ve')} \bigg( \log \log \frac{1}{\zeta} + O \bigg( \log \log \log \frac{1}{\zeta} \bigg) \bigg) \leq J \leq \frac{1}{\log(\aval - \ve')} \bigg( \log \log \frac{1}{\zeta} + O \bigg( \log \log \log \frac{1}{\zeta} \bigg) \bigg).
\end{equation}
Since $\avalnb = \avalnb^{\textrm{FF}} = \frac{1}{\log \aval}$, we can get from \eqref{eq:bound_J_FFWoR}, by choosing $\ve'$ small enough,
\begin{equation} \label{eq:bound_J_FFWoR2}
\avalnb - \frac{\ve}{3} + o(1) \leq \frac{J}{\log \log \frac{1}{\zeta}} \leq \avalnb + \frac{\ve}{3} + o(1).
\end{equation}

\bigskip

\textbf{Step 3}: We now introduce events $E_1^{(i)}$--$E_9^{(i)}$, for all $i =0, \ldots, j$, involving
\begin{itemize}
\item the percolation configuration in the pure birth process, at times $\dol{t}^{(i)}$, $\doul{t}^{(i)}$ and $\dul{t}^{(i)}$,

\item the configuration of the FFWoR process where ignitions are stopped at time $t_c - \dol{\ve}^{(i)}$ (recall that this process is denoted by $\sigma^{[t_c - \dol{\ve}^{(i)}]}$), at times $\dol{t}^{(i)}$ and $\doul{t}^{(i)}$,

\item as well as the ignition process over the time interval $\big[ t_c - \dol{\ve}^{(i)}, \dul{t}^{(i)} \big]$.
\end{itemize}

First, let
\begin{itemize}
\item $E_1^{(i)} := \Big\{$at $\dol{t}^{(i)}$, $\Big| \lclus_{\Ball_{R^{(i)}}} \Big| \leq 2 \big( \ol{\ve}^{(i)} \zeta \big)^{-1}$ and $\circuitarm \big( \Ann_{\frac{1}{2} R^{(i)}, R^{(i)}} \, | \, \lclus_{\Ball_{R^{(i)}}} \big)$ occurs$\Big\}$,

\item $E_2^{(i)} := \Big\{$no vertex of $\ol{\ol{\cluster}}^{(i)}$ gets ignited in $\big[ t_c - \dol{\ve}^{(i)}, \dol{t}^{(i)} \big] \Big\}$, where $\ol{\ol{\cluster}}^{(i)} := \lclus_{\Ball_{R^{(i)}}}$ at time $\dol{t}^{(i)}$,

\item $E_3^{(i)} := \Big\{$at $\dol{t}^{(i)}$, in $\sigma^{[t_c - \dol{\ve}^{(i)}]}$, $\circuitevent \big( \Ann_{\frac{1}{2} R^{(i+1)}, R^{(i+1)}} \big)$ occurs and $\dout \Ball_{\frac{1}{2} R^{(i+1)}} \lra \infty \Big\}$,

\item $E_4^{(i)} := \Big\{$at $\dol{t}^{(i)}$, in $\sigma^{[t_c - \dol{\ve}^{(i)}]}$, $\circuitevent \big( \Ann_{\frac{1}{2} r^{(i)}, r^{(i)}} \big)$ occurs$\Big\}$,

\item $E_5^{(i)} := \Big\{$at $\dul{t}^{(i)}$, $\circuitevent^* \big( \Ann_{r^{(i+1)}, \frac{1}{2} R^{(i+1)}} \big)$ occurs$\Big\}$.
\end{itemize}
We then define four more events:
\begin{itemize}
\item $E_6^{(i)} := \Big\{$at $\doul{t}^{(i)}$, $\Big| \lclus_{\Ball_{r^{(i)}}} \Big| \leq 2 \big( \ul{\ve}^{(i)} \zeta \big)^{-1}$ and $\circuitarm \big( \Ann_{\frac{1}{2} r^{(i)}, r^{(i)}} \, | \, \lclus_{\Ball_{r^{(i)}}} \big)$ occurs$\Big\}$,

\item $E_7^{(i)} := \Big\{$no vertex of $\doul{\cluster}^{(i)}$ gets ignited in $\big[ t_c - \dol{\ve}^{(i)}, \doul{t}^{(i)} \big] \Big\}$, where $\doul{\cluster}^{(i)} := \lclus_{\Ball_{r^{(i)}}}$ at time $\doul{t}^{(i)}$,

\item $E_8^{(i)} := \Big\{$at $\doul{t}^{(i)}$, in $\sigma^{[t_c - \dol{\ve}^{(i)}]}$, $\Big| \lclus_{\Ball_{r^{(i)}}} \cap \Ball_{\frac{1}{2} r^{(i)}} \Big| \geq \frac{\kappa'_1}{8} \big( \log \log \frac{1}{\zeta} \big)^{-\frac{1}{3}} \big( \ul{\ve}^{(i)} \zeta \big)^{-1}$ and $\circuitarm \big( \Ann_{\frac{1}{2} r^{(i)}, r^{(i)}} \, | \, \lclus_{\Ball_{r^{(i)}}} \big)$ occurs$\Big\}$, where $\kappa'_1 > 0$ is the universal constant from \eqref{eq:lower_vol},

\item $E_9^{(i)} := \Big\{$at least one vertex of $\ul{\cluster}^{(i)} \cap \Ball_{\frac{1}{2} r^{(i)}}$ is ignited in $\big[ \doul{t}^{(i)}, \dul{t}^{(i)} \big] \Big\}$, where $\ul{\cluster}^{(i)} := \lclus_{\Ball_{r^{(i)}}}$ at time $\doul{t}^{(i)}$, in $\sigma^{[t_c - \dol{\ve}^{(i)}]}$.
\end{itemize}

\begin{figure}[t]
\begin{center}

\includegraphics[width=.88\textwidth]{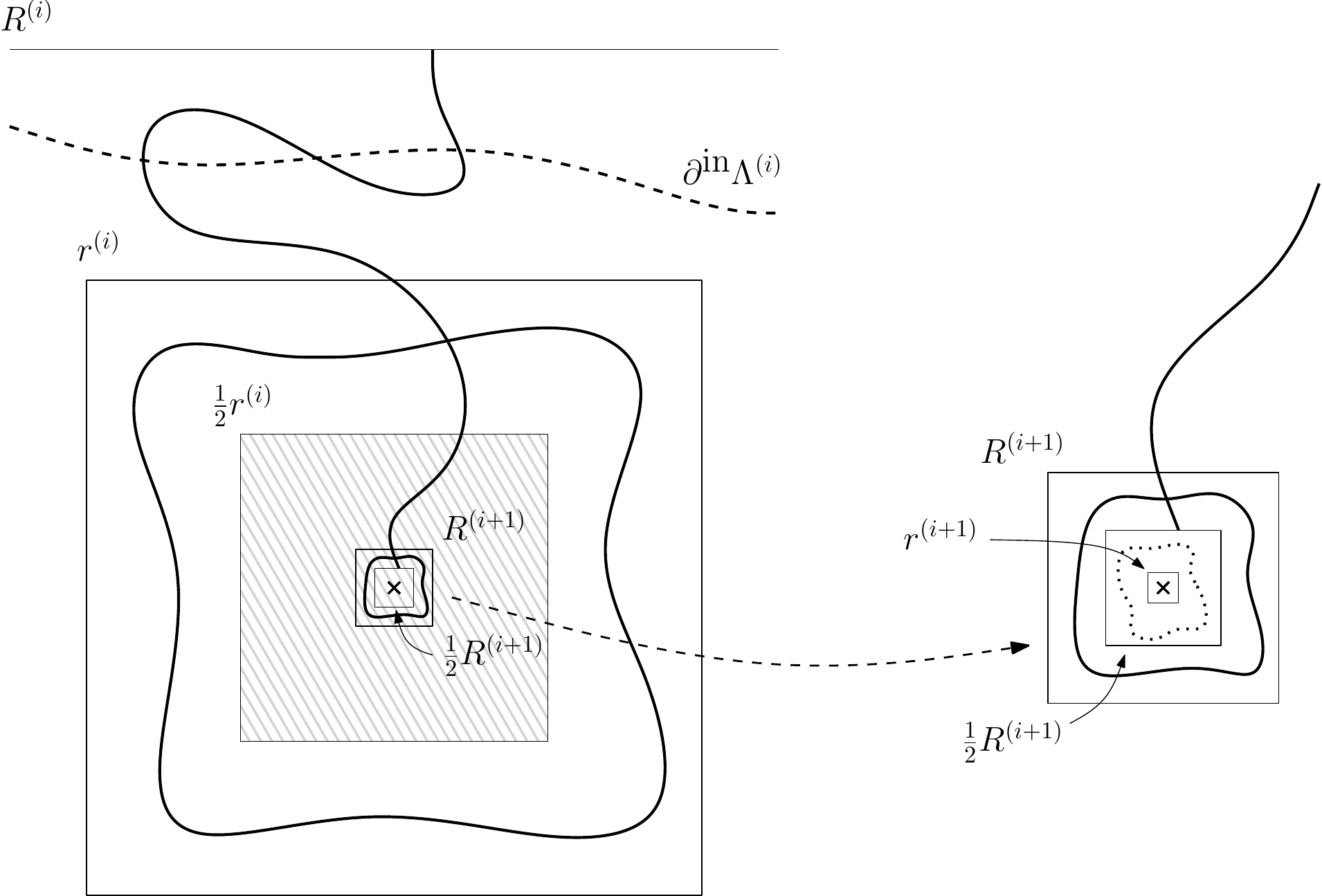}
\caption{\label{fig:iteration_FF} This figure presents the iterative construction used to produce the clusters burning successively around $0$. In the $i$th step, in a domain $\Lambda^{(i)}$ with $\Ball_{r^{(i)}} \subseteq \Lambda^{(i)} \subseteq \Ball_{R^{(i)}}$, we consider times $t_c < \dol{t}^{(i)} < \dul{t}^{(i)}$. The path to $\din \Ball_{R^{(i)}}$ and the two circuits in solid line are occupied (not burnt) at time $\dol{t}^{(i)}$, while the dotted circuit is vacant at time $\dul{t}^{(i)}$. The circuit in $\Ann_{\frac{1}{2} r^{(i)}, r^{(i)}}$ separates from outside the vertices in the gray region, which are used to ``trigger'' the burning of the solid path (more precisely, its connected component in $\Lambda^{(i)}$ containing the two circuits) before time $\dul{t}^{(i)}$, if this path has not already burnt on its own. This leaves $0$ in an island $\Lambda^{(i+1)}$ which satisfies $\Ball_{r^{(i+1)}} \subseteq \Lambda^{(i+1)} \subseteq \Ball_{R^{(i+1)}}$.}

\end{center}
\end{figure}

Observe that $\ul{\cluster}^{(i)} \subseteq \doul{\cluster}^{(i)}$. Indeed, all occupied clusters in $\sigma^{[t_c - \dol{\ve}^{(i)}]}$ are contained in occupied clusters in the pure birth process, and the two clusters $\ul{\cluster}^{(i)}$ and $\doul{\cluster}^{(i)}$ intersect, since they both contain an occupied circuit and an occupied crossing in $\Ann_{\frac{1}{2} r^{(i)}, r^{(i)}}$.

Let us examine the consequence of these nine events occurring simultaneously for the FFWoR process, in any simply connected domain $\Lambda^{(i)}$ with $\Ball_{r^{(i)}} \subseteq \Lambda^{(i)} \subseteq \Ball_{R^{(i)}}$. First, at time $\dol{t}^{(i)}$, in $\sigma^{[t_c - \dol{\ve}^{(i)}]}$, $E_3^{(i)}$ provides the existence of a ``superstructure'', containing an occupied circuit in $\Ann_{\frac{1}{2} R^{(i+1)}, R^{(i+1)}}$ connected to $\din \Lambda^{(i)}$. We denote by $\calS$ the cluster of this superstructure inside $\Lambda^{(i)}$. Note that any $\dol{t}^{(i)}$-occupied circuit in $\Ann_{\frac{1}{2} r^{(i)}, r^{(i)}}$ whose existence is provided by $E_4^{(i)}$ must be contained in $\calS$. The event $E_1^{(i)}$ implies that $\calS \subseteq \lclus_{\Ball_{R^{(i)}}}$ (the largest cluster in $\Ball_{R^{(i)}}$ in the pure birth process), and so it follows from $E_2^{(i)}$ that no vertex of $\calS$ has been ignited yet at time $\dol{t}^{(i)}$. Hence, all vertices in $\calS$ will burn together, at a later time than $\dol{t}^{(i)}$.

We claim that $\calS$ burns in the time interval $(\dol{t}^{(i)}, \dul{t}^{(i)}]$. We are fine if it burns before time $\doul{t}^{(i)}$, so we can assume that $\calS$ is not yet burnt at time $\doul{t}^{(i)}$. Note that in $E_8^{(i)}$, all vertices in $\ul{\cluster}^{(i)} \cap \Ball_{\frac{1}{2} r^{(i)}}$ ($\subseteq \doul{\cluster}^{(i)} \cap \Ball_{\frac{1}{2} r^{(i)}}$) are connected to $\din \Ball_{r^{(i)}}$ at time $\doul{t}^{(i)}$ (in the configuration $\sigma^{[t_c - \dol{\ve}^{(i)}]}$, still). Hence, at time $\doul{t}^{(i)}$ they are connected to any $\dol{t}^{(i)}$-occupied circuit in $\Ann_{\frac{1}{2} r^{(i)}, r^{(i)}}$, which ``protects'' them from outside ignitions. More precisely, since no vertex of $\doul{\cluster}^{(i)}$ gets ignited in $[ t_c - \dol{\ve}^{(i)}, \doul{t}^{(i)} ]$ (from $E_6^{(i)}$ and $E_7^{(i)}$), the only way for a vertex in $\ul{\cluster}^{(i)} \cap \Ball_{\frac{1}{2} r^{(i)}}$ to burn before time $\doul{t}^{(i)}$ would be for the fire to come from outside the circuit in $\Ann_{\frac{1}{2} r^{(i)}, r^{(i)}}$. But this is not possible, since otherwise, the aforementioned $\dol{t}^{(i)}$-occupied circuit would already be burnt at time $\doul{t}^{(i)}$, and thus $\calS$ as well.

Finally, $E_9^{(i)}$ ensures that one vertex of $\ul{\cluster}^{(i)} \cap \Ball_{\frac{1}{2} r^{(i)}}$ gets ignited before time $\dul{t}^{(i)}$. Such a vertex belongs to $\calS$ at time $\doul{t}^{(i)}$, so $\calS$ also burns before time $\dul{t}^{(i)}$ (either because of this vertex, or at an earlier time due to another ignition). We deduce that in all cases, the cluster of $\calS$, which surrounds $0$, burns at a time $\tau^{(i)} \in (\dol{t}^{(i)}, \dul{t}^{(i)}]$, and leaves $0$ in an island $\Lambda^{(i+1)} \subseteq \Ball_{R^{(i+1)}}$. Moreover, any $\dul{t}^{(i)}$-vacant circuit provided by $E_5^{(i)}$ is still vacant at time $\tau^{(i)}$, which implies that $\Lambda^{(i+1)}$ contains $\Ball_{r^{(i+1)}}$.

We now show that for some $\kappa = \kappa(\ve) > 0$ (which does not depend on $i$), we have: for all $i =0, \ldots, j$,
\begin{equation} \label{eq:unif_lower_bound_FF}
\PP(E_k^{(i)}) \geq 1 - \frac{\kappa}{\big( \log \log \frac{1}{\zeta} \big)^2} \quad (k = 1, \ldots, 9).
\end{equation}
As in the proof of Theorem~\ref{thm:mainFP}, this will then allow us to use the union bound, over the $j+1$ steps.

Let $i \in \{0, \ldots, j\}$, and start with $E_1^{(i)}$. First, it follows from $\ol{t}^{(i)} = t_c + \ol{\ve}^{(i)} = \next_\zeta(R^{(i)})$ (see \eqref{eq:t_FFWoR}) and the definition of $\next_\zeta$ (see \eqref{eq:def_next_FF}) that
\begin{equation}
c_{\TT} \big( R^{(i)} \big)^2 \theta \big( \ol{t}^{(i)} \big) \ol{\ve}^{(i)} = \zeta^{-1}.
\end{equation}
Hence,
\begin{equation} \label{eq:E1_FFWoR1}
\big| \Ball_{R^{(i)}} \big| \theta \big( \dol{t}^{(i)} \big) \leq \frac{5}{4} \big( \ol{\ve}^{(i)} \zeta \big)^{-1}
\end{equation}
(we used that $\dol{t}^{(i)} \leq \ol{t}^{(i)}$, so $\theta(\dol{t}^{(i)}) \leq \theta(\ol{t}^{(i)})$). On the other hand, it follows from \eqref{eq:r_FFWoR}, $r^{(i)} \leq R^{(i)}$, and \eqref{eq:r_estimate2_FFWoR} that
\begin{equation} \label{eq:E1_FFWoR2}
\frac{L(\dol{t}^{(i)})}{R^{(i)}} = \bigg( \log \log \log \frac{1}{\zeta} \bigg)^{-4} \frac{R^{(i+1)}}{R^{(i)}} \leq \frac{R^{(i+1)}}{r^{(i)}} \leq \bigg( \frac{c'_4}{\log \frac{1}{\zeta}} \bigg)^{\frac{\xi}{2} (\aval - \ve')^i}.
\end{equation}
We can now deduce \eqref{eq:unif_lower_bound_FF} for $E_1^{(i)}$ from \eqref{eq:largest_cluster_quant}, combined with \eqref{eq:E1_FFWoR1} and \eqref{eq:E1_FFWoR2}.

We can proceed in a similar way for $E_6^{(i)}$. Using $\ul{t}^{(i)} = t_c + \ul{\ve}^{(i)} := \next_\zeta(r^{(i)})$ (from \eqref{eq:t_FFWoR}), we have
\begin{equation} \label{eq:lower_vol2}
c_{\TT} \big( r^{(i)} \big)^2 \theta \big( \ul{t}^{(i)} \big) \ul{\ve}^{(i)} = \zeta^{-1}
\end{equation}
(from \eqref{eq:def_next_FF}), which implies
\begin{equation} \label{eq:lower_vol3}
\big| \Ball_{r^{(i)}} \big| \theta \big( \doul{t}^{(i)} \big) \leq \frac{5}{4} \big( \ul{\ve}^{(i)} \zeta \big)^{-1}
\end{equation}
(using also $\doul{t}^{(i)} \leq \ul{t}^{(i)}$). In addition, $t_c < \dol{t}^{(i)} \leq \doul{t}^{(i)}$, so
\begin{equation} \label{eq:E68_FFWoR}
\frac{L(\doul{t}^{(i)})}{r^{(i)}} \leq \frac{L(\dol{t}^{(i)})}{r^{(i)}} \leq \frac{R^{(i+1)}}{r^{(i)}} \leq \bigg( \frac{c'_4}{\log \frac{1}{\zeta}} \bigg)^{\frac{\xi}{2} (\aval - \ve')^i}
\end{equation}
(as in \eqref{eq:E1_FFWoR2}). We thus obtain \eqref{eq:unif_lower_bound_FF} for $E_6^{(i)}$ from \eqref{eq:largest_cluster_quant}, \eqref{eq:lower_vol3} and \eqref{eq:E68_FFWoR}.

The estimate \eqref{eq:unif_lower_bound_FF} for $E_8^{(i)}$ comes from Proposition~\ref{prop:largest_cluster_impurities}, about the largest cluster in the presence of impurities, together with the stochastic domination provided by Lemma~\ref{lem:stoch_domin}. Indeed, we can use again \eqref{eq:E68_FFWoR}, and also the fact that \eqref{eq:lower_vol2} and \eqref{eq:lower_vol} imply together
\begin{equation}
\big| \Ball_{\frac{1}{2} r^{(i)}} \big| \theta \big( \doul{t}^{(i)} \big) \geq \frac{1}{4} \cdot \frac{\theta \big( \doul{t}^{(i)} \big)}{\theta \big( \ul{t}^{(i)} \big)} \cdot \frac{3}{4} \big( \ul{\ve}^{(i)} \zeta \big)^{-1} \geq \frac{3 \kappa'_1}{16} \bigg( \log \log \frac{1}{\zeta} \bigg)^{-\frac{1}{3}} \big( \ul{\ve}^{(i)} \zeta \big)^{-1}.
\end{equation}

Now, given clusters $\ol{\ol{\cluster}}^{(i)}$, $\doul{\cluster}^{(i)}$ and $\ul{\cluster}^{(i)}$, as in $E_1^{(i)}$, $E_6^{(i)}$ and $E_8^{(i)}$ respectively, the events $E_2^{(i)}$, $E_7^{(i)}$ and $E_9^{(i)}$ are just straightforward statements about exponentially distributed random variables. More precisely, if $\big| \ol{\ol{\cluster}}^{(i)} \big| \leq 2 (\ol{\ve}^{(i)} \zeta)^{-1}$ as in $E_1^{(i)}$, then the probability that at least one of its vertices gets ignited in the time interval $[t_c - \dol{\ve}^{(i)}, t_c + \dol{\ve}^{(i)}]$ is at most
\begin{equation}
2 \dol{\ve}^{(i)} \zeta \cdot 2 (\ol{\ve}^{(i)} \zeta)^{-1} = 4 \, \frac{\dol{\ve}^{(i)}}{\ol{\ve}^{(i)}} = \frac{4}{\big( \log \log \frac{1}{\zeta} \big)^2}.
\end{equation}
This proves \eqref{eq:unif_lower_bound_FF} for $E_2^{(i)}$. Estimating the probability of $E_7^{(i)}$ leads to a very similar computation, and so \eqref{eq:unif_lower_bound_FF} in this case, using that $[ t_c - \dol{\ve}^{(i)}, \doul{t}^{(i)} ]$ has length $\dol{\ve}^{(i)} + \doul{\ve}^{(i)} \leq 2 \doul{\ve}^{(i)}$.

In the other direction, if $\big| \ul{\cluster}^{(i)} \cap \Ball_{\frac{1}{2} r^{(i)}} \big| \geq \frac{\kappa'_1}{8} ( \log \log \frac{1}{\zeta} )^{-\frac{1}{3}} ( \ul{\ve}^{(i)} \zeta )^{-1}$ as in $E_8^{(i)}$, then the probability that at least one of its vertices is ignited in the time interval $[t_c + \doul{\ve}^{(i)}, t_c + \dul{\ve}^{(i)}]$, which has length $\dul{\ve}^{(i)} - \doul{\ve}^{(i)} \geq \big( \log \log \frac{1}{\zeta} - 1 \big) \ul{\ve}^{(i)}$ (using \eqref{eq:ttt_FFWoR}), is at least
\begin{equation}
1 - e^{- ( \log \log \frac{1}{\zeta} - 1) \ul{\ve}^{(i)} \zeta \cdot \frac{\kappa'_1}{8} ( \log \log \frac{1}{\zeta} )^{-\frac{1}{3}} (\ul{\ve}^{(i)} \zeta)^{-1}} \geq 1 - e^{ - \frac{\kappa_1}{16} (\log \log \frac{1}{\zeta})^{\frac{2}{3}}} \geq 1 - \frac{C}{(\log \log \frac{1}{\zeta} )^2},
\end{equation}
for some universal constant $C > 0$. We deduce \eqref{eq:unif_lower_bound_FF} for $E_9^{(i)}$.

The event $E_5^{(i)}$ only involves the pure birth process, and we can write
\begin{align*}
\PP(E_5^{(i)}) & \geq \PP_{p(\dul{t}^{(i)})} \Big( \circuitevent^* \Big( \Ann_{(\log \log \frac{1}{\zeta})^{-24} L(\dul{t}^{(i)}), L(\dul{t}^{(i)})} \Big) \Big)\\[1.5mm]
& = 1 - \PP_{p(\dul{t}^{(i)})} \Big( \arm_1 \Big( \Ann_{(\log \log \frac{1}{\zeta})^{-24} L(\dul{t}^{(i)}), L(\dul{t}^{(i)})} \Big) \Big)\\[1.5mm]
& \geq 1 - C \, \pi_1 \bigg( \bigg( \log \log \frac{1}{\zeta} \bigg)^{-24} L \big( \dul{t}^{(i)} \big), L \big( \dul{t}^{(i)} \big) \bigg),
\end{align*}
from which \eqref{eq:unif_lower_bound_FF} follows, using \eqref{eq:uniform_arm_exp}.

Finally, we can obtain \eqref{eq:unif_lower_bound_FF} for $E_3^{(i)}$ and $E_4^{(i)}$ from the stretched exponential decay property for percolation with impurities, Proposition~\ref{prop:exp_decay_impurities}, combined with Lemma~\ref{lem:stoch_domin}. For this purpose, observe that $m = L(t_c - \dol{\ve}^{(i)}) \asymp L(\dol{t}^{(i)})$, and, from the earlier observation that $r^{(i)} \geq R^{(i+1)}$,
\begin{equation} \label{eq:ineq_ri_Lti}
\frac{r^{(i)}}{L(\dol{t}^{(i)})} \geq \frac{R^{(i+1)}}{L(\dol{t}^{(i)})} = \bigg( \log \log \log \frac{1}{\zeta} \bigg)^4
\end{equation}
(the equality comes from \eqref{eq:r_FFWoR}).

We have thus checked \eqref{eq:unif_lower_bound_FF} for all $k = 1, \ldots, 9$. It then follows immediately from the union bound that
\begin{equation}
\PP \Bigg( \bigcup_{\substack{0 \leq i \leq j\\ 1 \leq k \leq 9}} E_k^{(i)} \Bigg) \geq 1 - (j+1) \cdot 9 \cdot \frac{\kappa}{(\log \log \frac{1}{\zeta})^2} \geq 1 - \frac{\eta}{10}
\end{equation}
for all $\zeta$ sufficiently small (using \eqref{eq:bound_J_FFWoR2} and $j \leq J$).

Actually, we have to be a bit careful in the final step $i = j$, as in the proof of Theorem~\ref{thm:mainFP} (see the end of Step~3): we discard $E_5^{(j+1)}$ if $r^{(j+1)} < 1$ (and the other events can be left unaffected). In any case, we reach a situation where $0$ lies in an island $\Lambda^{(j+1)}$, with $\Lambda^{(j+1)} \subseteq \Ball_{R^{(j+1)}}$. Additionally, $\Ball_{r^{(j+1)}} \subseteq \Lambda^{(j+1)}$, but only if $r^{(j+1)} \geq 1$ ($\Lambda^{(j+1)}$ may even be empty otherwise).

\bigskip

\textbf{Step 4}: We now conclude the iterative procedure. Recall that by definition,
$$r^{(j)} \geq \frac{1}{\sqrt{\zeta}} \quad \text{and} \quad r^{(j+1)} < \frac{1}{\sqrt{\zeta}},$$
and similar inequalities hold true for $R^{(J)}$ and $R^{(J+1)}$. Moreover, $J \in \{j, j+1\}$.

At this stage, $j+1$ clusters have already burnt around $0$ in the domain $\Lambda^{(0)} = \Lambda^+$. Analogously to frozen percolation, we show that only a bounded number of additional burnt clusters can arise around $0$ inside $\Lambda^{(j+1)}$ (at most two, as we are going to explain). We distinguish the following cases.
\begin{itemize}
\item \ul{Case 1}: $J = j$, i.e. $R^{(j+1)} < \frac{1}{\sqrt{\zeta}}$. We have $\Lambda^{(j+1)} \subseteq \Ball_{R^{(j+1)}} \subseteq \Ball_{\frac{1}{\sqrt{\zeta}}}$. Clearly, we can stop if $\Lambda^{(j+1)} = \emptyset$, and in this case, $0$ is surrounded by exactly $j+1$ burnt clusters. We thus assume $\Lambda^{(j+1)} \neq \emptyset$, and consider the time $t^{(j+1)} = t_c + \ve^{(j+1)}$, where $\ve^{(j+1)} = \ve^{(j+1)}(\eta) > 0$ satisfies
\begin{equation} \label{eq:last_step_FF1}
\theta \big( t_c + \ve^{(j+1)} \big) = \frac{1}{2 t_c c_{\TT}} \cdot \frac{\eta}{100}.
\end{equation}
We use the event
$$E^{(j+1)} := \Big\{ \text{at $t^{(j+1)}$, } \circuitevent \big( \Ann_{\frac{1}{2} R^{(j+2)}, R^{(j+2)}} \big) \text{ occurs, and } \dout \Ball_{\frac{1}{2} R^{(j+2)}} \lra \infty \Big\}$$
with $R^{(j+2)} := c^{(j+2)} L(t^{(j+1)})$, where $c^{(j+2)} = c^{(j+2)}(\eta) > 0$ is chosen sufficiently large so that $\PP(E^{(j+1)}) \geq 1 - \frac{\eta}{100}$ (this is possible, thanks to \eqref{eq:a-priori}). We denote by $\circuit^{(j+1)}$ the corresponding circuit. The number of vertices connected to $\circuit^{(j+1)}$ in $\Lambda^{(j+1)}$ is at most the volume of the largest cluster $\lclus$ in $\Ball_{\frac{1}{\sqrt{\zeta}}}$ at time $t^{(j+1)}$, which satisfies
$$\big| \lclus \big| \leq \frac{3}{2} \big| \Ball_{\frac{1}{\sqrt{\zeta}}} \big| \theta \big( t^{(j+1)} \big) \sim \frac{3}{2} c_{\TT} \bigg( \frac{1}{\sqrt{\zeta}} \bigg)^2 \theta \big( t^{(j+1)} \big)$$
with probability $\geq 1 - \frac{\eta}{100}$ (from \eqref{eq:largest_cluster}). Hence, \eqref{eq:last_step_FF1} ensures that with probability $\geq 1 - \frac{\eta}{100}$, no vertex connected to $\circuit^{(j+1)}$ gets ignited before time $t^{(j+1)}$. This circuit thus burns at a later time, leaving $0$ in a microscopic island $\Lambda^{(j+2)} \subseteq \Ball_{R^{(j+2)}}$. Either $\Lambda^{(j+2)} = \emptyset$, in which case $0$ is only surrounded by this additional burnt cluster, or $\Lambda^{(j+2)} \neq \emptyset$, in which case all vertices inside $\Lambda^{(j+2)}$ burn together at an even later time (observe that $R^{(j+2)}$ does not depend on $\zeta$, only on $\eta$), so that $0$ is surrounded by two additional burnt clusters. We deduce that the total number of burnt clusters around $0$ in $\Lambda^+$ is either $j+1$, $j+2$, or $j+3$.

\item \ul{Case 2}: $J = j+1$, i.e. $R^{(j+1)} \geq \frac{1}{\sqrt{\zeta}}$. In this case, \eqref{eq:r_estimate2_FFWoR} implies
$$10 R^{(j+2)} \leq r^{(j+1)} < \frac{1}{\sqrt{\zeta}}$$
for all $\zeta$ small enough, so in particular $\Lambda^{(j+1)} \neq \emptyset$. First, we use the same events $E_1^{(j+1)}$, $E_2^{(j+1)}$, and $E_3^{(j+1)}$ as in the previous steps, involving $R^{(j+1)}$ and $R^{(j+2)}$ (over the time interval $[ t_c - \dol{\ve}^{(j+1)}, t_c + \dol{\ve}^{(j+1)} ]$). For similar reasons as in Step~3, one more burning takes place in $\Lambda^{(j+1)}$, leaving $0$ in an island $\Lambda^{(j+2)} \subseteq \Ball_{R^{(j+2)}}$. Obviously the process ends if $\Lambda^{(j+2)}$ is empty. Otherwise, let $t^{(j+2)} = t^{(j+2)}(\eta) > t_c$ be defined by $\theta(t^{(j+2)}) = 1 - \frac{\eta}{100}$, i.e. such that the event
$$E^{(j+2)} := \Big\{ \text{at $t^{(j+2)}$, } 0 \lra \infty \Big\}$$
has probability $\geq 1 - \frac{\eta}{100}$. Note that if $\zeta$ is sufficiently small, no vertex inside $\Lambda^{(j+2)}$ gets ignited before time $t^{(j+2)}$, with probability $\geq 1 - \frac{\eta}{100}$. Indeed, it follows from $|\Lambda^{(j+2)}| \leq |\Ball_{R^{(j+2)}}|$, $r^{(j+1)} < \frac{1}{\sqrt{\zeta}}$, and \eqref{eq:r_estimate2_FFWoR}, that
$$\zeta \cdot |\Lambda^{(j+2)}| \leq \frac{1}{t^{(j+2)}} \cdot \frac{\eta}{100}$$
for all $\zeta$ small enough. We deduce that the occupied path from $0$ to $\din \Lambda^{(j+2)}$, provided by $E^{(j+2)}$, burns as a whole, after time $t^{(j+2)}$. Hence, exactly one additional cluster surrounding $0$ burns inside $\Lambda^{(j+2)}$. In any case, the total number of clusters surrounding $0$ which burn in $\Lambda^+$ is either $j+2$ or $j+3$.

\end{itemize}

By combining these two cases together, we obtain that the number of burnt clusters around $0$ in $\Lambda^+$ is between $J+1$ and $J+3$. This allows us to complete the proof, thanks to \eqref{eq:apriori_circuits_FF} and \eqref{eq:bound_J_FFWoR2}.

\end{proof}

\begin{remark} \label{rem:hypothesis_box}
As we mentioned in Section~\ref{sec:perc_impurities}, Proposition~\ref{prop:largest_cluster_impurities} improves the analogous result from \cite{BN2018} (Proposition~5.5) in two ways, in addition to providing a quantitative lower bound on the probability. Firstly, the boxes in which it can be applied are not required to have a side length $n \gg m (\log m)^2$. This is used for the events $E_8^{(i)}$, for small values of $i$. Indeed, for $i=0$ in particular, we have, roughly speaking,
$$n = r^{(0)} \simeq \frac{m_\infty(\zeta)}{(\log \frac{1}{\zeta})^{\bar{\alpha}}}$$
(where $\bar{\alpha}$ is ``comparable'' to $\alpha$), and the corresponding value of $m$ is
$$m = L(t_c - \dol{\ve}^{(0)}) \asymp L(t_c + \dol{\ve}^{(0)}) = \bigg( \log \log \frac{1}{\zeta} \bigg)^{\frac{8}{3} + o(1)} L(t_c + \ol{\ve}^{(0)}).$$
Since $L(t_c + \ol{\ve}^{(0)}) = \next_\zeta (R^{(0)})$, we have $m = \frac{m_\infty(\zeta)}{(\log \frac{1}{\zeta})^{\frac{96}{41} \bar{\alpha} + o(1)}}$ (from \eqref{eq:compar_minf}). Hence,
$$n = m \cdot \bigg( \log \frac{1}{\zeta} \bigg)^{(\frac{96}{41} - 1) \bar{\alpha} + o(1)} = m (\log m)^{\beta + o(1)},$$
where $\beta = \frac{55}{41} \, \bar{\alpha}$ (using that $\log m = \log m_\infty(\zeta) + O(\log \log \frac{1}{\zeta}) = (\frac{48}{55} + o(1)) \log \frac{1}{\zeta}$, from \eqref{eq:exp_t_infty_FF}).

Secondly, we apply Proposition~\ref{prop:largest_cluster_impurities} at a time when the characteristic length is
$$L(\doul{t}^{(i)}) = \bigg( \log \log \frac{1}{\zeta} \bigg)^{\frac{8}{3} + o(1)} L(t_c + \ul{\ve}^{(i)}) = \bigg( \log \log \frac{1}{\zeta} \bigg)^{\frac{8}{3} + o(1)} \next_\zeta (r^{(i)}),$$
while
$$m = L(t_c - \dol{\ve}^{(i)}) \asymp L(t_c + \dol{\ve}^{(i)}) = \bigg( \log \log \frac{1}{\zeta} \bigg)^{\frac{8}{3} + o(1)} L(t_c + \ol{\ve}^{(i)}) = \bigg( \log \log \frac{1}{\zeta} \bigg)^{\frac{8}{3} + o(1)} \next_\zeta (R^{(i)}).$$
These two quantities may get far apart as $i$ increases, our control on $\frac{r^{(i)}}{R^{(i)}}$ becoming less and less accurate (in a similar way as for frozen percolation, see Remark~\ref{rem:separation_scales}).
\end{remark}

\subsection{Proof of Proposition \ref{prop:large_burnt_clusters}} \label{sec:proof_FF2}

The proof above can be adapted in order to derive Proposition \ref{prop:large_burnt_clusters}.

\begin{proof}[Proof of Proposition \ref{prop:large_burnt_clusters}]
Let $\ve, \eta > 0$. We follow the iterative procedure from the proof of Theorem~\ref{thm:mainFF}, i.e. we use the same Steps~1--3. Since we are aiming at a lower bound, we do not need Step~4, and we consider only those burnt clusters produced by steps $i = i_{\textrm{min}}, \ldots, i_{\textrm{max}} - 2$, where
$$i_{\textrm{min}} := \bigg \lceil \frac{4}{9} \ve \log \log \frac{1}{\zeta} \bigg \rceil \quad \text{and} \quad i_{\textrm{max}} := \bigg \lfloor \bigg( \avalnb - \frac{4}{9} \ve \bigg) \log \log \frac{1}{\zeta} \bigg \rfloor.$$
Note that for all $\zeta$ sufficiently small, $i_{\textrm{max}} \leq j$ (from \eqref{eq:bound_J_FFWoR2}, and $j \geq J-1$), and the number of such steps is at least $(\avalnb - \ve) \log \log \frac{1}{\zeta}$.

For all $i \in \{i_{\textrm{min}}, \ldots, i_{\textrm{max}}\}$, \eqref{eq:r_estimate_FFWoR} ensures that
\begin{equation} \label{eq:intermediate_scales}
\bigg( \frac{c_1}{\log \frac{1}{\zeta}} \bigg)^{\alpha (\aval + \ve')^{( \avalnb - \frac{4}{9} \ve ) \log \log \frac{1}{\zeta}}} \leq \frac{R^{(i)}}{m_{\infty}} \leq \bigg( \frac{c'_1}{\log \frac{1}{\zeta}} \bigg)^{\frac{\alpha}{2} (\aval - \ve')^{\frac{4}{9} \ve \log \log \frac{1}{\zeta}}},
\end{equation}
for some $c_1, c'_1 >0$ (which do not depend on $\zeta$). Recall that the constant $\ve' > 0$ appearing in \eqref{eq:r_estimate_FFWoR} was later chosen small enough so that \eqref{eq:bound_J_FFWoR2} holds. We can assume that
$$\avalnb - \frac{\ve}{3} < \frac{1}{\log(\aval + \ve')} \quad \text{and} \quad \frac{1}{\log(\aval - \ve')} < \avalnb + \frac{\ve}{3}.$$

A short computation shows that for all $\zeta$ small enough, the r.h.s. of \eqref{eq:intermediate_scales} is at most $e^{- ( \log \frac{1}{\zeta} )^{\xi'}}$, while the l.h.s. is at least $e^{- ( \log \frac{1}{\zeta} )^{1 - \xi''}}$, with
$$\xi' = \frac{\ve}{3} \log (\aval - \ve') > 0 \quad \text{and} \quad \xi'' = 1 - \bigg( \avalnb - \frac{\ve}{3} \bigg) \log (\aval + \ve') > 0.$$
Using that $R^{(i+1)} \leq r^{(i)} \leq R^{(i)}$, we obtain the following: for all $i \in \{i_{\textrm{min}}, \ldots, i_{\textrm{max}} - 1\}$,
\begin{equation} \label{eq:intermediate_scales2}
e^{- ( \log \frac{1}{\zeta} )^{1 - \xi''}} \leq \frac{r^{(i)}}{m_{\infty}} \leq \frac{R^{(i)}}{m_{\infty}} \leq e^{- ( \log \frac{1}{\zeta} )^{\xi'}}.
\end{equation}
We then estimate the volume of the clusters surrounding $0$ which burn during the steps $i \in I := \{i_{\textrm{min}}, \ldots, i_{\textrm{max}} - 2\}$. Let
\begin{equation} \label{eq:def_gamma}
\xi = \frac{1}{2} (\xi' \wedge \xi''),
\end{equation}
and recall that $V_{\infty}(\zeta) = m_{\infty}(\zeta)^2 \pi_1(m_{\infty}(\zeta))$ (see \eqref{eq:def_Vinfty}). We use the same events $E_1^{(i)}$--$E_9^{(i)}$ ($0 \leq i \leq j$) as in the iterative construction, as well as two additional events when $i \in I$:
\begin{itemize}
\item $E_{10}^{(i)} := \Big\{$at $\dul{t}^{(i)}$, $\Big| \lclus_{\Ball_{R^{(i)}}} \Big| \leq e^{- ( \log \frac{1}{\zeta} )^{\xi}} V_{\infty}(\zeta)\Big\}$,

\item $E_{11}^{(i)} := \Big\{$at $\dol{t}^{(i)}$, in $\sigma^{[t_c - \dol{\ve}^{(i)}]}$, $\Big| \lclus_{\Ball_{r^{(i)}}} \cap \Ball_{\frac{1}{2} r^{(i)}} \Big| \geq e^{- ( \log \frac{1}{\zeta} )^{1 - \xi}} V_{\infty}(\zeta)$ and $\circuitarm \big( \Ann_{\frac{1}{2} r^{(i)}, r^{(i)}} \, | \, \lclus_{\Ball_{r^{(i)}}} \big)$ occurs$\Big\}$.
\end{itemize}
These two events are enough to ensure that the cluster $\cluster^{(i)}$ around $0$ burning in the $i$th step has the right volume. Indeed, $E_{11}^{(i)}$ ensures that $\cluster^{(i)}$ contains the intersection of $\lclus_{\Ball_{r^{(i)}}}$ at time $\dol{t}^{(i)}$ with $\Ball_{\frac{1}{2} r^{(i)}}$, and $E_{10}^{(i)}$ implies that $\cluster^{(i)}$ has a volume at most $e^{- ( \log \frac{1}{\zeta} )^{\xi}} V_{\infty}(\zeta)$: this is because $|\cluster^{(i)}|$ cannot be larger than the volume of the largest cluster in $\Ball_{R^{(i)}}$ at time $\dul{t}^{(i)}$ (in the pure birth process), since $\cluster^{(i)}$ burns before that time.

First, observe that
\begin{equation} \label{eq:ratio_cmax}
\frac{L(\dul{t}^{(i)})}{R^{(i)}} \leq \frac{L(\dol{t}^{(i)})}{r^{(i)}} \leq \bigg( \log \log \log \frac{1}{\zeta} \bigg)^{-4},
\end{equation}
using that $t_c < \dol{t}^{(i)} \leq \dul{t}^{(i)}$ and $r^{(i)} \leq R^{(i)}$ for the first inequality, and \eqref{eq:ineq_ri_Lti} for the second one. We have
$$\theta(\dul{t}^{(i)}) \big| \Ball_{R^{(i)}} \big| \asymp (R^{(i)})^2 \pi_1(L(\dul{t}^{(i)})) \leq (R^{(i)})^2 \pi_1(r^{(i+1)})$$
(since $L(\dul{t}^{(i)}) \geq r^{(i+1)}$, from \eqref{eq:r_FFWoR}). We can write
$$(R^{(i)})^2 \pi_1(r^{(i+1)}) = \bigg( \frac{R^{(i)}}{m_{\infty}} \bigg)^2 \frac{\pi_1(r^{(i+1)})}{\pi_1(m_{\infty})} \cdot (m_{\infty})^2 \pi_1(m_{\infty}).$$
We deduce from \eqref{eq:intermediate_scales2} and \eqref{eq:def_gamma}, combined with \eqref{eq:quasi_mult} and \eqref{eq:uniform_arm_exp}, that
$$\theta(\dul{t}^{(i)}) \big| \Ball_{R^{(i)}} \big| \leq \frac{1}{2} \cdot \frac{V_{\infty}(\zeta)}{e^{( \log \frac{1}{\zeta} )^{\xi}}}$$
for all $\zeta$ small enough (uniformly in $i \in I$). Hence, \eqref{eq:largest_cluster_quant} implies that
\begin{equation} \label{eq:proba_E10}
\PP(E_{10}^{(i)}) \geq 1 - C \, \frac{L(\dul{t}^{(i)})}{R^{(i)}} \geq 1 - \frac{\kappa}{\big( \log \log \frac{1}{\zeta} \big)^2}
\end{equation}
for some $\kappa$ which does not depend on $i$, where we also used \eqref{eq:ratio_cmax}.

We can get the same lower bound for $\PP(E_{11}^{(i)})$ by proceeding in a similar way. In this case we have, for all $i \in I$,
$$\theta(\dol{t}^{(i)}) \big| \Ball_{\frac{1}{2} r^{(i)}} \big| \asymp (r^{(i)})^2 \pi_1(L(\dol{t}^{(i)})) \geq (r^{(i)})^2 \pi_1(R^{(i+1)})$$
(using $L(\dol{t}^{(i)}) \leq R^{(i+1)}$, from \eqref{eq:r_FFWoR} again), so
$$\theta(\dol{t}^{(i)}) \big| \Ball_{\frac{1}{2} r^{(i)}} \big| \geq 2 \cdot \frac{V_{\infty}(\zeta)}{e^{( \log \frac{1}{\zeta} )^{1 - \xi}}}$$
for all $\zeta$ sufficiently small (from \eqref{eq:intermediate_scales2} and \eqref{eq:def_gamma} again, together with \eqref{eq:quasi_mult} and \eqref{eq:uniform_arm_exp}). We deduce from Proposition~\ref{prop:largest_cluster_impurities}, and then \eqref{eq:ratio_cmax}, that for some $\kappa$ uniform in $i$,
\begin{equation} \label{eq:proba_E11}
\PP(E_{11}^{(i)}) \geq 1 - C \, \frac{L(\dol{t}^{(i)})}{r^{(i)}} \geq 1 - \frac{\kappa}{\big( \log \log \frac{1}{\zeta} \big)^2}.
\end{equation}

We can now complete the proof of Proposition \ref{prop:large_burnt_clusters}, by combining \eqref{eq:proba_E10} and \eqref{eq:proba_E11}, for all $i \in I$, with Steps~1--3 in the proof of Theorem~\ref{thm:mainFF}.
\end{proof}

\appendix

\section{Appendix: additional proofs}

\subsection{Largest cluster in a box: Bernoulli percolation} \label{sec:proof_largest}

We first establish Lemma~\ref{lem:largest_cluster_quant}, about the largest cluster in a box $\Ball_n$ for Bernoulli percolation. A non-quantitative version of this result was proved in \cite{BCKS2001}, and it is not very difficult to obtain Lemma~\ref{lem:largest_cluster_quant} by following similar arguments (see Section~5.2 of \cite{BCKS2001}, and also the proof of Lemma 4.1 in \cite{BKN2015}).

When presenting a detailed proof of Lemma~\ref{lem:largest_cluster_quant}, our goal is twofold. First, we include it for the reader's convenience, since the quantitative version is not stated in \cite{BCKS2001}, and ingredients of the proof are located in various places in that paper. But we also strived to write it so as to make its adaptation to the case of percolation with impurities as straightforward as possible (we handle this case in Section~\ref{sec:proof_largest_imp}).

As an input, we use that for some universal constants $C_1, C_2$, we have: for all $p > p_c$,
\begin{equation} \label{eq:chi_fin}
\chi^{\textrm{fin}}(p) := \EE_p \big[ |\cluster(0)| \mathbbm{1}_{|\cluster(0)| < \infty} \big] \leq C_1 L(p)^2 \theta(p)^2
\end{equation}
and
\begin{equation} \label{eq:chi_cov}
\chi^{\textrm{cov}}(p) := \sum_{v \in V} \textrm{Cov}_p(\mathbbm{1}_{0 \lra \infty}, \mathbbm{1}_{v \lra \infty}) \leq C_2 L(p)^2 \theta(p)^2.
\end{equation}
Each of these two bounds follows from a summation argument relying on \eqref{eq:exp_decay}. The bound \eqref{eq:chi_fin} is proved in Theorem~3 of \cite{Ke1987} (note that $\chi^{\textrm{fin}}(p) = \sum_{v \in V} \PP_p(0 \lra v, 0 \not \lra \infty)$), while \eqref{eq:chi_cov} is established in Section~6.4 of \cite{BCKS2001}.

\begin{proof}[Proof of Lemma \ref{lem:largest_cluster_quant}]
Let $\ve > 0$. We may assume that $\ve < \frac{1}{2}$. We want to prove that uniformly in $p > p_c$ and $n \geq 1$,
$$\PP_p \left( \frac{|\lclus_{\Ball_n}|}{\theta(p) |\Ball_n|} \notin (1 - \ve, 1 + \ve) \right) \leq C \, \frac{L(p)}{n}$$
for some $C = C(\ve)$. For that, we list the clusters in $\Ball_n$ in order of decreasing volume, and distinguish them according to whether they are contained in the infinite cluster $\Cinf$ (on the whole lattice $\TT$), or not: $|\cluster_{\Ball_n,\infty}^{(1)}| \geq |\cluster_{\Ball_n,\infty}^{(2)}| \geq \ldots$ and $|\cluster_{\Ball_n,<\infty}^{(1)}| \geq |\cluster_{\Ball_n,<\infty}^{(2)}| \geq \ldots$ (using e.g. the lexicographic order for clusters having the same volume). Clearly, we have $|\lclus_{\Ball_n}| = |\cluster_{\Ball_n,\infty}^{(1)}|$ or $|\cluster_{\Ball_n,<\infty}^{(1)}|$.

First,
\begin{equation} \label{eq:exp_cinf_ball}
\EE_p \big[ |\Cinf \cap \Ball_n | \big] = \EE_p \Bigg[ \sum_{v \in \Ball_n} \mathbbm{1}_{v \lra \infty} \Bigg] = \sum_{v \in \Ball_n} \PP_p(v \lra \infty) = |\Ball_n| \, \theta(p)
\end{equation}
by translation invariance. We also have
\begin{align*}
\textrm{Var}_p \big( |\Cinf \cap \Ball_n | \big) = \textrm{Var}_p \Bigg( \sum_{v \in \Ball_n} \mathbbm{1}_{v \lra \infty} \Bigg) & = \sum_{v,v' \in \Ball_n} \textrm{Cov}_p(\mathbbm{1}_{v \lra \infty}, \mathbbm{1}_{v' \lra \infty})\\[1mm]
& \leq \sum_{v \in \Ball_n} \sum_{v' \in V} \textrm{Cov}_p(\mathbbm{1}_{v \lra \infty}, \mathbbm{1}_{v' \lra \infty}) = |\Ball_n| \, \chi^{\textrm{cov}}(p),
\end{align*}
where the inequality follows from the fact that $\textrm{Cov}_p(\mathbbm{1}_{v \lra \infty}, \mathbbm{1}_{v' \lra \infty}) \geq 0$ for all $v, v' \in V$ (by the FKG inequality), and the last equality from translation invariance and the definition of $\chi^{\textrm{cov}}$. Combined with \eqref{eq:chi_cov}, this yields
\begin{equation} \label{eq:var_cinf_ball}
\textrm{Var}_p \big( |\Cinf \cap \Ball_n | \big) \leq  |\Ball_n| \cdot C_2 L(p)^2 \theta(p)^2 \leq C_3 \bigg( \frac{L(p)}{n} \bigg)^2 \big( |\Ball_n| \theta(p) \big)^2.
\end{equation}
We deduce from \eqref{eq:exp_cinf_ball} and \eqref{eq:var_cinf_ball} that
\begin{equation} \label{eq:cinf_ball1}
\PP_p \big( (1 - \ve) |\Ball_n| \theta(p) \leq |\Cinf \cap \Ball_n | \leq (1 + \ve) |\Ball_n| \theta(p) \big) \geq 1 - \ve^{-2} C_3 \bigg( \frac{L(p)}{n} \bigg)^2.
\end{equation}
Moreover, a similar bound holds for $\Ball_{n - \sqrt{n L(p)}}$ (we assume, without loss of generality, that $\frac{L(p)}{n} \leq \frac{1}{4}$, so that $\sqrt{n L(p)} \leq \frac{n}{2}$):
\begin{align}
\PP_p \bigg( \bigg( 1 - \frac{\ve}{2} \bigg) & \Big| \Ball_{n - \sqrt{n L(p)}} \Big| \theta(p) \leq \Big| \Cinf \cap \Ball_{n - \sqrt{n L(p)}} \Big| \leq \bigg(1 + \frac{\ve}{2} \bigg) \Big| \Ball_{n - \sqrt{n L(p)}} \Big| \theta(p) \bigg) \nonumber \\[1mm]
& \geq 1 - \bigg( \frac{\ve}{2} \bigg)^{-2} C_3 \bigg( \frac{L(p)}{n - \sqrt{n L(p)}} \bigg)^2 \geq 1 - 16 \ve^{-2} C_3 \bigg( \frac{L(p)}{n} \bigg)^2. \label{eq:cinf_ball2}
\end{align}
Note that if the event $\circuitevent \Big( \Ann_{n - \sqrt{n L(p)}, n} \Big)$ occurs, then all vertices in $\Cinf \cap \Ball_{n - \sqrt{n L(p)}}$ are connected inside $\Ball_n$, so belong to the same cluster. Moreover, it follows easily from \eqref{eq:exp_decay} that
\begin{equation} \label{eq:cinf_circuit}
\PP_p \Big( \circuitevent \Big( \Ann_{n - \sqrt{n L(p)}, n} \Big) \Big) \geq 1 - C_4 \bigg( \frac{n}{L(p)} \bigg)^{1/2} e^{- C_5 \big( \frac{n}{L(p)} \big)^{1/2}} \geq 1 - C_6 \, \frac{L(p)}{n}.
\end{equation}
On the other hand,
\begin{equation} \label{eq:cfin}
\EE_p \Big[ \big| \cluster_{\Ball_n,<\infty}^{(1)} \big| \Big] \leq |\Ball_n|^{1/2} L(p) \theta(p) + \EE_p \bigg[ \big| \cluster_{\Ball_n,<\infty}^{(1)} \big| \mathbbm{1}_{\big| \cluster_{\Ball_n,<\infty}^{(1)} \big| \geq |\Ball_n|^{1/2} L(p) \theta(p)} \bigg],
\end{equation}
and
\begin{align}
\EE_p \bigg[ \big| \cluster_{\Ball_n,<\infty}^{(1)} \big| \mathbbm{1}_{\big| \cluster_{\Ball_n,<\infty}^{(1)} \big| \geq |\Ball_n|^{1/2} L(p) \theta(p)} \bigg] & = \EE_p \Bigg[ \sum_{v \in \Ball_n} \mathbbm{1}_{\cluster(v) = \cluster_{\Ball_n,<\infty}^{(1)}} \mathbbm{1}_{| \cluster(v) | \geq |\Ball_n|^{1/2} L(p) \theta(p), v \not \lra \infty} \Bigg] \nonumber \\[2mm]
& \leq |\Ball_n| \, \PP_p \big( |\cluster(0)| \geq  |\Ball_n|^{1/2} L(p) \theta(p), 0 \not \lra \infty \big) \label{eq:cfin2}
\end{align}
(we used again translation invariance). Using Markov's inequality, we get
\begin{equation} \label{eq:cfin3}
\PP_p( |\cluster(0)| \geq |\Ball_n|^{1/2} L(p) \theta(p), 0 \not \lra \infty) \leq \frac{\chi^{\textrm{fin}}(p)}{|\Ball_n|^{1/2} L(p) \theta(p)} \leq C_7 \, \frac{L(p)}{n} \, \theta(p),
\end{equation}
from the definition of $\chi^{\textrm{fin}}$, and then \eqref{eq:chi_fin}. Combining \eqref{eq:cfin}, \eqref{eq:cfin2} and \eqref{eq:cfin3} gives
\begin{equation}
\EE_p \Big[ \big| \cluster_{\Ball_n,<\infty}^{(1)} \big| \Big] \leq C_8 \, \frac{L(p)}{n} \, |\Ball_n| \, \theta(p).
\end{equation}
Hence,
\begin{equation} \label{eq:cfin4}
\PP_p \Big( \big| \cluster_{\Ball_n,<\infty}^{(1)} \big| \leq \frac{1}{2} |\Ball_n| \theta(p) \Big) \geq 1 - 2 C_8 \, \frac{L(p)}{n}.
\end{equation}
Finally, we conclude by noting that if the events in the l.h.s. of \eqref{eq:cinf_ball1}, \eqref{eq:cinf_ball2}, \eqref{eq:cinf_circuit} and \eqref{eq:cfin4} all occur, which has a probability at least $1 - C_9 \, \frac{L(p)}{n}$ for some $C_9 = C_9(\ve)$, then
\begin{align*}
(1 - \ve) |\Ball_n| \theta(p) \leq \bigg( 1 - \frac{\ve}{2} \bigg) \Big| \Ball_{n - \sqrt{n L(p)}} \Big| \theta(p) \leq \Big| \Cinf \cap \Ball_{n - \sqrt{n L(p)}} \Big| & \leq \big|\cluster_{\Ball_n,\infty}^{(1)} \big|\\[1mm]
& \leq |\Cinf \cap \Ball_n | \leq (1 + \ve) |\Ball_n| \theta(p)
\end{align*}
and $|\lclus_{\Ball_n}| = |\cluster_{\Ball_n,\infty}^{(1)}|$.
\end{proof}

\subsection{Near-critical percolation with impurities} \label{sec:proof_impurities}

We now explain how to obtain the results stated in Section~\ref{sec:perc_impurities}, for our generalized version of percolation with impurities, i.e. under Assumption~\ref{ass:impurities}. Recall that we denote by $c$, $\gamma$ and $\upsilon$ the quantities in the r.h.s. of \eqref{eq:impurities_assumption}. For this purpose, we need to adapt the reasonings in Sections 3--5 of \cite{BN2018}.

More specifically, we first state preliminary lemmas about the existence of crossing holes, in Section~\ref{sec:app_hole}. We then show, in Section~\ref{sec:app_four}, how to extend a proof from \cite{BN2018} that some (enlarged) four-arm event is ``stable'' under the addition of impurities. This property is instrumental to derive further stability results, and we discuss consequences in Sections~\ref{sec:app_box} and \ref{sec:app_exp}, in particular Propositions~\ref{prop:crossing_impurities} and \ref{prop:exp_decay_impurities}. The proof of Proposition~\ref{prop:largest_cluster_impurities} requires more work, so we postpone it to Section~\ref{sec:proof_largest_imp}.

\subsubsection{Crossing holes} \label{sec:app_hole}

In our reasonings, we often use results regarding large holes in an annulus, analogous to Lemmas~3.2 and 3.3 in \cite{BN2018}. We just state the lemmas without providing proofs, since, compared with the corresponding results in \cite{BN2018}, only straightforward modifications are required (in order to make use of Assumption~\ref{ass:impurities} in the summations).

First, recall the upper bound on the event $\calH(A)$ that a crossing hole exists in a given annulus $A$, which was stated earlier as Lemma~\ref{lem:crossing_hole}.
\begin{lemma} \label{lem:crossing_hole2}
There exist constants $C, C'>0$ (depending only on $c$ and $\gamma$) such that the following holds. For all $m\geq 1$, for all annuli $A= A_{n_1, n_2}(z)$ with $z\in V$ and $1\leq n_1\leq \frac{n_2}{2}$,
$$\PPh^{(m)} \left(\calH(A)\right) \leq C \upsilon(m) e^{-C' \frac{n_1}{m}}.$$
\end{lemma}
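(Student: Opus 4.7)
The plan is to control $\PPh^{(m)}(\calH(A))$ via a union bound over the center $v$ of the impurity producing the crossing, and then to estimate the resulting sum using Assumption~\ref{ass:impurities}. For each $v \in V$, I would denote by $r(v)$ the smallest radius such that $v + \Ball_r$ meets both $\dout \Ball_{n_1}(z)$ and $\din \Ball_{n_2}(z)$. An elementary $L^\infty$ computation gives $r(v) \geq (n_2 - n_1)/2$ for every $v$, with $r(v)$ growing linearly in $\mathrm{dist}(v, A)$ whenever $v$ lies outside $\Ball_{n_2}(z)$ or inside $\Ball_{n_1}(z)$. The hypothesis $n_1 \leq n_2/2$ forces $n_2 - n_1 \geq n_1$, hence $r(v) \geq n_1/2$ uniformly in $v$, and this is what will ultimately produce the factor $e^{-C' n_1/m}$.

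By the union bound and Assumption~\ref{ass:impurities},
$$
\PPh^{(m)}(\calH(A)) \leq \sum_{v \in V} \pi_v^{(m)} \rho^{(m)}\bigl([r(v), +\infty)\bigr) \leq \frac{\upsilon(m)}{m^2} \sum_{v \in V} \left(\frac{r(v) \vee 1}{m}\right)^{\gamma - 2} e^{-c r(v)/m}.
$$
I would then split the sum into vertices in (a thickened version of) the annulus $A$ and vertices farther out. In the first region, the number of vertices at distance $\approx d \in [n_1, n_2]$ from $z$ is $O(n_2)$ and $r(v) = \max(d - n_1, n_2 - d)$, so a change of variable $u = r(v)/m$ reduces the inner sum to an integral of the form $\int u^{\gamma - 2} e^{-c u}\, du$ over $u \gtrsim (n_2 - n_1)/(2m)$, which is standard to estimate in both regimes $n_2 - n_1 \lesssim m$ and $n_2 - n_1 \gg m$. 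In the second region, dyadic shells around $z$ contain only polynomially many vertices while $r(v)$ grows at least linearly, so the exponential factor $e^{-c r(v)/m}$ crushes each shell individually and the contribution is negligible compared to the first one.

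The one step that requires care is the polynomial prefactor $(r(v)/m)^{\gamma - 2}$: since $\gamma - 2 \in (-1, 0)$, this weight is singular as $r(v)/m \to 0$, preventing a naive factorization. The fix is to absorb it into the exponential via the elementary inequality $x^{\gamma - 1} e^{-c x} \leq C e^{-c' x}$ (valid for any $0 < c' < c$), applied after the spatial counting; combined with the uniform lower bound $r(v) \geq n_1/2$ and the observation $n_2 - n_1 \geq n_1$, this yields the announced estimate $C \upsilon(m) e^{-C' n_1/m}$ with constants depending only on $c$ and $\gamma$. Conceptually, the argument is the same as in the proof of Lemma~3.2 of \cite{BN2018}; the only adjustment is that one no longer needs separate power-law estimates on $\pi_v^{(m)}$ and $\rho^{(m)}$, since Assumption~\ref{ass:impurities} controls their product directly in the very step where the bound is applied.
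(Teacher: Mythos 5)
Your proof is correct and follows exactly the route the paper has in mind: the paper explicitly omits the proof of this lemma, remarking that it is obtained from Lemma~3.2 of \cite{BN2018} with "only straightforward modifications... in order to make use of Assumption~\ref{ass:impurities} in the summations," which is precisely what you do — a union bound over impurity centers $v$, the uniform lower bound $r(v)\geq (n_2-n_1)/2\geq n_1/2$, a dyadic/shell decomposition of $V$, and absorption of the polynomial weight $(r/m)^{\gamma-2}$ (and of the volume prefactors $n_2/m$, $(2^kn_2/m)^2$) into the exponential using $\gamma\in(1,2)$.
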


The next lemma is needed in the proof of Theorem~\ref{thm:four-arm}, for the following sub-event of $\calH(A)$:
$$\ol{\ol{\calH}}(A) := \{\exists v \in V \: : \: H_v \cap \dout \Ball_{n_1}(z) \neq \emptyset, H_v \cap \din \Ball_{n_2}(z) \neq \emptyset, H_v \not \supseteq \Ball_{n_1}(z), \text{ and } H_v \cap \din \Ball_{2n_2}(z) = \emptyset\}.$$
\begin{lemma} \label{lem:crossing_hole3}
There exist constants $C, C'>0$ (depending only on $c$ and $\gamma$) such that the following holds. For all $m\geq 1$, for all annuli $A= A_{n_1, n_2}(z)$ with $z\in V$ and $1\leq n_1\leq \frac{n_2}{2}$,
$$\PPh^{(m)} \big(\ol{\ol{\calH}}(A)\big) \leq C \frac{\upsilon(m)}{m^2} n_1 n_2^{\gamma-1} \frac{1}{m^{\gamma-2}} e^{-C'\frac{n_2}{m}}.$$
\end{lemma}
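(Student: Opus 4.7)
The plan is to decompose $\ol{\ol{\calH}}(A)$ over the center of the impurity responsible for it, geometrically identify the narrow ``band'' of admissible centers, and apply Assumption~\ref{ass:impurities} to each term. Writing
\begin{equation*}
\PPh^{(m)}\!\big(\ol{\ol{\calH}}(A)\big) \leq \sum_{v \in V} \pi_v^{(m)} \, \rho^{(m)}(I_v),
\end{equation*}
where $I_v \subseteq [0,\infty)$ is the set of radii $r$ making $\Ball_r(v)$ satisfy the four conditions defining $\ol{\ol{\calH}}(A)$, the bound $\rho^{(m)}(I_v) \leq \rho^{(m)}([\inf I_v, \infty))$ combined with \eqref{eq:impurities_assumption} yields
\begin{equation*}
\pi_v^{(m)} \, \rho^{(m)}(I_v) \leq \frac{\upsilon(m)}{m^2}\Bigl(\frac{\inf I_v \vee 1}{m}\Bigr)^{\gamma-2} e^{-c\,\inf I_v/m}.
\end{equation*}

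The geometric content is then used to pin down both the set of valid $v$ and the size of $\inf I_v$. Setting $d := \|v-z\|$, the four conditions translate, up to additive $O(1)$ lattice corrections, into
\begin{equation*}
r \geq \max\!\bigl(|d-n_1|,\,|d-n_2|\bigr), \qquad r < d + n_1, \qquad r < 2n_2 - d.
\end{equation*}
A short case analysis on the position of $d$ relative to the thresholds $n_1$, $(n_1+n_2)/2$, $n_2 - n_1/2$ and $n_2$ reveals that $I_v \neq \emptyset$ forces
\begin{equation*}
d \in \bigl[\tfrac{n_2-n_1}{2},\ n_2 + \tfrac{n_1}{2}\bigr],
\end{equation*}
a radial band of width $\lesssim n_1$, and moreover that throughout this band one has $\inf I_v \geq (n_2-n_1)/2 \geq n_2/4$, using the hypothesis $n_1 \leq n_2/2$. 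Since $\gamma - 2 \in (-1, 0)$, the function $x \mapsto x^{\gamma-2} e^{-cx/m}$ is effectively decreasing for $x \geq n_2/4$ (absorbing the polynomial into a slightly weaker exponential via $x^a e^{-bx} \leq C e^{-b'x}$ when needed), so the per-center bound is uniform across the band:
\begin{equation*}
\pi_v^{(m)} \, \rho^{(m)}(I_v) \leq C\,\frac{\upsilon(m)}{m^2}\Bigl(\frac{n_2}{m}\Bigr)^{\gamma-2} e^{-c'' n_2/m}.
\end{equation*}

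The number of lattice sites with $d \in [(n_2-n_1)/2,\ n_2+n_1/2]$ is of order $n_1 n_2$ (area of a radial band of width $\sim n_1$ at radius $\sim n_2$), so summing yields
\begin{equation*}
\PPh^{(m)}\!\big(\ol{\ol{\calH}}(A)\big) \leq C'\,n_1 n_2 \cdot \frac{\upsilon(m)}{m^2}\Bigl(\frac{n_2}{m}\Bigr)^{\gamma-2} e^{-c'' n_2/m} = C'\,\frac{\upsilon(m)}{m^2}\, n_1\, n_2^{\gamma-1}\, \frac{1}{m^{\gamma-2}}\, e^{-c'' n_2/m},
\end{equation*}
which is the claimed inequality. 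The principal technical care is in the case-splitting for $I_v$: one must verify that \emph{both} the radial width of the valid $d$-band and the lower bound $\inf I_v \gtrsim n_2$ hold uniformly across all sub-cases (particularly the transitional ones near $d \approx (n_1+n_2)/2$ and $d \approx n_2$). Beyond that bookkeeping, the argument is a direct transcription of the reasoning for Lemma~3.3 in \cite{BN2018}, with Assumption~\ref{ass:impurities} replacing the two separate assumptions used there.
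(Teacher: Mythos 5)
Your argument contains a genuine error in the geometric count, which makes the final bound too weak by a factor of order $n_2/n_1$. You claim that $I_v \neq \emptyset$ forces $d := \|v - z\|_\infty$ into the band $\bigl[\tfrac{n_2-n_1}{2},\,n_2 + \tfrac{n_1}{2}\bigr]$ and that this band has ``width $\lesssim n_1$'', from which you deduce a count of order $n_1 n_2$ valid centers. But the band actually has width $(n_2 + \tfrac{n_1}{2}) - \tfrac{n_2-n_1}{2} = \tfrac{n_2}{2} + n_1$, of order $n_2$, not $n_1$ (recall $n_1 \leq n_2/2$), so the number of centers with $I_v \neq \emptyset$ is of order $n_2^2$. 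Your per-center bound $\pi_v^{(m)}\rho^{(m)}(I_v) \leq C\,\tfrac{\upsilon(m)}{m^2}\bigl(\tfrac{n_2}{m}\bigr)^{\gamma-2}e^{-c''n_2/m}$ is fine, but summed over $\sim n_2^2$ centers it yields an extra factor $n_2/n_1$ compared to the statement. This factor cannot be absorbed into the exponential constant: with $n_1 = 1$ and $n_2 = m$ your estimate is of order $\upsilon(m)$, whereas the lemma asserts order $\upsilon(m)/m$.

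The loss comes from bounding $\rho^{(m)}(I_v)$ by the full tail $\rho^{(m)}([\inf I_v, \infty))$, which throws away the constraint from $H_v \cap \dout\Ball_{n_1}(z) \neq \emptyset$ and $H_v \not\supseteq \Ball_{n_1}(z)$: these confine $r$ to an interval of length at most $2n_1$ given $d$, or equivalently confine $d$ to an interval of length at most $2n_1$ given $r$. The remedy is to interchange the sum over $v$ with the integral over the radius. Since the right-hand side of \eqref{eq:impurities_assumption} does not depend on $v$, set $\pi^{(m)} := \sup_v \pi_v^{(m)}$ and write
$$
\PPh^{(m)}\bigl(\ol{\ol{\calH}}(A)\bigr) \;\leq\; \sum_v \pi_v^{(m)}\rho^{(m)}(I_v) \;\leq\; \pi^{(m)}\int_0^\infty g(r)\,d\rho^{(m)}(r), \qquad g(r) := \big|\{v : r \in I_v\}\big|.
$$
One checks that $g(r) = 0$ for $r < (n_2-n_1)/2$ and that $g(r) \leq C n_1 n_2$ uniformly (for each $r$ the valid $d$ form an interval of width at most $2n_1$ at radius of order $n_2$). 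Hence $\PPh^{(m)}(\ol{\ol{\calH}}(A)) \leq C n_1 n_2\, \pi^{(m)}\rho^{(m)}\bigl([\tfrac{n_2-n_1}{2},\infty)\bigr)$, and applying \eqref{eq:impurities_assumption} at $r = \tfrac{n_2-n_1}{2} \geq n_2/4$ gives the stated bound.
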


\subsubsection{Four-arm stability} \label{sec:app_four}

We now state an analogous statement of Theorem~4.1 in \cite{BN2018}. Following the notations in \cite{BN2018}, for $\omega \in \Omega$ and $U\subseteq V$, we denote $(\omega^{(U)}) = (\omega^{(U)}_v)_{v\in V}$, where
$$\omega^{(U)}_v := \omega_v \mathbbm{1}_{v\not\in \bigcup_{u\in U} H_u},$$
and for an annulus $A = \Ann_{n_1, n_2}(z)$, let
$$\mathcal{W}_4(A) := \{\exists U\subseteq V \: : \: \omega^{(U)} \text{ satisfies } \mathcal{A}_4(A)\}.$$

\begin{theorem} \label{thm:four-arm}
Let $K \geq 1$. There exist $C$, $\delta_0 > 0$ (both depending on $c$, $\gamma$, $K$) such that the following holds. For all $p\in (0,1)$, and all $1\leq n_1 < n_2 \leq K(m\wedge L(p))$, if $\upsilon(m) \leq \delta_0$, then
$$\PPh_p^{(m)}\left(\mathcal{W}_4(A_{n_1, n_2})\right) \leq C \pi_4(n_1, n_2).$$
\end{theorem}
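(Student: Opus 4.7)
The plan is to adapt the strategy of Theorem~4.1 in \cite{BN2018}, in which the same inequality is established under the original (strongly decaying) impurity assumptions. The only substantive change is that our $\upsilon(m)$ is merely bounded by $\delta_0$ rather than decaying like a power of $m$, so a geometric series in $\upsilon(m)$ must be absolutely summable, which dictates the choice of $\delta_0$ at the end.

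The core idea is a dyadic decomposition of $\Ann_{n_1,n_2}$ into sub-annuli $A^{(i)} = \Ann_{2^i n_1, 2^{i+1} n_1}$ for $0 \leq i \leq \log_2(n_2/n_1)$, combined with a case analysis based on the largest impurity of $U$ that ``interferes'' with the four-arm structure in each $A^{(i)}$. For a given sub-annulus, I would distinguish two regimes: either no impurity of $U$ crosses a well-chosen thinner sub-annulus contained in $A^{(i)}$ (then the four-arm event in $\omega^{(U)}$ locally forces a standard four-arm event in $\omega$, with probability bounded via \eqref{eq:near_critical_arm}), or there is such a crossing impurity of some radius $r$. In the second regime, on the scale of a (dyadic) $r$, the four-arm event in $\omega^{(U)}$ forces an arm configuration which, combined with the arms outside the impurity, costs roughly $\pi_4(n_1,r)$ times a ``two-arm refinement'' carrying the $\pi_{(oo)}(r,\cdot)$ exponent near the impurity. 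This is where the inequality $\alpha_{(oo)} > \alpha_4 - 1$, mentioned in Remark~\ref{rem:other_lattices}, comes in: the sum over $r$ of (cost of impurity) $\times$ (four-arm penalty) converges. The impurity cost is controlled by Lemma~\ref{lem:crossing_hole3}, which yields a factor of order $\upsilon(m) n_1 r^{\gamma-1} m^{-\gamma}$ (up to a stretched exponential factor in $r/m$), and the $\gamma > 1$ in Assumption~\ref{ass:impurities} combined with the arm exponent inequality guarantees summability.

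Concretely, one sets $q(n_1,n_2) := \sup_{p, m} \frac{\PPh_p^{(m)}(\mathcal{W}_4(\Ann_{n_1,n_2}))}{\pi_4(n_1,n_2)}$, where the supremum is taken over admissible $p$, $m$ with $\upsilon(m) \leq \delta_0$, and proves boundedness by induction on $\lfloor \log_2(n_2/n_1) \rfloor$. The key recursive estimate takes the form $q(n_1,n_2) \leq C_0 + C_1 \upsilon(m) q(n_1,n_2/2)$ (with $C_0, C_1$ depending on $c, \gamma, K$), using quasi-multiplicativity \eqref{eq:quasi_mult} to factor out the innermost scale. Choosing $\delta_0$ small enough so that $C_1 \delta_0 \leq \tfrac{1}{2}$ then produces a uniform bound $q(n_1,n_2) \leq 2C_0$, as desired.

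The main obstacle, relative to \cite{BN2018}, is the absence of a parameter decay in $\upsilon(m)$: in the original paper, the extra $m$-decay gave slack in every sub-annulus, whereas here the constants must be tracked so that a single smallness condition $\upsilon(m) \leq \delta_0$ suffices to close the induction. This requires slightly sharper bookkeeping in the application of Lemmas~\ref{lem:crossing_hole2} and \ref{lem:crossing_hole3}, in particular to ensure that the dependence on $K$ (through the condition $n_2 \leq K(m \wedge L(p))$) and on $c, \gamma$ is absorbed into the constants $C_0, C_1$ rather than into the threshold $\delta_0$. The rest of the argument is essentially a translation of the BN2018 proof, with Assumption~\ref{ass:impurities} systematically replacing Assumptions~1 and 2 of that paper.
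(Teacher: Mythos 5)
Your sketch is essentially the same approach as the paper: adapt Theorem~4.1 of \cite{BN2018} by a dyadic decomposition and case analysis on crossing holes, using Lemma~\ref{lem:crossing_hole3} together with the arm-exponent inequality $\alpha_{(oo)} > \alpha_4 - 1$ to sum over impurity radii, and closing a double induction (on outer scale and on $j-i$) by taking $\delta_0$ small. One minor bookkeeping discrepancy: your recursive estimate $q(n_1,n_2) \leq C_0 + C_1\upsilon(m)\,q(n_1,n_2/2)$ is linear in the induction constant, whereas the paper's Term~1 contribution (the ``no big hole'' case under the event $\mathcal{D}$ that the four-arm event fails without holes in the middle annulus) produces a bound of the form $C_1 C^3 \upsilon(m)\pi_4$, i.e.\ cubic in the target constant $C$ from the induction hypothesis; this does not change the outcome, since one fixes $C=\hat C+1$ first and then chooses $\delta_0$ so that $(C_1 C^3 + C_2 C)\delta_0 < 1$, but it is the reason the paper must set $C$ before picking $\delta_0$ rather than solving a linear fixed-point inequality.
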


\begin{proof}[Proof of Theorem~\ref{thm:four-arm}]
The proof is almost the same as that of Theorem~4.1 in \cite{BN2018}, in the particular case when $\alpha>1$ (in the notation of that paper). Here, note that we use crucially the assumption $\gamma \in (1,2)$.
	
As in \cite{BN2018}, it suffices to consider the case $n_1=2^i$, $n_2 = 2^j$ with $j-i\geq 7$ and $2^j \leq 2K(m\wedge L(p))$. The result is proved by induction over $j$ and $(j-i)$. Let $\mathcal{D}$ be the event that $\mathcal{A}_4(\Ann_{2^{i+3}, 2^{j-3}})^c$ holds without the holes. We further define the following events:
	\begin{itemize}
		\item $\mathcal{E}_1 := \{\text{there is no big hole in $\Ann_{2^i, 2^j}$}\}$,
		
		\item $\mathcal{E}_2 :=\{\text{there is at least one big hole in $\Ann_{2^i, 2^j}$}\}$,
	\end{itemize}
	where a big hole in $\Ann_{2^i, 2^j}$ is a hole $H_v$ crossing at least one of the sub-annuli $\Ann_{2^h, 2^{h+1}} \subseteq \Ann_{2^i, 2^j}$, $i \leq h \leq j-1$. We start by writing
	\begin{align}
		\PPh_p^{(m)} \left(\mathcal{W}_4(A_{n_1, n_2}) \cap \mathcal{D}\right) & \leq \PPh_p^{(m)} \left(\mathcal{W}_4(A_{n_1, n_2}) \cap \mathcal{D} \cap \mathcal{E}_1\right) + \PPh_p^{(m)} \left(\mathcal{W}_4(A_{n_1, n_2}) \cap \mathcal{E}_2\right) \nonumber\\[1mm]
		& =: (\text{Term 1}) + (\text{Term 2}). 	\label{eq:term1term2}
	\end{align}
	Proceeding as in Section~4.2.1 of \cite{BN2018} (and using Lemma~\ref{lem:crossing_hole3}), we can obtain the following upper bounds:
	\begin{equation} \label{eq:term12}
	(\text{Term 1}) \leq C_1 C^3 \upsilon(m) \pi_4(2^i, 2^j) \quad \text{and} \quad (\text{Term 2}) \leq C_2 C \upsilon(m) \pi_4(2^i, 2^j),
	\end{equation}
	where $C$ is from the induction hypothesis, and $C_1$, $C_2$ depend only on $c$, $\gamma$, and $K$. Hence, combining \eqref{eq:term1term2}, \eqref{eq:term12}, and the hypothesis $\upsilon(m) \leq \delta_0$, we have
$$\PPh_p^{(m)} (\mathcal{W}_4(A_{n_1, n_2})\cap \mathcal{D}) \leq (C_1C^3+C_2C)\delta_0 \pi_4(2^i, 2^j).$$
As in Section~4.2.3 of \cite{BN2018}, this implies
$$\PPh_p^{(m)} (\mathcal{W}_4(A_{n_1, n_2})) \leq \hat{C} \pi_4(2^i, 2^j) + (C_1 C^3+C_2C)\delta_0 \pi_4(2^i, 2^j)$$
for some universal $\hat{C}>0$, and we choose $C = \hat{C} + 1$. If furthermore $\delta_0 > 0$ is sufficiently small, so that $(C_1 C^3+C_2C) \delta_0 < 1$, Theorem~\ref{thm:four-arm} follows by induction.
\end{proof}

\subsubsection{One-arm and box crossing stability} \label{sec:app_box}

We now state the analogs of Propositions~5.1 and 5.2 in \cite{BN2018}. Once Theorem~\ref{thm:four-arm} is established, these results can be obtained in essentially the same way as in \cite{BN2018}, so we will not repeat the proofs. Note that Proposition~\ref{prop:crossing_impurities2} below is exactly Proposition~\ref{prop:crossing_impurities}, stated earlier.

\begin{proposition}
Let $K \geq 1$. There exists $C = C(c, \gamma, K) > 0$ such that: for all $p \in (0,1)$ and $1 \leq n_1 \leq \frac{n_2}{32} \leq n_2 \leq K (m \wedge L(p))$,
\begin{equation}
\PPh_p^{(m)} \big( \arm_1(\Ann_{n_1, n_2}) \big) \geq \big( 1 - C \upsilon(m) \big) \, \PP_p \big( \arm_1(\Ann_{n_1, n_2}) \big).
\end{equation}
\end{proposition}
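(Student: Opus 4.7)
The plan is to adapt closely the proof of Proposition~5.1 of \cite{BN2018}, with our Theorem~\ref{thm:four-arm} playing the role of Theorem~4.1 there. Let $A_n := \arm_1(\Ann_{n_1,n_2})$ read on the bare percolation configuration $\omega$, and $\tilde{A}_n$ denote the same event read on the configuration $\omega^{(V)}$ obtained by declaring all impurity-covered sites to be vacant. Since clearly $\tilde A_n \subseteq A_n$, and since \eqref{eq:near_critical_arm} gives $\PP_p(A_n) \asymp \pi_1(n_1, n_2)$, it suffices to show
\begin{equation} \label{eq:plan_target}
\PPh_p^{(m)}\big( A_n \setminus \tilde A_n \big) \leq C \, \upsilon(m) \, \pi_1(n_1, n_2).
\end{equation}

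To bound the left-hand side of \eqref{eq:plan_target}, I would associate with every configuration in $A_n \setminus \tilde A_n$ a canonical ``blocking'' impurity $H_v$: by exploring the lowest $\omega$-occupied arm, one can identify an impurity along it whose presence is responsible for the disappearance of every $\omega^{(V)}$-arm (this exploration is the planar analogue of the one used inside the proof of Theorem~\ref{thm:four-arm}). I would then decompose the event according to two dyadic scales: $2^k$ for the distance from $v$ to $\dout \Ball_{n_1}$ (with $2^k$ ranging over $[n_1, n_2]$) and $2^\ell$ for $r_v \vee 1$ (with $\ell \leq k + O(1)$). On the sub-event parameterized by $(k, \ell)$, the configuration must display simultaneously: (i) an occupied $\omega$-arm in $\Ann_{n_1, 2^{k-1}}$; (ii) an occupied $\omega$-arm in $\Ann_{2^{k+1}, n_2}$; and (iii) in an annulus of the form $\Ann_{C 2^{\ell}, c \cdot 2^{k}}(v)$, the four-arms-with-holes event $\mathcal{W}_4$ introduced before Theorem~\ref{thm:four-arm}, since blocking every $\omega$-occupied arm through the single impurity $H_v$ forces precisely such a local four-arm structure around $v$.

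By the spatial independence of (i), (ii) and (iii) (which are measurable with respect to disjoint regions), together with Theorem~\ref{thm:four-arm} and quasi-multiplicativity \eqref{eq:quasi_mult}, the probability of the $(k,\ell)$ sub-event is bounded by a product whose key factor, after invoking Assumption~\ref{ass:impurities} to control $\pi_v^{(m)} \rho^{(m)}([2^\ell, \infty))$ and summing over the $O(2^{2k})$ admissible positions of $v$, reduces to
\begin{equation*}
C \, \pi_1(n_1, n_2) \cdot \upsilon(m) \cdot \big(2^\ell / 2^k\big)^{\gamma - 1} \, e^{-c \, 2^\ell / m},
\end{equation*}
where the exponent $\gamma - 1$ arises by combining the scaling of $\pi_4$ (at near-critical scales) with the $(r/m)^{\gamma - 2}$ tail of the impurity radius. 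Because $\gamma > 1$, the double sum over $\ell \leq k \leq \log_2 n_2$ converges, and a geometric series computation yields \eqref{eq:plan_target}.

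The main obstacle is step~(iii): rigorously justifying that the blocking of every $\omega$-occupied arm by a single impurity forces a $\mathcal{W}_4$ configuration around its center at the appropriate scale. This is a planar-topological argument that must be carried out carefully using the exploration of the lowest obstructed arm; it is completely analogous to the corresponding argument in Section~5.1 of \cite{BN2018} and transfers verbatim to our setting, since only the geometry of the triangular lattice and the definition of $\mathcal{W}_4$ are used. Once this geometric reduction is in place, the remaining computations reduce to the summation sketched above, which is routine given Assumption~\ref{ass:impurities} and the results already collected in Section~\ref{sec:perc_impurities}.
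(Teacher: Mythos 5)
Your overall plan is exactly the one the paper intends: the appendix (Section~\ref{sec:app_box}) simply notes that the one-arm and box-crossing stability statements ``can be obtained in essentially the same way as in \cite{BN2018}'', with Theorem~\ref{thm:four-arm} playing the role of Theorem~4.1 in that paper. Your decomposition --- passing to $A_n\setminus\tilde A_n$, identifying a canonical blocking impurity along the lowest obstructed arm, the dyadic split over its distance $2^k$ and radius $2^\ell$, and the $\mathcal{W}_4$ structure around it controlled by Theorem~\ref{thm:four-arm} --- is the intended adaptation, and your assessment that the topological step is the only delicate ingredient carried over from \cite{BN2018} is right.

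The intermediate bound you state, however, would not close the argument. The factor $\pi_1(n_1,n_2)\,\upsilon(m)\,(2^\ell/2^k)^{\gamma-1}e^{-c\,2^\ell/m}$ has a convergent $\ell$-sum (using $\gamma>1$), but it carries no decay in $k$ at all, and summing over the roughly $\log_2(n_2/n_1)$ admissible scales $k$ produces a spurious logarithmic factor that is incompatible with the target bound $C\,\upsilon(m)\,\pi_1(n_1,n_2)$. Tracking the exponents carefully --- the impurity tail $\pi_v^{(m)}\rho^{(m)}([2^\ell,\infty))\le\frac{\upsilon(m)}{m^2}(2^\ell/m)^{\gamma-2}e^{-c\,2^\ell/m}$ from Assumption~\ref{ass:impurities}, the four-arm bound $\pi_4(2^\ell,2^k)$ from Theorem~\ref{thm:four-arm}, and the $O(2^{2k})$ positions of $v$ --- gives instead a factor proportional to $(2^k/m)^{2-\alpha_4+o(1)}(2^\ell/m)^{\gamma-(2-\alpha_4)+o(1)}e^{-c\,2^\ell/m}$. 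The $m^{-2}$ normalisation in Assumption~\ref{ass:impurities} is precisely what supplies the missing $(2^k/m)^{3/4+o(1)}$ decay in $k$ (here one uses $2^k\le n_2\le Km$), and the $\ell$-sum converges because $\gamma>2-\alpha_4=\frac{3}{4}$, a joint condition on $\gamma$ and the four-arm exponent rather than $\gamma>1$ alone. This is a fixable slip in bookkeeping, not a conceptual gap; transcribing the estimate from \cite{BN2018} with Assumption~\ref{ass:impurities} in place of the original hypotheses produces the correct exponents.
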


\begin{proposition} \label{prop:crossing_impurities2}
Let $K \geq 1$. There exists $C = C(c, \gamma, K) > 0$ such that: for all $p \in (0,1)$ and $1 \leq n \leq K (m \wedge L(p))$,
\begin{equation}
\PPh_p^{(m)} \big( \Ch([0,2n] \times [0,n]) \big) \geq \big( 1 - C \upsilon(m) \big) \, \PP_p \big( \Ch([0,2n] \times [0,n]) \big).
\end{equation}
\end{proposition}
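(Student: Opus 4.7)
The plan is to carry out the argument used for Proposition~5.2 in \cite{BN2018}, with our Theorem~\ref{thm:four-arm} in the role of their four-arm stability result (Theorem~4.1 of that paper). Writing $R := [0,2n] \times [0,n]$, since applying the impurities can only turn occupied vertices into vacant ones, we automatically have $\PPh_p^{(m)}(\Ch(R)) \leq \PP_p(\Ch(R))$. It therefore suffices to bound
$$\PPh_p^{(m)}(D) \leq C' \upsilon(m),$$
where $D$ is the event that horizontal occupied crossings exist in $\omega$ but are all destroyed by the impurities, i.e.\ $\omega \in \Ch(R)$ but $\omega^{(V)} \notin \Ch(R)$. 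Indeed, the RSW estimate \eqref{eq:RSW} provides the uniform lower bound $\PP_p(\Ch(R)) \geq \delta_4$ under the hypothesis $n \leq K L(p)$, which converts such an additive bound into the desired multiplicative form.

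On $D$, planar duality on $\TT$ produces, in the configuration with impurities, a vertical ``$\omega$-vacant or impurity'' crossing of $R$, while no such crossing exists in $\omega$ alone. Hence there must be at least one \emph{pivotal} impurity $H_v$ whose removal restores the absence of a vertical dual crossing. Near the center $v$ of such an impurity, one then finds four alternating arms reaching the four sides of $R$: two $\omega$-occupied arms going to the left and right sides, and two ``$\omega$-vacant or other impurity'' arms going to the top and bottom. This is precisely a $\mathcal{W}_4$-type configuration in an annulus $\Ann_{r \vee 1, d(v)}(v)$, where $r$ is the radius of $H_v$ and $d(v)$ the distance from $v$ to $\partial R$, and so Theorem~\ref{thm:four-arm} bounds its probability by $C \, \pi_4(r \vee 1, d(v))$.

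Summing over the candidate pivotal center $v$ and radius $r$, and using Assumption~\ref{ass:impurities} (via an Abel-type manipulation converting the tail estimate on $\rho^{(m)}$ into a pointwise bound on $\pi_v^{(m)} \rho^{(m)}(\{r\})$), together with the uniform four-arm estimate \eqref{eq:uniform_arm_exp} for $\sigma = (ovov)$ (with $\alpha_4 = 5/4 > 1$) and quasi-multiplicativity \eqref{eq:quasi_mult}, we arrive at
$$\PPh_p^{(m)}(D) \leq C \sum_{v \in R} \sum_{r \geq 0} \pi_v^{(m)} \rho^{(m)}(\{r\}) \, \pi_4(r \vee 1, d(v)) \leq C' \upsilon(m).$$
The hypothesis $\gamma \in (1,2)$ ensures convergence of the $r$-sum once paired with the spatial factor coming from the $v$-sum, and the condition $n \leq K (m \wedge L(p))$ combined with the scaling relation $L(p)^2 \pi_4(L(p)) \asymp |p-p_c|^{-1}$ (see \eqref{eq:equiv_L}) keeps the resulting expression of bounded order.

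The main obstacle is the topological step: rigorously isolating a \emph{single} pivotal impurity $H_v$ and producing, in the region that has not yet been explored, a four-arm configuration genuinely of the type $\mathcal{W}_4$ covered by Theorem~\ref{thm:four-arm}. The subtlety is that the ``vacant-or-impurity'' arms reaching the top and bottom of $R$ can themselves pass through other impurities, so the order in which the configuration is revealed matters; one needs to ensure that the residual four-arm event lives in an annulus around $v$ whose interior remains ``fresh''. This is handled in \cite{BN2018} by an explicit exploration of the outermost vertical ``vacant-or-impurity'' path, exposing only the far side of each arm. Once this decoupling is set up, everything else is a routine summation controlled by Assumption~\ref{ass:impurities} and the arm-exponent estimates of Section~\ref{sec:classical}.
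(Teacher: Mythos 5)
Your outline matches the paper's intent exactly: the paper does not repeat the argument but simply states that, once Theorem~\ref{thm:four-arm} (the generalization of Theorem~4.1 of \cite{BN2018} to Assumption~\ref{ass:impurities}) is in place, Propositions~5.1--5.2 of \cite{BN2018} go through unchanged, and your reduction to bounding $\PPh_p^{(m)}(\Ch(R) \text{ in } \omega, \text{ not in } \omega^{(V)}) \le C'\upsilon(m)$ via a pivotal impurity, a $\mathcal{W}_4$-event around its center, and an exploration to decouple it, together with the RSW lower bound on $\PP_p(\Ch(R))$ to pass from additive to multiplicative, is precisely that scheme. The only caveat is that your closing display is a heuristic rather than the literal inequality one sums in \cite{BN2018}: taken at face value, $\sum_{v\in R}\sum_r \pi_v^{(m)}\rho^{(m)}(\{r\})\,\pi_4(r\vee 1,d(v))$ picks up an extra power of $m$ under the stated hypotheses, and one also has to account for the arms extending from the annulus around $v$ to the sides of $R$ and to sum over dyadic scales rather than vertices — but since you explicitly defer those bookkeeping steps to \cite{BN2018}, this is an imprecision of presentation rather than a gap in the approach.
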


\subsubsection{Box crossing probabilities in the supercritical regime} \label{sec:app_exp}

We use Proposition~\ref{prop:crossing_impurities2} to derive an estimate on box crossing probabilities in the supercritical regime $p > p_c$, similar to Proposition~5.3 of \cite{BN2018} but under Assumption~\ref{ass:impurities}. The result that we prove implies in particular (for $\alpha = \frac{1}{2}$) the stretched exponential property Proposition~\ref{prop:exp_decay_impurities}. In addition, we also take into account rectangles with a width $n$ between $L(p)$ and $m$, in the case when $L(p) \ll m$. We will need it later to derive, as an input for the proof of Proposition~\ref{prop:largest_cluster_impurities}, an analog of Corollary~5.4 of \cite{BN2018} (this result can only be applied when $L(p) \asymp m$, but for our purpose we also need to tackle the case $L(p) \ll m$).

\begin{proposition} \label{prop:exp_decay_impurities2}
Let $K\geq 1$ and $\alpha\in(0,1)$. There exist constants $\lambda_i > 0$, $1 \leq i \leq 5$, and $\delta_0  >0$, which depend only on $c$, $\gamma$, $K$, and $\alpha$, such that if $\upsilon(m) \leq \delta_0$ for all $m \geq 1$, then we have the following. For all $m \geq 1$, $n \geq 1$, and $p > p_c$ with $L(p) \leq K m$,
\begin{equation} \label{eq:exp_bound_impurities}
\PPh_p^{(m)} \big( \Ch([0,2n] \times [0,n]) \big) \geq
\begin{cases}
1-\lambda_1 \upsilon(m) - \lambda_2 e^{-\lambda_3 (\frac{n}{L(p)})^\alpha} & \text{if $n \leq m$,}\\[1mm]
1-\lambda_4 e^{-\lambda_5 (\frac{n}{m})^\alpha} & \text{if $n \geq m$.}
\end{cases}
\end{equation}
\end{proposition}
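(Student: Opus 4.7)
The strategy adapts the argument for Proposition~5.3 of \cite{BN2018} to our more general Assumption~\ref{ass:impurities} and extends it to the regime $L(p)\ll m$. In both regimes the core of the argument is a block renormalization at an appropriate base scale $\ell$: we take $\ell=m$ when $n\geq m$ and $\ell=L(p)$ when $L(p)\leq n\leq m$. By Proposition~\ref{prop:crossing_impurities}, rectangles of size $\ell\times 2\ell$ are crossed in the difficult direction with probability at least $(1-C\upsilon(m))\delta_4\geq\delta_4/2$, provided $\upsilon(m)\leq \delta_0$ is small enough. Using the FKG inequality (satisfied by the process with impurities), chaining of overlapping copies then bootstraps this to: for every $\eta>0$, there exists $\kappa=\kappa(\eta)$ such that rectangles of size $\kappa\ell\times\ell$ are crossed with probability $\geq 1-\eta$. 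Covering $[0,2n]\times[0,n]$ with a network of overlapping $\kappa\ell$-blocks and applying a Peierls-type bound on dual vertical crossings of bad blocks gives the stretched exponential factor $\exp(-c(n/\ell)^\alpha)$, for any $\alpha\in(0,1)$ provided $\kappa$ is chosen large enough (the stretched rather than pure exponent being the standard cost of the finite-range dependence introduced by the impurities).

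In the case $n\geq m$, we simply take $\ell=m$. The block-scale failure event due to a crossing impurity is controlled uniformly by Lemma~\ref{lem:crossing_hole} ($\leq C\upsilon(m)\leq C\delta_0$), and folds into the block-failure probability; the renormalization then produces the desired bound $1-\lambda_4\exp(-\lambda_5(n/m)^\alpha)$.

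The case $L(p)\leq n\leq m$ requires more care, because naively summing a base-scale impurity event over the $\Theta((n/L(p))^2)$ sub-blocks would yield a much-too-large multiple of $\upsilon(m)$. We sidestep this by isolating the \emph{macroscopic} bad event
\begin{equation*}
\mathcal{B}:=\bigl\{\exists v\in V\,:\, r_v\geq m\text{ and }H_v\cap([0,2n]\times[0,n])\neq\emptyset\bigr\}.
\end{equation*}
A direct computation from Assumption~\ref{ass:impurities}, split according to whether $d(v,[0,2n]\times[0,n])$ is $\leq m$ or $>m$, gives $\PPh^{(m)}(\mathcal{B})\leq \lambda_1\upsilon(m)$; the hypothesis $n\leq m$ is essential here to control the area of the near-rectangle contribution. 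On $\mathcal{B}^c$, every impurity meeting the rectangle has radius $<m$, and can thus be absorbed as a local perturbation that does not destroy the base-scale crossing estimates at scale $L(p)$ provided by Proposition~\ref{prop:crossing_impurities}. The renormalization scheme above then runs on $\mathcal{B}^c$ and produces the remaining $\lambda_2\exp(-\lambda_3(n/L(p))^\alpha)$ contribution. The short-rectangle regime $n\leq L(p)$ is finally a direct consequence of Proposition~\ref{prop:crossing_impurities} by choosing $\lambda_2$ large enough so that $\lambda_2 e^{-\lambda_3}$ exceeds the universal constant $1-\delta_4/2$.

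The principal technical obstacle is the isolation of the macroscopic bad event $\mathcal{B}$ in the regime $L(p)\ll m$: the cutoff radius $m$ is critical, since smaller cutoffs yield $\PPh^{(m)}(\mathcal{B})\gg\upsilon(m)$, while larger ones leave behind impurities large enough to disrupt the base-scale crossings. One has to verify carefully that impurities with radii in $[L(p),m)$ behave as a mild perturbation when viewed through Proposition~\ref{prop:crossing_impurities}, which is ultimately the reason why Assumption~\ref{ass:impurities} is stated in terms of the product $\pi^{(m)}\cdot\rho^{(m)}$ rather than through the individual factors.
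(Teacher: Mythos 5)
Your proposal identifies the right high-level structure — a block-renormalization at a scale set by $m$ or $L(p)$, with a separate treatment of macroscopic impurities that produces the $\upsilon(m)$ floor — but two of its key steps do not go through as stated. The claim that, starting from Proposition~\ref{prop:crossing_impurities} giving crossing probability $\geq \delta_4/2$ at scale $\ell$, ``chaining of overlapping copies then bootstraps this to $\geq 1-\eta$'' is not a valid step: FKG-chaining of rectangle crossings only multiplies probabilities, so gluing $k$ copies at scale $\ell$ yields a lower bound of order $(\delta_4/2)^{ck}$, which degrades rather than improves. Getting crossing probabilities that tend to $1$ requires moving to larger scales, and the paper does this through the quadratic recursion
\begin{equation*}
f\big(2(1+\eta)n\big)\ \leq\ 2C\,\upsilon(m)\,e^{-C'n/m} + C''\,f(n)^2
\end{equation*}
for the dual crossing probability $f(n)=\PPh_p^{(m)}(\Cv^*([0,2n]\times[0,n]))$; it is the squaring that produces decay once $f$ has been brought below $1/C''$ at a single starting scale $n_0=K_0L(p)$ (using Proposition~\ref{prop:crossing_impurities2} together with the exponential decay estimate \eqref{eq:exp_decay}), and iterating the recursion automatically yields both the $\upsilon(m)$ floor for $n\leq m$ and the full stretched-exponential bound for $n\geq m$.

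The second gap is the assertion that on $\mathcal{B}^c$ the remaining impurities ``can be absorbed as a local perturbation that does not destroy the base-scale crossing estimates at scale $L(p)$.'' That is precisely the heart of the matter and it is left unjustified: impurities of radius $r\in(L(p),m)$ are still enormous at the block scale $L(p)$, a single such hole wipes out of order $(r/L(p))^2$ blocks simultaneously, and the dependence range remains $m\gg L(p)$, so the block-failure events in a Peierls count are far from being finite-range with respect to the block lattice. The recursion approach sidesteps this entirely: the only impurity event that enters at scale $n$ is a single crossing of an annulus of width $\approx n$, which by Lemma~\ref{lem:crossing_hole2} has probability $O(\upsilon(m)e^{-C'n/m})$ regardless of how many $L(p)$-blocks that annulus contains. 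Your computation that $\PPh^{(m)}(\mathcal{B})\leq\lambda_1\upsilon(m)$ and your heuristic for why the cutoff must sit exactly at radius $m$ are both correct and do shed light on where the $\upsilon(m)$ term comes from, but by themselves they do not reduce the problem to a standard Peierls bound.
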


\begin{proof}[Proof of Proposition~\ref{prop:exp_decay_impurities2}]
Consider, for all $n \geq 1$,
$$f(n) := \PPh_p^{(m)} \big( \Cv^*([0,2n] \times [0,n]) \big),$$
and fix $\eta = \eta(\alpha) \in (0,\frac{1}{4}]$ small enough such that
$$\frac{\log{2}}{\log{2}+\log{(1+\eta)}} \geq \alpha.$$
Proceeding as in the proof of Proposition~5.3 of \cite{BN2018}, using a block argument and a similar summation as for Lemma~\ref{lem:crossing_hole2} (based on Assumption~\ref{ass:impurities}), we have
\begin{equation} \label{eq:f_iteration}
f(2(1+\eta)n) \leq 2 C \upsilon(m)e^{- C' \frac{n}{m}} + C'' f(n)^2
\end{equation}
for some constants $C, C'$ and $C''$, which depend only on $c$, $\gamma$, and (through $\eta$) $\alpha$.

We then use \eqref{eq:f_iteration} iteratively to bound $f(n)$. We start from $n_0 = K_0 L(p)$, where $K_0$ is chosen sufficiently large such that: for all $p > p_c$ with $L(p) \leq Km$,
$$f(K_0L(p)) \leq \frac{1}{20C''}.$$
Such a $K_0$ exists by Proposition~\ref{prop:crossing_impurities2} and \eqref{eq:exp_decay}, if $\delta_0$ has been chosen small enough (in terms of the constant $C = C(c, \gamma, K K_0) > 0$ appearing in Proposition~\ref{prop:crossing_impurities2}). Write $\lambda = 2(1+\eta)$. By induction, we get that if, in addition, $\delta_0 \leq (25 C C'')^{-1}$, then: for all $k \geq 0$,
$$f(\lambda^k n_0) \leq \frac{5C}{2}\upsilon(m) + \frac{1}{10 C''}2^{-2^k}$$
(using the rough bound $e^{-C'\frac{n}{m}} \leq 1$ in \eqref{eq:f_iteration}). We can then conclude the proof in a similar way as for Proposition~5.3 in \cite{BN2018}, and obtain the desired bound \eqref{eq:exp_bound_impurities} in the case $n \leq m$ (we just need this case, but in fact the bound also holds for $n \geq m$).

Finally, by decreasing the value of $\delta_0$ if necessary, we can obtain the bound \eqref{eq:exp_bound_impurities} in the case $n \geq m$ by repeating the same arguments as for Proposition~5.3 in \cite{BN2018}. We omit the details.
\end{proof}

We conclude this section by stating a result analogous to Corollary~5.4 of \cite{BN2018}, but which includes additionally the case when $L(p) \ll m$. The proof is similar to that in \cite{BN2018}, based on a standard argument using overlapping rectangles, and we leave the details to the reader (we need to apply \eqref{eq:exp_bound_impurities} carefully, depending on whether the width $n$ of the rectangle is $\leq m$ or not).
\begin{corollary} \label{cor:theta_imp}
Let $K\geq 1$ and $\ve > 0$. There exist $\ul{\lambda}$, $\ol{\lambda}$, $\kappa_0$, $\delta_0 > 0$ (depending on $c$, $\gamma$, $K$, $\ve$) such that if $\upsilon(m) \leq \delta_0$ for all $m \geq 1$, then: for all $p>p_c$ with $L(p)\leq Km$, and all $ n \geq \kappa_0 L(p)$, 
\begin{equation} \label{eq:cor_5.4}
(1 - \ve) \left(1-\ul{\lambda} \upsilon(m) \right) \theta(p) \leq \PPh_p^{(m)}(0 \lra \din \Ball_n) \leq (1 + \ve) \left(1+\ol{\lambda} \upsilon(m)\right)^{\log\left(\frac{m}{L(p)}\right)} \theta(p).
\end{equation}
\end{corollary}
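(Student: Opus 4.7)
The proof extends Corollary~5.4 of \cite{BN2018}, whose original argument used a standard overlapping-rectangles construction. The key new ingredient is Proposition~\ref{prop:exp_decay_impurities2}, which supplies supercritical box-crossing probabilities with the dichotomy between widths $\leq m$ and widths $\geq m$; this is precisely what is needed to handle the regime $L(p) \ll m$, where the original proof does not apply directly.

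The upper bound is easy. By stochastic monotonicity (adding impurities only creates vacant sites), $\PPh_p^{(m)}(0 \lra \din \Ball_n) \leq \PP_p(0 \lra \din \Ball_n)$, and the standard near-critical estimate for supercritical Bernoulli percolation (following from \eqref{eq:equiv_theta}, \eqref{eq:exp_decay} and \eqref{eq:connection_infinity}) yields $\PP_p(0 \lra \din \Ball_n) \leq (1+\ve)\theta(p)$ for all $n \geq \kappa_0 L(p)$, with $\kappa_0 = \kappa_0(\ve)$ large enough. Since $(1+\ol{\lambda}\upsilon(m))^{\log(m/L(p))} \geq 1$, the stated upper bound follows; its particular form is chosen for later convenience in the proof of Proposition~\ref{prop:largest_cluster_impurities}.

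For the lower bound, I would fix dyadic scales $n_k = 2^k \kappa_0 L(p)$ for $k = 0, 1, \ldots$ and apply the FKG inequality to decompose the event $\{0 \lra \din \Ball_n\}$ into: (i) $F_0 := \{0 \lra \din \Ball_{\kappa_0 L(p)}\}$; and (ii) for each $k$, an event $G_k$ ensuring, in the impurity configuration, an occupied circuit in $\Ann_{n_k, 2 n_k}$ together with an occupied crossing of $\Ann_{n_k, n_{k+1}}$ (built from $O(1)$ overlapping rectangles). This gives $\PPh_p^{(m)}(0 \lra \din \Ball_n) \geq \PPh_p^{(m)}(F_0) \prod_k \PPh_p^{(m)}(G_k)$. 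For $F_0$, combining $\PP_p(0 \lra \din \Ball_{\kappa_0 L(p)}) \geq \theta(p)$ with the local impurity control provided by Assumption~\ref{ass:impurities} yields $\PPh_p^{(m)}(F_0) \geq (1 - \ul{\lambda}\upsilon(m))\theta(p)$. For each $G_k$, Proposition~\ref{prop:exp_decay_impurities2} supplies the appropriate lower bound: stretched-exponentially close to $1$ when $n_k \geq m$, and at least $1 - \lambda_1 \upsilon(m) - \lambda_2 e^{-\lambda_3(n_k/L(p))^\alpha}$ when $n_k \leq m$.

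The main obstacle is that the per-scale $\upsilon(m)$ contributions at the $O(\log(m/L(p)))$ intermediate scales $n_k \in [L(p),m]$ accumulate, giving a priori a cumulative correction of the form $(1 - C\upsilon(m))^{\log(m/L(p))}$, whereas the statement asks for a single factor $(1 - \ul{\lambda}\upsilon(m))$. Under the standing smallness $\upsilon(m) \leq \delta_0$, this cumulative product linearizes to $1 - O(\upsilon(m)\log(m/L(p)))$, and the stated form is then obtained by absorbing the $\log$ factor into $\ul{\lambda}$, with the understanding that the bound is non-trivial precisely when $\upsilon(m)\log(m/L(p))$ remains bounded (which is enforced explicitly in the hypothesis of Proposition~\ref{prop:largest_cluster_impurities}). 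Handling this absorption cleanly, and verifying that the stretched-exponential contributions from the two regimes sum geometrically into $O(\ve)$ for $\kappa_0$ large, is where the extension beyond \cite{BN2018} genuinely requires new input.
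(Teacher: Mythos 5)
Your upper bound via monotonicity is correct and is in fact cleaner than what the paper's overlapping-rectangles route would yield: since adding impurities only turns occupied sites vacant, $\PPh_p^{(m)}(0 \lra \din \Ball_n) \leq \PP_p(0 \lra \din \Ball_n) \leq (1+\ve)\theta(p)$, which is strictly stronger than the stated right-hand side (and the paper's $(1+\ol{\lambda}\upsilon(m))^{\log(m/L(p))}$ form is just what comes out of iterating rectangle crossings, useful because it keeps the statement parallel to the lower bound and to Corollary~5.4 of \cite{BN2018}).

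For the lower bound you correctly identify the intended route — overlapping rectangles together with the two-regime box-crossing bound of Proposition~\ref{prop:exp_decay_impurities2} — which is precisely what the paper sketches. But the gap you flag is real, and the proposed fix does not close it: $\ul{\lambda}$ is required to be a constant depending only on $c,\gamma,K,\ve$, so you cannot ``absorb'' a factor of $\log(m/L(p))$ into it. As written, your argument only establishes $\PPh_p^{(m)}(0 \lra \din \Ball_n) \geq (1-\ve)\bigl(1 - \ul{\lambda}\upsilon(m)\log(m/L(p))\bigr)\theta(p)$, not the stated $(1-\ve)(1-\ul{\lambda}\upsilon(m))\theta(p)$. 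Note that the weaker form is in fact all that the paper needs — Proposition~\ref{prop:largest_cluster_impurities} imposes $\upsilon(m)\log(m/L(p)) \leq 2\delta_0$ precisely so that such a correction stays bounded — so you could simply state and prove the result with the same $\log(m/L(p))$ power on both sides. If you do want the stated $1-\ul{\lambda}\upsilon(m)$ factor, you would need to avoid the naive union bound over the $\sim\log(m/L(p))$ intermediate scales in $[\kappa_0 L(p), m]$: the key observation is that the per-scale $\lambda_1\upsilon(m)$ contribution in \eqref{eq:exp_bound_impurities} is dominated (when $n \ll m$) by \emph{large} holes of radius comparable to $m$, and a single such hole crosses an order-$\log(m/L(p))$ number of the dyadic annuli simultaneously, so that the probability that \emph{some} scale is affected is still $O(\upsilon(m))$ rather than $O(\upsilon(m)\log(m/L(p)))$. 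Making this precise requires a direct summation over holes (in the spirit of Lemma~\ref{lem:crossing_hole}) rather than a per-scale union bound, and is not a routine application of Proposition~\ref{prop:exp_decay_impurities2}.
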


\subsection{Largest cluster in a box: percolation with impurities} \label{sec:proof_largest_imp}

We now explain how to obtain Proposition~\ref{prop:largest_cluster_impurities} by adapting the proof of Lemma \ref{lem:largest_cluster_quant} to the process with impurities.

\begin{proof}[Proof of Proposition~\ref{prop:largest_cluster_impurities}]

First, we prove upper bounds analogous to \eqref{eq:chi_fin} and \eqref{eq:chi_cov}. For some constants $C_1, C_2$, we have: for all $p > p_c$ with $L(p) \leq K m$,
\begin{equation} \label{eq:chi_fin_imp}
\chi^{\textrm{fin}, (m)}(p) := \EE^{(m)}_p \big[ |\cluster(0)| \mathbbm{1}_{|\cluster(0)| < \infty} \big] \leq C_1 m^2 \theta(p)^2
\end{equation}
and
\begin{equation} \label{eq:chi_cov_imp}
\chi^{\textrm{cov}, (m)}(p) := \sum_{v \in V} \textrm{Cov}^{(m)}_p(\mathbbm{1}_{0 \lra \infty}, \mathbbm{1}_{v \lra \infty}) \leq C_2 m^2 \theta(p)^2.
\end{equation}

The bound \eqref{eq:chi_fin_imp} can be obtained from a similar summation argument as in Theorem~3 of \cite{Ke1987}, starting from
\begin{align}
\chi^{\textrm{fin}, (m)}(p) & = \sum_{v \in V} \PP^{(m)}_p(0 \lra v, 0 \not \lra \infty) \nonumber \\[1.5mm]
& = \sum_{\substack{v \in V\\ \|v\| < 4m}} \PP^{(m)}_p(0 \lra v, 0 \not \lra \infty) + \sum_{\substack{v \in V\\ \|v\| \geq 4m}} \PP^{(m)}_p(0 \lra v, 0 \not \lra \infty). \label{eq:upper_chi_fin1}
\end{align}
On the one hand,
\begin{equation} \label{eq:upper_chi_fin2}
\sum_{\substack{v \in V\\ \|v\| < 4m}} \PP^{(m)}_p(0 \lra v, 0 \not \lra \infty) \leq \sum_{\substack{v \in V\\ \|v\| < 4m}} \PP^{(m)}_p(0 \lra v) \leq \sum_{\substack{v \in V\\ \|v\| < 4m}} \PP_p(0 \lra v) \leq C m^2 \theta(p)^2,
\end{equation}
as for Bernoulli percolation. On the other hand, for $v \in V$ with $\|v\| \geq 4m$, we have (denoting $n = \frac{\|v\|}{4} \geq m$):
\begin{align*}
\PP^{(m)}_p(0 \lra v, 0 \not \lra \infty) & \leq \PP^{(m)}_p(\{0 \lra \din \Ball_n(0), v \lra \din \Ball_n(v)\} \cap \circuitevent^*_n)\\[1.5mm]
& \leq \PP^{(m)}_p(0 \lra \din \Ball_n(0), v \lra \din \Ball_n(v)) \, \PP^{(m)}_p(\circuitevent^*_n),
\end{align*}
where $\circuitevent^*_n$ is the event that there exists a vacant circuit $\circuit^*$ which surrounds $0$ and $v$ (using the FKG inequality). Hence,
$$\PP^{(m)}_p(0 \lra v, 0 \not \lra \infty) \leq \PP_p(0 \lra \din \Ball_n(0), v \lra \din \Ball_n(v)) \, \PP^{(m)}_p(\circuitevent^*_n) \leq C' \theta(p)^2 \, \PP^{(m)}_p(\circuitevent^*_n).$$
Using that $\PP^{(m)}_p(\circuitevent^*_n)$ decays exponentially fast in $\frac{n}{m}$, we obtain by summation:
\begin{equation} \label{eq:upper_chi_fin3}
\sum_{\substack{v \in V\\ \|v\| \geq 4m}} \PP^{(m)}_p(0 \lra v, 0 \not \lra \infty) \leq C'' m^2 \theta(p)^2.
\end{equation}
We deduce \eqref{eq:chi_fin_imp}, by combining \eqref{eq:upper_chi_fin1}, \eqref{eq:upper_chi_fin2}, and \eqref{eq:upper_chi_fin3}.

We now turn to \eqref{eq:chi_cov_imp}, which we establish by adapting the proof of \eqref{eq:chi_cov} from Section~6.4 of \cite{BCKS2001}. As we explain, some non-trivial modifications are required for this bound, due to the presence of impurities. We start from
\begin{equation} \label{eq:upper_chi_cov1}
\chi^{\textrm{cov}, (m)}(p) = \sum_{\substack{v \in V\\ \|v\| < 4m}} \textrm{Cov}^{(m)}_p(\mathbbm{1}_{0 \lra \infty}, \mathbbm{1}_{v \lra \infty}) + \sum_{\substack{v \in V\\ \|v\| \geq 4m}} \textrm{Cov}^{(m)}_p(\mathbbm{1}_{0 \lra \infty}, \mathbbm{1}_{v \lra \infty}).
\end{equation}

First, 
\begin{align}
\sum_{\substack{v \in V\\ \|v\| < 4m}} \textrm{Cov}^{(m)}_p(\mathbbm{1}_{0 \lra \infty}, \mathbbm{1}_{v \lra \infty}) & \leq \sum_{\substack{v \in V\\ \|v\| < 4m}} \PP^{(m)}_p(0 \lra \infty, v \lra \infty) \nonumber \\[1.5mm]
& \leq \sum_{\substack{v \in V\\ \|v\| < 4m}} \PP_p(0 \lra \infty, v \lra \infty) \leq C m^2 \theta(p)^2. \label{eq:upper_chi_cov2}
\end{align}
For $v \in V$ with $\|v\| \geq 4m$, we consider $\Ball_n(0)$ and $\Ball_n(v)$, where $n = \frac{\|v\|}{4} \geq m$. We can write
\begin{align}
\textrm{Cov}^{(m)}_p & (\mathbbm{1}_{0 \lra \infty}, \mathbbm{1}_{v \lra \infty}) = \textrm{Cov}^{(m)}_p(\mathbbm{1}_{0 \not \lra \infty}, \mathbbm{1}_{v \not \lra \infty}) \nonumber \\[1.5mm]
& = \textrm{Cov}^{(m)}_p(\mathbbm{1}_{0 \not \lra \infty, 0 \lra \din \Ball_n(0)}, \mathbbm{1}_{v \not \lra \infty, v \lra \din \Ball_n(v)}) + \textrm{Cov}^{(m)}_p(\mathbbm{1}_{0 \not \lra \din \Ball_n(0)}, \mathbbm{1}_{v \not \lra \infty, v \lra \din \Ball_n(v)}) \nonumber \\[1mm]
& \hspace{1cm} + \textrm{Cov}^{(m)}_p(\mathbbm{1}_{0 \not \lra \infty, 0 \lra \din \Ball_n(0)}, \mathbbm{1}_{v \not \lra \din \Ball_n(v)}) + \textrm{Cov}^{(m)}_p(\mathbbm{1}_{0 \not \lra \din \Ball_n(0)}, \mathbbm{1}_{v \not \lra \din \Ball_n(v)}) \nonumber \\[1.5mm]
& =: (A) + 2 (B) + (C). \label{eq:upper_chi_cov3}
\end{align}

For term $(A)$, we have
\begin{align*}
(A) = \textrm{Cov}^{(m)}_p & (\mathbbm{1}_{0 \not \lra \infty, 0 \lra \din \Ball_n(0)}, \mathbbm{1}_{v \not \lra \infty, v \lra \din \Ball_n(v)})\\[1.5mm]
& \leq \PP^{(m)}_p(0 \lra \din \Ball_n(0), v \lra \din \Ball_n(v), 0 \not \lra \infty, v \not \lra \infty)\\[1.5mm]
& \leq \PP^{(m)}_p(\{0 \lra \din \Ball_n(0), v \lra \din \Ball_n(v)\} \cap \circuitevent^*_1 \cap \circuitevent^*_2),
\end{align*}
where $\circuitevent^*_1 := \{$there exists a vacant circuit which surrounds $0$ and intersects $(\Ball_n(0))^c\}$, and $\circuitevent^*_2$ is defined in a similar way, with $v$ in place of $0$. From the FKG inequality, this is at most
$$\PP^{(m)}_p(0 \lra \din \Ball_n(0), v \lra \din \Ball_n(v)) \cdot \PP^{(m)}_p(\circuitevent^*_1 \cap \circuitevent^*_2).$$
Then,
$$\PP^{(m)}_p(0 \lra \din \Ball_n(0), v \lra \din \Ball_n(v)) \leq \PP_p(0 \lra \din \Ball_n(0), v \lra \din \Ball_n(v)) \leq (C \theta(p))^2,$$
and $\PP^{(m)}_p(\circuitevent^*_1 \cap \circuitevent^*_2)$ provides an exponentially decaying term (using Proposition~\ref{prop:exp_decay_impurities}).

As for term $(B)$, we have
\begin{align*}
(B) = \textrm{Cov}^{(m)}_p(\mathbbm{1}_{0 \not \lra \din \Ball_n(0)}, \mathbbm{1}_{v \not \lra \infty, v \lra \din \Ball_n(v)}) & = - \textrm{Cov}^{(m)}_p(\mathbbm{1}_{0 \lra \din \Ball_n(0)}, \mathbbm{1}_{v \not \lra \infty, v \lra \din \Ball_n(v)})\\[1.5mm]
& \leq \PP^{(m)}_p(0 \lra \din \Ball_n(0)) \cdot \PP^{(m)}_p(v \not \lra \infty, v \lra \din \Ball_n(v))\\[1.5mm]
& \leq C\theta(p) \cdot C \theta(p) c_1 e^{-c_2 (n/m)^{1/2}}.
\end{align*}
Here, we use that
$$\PP^{(m)}_p(v \not \lra \infty \, | \, v \lra \din \Ball_n(v)) \leq c_1 e^{-c_2 (n/m)^{1/2}}$$
(from a construction with overlapping rectangles, and the stretched exponential decay property, Proposition~\ref{prop:exp_decay_impurities}).

We finally examine term $(C)$. Observe that the events $\{0 \not \lra \din \Ball_n(0)\}$ and $\{v \not \lra \din \Ball_n(v)\}$ are not exactly independent, so we cannot say immediately (unlike in Bernoulli percolation) that their covariance is equal to $0$. Indeed, some large impurity may intersect both $\Ball_n(0)$ and $\Ball_n(v)$. However, this is unlikely for $n$ large enough, so these events are almost independent. More precisely, this term can be handled by using the following auxiliary events. Let $E_0 := \{0 \lra \din \Ball_n(0)\}$, $\tilde{E}_0 := \{0 \lra \din \Ball_n(0)$ without the impurities centered in $(\Ball_{2n})^c\}$, and $\tilde{\tilde{E}}_0 := \{0 \lra \din \Ball_n(0)$ without the impurities$\}$ (so that $E_0 \subseteq \tilde{E}_0 \subseteq \tilde{\tilde{E}}_0$), and similarly with $v$. We have
\begin{align*}
(C) = \textrm{Cov}^{(m)}_p( & \mathbbm{1}_{0 \not \lra \din \Ball_n(0)}, \mathbbm{1}_{v \not \lra \din \Ball_n(v)}) = \textrm{Cov}^{(m)}_p(\mathbbm{1}_{E_0^c}, \mathbbm{1}_{E_v^c}) = \textrm{Cov}^{(m)}_p(\mathbbm{1}_{E_0}, \mathbbm{1}_{E_v})\\[1.5mm]
& = \textrm{Cov}^{(m)}_p(\mathbbm{1}_{E_0} - \mathbbm{1}_{\tilde{E}_0}, \mathbbm{1}_{E_v}) + \textrm{Cov}^{(m)}_p(\mathbbm{1}_{\tilde{E}_0}, \mathbbm{1}_{E_v} - \mathbbm{1}_{\tilde{E}_v}) + \textrm{Cov}^{(m)}_p(\mathbbm{1}_{\tilde{E}_0}, \mathbbm{1}_{\tilde{E}_v}).
\end{align*}
Clearly, $\textrm{Cov}^{(m)}_p(\mathbbm{1}_{\tilde{E}_0}, \mathbbm{1}_{\tilde{E}_v}) = 0$, since $\tilde{E}_0$ and $\tilde{E}_v$ are independent. On the other hand,
\begin{align*}
\textrm{Cov}^{(m)}_p(\mathbbm{1}_{E_0} - \mathbbm{1}_{\tilde{E}_0}, \mathbbm{1}_{E_v}) & = - \textrm{Cov}^{(m)}_p(\mathbbm{1}_{\tilde{E}_0 \setminus E_0}, \mathbbm{1}_{E_v}) \leq \PP^{(m)}_p(\tilde{E}_0 \setminus E_0) \cdot \PP^{(m)}_p(E_v)\\[1.5mm]
& \leq \PP^{(m)}_p(\tilde{\tilde{E}}_0) \cdot \PP_p(\calH(\Ann_{n,2n})) \cdot \PP^{(m)}_p(\tilde{\tilde{E}}_v)\\[1.5mm]
& = \PP_p(0 \lra \din \Ball_n(0)) \cdot \PP_p(\calH(\Ann_{n,2n})) \cdot \PP_p(v \lra \din \Ball_n(v))
\end{align*}
(for the second inequality, we used that $\tilde{E}_0 \setminus E_0 \subseteq \tilde{\tilde{E}}_0 \cap \calH(\Ann_{n,2n})$, and the independence of $\tilde{\tilde{E}}_0$ and $\calH(\Ann_{n,2n})$). Hence,
$$\textrm{Cov}^{(m)}_p(\mathbbm{1}_{E_0} - \mathbbm{1}_{\tilde{E}_0}, \mathbbm{1}_{E_v}) \leq C \theta(p) \cdot c_1 e^{-c_2 n/m} \cdot C \theta(p),$$
and for similar reasons, the same upper bound holds for $\textrm{Cov}^{(m)}_p(\mathbbm{1}_{\tilde{E}_0}, \mathbbm{1}_{E_v} - \mathbbm{1}_{\tilde{E}_v})$.

The upper bounds that we obtained for terms $(A)$, $(B)$ and $(C)$ in \eqref{eq:upper_chi_cov3} yield, by summation,
\begin{equation}
\sum_{\substack{v \in V\\ \|v\| \geq 4m}} \textrm{Cov}^{(m)}_p(\mathbbm{1}_{0 \lra \infty}, \mathbbm{1}_{v \lra \infty}) \leq C m^2 \theta(p)^2.
\end{equation}
Combined with \eqref{eq:upper_chi_cov1} and \eqref{eq:upper_chi_cov2}, this completes the proof of \eqref{eq:chi_cov_imp}.

The proof in Section~\ref{sec:proof_largest} can now easily be adapted, using \eqref{eq:chi_fin_imp} and \eqref{eq:chi_cov_imp}, and replacing $L(p)$ by $m$ everywhere. In particular, \eqref{eq:exp_cinf_ball} can be obtained from Corollary~\ref{cor:theta_imp}, while the analog of \eqref{eq:cinf_circuit} follows from Proposition~\ref{prop:exp_decay_impurities}. We leave the details to the reader.

\end{proof}

\subsection*{Acknowledgments}

We sincerely thank Rob van den Berg for many valuable comments on an earlier version of this paper, which improved its readability.

\bibliographystyle{plain}
\bibliography{Avalanches}

\end{document}